\documentclass[12pt]{article}
\usepackage[T1]{fontenc}
\usepackage{mathrsfs,amsthm,graphicx,color,verbatim,bbm,amsmath,amsfonts,amssymb,newclude,nicefrac,graphicx,enumerate,hyperref,bm,geometry,mathabx,stmaryrd}
\usepackage{todonotes}
\geometry{a4paper,top=20mm,left=20mm,right=15mm,bottom=25mm,headsep=2mm,footskip=8mm}
\usepackage[capitalise]{cleveref} % clever reference

\theoremstyle{plain}
\newtheorem{theorem}{Theorem}[section]
\newtheorem{lemma}[theorem]{Lemma}
\newtheorem{prop}[theorem]{Proposition}
\newtheorem{cor}[theorem]{Corollary}

\theoremstyle{remark}

\theoremstyle{definition}
\newtheorem{definition}[theorem]{Definition}

%%%% Customized comands %%%%
\newcommand{\E}{\mathbb{E}}
\newcommand{\F}{\mathbb{F}}
\renewcommand{\P}{\mathbb{P}}

\newcommand{\R}{\mathbb{R}}
\newcommand{\N}{\mathbb{N}}
\newcommand{\Sym}{\mathbb{S}}
\newcommand{\V}{\mathbb{V}}

\newcommand{\Borel}{\mathcal{B}}
\newcommand{\cC}{\mathcal{C}}
\newcommand{\cF}{\mathcal{F}}
\newcommand{\cK}{\mathcal{K}}

\newcommand{\cO}{\mathcal{O}}

\newcommand{\smallsum}{\textstyle\sum}

%Expectations
\newcommand{\Exp}[1]{ \E \! \left[ #1 \right]}

\newcommand{\EXP}[1]{ \E  [ #1 ]}
\newcommand{\EXPP}[1]{ \E \big[ #1 \big]}

%Norms
\newcommand{\norm}[1]{ \left\| #1 \right\| }
\newcommand{\Norm}[1]{ \| #1 \| }

\newcommand{\HSnorm}[1]{ \left\vvvert #1 \right\vvvert }
% and in equations

\newcommand{\qandq}{\qquad\text{and}\qquad}

% Image of a map

% Identity

% diag

%Lipschitz functions

%Variance

\newcommand{\Exists}{\exists\,}
\newcommand{\Forall}{\forall\,}

\newcommand{\mc}{\mathcal}
\newcommand{\mf}{\mathfrak}
\newcommand{\is}{\leftarrow}
\newcommand{\Closure}[1]{\overline{#1}}
\newcommand{\JetClosure}{\mf P}

%Larger brackets than big
\makeatletter
\newcommand{\vast}{\bBigg@{3.5}}
\newcommand{\Vast}{\bBigg@{4}}
\makeatother

\newcounter{AuthorCount}
\stepcounter{AuthorCount}

\begin{document}

\title{On nonlinear Feynman--Kac formulas \\
	for viscosity solutions of semilinear \\
	parabolic partial differential equations}

\author{
	Christian Beck$^{\arabic{AuthorCount}}\stepcounter{AuthorCount}$, 
	Martin Hutzenthaler$^{\arabic{AuthorCount}\stepcounter{AuthorCount}}$, and 
	Arnulf Jentzen$^{\arabic{AuthorCount}\stepcounter{AuthorCount},\arabic{AuthorCount}\stepcounter{AuthorCount}}$
	\bigskip
	\setcounter{AuthorCount}{1}
	\\
	\small{$^{\arabic{AuthorCount}
			\stepcounter{AuthorCount}}$ 
		Department of Mathematics, 
		ETH Zurich, 
		Z\"urich,}\\
	\small{Switzerland, 
		e-mail: christian.beck@math.ethz.ch} 
	\\
	\small{$^{\arabic{AuthorCount}
			\stepcounter{AuthorCount}}$ 
		Faculty of Mathematics, University of Duisburg-Essen, } \\
	\small{Essen, Germany, e-mail: martin.hutzenthaler@uni-due.de}
	\\
	\small{$^{\arabic{AuthorCount}
			\stepcounter{AuthorCount}}$ 
		Department of Mathematics, 
		ETH Zurich, 
		Z\"urich,}\\
	\small{Switzerland, 
		e-mail: arnulf.jentzen@sam.math.ethz.ch} 
	\\
	\small{$^{\arabic{AuthorCount}\stepcounter{AuthorCount}}$ 
		Faculty of Mathematics and Computer Science, 
		University of M\"unster, }\\
	\small{M\"unster, Germany, e-mail: ajentzen@uni-muenster.de}		
}

\maketitle

\begin{abstract}
The classical Feynman--Kac identity builds a bridge between stochastic analysis and partial differential equations (PDEs) by providing stochastic representations for classical solutions of linear Kolmogorov PDEs. 
This opens the door for the derivation of sampling based Monte Carlo approximation methods, which can be meshfree and thereby stand a chance to approximate solutions of PDEs without suffering from the curse of dimensionality. 
In this article we extend the classical Feynman--Kac formula to certain semilinear Kolmogorov PDEs. 
More specifically, we identify suitable solutions of stochastic fixed point equations (SFPEs), which arise when the classical Feynman--Kac identity is formally applied to semilinear Kolmorogov PDEs, as viscosity solutions of the corresponding PDEs. 
This justifies, in particular, employing full-history recursive multilevel Picard (MLP) approximation algorithms, which have recently been shown to overcome the curse of dimensionality in the numerical approximation of solutions of SFPEs, in the numerical approximation of semilinear Kolmogorov PDEs.
\end{abstract}

\tableofcontents

\section{Introduction}
\label{sect:intro}

The classical Feynman--Kac identity (see, e.g., \cite{GiSk1972_SDEs,HaHuJe2017_LossOfRegularityKolmogorov,
	KaSh1991_BrownianMotionAndStochasticCalculus,
	PaRa2014_SDEsBackwardSDEsAndPDEs}) 
builds a bridge between stochastic analysis and partial differential equations (PDEs) by providing stochastic representations for classical solutions of linear Kolmogorov PDEs. 
The fact that certain solutions of linear Kolmogorov PDEs can be expressed as appropriate averages of It\^o processes associated with these PDEs opens the door for the derivation of sampling based Monte Carlo approximation methods, which can be meshfree and thereby stand a chance to approximate solutions of PDEs without suffering from the curse of dimensionality. 
Since PDEs in applications are not always linear, an extension of the classical Feynman--Kac formula to nonlinear PDEs is desirable. 
One approach to nonlinear Feynman--Kac type formulas passes through backward stochastic differential equations (BSDEs); see, e.g., \cite{Bismut1973,PardouxPeng1990} for references on BSDEs and see, e.g., \cite{Antonelli1993FBSDEs,
	BarlesBuckdahnPardoux1997BSDEsAndIntegralPartialDifferentialEquations,
	HuPeng1995SolutionOfFBSDEs,
	HuYong2000FBSDEsWithNonsmoothCoefficients,
	MaProtterYong1994FourStepScheme,
	MaZhang2002RepresentationTheoremsForBSDEs,
	Pardoux1998BSDEsAndViscositySolutionsOfSystems,
	PardouxPeng1992,
	PardouxPradeillesRao1997ProbabilisticInterpretation,
	PaRa2014_SDEsBackwardSDEsAndPDEs,
	PardouxTang1999,
	Peng1991ProbabilisticInterpretation,
	SowPardoux2004ProbabilisticInterpretation} for references on the connection between BSDEs and PDEs. 
The approach which is pursued in this article is to identify suitable solutions of stochastic fixed point equations (SFPEs), which arise when the classical Feynman--Kac identity is formally applied to semilinear Kolmorogov PDEs by treating the nonlinearity as mere inhomogeneity, as viscosity solutions of the corresponding PDEs; see, e.g., 
	\cite{CrandallEvansLions1984,UsersGuide,CrandallLions1983ViscosityHJB,HaHuJe2017_LossOfRegularityKolmogorov,ImbertSilvestre_FullyNonlinearParabolic} 
for references on viscosity solutions of PDEs.
More specifically, we establish in this article a one-to-one correspondence between viscosity solutions of certain semilinear Kolmogorov PDEs and solutions of the associated SFPEs (see \cref{thm:existence_of_fixpoint} in \cref{subsec:viscosity_semilinear} below).
This justifies, in particular, employing full-history recursive multilevel Picard (MLP) approximation algorithms (see \cite{beck2019overcoming,EHutzenthalerJentzenKruse2019MLP,GilesJentzenWelti2019GeneralisedMLP,HutzenthalerJentzenKruse2019GradientDependent,hutzenthaler2016multilevel,Overcoming,hutzenthaler2019overcoming,hutzenthaler2017multi} for references on MLP approximation algorithms), which have been shown to overcome the curse of dimensionality in the numerical approximation of solutions of SFPEs, in the numerical approximation of semilinear Kolmogorov PDEs. 
MLP approximation algorithms are the first and up to now only methods which have been shown to overcome the curse of dimensionality in the numerical approximation of solutions of semilinear Kolmogorov PDEs. 
To illustrate the findings of this article, we now present in \cref{thm:intro} below a special case of \cref{thm:existence_of_fixpoint} which is the main result of this article. 

\begin{theorem}\label{thm:intro}
	Let $ d \in \N $,
		$ L,T \in (0,\infty) $, 
	let $ \mu \colon \R^d \to \R^d $ and $ \sigma \colon \R^d \to \R^{d\times d} $ be locally Lipschitz continuous, 
	let	$ f \in C( \R^d \times \R , \R ) $, 
		$ g \in C( \R^d, \R ) $ 
	be at most polynomially growing, 
	let $ \norm{\cdot}\colon\R^d\to [0,\infty)$ be the standard Euclidean norm on $\R^d$, 
	let $ \langle\cdot,\cdot\rangle \colon \R^d\times\R^d\to\R $ be the standard Euclidean scalar product on $\R^d$, 
	assume for all 
		$ x,y \in \R^d $, 
		$ v,w \in \R $  
	that
		$ \langle x,\mu(x)\rangle \leq  L ( 1 + \norm{x}^2 ) $,
		$ \Norm{\sigma(x)y} \leq L ( 1+\Norm{x} ) \Norm{y} $, and 
		$ | f( x, v ) - f( x, w ) | \leq L | v - w | $,
	let $ ( \Omega, \mathcal{F}, \P, (\mathbb{F}_t)_{t\in [0,T]} ) $ be a stochastic basis\footnote{Note that we say that a filtered probability space $(\Omega,\mc F,\P,(\F_t)_{t\in [0,T]})$ is a stochastic basis if and only if we have for all 
		$ t \in [0,T) $ 
	that 
		$ \{ A \in \mc F\colon \P(A) = 0 \} \subseteq \F_t = (\cap_{s\in (t,T]} \F_s) $; cf., e.g., Liu \& R\"ockner~\cite[Definition 2.1.11]{LiuRoeckner2015SPDEs}.}, 
	and let $W \colon [0,T] \times \Omega \to \R^d$ be a standard $(\mathbb{F}_t)_{t\in [0,T]}$-Brownian motion. 
	Then 
		\begin{enumerate}[(i)]
		\item \label{intro:item2}
		there exists a unique at most polynomially growing viscosity solution $ u \in C([0,T] \times \R^d, \R) $ of 
			\begin{equation} \label{intro:pde}
			(\tfrac{\partial}{\partial t}u)(t,x) 
			+ \tfrac{1}{2} \operatorname{Trace}\!\left( \sigma(x)[\sigma(x)]^{\ast}(\operatorname{Hess}_x u )(t,x) \right) 
			+ \langle \mu(x), (\nabla_x u)(t,x)\rangle 
			+ f(x, u(t, x)) 
			= 0
			\end{equation}
		with $u(T,x) = g(x)$ for $(t,x) \in (0,T) \times \R^d$,	
		\item \label{intro:item1}
		for every 
			$t\in [0,T]$, 
			$x\in\R^d$ 
		there exists an up to indistinguishability unique $(\mathbb{F}_s)_{s\in [t,T]}$-adapted stochastic process
			$X^{t,x}=(X^{t,x}_s)_{s\in [t,T]}\colon [t,T]\times\Omega\to\R^d$
		with continuous sample paths satisfying that for all
			$s\in [t,T]$ 
		we have $\P$-a.s.~that 
			\begin{equation} \label{intro:sde}
			X^{t,x}_s
			= 
			x + \int_t^s \mu( X^{t,x}_r )\, dr + \int_t^s \sigma( X^{t,x}_r ) \, dW_r,
			\end{equation}
		\item \label{intro:item3}
		there exists a unique at most polynomially growing $ v \in C([0,T]\times\R^d, \R)$ 
		which satisfies for all 
			$t\in [0,T]$, 
			$x\in\R^d$ 
		that 
			$ \EXP{|g(X^{t,x}_T)| + \int_t^T |f(X^{t,x}_s,v(s,X^{t,x}_s))|\,ds} < \infty $
		and 
			\begin{equation} \label{intro:sfpe}
			v(t, x)
			=
			\Exp{
				g ( X^{t,x}_{T} )
				+
				\int_{t}^T
				f (X^{t,x}_s,  v(s, X^{t,x}_s) )
				\, ds
			}\!, 
			\end{equation}
		and 
		\item\label{intro:item4} 
		we have for all 
			$ t \in [0,T] $, 
			$ x \in \R^d $ 
		that 
			$ u(t,x) = v(t,x) $. 
	\end{enumerate}		
\end{theorem}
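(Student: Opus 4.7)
The plan is to derive \cref{thm:intro} from the general correspondence result \cref{thm:existence_of_fixpoint} together with a classical well-posedness theorem for the SDE~\eqref{intro:sde} and a Banach fixed point argument for the SFPE~\eqref{intro:sfpe}. Concretely, I would first prove items~\eqref{intro:item1} and~\eqref{intro:item3} independently; then invoking \cref{thm:existence_of_fixpoint} transfers the unique polynomially growing fixed point $v$ of~\eqref{intro:sfpe} into a unique polynomially growing viscosity solution $u$ of~\eqref{intro:pde} with $u=v$ pointwise, which delivers items~\eqref{intro:item2} and~\eqref{intro:item4}.

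For item~\eqref{intro:item1}, local Lipschitz continuity of $\mu,\sigma$ yields local pathwise uniqueness and local existence of strong solutions of~\eqref{intro:sde} up to a (possibly finite) explosion time $\tau$. Applying It\^o's formula to $\|X^{t,x}_{s\wedge \tau_n}\|^{2}$ along stopping times $\tau_n\nearrow\tau$, and combining the one-sided coercivity $\langle x,\mu(x)\rangle\le L(1+\|x\|^2)$ with the growth bound $\|\sigma(x)y\|\le L(1+\|x\|)\|y\|$ and a Gronwall estimate, produces uniform-in-$n$ second moment bounds. Iteration with Burkholder--Davis--Gundy then yields $\E[\sup_{s\in [t,T]}\|X^{t,x}_s\|^{2p}]<\infty$ for every $p\ge 1$; this precludes explosion and delivers a non-exploding strong solution with continuous paths in a standard Khasminskii-type manner.

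For item~\eqref{intro:item3}, choose $p\in\N$ large enough to dominate the polynomial growth of $g$ and $x\mapsto f(x,0)$, fix $\lambda>L$, and consider the Banach space $(\cV_p,\|\cdot\|_{p,\lambda})$ of continuous functions $w\colon [0,T]\times\R^d\to\R$ with finite weighted norm $\|w\|_{p,\lambda}:=\sup_{(t,x)}|w(t,x)|(1+\|x\|^p)^{-1}e^{-\lambda(T-t)}$. Define $\Phi(w)(t,x):=\E[g(X^{t,x}_T)+\int_t^T f(X^{t,x}_s,w(s,X^{t,x}_s))\,ds]$. The moment bounds from the previous step ensure that $\Phi$ maps $\cV_p$ into itself and that the integrability condition in~\eqref{intro:sfpe} holds; standard $L^2$-continuity of $(t,x)\mapsto X^{t,x}$ under locally Lipschitz coefficients makes $\Phi(w)$ jointly continuous; and the Lipschitz hypothesis on $f(x,\cdot)$ together with $\lambda>L$ makes $\Phi$ a strict contraction on $\cV_p$. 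Banach's fixed point theorem then yields the unique polynomially growing fixed point $v$. Uniqueness beyond $\cV_p$ follows by enlarging $p$.

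The real obstacle sits in \cref{thm:existence_of_fixpoint} itself rather than in the present reduction. It decomposes into (a) showing that every polynomially growing continuous solution of~\eqref{intro:sfpe} is a viscosity solution of~\eqref{intro:pde} --- a Dynkin-type argument in which the flow/Markov property of $X^{t,x}$, the dynamic programming form of the SFPE, and a Taylor expansion of a test function along trajectories combine to verify the sub- and supersolution inequalities against test functions touching $v$ from above and below --- and (b) proving a comparison principle for polynomially growing viscosity sub- and supersolutions of the semilinear PDE with unbounded coefficients, in which the standard doubling-of-variables construction must be regularised at infinity by a polynomially growing penalty term and the Lipschitz-in-$v$ nonlinearity absorbed via an exponential-in-time transformation. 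Step (b) is the most delicate single piece: the comparison principle must accommodate the non-bounded drift $\mu$, the non-bounded diffusion $\sigma$, and the polynomial growth class simultaneously, and it is precisely what upgrades the uniqueness in $\cV_p$ from Step~2 to the uniqueness claim stated in item~\eqref{intro:item2}.
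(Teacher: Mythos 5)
Your overall architecture coincides with the paper's: \cref{thm:intro} is obtained there by specializing \cref{thm:existence_of_fixpoint} (via \cref{cor:existence_of_fixpoint_product_lyapunov} and \cref{existence_of_fixpoint_polynomial_growth}), the SDE well-posedness is the Gy\"ongy--Krylov result (reproved in \cref{lem:krylov_existence} by a Lyapunov/stopping-time/Borel--Cantelli argument rather than your BDG moment bounds, but both are standard), and the SFPE well-posedness is a weighted-norm Banach fixed point essentially as you describe. The one concrete step you skip is the hinge of the reduction: to invoke \cref{thm:existence_of_fixpoint} at all you must exhibit a Lyapunov function $V\in C^{1,2}$ satisfying $\tfrac12\operatorname{Trace}(\sigma\sigma^{*}(\operatorname{Hess}V))+\langle\mu,\nabla V\rangle\le\rho V$ together with the requirement that $|g|$ and $|f(\cdot,0)|$ be of lower order than $V$ at infinity. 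The paper takes $V_q(x)=(1+\norm{x}^2)^{q/2}$, checks the supersolution inequality from the coercivity of $\mu$ and the linear growth of $\sigma$, and absorbs the constant $\rho_q$ by the factor $e^{-\rho_q t}$; moreover, since the existence and uniqueness classes in \cref{thm:existence_of_fixpoint} are tied to a \emph{single} $V$ (functions that are $o(V)$ at infinity), the ``at most polynomially growing'' uniqueness asserted in Items~\eqref{intro:item2} and~\eqref{intro:item3} is obtained only by running the argument for every sufficiently large $q$ --- this is precisely the content of \cref{existence_of_fixpoint_polynomial_growth}, and your remark that ``uniqueness beyond $\cV_p$ follows by enlarging $p$'' needs to be carried out at the level of the viscosity solution as well, not only inside your contraction argument.

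A secondary remark: your description of how \cref{thm:existence_of_fixpoint} itself is proved does not match the paper. Part (a) is not a Dynkin/dynamic-programming computation; the paper freezes the nonlinearity as an inhomogeneity $h(t,x)=f(t,x,v(t,x))$ and establishes the Feynman--Kac representation for the resulting \emph{linear} inhomogeneous PDE (\cref{prop:viscosity_solution_lyapunov}) by approximating $\mu$, $\sigma$, $g$, $h$ with smooth, compactly supported data, applying the classical smooth Feynman--Kac formula (\cref{lem:smooth_solutions}), and passing to the limit with the viscosity-stability result (\cref{cor:stability_result_for_viscosity_solutions}) and the SDE-stability result (\cref{lem:stability_for_sdes}). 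Since you cite the theorem rather than prove it, this is a mischaracterization rather than a gap, but it matters if your sketch of (a) was intended as a substitute for the paper's argument.
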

\cref{thm:intro} above is an immediate consequence of \cref{existence_of_fixpoint_polynomial_growth}. 
\cref{existence_of_fixpoint_polynomial_growth}, in turn, follows from \cref{cor:existence_of_fixpoint_product_lyapunov} which itself is a special case of \cref{thm:existence_of_fixpoint}, the main result of this article.  
Let us comment on some of the mathematical objects appearing in \cref{thm:intro}. 
The real number $ L \in (0,\infty) $ in \cref{thm:intro} above is used to formulate a growth condition, a coercivity type condition, and a Lipschitz continuity condition on the functions $ \mu \colon \R^d \to \R^d $, $ \sigma \colon \R^d \to \R^{d\times d} $, and $ f \colon \R^d\times\R \to \R $. 
The real number $ T \in (0,\infty) $ specifies the time horizon of the PDE in \eqref{intro:pde} in \cref{thm:intro} above. 
The functions $ \mu \colon \R^d \to \R^d $ and $ \sigma \colon \R^d \to \R^{d\times d} $ in \cref{thm:intro} above determine the random dynamics in \eqref{intro:sde} and specify the linear part of the PDE in \eqref{intro:pde}. 
The assumption that the functions $ \mu \colon \R^d \to \R^d $ and $ \sigma \colon \R^d \to \R^{d\times d} $ in \cref{thm:intro} are locally Lipschitz continuous, the assumption that $ \sigma $ is at most linearly growing, and the assumption that $ \mu $ satisfies a coercivity type condition, i.e., the assumption that for all $ x \in \R^d $ we have that $ \langle x,\mu(x) \rangle \leq L (1 + \norm{x}^2) $, roughly speaking, prevent local solutions of the stochastic differential equation (SDE) in \eqref{intro:sde} from blowing up (see, e.g., Gy\"ongy \& Krylov~\cite{GyoengyKrylov1995_existenceStrong}). 
The function $ f \in C(\R^d\times\R,\R) $ in \cref{thm:intro} represents the nonlinearity of the semilinear Kolmogorov PDE in \eqref{intro:pde}. 
The function $ g \in C(\R^d,\R) $ in \cref{thm:intro}, in turn, specifies the terminal condition of the semilinear Kolmogorov PDE in \eqref{intro:pde}.
\cref{thm:intro} proves, in particular, the unique existence of an at most polynomially growing viscosity solution $u$ of the PDE in  \eqref{intro:pde} and, moreover, shows that $u$ is the unique at most polynomially growing solution of the SFPE in \eqref{intro:sfpe}. 
Related results can be found, e.g., in 
	El Karoui et al.~\cite[Theorem 8.5]{ElKarouiKapoudjianPardouxPeng1997RBSDEsAndObstacleProblems}, 
	Kalinin~\cite[Theorem 2.3]{KalininSchied2016MildAndViscositySolutionsToSemilinearParbabolicPPDEs}, 
	Ma \& Zhang~\cite[Theorem 4.2]{MaZhang2002RepresentationTheoremsForBSDEs},
	Pardoux~\cite[Theorem 4.6]{Pardoux1998BSDEsAndViscositySolutionsOfSystems}, 
	Pardoux \& Peng~\cite[Theorem 4.3]{PardouxPeng1992}, 
	Pardoux \& Tang~\cite[Theorem 5.1]{PardouxTang1999}, 
	Pardoux et al.~\cite[Theorem 4.1]{PardouxPradeillesRao1997ProbabilisticInterpretation}, 
and
	Peng~\cite[Theorem 3.2]{Peng1991ProbabilisticInterpretation}. 
Note that, roughly speaking, these results are, on the one hand, more general than \cref{thm:intro} above with regard to the assumptions on the nonlinearity $f$ in \cref{thm:intro} and, on the other hand, less general than \cref{thm:intro} above with regard to the assumptions on the coefficients  $\mu$ and $\sigma$ of the SDE in \eqref{intro:sde} in \cref{thm:intro} above. 
In addition, observe that in general the viscosity solution $ u \in C([0,T]\times\R^d,\R) $ of the PDE in \eqref{intro:pde} fails to be a classical solution of the PDE in \eqref{intro:pde}. 
Indeed, Hutzenthaler et al.~\cite{HaHuJe2017_LossOfRegularityKolmogorov} implies that there exist admissible choices for the functions $ \mu $, $ \sigma $, and $ g $ in \cref{thm:intro} such that the unique at most polynomially growing viscosity solution of the PDE in \eqref{intro:pde} with $ f = (\R^d\times\R \ni (x,a) \mapsto 0 \in \R) $ is not locally H\"older continuous (cf., e.g., Elworthy~\cite{Elworthy1978StochasticDynamicalSystemsAndTheirFlows} and Li \& Scheutzow~\cite{LiScheutzow2011LackOfStrongCompleteness} for related results). 
Next we comment on the proof of \cref{thm:intro}. 
Item~\eqref{intro:item1} is well-known in the scientific literature (see, e.g., Gy\"ongy \& Krylov~\cite{GyoengyKrylov1995_existenceStrong}) and Item~\eqref{intro:item3} follows from \cite[Corollary 3.10]{StochasticFixedPointEquations}. 
In order to prove Items~\eqref{intro:item2} and \eqref{intro:item4} we first show in \cref{prop:viscosity_solution_lyapunov} in \cref{subsec:viscosity_inhomogeneous} (see also the proof of \cref{thm:existence_of_fixpoint} in \cref{subsec:viscosity_semilinear}) that the unique at most polynomially growing solution of the SFPE in \eqref{intro:sfpe} is a viscosity solution of the PDE in \eqref{intro:pde} and, thereafter, we show in \cref{prop:uniqueness_viscosity_semilinear} in \cref{subsec:uniqueness_viscosity_semilinear} that there is at most one at most polynomially growing viscosity solution of the terminal value problem in Item~\eqref{intro:item2}. 

The remainder of this article is organized as follows.  
\cref{sec:linear_inhomogeneous_kolmogorov_pdes} is concerned with a Feynman--Kac representation result for viscosity solutions of linear inhomogeneous Kolmogorov PDEs; see \cref{prop:viscosity_solution_lyapunov} in \cref{subsec:viscosity_inhomogeneous}.
 \cref{prop:viscosity_solution_lyapunov} is proved by combining a well-known approximation argument (see \cref{cor:stability_result_for_viscosity_solutions} in \cref{subsec:approximation_of_viscosity_solutions}) with a well-known result for Feynman--Kac representations of classical solutions of linear inhomogeneous Kolmogorov PDEs (see \cref{lem:smooth_solutions} in \cref{subsec:smooth_solutions}) and an essentially well-known approximation result for SDEs (see \cref{lem:stability_for_sdes} in \cref{subsec:approximation_sdes}). 
The notion of viscosity solutions as well as some basic properties of viscosity solutions are recalled in \cref{subsec:def_and_elem_prop_viscosity_solutions}. 
\cref{sec:semilinear_kolmogorov_pdes} deals with existence, uniqueness, and Feynman--Kac representation results for viscosity solutions of semilinear Kolmogorov PDEs. 
In \cref{subsec:uniqueness_viscosity_semilinear} we establish suitable uniqueness results for suitable viscosity solutions of semilinear Kolmogorov PDEs (see \cref{prop:uniqueness_viscosity_semilinear} in \cref{subsec:uniqueness_viscosity_semilinear}). 
In  \cref{subsec:sde_existence} we reprove an essentially well-known existence result for solutions of SDEs which is originally due to Gy\"ongy \& Krylov~\cite{GyoengyKrylov1995_existenceStrong}.  
Finally, in \cref{subsec:viscosity_semilinear}, we combine the existence and uniqueness result for SFPEs in \cite[Theorem 3.8]{StochasticFixedPointEquations}, the Feynman--Kac representation result for viscosity solutions of linear inhomogeneous Kolmogorov PDEs in \cref{prop:viscosity_solution_lyapunov}, and the uniqueness result in \cref{prop:uniqueness_viscosity_semilinear} to establish  \cref{thm:existence_of_fixpoint}, the main result of this article. 
We conclude this article by presenting in  \cref{existence_of_fixpoint_polynomial_growth} and \cref{cor:existence_bounded_heat_equation_type} in \cref{subsec:viscosity_semilinear} below a few illustrative applications of  \cref{thm:existence_of_fixpoint}. 

\section{Linear inhomogeneous Kolmogorov partial differential equations (PDEs)}
\label{sec:linear_inhomogeneous_kolmogorov_pdes}

In this section we recall the definitions of a viscosity subsolution (see \cref{def:viscosity_subsolution} in \cref{subsec:def_and_elem_prop_viscosity_solutions} below), of a viscosity supersolution (see \cref{def:viscosity_supersolution} in \cref{subsec:def_and_elem_prop_viscosity_solutions} below), and of a viscosity solution (see \cref{def:viscosity_solution} in \cref{subsec:def_and_elem_prop_viscosity_solutions} below) in the case of a suitable class of degenerate parabolic PDEs, which in particular includes linear inhomogeneous Kolmogorov PDEs as special cases, and we establish in \cref{prop:viscosity_solution_lyapunov} in \cref{subsec:viscosity_inhomogeneous} below a Feynman--Kac type representation result for viscosity solutions of such linear inhomogeneous Kolmogorov PDEs. 
The Feynman--Kac type representation result in \cref{prop:viscosity_solution_lyapunov} in \cref{subsec:viscosity_inhomogeneous} will be employed in our proof of  \cref{thm:existence_of_fixpoint} in \cref{subsec:viscosity_semilinear} below, the main result of this article.
Our proof of \cref{prop:viscosity_solution_lyapunov}, in turn, is based on the combination of the following three essentially well-known results: 
(i) the existence and Feynman--Kac type representation result for classical solutions of certain linear inhomogeneous Kolmogorov PDEs in \cref{lem:smooth_solutions} in \cref{subsec:smooth_solutions} below, (ii) the approximation result for viscosity solutions of degenerate parabolic PDEs in \cref{cor:stability_result_for_viscosity_solutions} in \cref{subsec:approximation_of_viscosity_solutions} below, and (iii) the approximation result for solutions of SDEs in \cref{lem:stability_for_sdes} in \cref{subsec:approximation_sdes} below.

In \cref{subsec:smooth_solutions} we establish in the essentially well-known result in \cref{lem:smooth_solutions} that a linear inhomogeneous Kolmogorov PDE with smooth and compactly supported drift and diffusion coefficients, with a smooth terminal condition, and with a smooth inhomogeneity admits a  classical solution. 
For the sake of completeness we also provide in \cref{subsec:smooth_solutions} a detailed proof for \cref{lem:smooth_solutions}.
In \cref{subsec:def_and_elem_prop_viscosity_solutions} we specify in \cref{degenerate_elliptic,def:viscosity_solution} below the well-known notions of a degenerate elliptic function and  of a viscosity solution (cf.~also, for example, Crandall et al.~\cite[Sections 2 and 8]{UsersGuide}, Hairer et al.~\cite[Section 4.1 and Definition 4.1]{HaHuJe2017_LossOfRegularityKolmogorov}, and Peng~\cite[Definition 1.2 in Appendix C]{peng2010nonlinear}) which are used in this article. 
In addition, in \cref{subsec:def_and_elem_prop_viscosity_solutions} we also briefly recall in \cref{lem:classical_solutions_are_viscosity_solutions}, \cref{lem:local_maximum_enough}, \cref{lem:C2_test_functions_suffice},  \cref{lem:equivalent_conditions_for_viscosity_solutions:left_to_right}, and 
\cref{lem:equivalent_conditions_for_viscosity_solutions:right_to_left} some elementary and well-known properties of viscosity solutions which are employed later on in this article. 
In particular, \cref{lem:classical_solutions_are_viscosity_solutions} recalls that every classical solution is also a viscosity solution, 
\cref{lem:local_maximum_enough} recalls an equivalent characterization for the notion of a viscosity subsolution, 
\cref{lem:C2_test_functions_suffice} proves, roughly speaking, that under suitable assumptions the notion of a viscosity subsolution in \cref{def:viscosity_subsolution} is consistent with the notion of a viscosity subsolution in Hairer et al.~\cite[Definition 4.1]{HaHuJe2017_LossOfRegularityKolmogorov}, and  \cref{lem:equivalent_conditions_for_viscosity_solutions:left_to_right} and \cref{lem:equivalent_conditions_for_viscosity_solutions:right_to_left} provide an equivalent characterization for the notion of a viscosity subsolution based on the notion of a parabolic superjet, which we briefly recall in \cref{def:parabolic_superjets} (cf.~also Crandall et al.~\cite[Section 8]{UsersGuide} and Peng \cite[Appendix C]{peng2010nonlinear}).  
In \cref{subsec:approximation_of_viscosity_solutions} we establish in the essentially well-known results in \cref{lem:stability_of_subsolutions}, \cref{cor:stability_of_supersolutions}, and \cref{cor:stability_result_for_viscosity_solutions} (cf., e.g., Crandall et al.~\cite[Lemma 6.1]{UsersGuide}, Hairer et al.~\cite[Lemma 4.8]{HaHuJe2017_LossOfRegularityKolmogorov}, and Imbert \& Silvestre~\cite[Proposition 2.3.11]{ImbertSilvestre_FullyNonlinearParabolic}) approximation results for viscosity subsolutions, viscosity supersolutions, and viscosity solutions. 
Our proof of \cref{lem:stability_of_subsolutions} is strongly inspired by Hairer et al.~\cite[Lemma 4.8]{HaHuJe2017_LossOfRegularityKolmogorov} (cf.\ also Barles \& Perthame~\cite[Theorem A.2]{BarlesPerthame}), \cref{cor:stability_of_supersolutions} is a rather direct consequence of \cref{lem:stability_of_subsolutions}, and \cref{cor:stability_result_for_viscosity_solutions} follows immediately from \cref{lem:stability_of_subsolutions} and \cref{cor:stability_of_supersolutions}. 
In \cref{subsec:approximation_sdes} we recall in \cref{lem:stability_for_sdes} an essentially well-known result on the continuous dependence of solutions of SDEs on their initial values. 
In \cref{subsec:viscosity_inhomogeneous} we establish in \cref{prop:viscosity_solution_lyapunov} an existence result for viscosity solutions of linear inhomogeneous Kolmogorov PDEs.   
Our proof of \cref{prop:viscosity_solution_lyapunov} employs \cref{lem:viscosity_solution_compact_lipschitz} together with the approximation result for viscosity solutions of degenerate parabolic PDEs in \cref{cor:stability_result_for_viscosity_solutions}. 
\cref{lem:viscosity_solution_compact_lipschitz}, in turn, uses the existence and Feynman--Kac type representation result for classical solutions of linear inhomogeneous Kolmogorov PDEs in \cref{lem:smooth_solutions}, the approximation result for viscosity solutions of degenerate parabolic PDEs in \cref{cor:stability_result_for_viscosity_solutions}, and the approximation result for solutions of SDEs in \cref{lem:stability_for_sdes}.  

\subsection{Existence results for classical solutions of linear inhomogeneous Kolmogorov PDEs}
\label{subsec:smooth_solutions}

\begin{lemma} 
	\label{lem:integrability} 
	Let $ d,m \in \N $, 
		$ L,T \in (0,\infty) $, 
		$ \xi \in \R^d $, 
	let $ \norm{\cdot} \colon \R^d\to [0,\infty) $ be the standard Euclidean norm on $\R^d$, 
	let $ \HSnorm{\cdot} \colon\R^{d\times m}\to [0,\infty) $ be the Frobenius norm on $\R^{d\times m}$,  
	let $ \mu \in C([0,T]\times\R^d,\R^d)$, 
		$ \sigma \in C([0,T]\times\R^d,\R^{d\times m}) $ 
	satisfy for all 
		$ t \in [0,T] $, 
		$ x,y \in \R^d $ 
	that 
		$ \norm{\mu(t,x)-\mu(t,y)} + \HSnorm{\sigma(t,x)-\sigma(t,y)} \leq L \norm{x-y} $, 
	let $ g \colon \R^d \to \R $ be $ \Borel(\R^d) $/$ \Borel(\R) $-measurable, 
	let $ h \colon [0,T]\times\R^d \to \R $ be $ \Borel([0,T]\times\R^d) $/$ \Borel(\R) $-measurable, 
	let $ \mc O \subseteq \R^d $ be an open set which satisfies $ (\operatorname{supp}(\mu)\cup\operatorname{supp}(\sigma)) \subseteq [0,T]\times\mc O $, 
	assume that 
		$ \sup ( \{ |g(x)| + |h(t,x)| \colon t \in [0,T], x \in \Closure{\mc O} \} \cup \{ 0 \} ) < \infty $, 
	assume for all 
		$ x \in \R^d $ 
	that 
		$ \int_0^T |h(t,x)| \,dt < \infty $, 
	let $ ( \Omega, \mc F, \P, (\F_t)_{t\in [0,T]}) $ be a stochastic basis, 
	let $ W \colon [0,T]\times\Omega \to \R^m $ be a standard $ (\F_t)_{t\in [0,T]} $-Brownian motion, 
	and let 
		$ X = (X_t)_{t \in [0,T]} \colon [0,T]\times\Omega \to \R^d $ 
	be an $ (\F_t)_{t\in [0,T]} $-adapted stochastic process with continuous sample paths satisfying that for all 
		$ t \in [0,T] $ 
	we have $ \P $-a.s.~that 
		\begin{equation} 
		X_t = \xi + \int_0^t \mu(s,X_s)\,ds + \int_0^t \sigma(s,X_s)\,dW_s. 
		\end{equation} 
	Then 
		\begin{equation} \label{integrability:claim}
		\Exp{|g(X_T)| + \int_0^T |h(t,X_t)|\,dt} < \infty.
		\end{equation} 
\end{lemma}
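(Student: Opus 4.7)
The crucial observation is that the support condition $\operatorname{supp}(\mu) \cup \operatorname{supp}(\sigma) \subseteq [0,T] \times \mc O$ forces $\mu(t,x) = 0 = \sigma(t,x)$ for all $(t,x) \in [0,T] \times \mc O^c$. The plan is to leverage this to show that, $\P$-a.s., $X$ is trapped in $\Closure{\mc O}$ whenever $\xi \in \mc O$ and is frozen at $\xi$ whenever $\xi \notin \mc O$, so that the integrand in \eqref{integrability:claim} can be controlled by the given boundedness of $g$ and $h$ on $\Closure{\mc O}$ together with the pointwise integrability hypothesis $\int_0^T |h(t,\xi)|\,dt < \infty$.

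Concretely, I would introduce the stopping time $\tau := \inf\{ t \in [0,T] : X_t \notin \mc O \}$ (with $\inf\emptyset := \infty$) and show that the stopped process $\tilde X_t := X_{t \wedge \tau}$ solves the same SDE with the same initial condition $\xi$ as $X$. For $s \leq \tau$ one has $\tilde X_s = X_s$, while for $s > \tau$ one has $\tilde X_s = X_\tau$; and since $\mc O^c$ is closed and $X$ has continuous paths, $X_\tau \in \mc O^c$ on the event $\{\tau < \infty\}$, so the support assumption gives $\mu(s, \tilde X_s) = 0 = \sigma(s, \tilde X_s)$ at all such times. Combining these two facts with the standard optional-stopping identity for It\^o integrals yields
\begin{equation*}
\tilde X_t = \xi + \int_0^t \mu(s, \tilde X_s)\,ds + \int_0^t \sigma(s, \tilde X_s)\,dW_s
\end{equation*}
$\P$-a.s.\ for every $t \in [0,T]$. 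Pathwise uniqueness for the SDE, which holds under the assumed global Lipschitz bound on $\mu$ and $\sigma$ in the spatial variable (together with the linear growth bound that follows from this Lipschitz bound and from continuity of $\mu(\cdot,0)$ and $\sigma(\cdot,0)$ on $[0,T]$), then forces $X$ and $\tilde X$ to be indistinguishable. In particular $X_t \in \Closure{\mc O}$ for every $t \in [0,T]$ $\P$-a.s.\ when $\xi \in \mc O$, whereas $\tau = 0$ and $X_t = \xi$ for every $t$ $\P$-a.s.\ when $\xi \notin \mc O$.

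The integrability conclusion now follows by splitting into these two cases. If $\xi \in \mc O$, then $|g(X_T)|$ and $|h(t, X_t)|$ are $\P$-a.s.\ bounded by the finite constant $M := \sup\{|g(x)| + |h(t,x)| : t \in [0,T], x \in \Closure{\mc O}\}$, and \eqref{integrability:claim} follows with the bound $(1+T)M$. If $\xi \notin \mc O$, then $X \equiv \xi$ a.s., so the left-hand side of \eqref{integrability:claim} equals $|g(\xi)| + \int_0^T |h(t,\xi)|\,dt$, which is finite by the pointwise integrability hypothesis on $h(\cdot,\xi)$. The main obstacle I foresee is the careful verification that $\tilde X$ satisfies the original SDE; specifically, the identity $\int_0^{t \wedge \tau} \sigma(s, X_s)\,dW_s = \int_0^t \sigma(s, \tilde X_s)\,dW_s$ requires the optional stopping of It\^o integrals combined with the observation that, thanks to the support condition, $\sigma(s, \tilde X_s)$ vanishes for $s > \tau$. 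The remainder of the argument is a routine appeal either to strong uniqueness for the SDE or to the boundedness and integrability hypotheses.
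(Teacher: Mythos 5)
Your proposal is correct and follows essentially the same route as the paper: split into the cases $\xi\in\mc O$ and $\xi\in\R^d\setminus\mc O$, use the support condition to conclude that $X$ is a.s.\ confined to $\Closure{\mc O}$ (resp.\ frozen at $\xi$), and then bound the expectation by the sup of $|g|+|h|$ on $\Closure{\mc O}$ (resp.\ by $|g(\xi)|+\int_0^T|h(t,\xi)|\,dt$). The only difference is that the paper delegates the localization facts to a cited lemma from a companion work, whereas you supply the underlying stopped-process/pathwise-uniqueness argument yourself, which is a valid way to establish them.
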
 

\begin{proof}[Proof of \cref{lem:integrability}]
	To prove \eqref{integrability:claim} we distinguish between the case $ \xi \in \R^d \setminus \mc O $ 
	and the case $ \xi \in \mc O $. 
 	We first prove \eqref{integrability:claim} in the case $ \xi \in \R^d \setminus \mc O $. 
	Note that the assumption that $ ( \operatorname{supp}(\mu) \cup \operatorname{supp}(\sigma) ) \subseteq [0,T]\times\mc O $ ensures that 
		$ \P( \Forall t\in [0,T]\colon X_t = \xi ) = 1 $ (cf., e.g., \cite[Item~(i) in Lemma 3.4]{StochasticFixedPointEquations}).  
	Combining this with the assumption that for all 
		$ x \in \R^d $ 
	we have that 
		$ \int_0^T |h(t,x)|\,dt < \infty $ 
	shows that 
		\begin{equation}
		\label{integrability:eq01}
		\Exp{ | g( X_T ) | + \int_0^T | h( t, X_{t} ) | \,dt }
		= | g(\xi) | + \int_0^T | h(t,\xi) | \,dt 
		< \infty. 
		\end{equation}
	This establishes \eqref{integrability:claim} in the case $ \xi \in \R^d\setminus\mc O $. 
	Next we prove \eqref{integrability:claim} in the case $ \xi \in \mc O $. 
	Observe that the assumption that $ ( \operatorname{supp}(\mu) \cup \operatorname{supp}(\sigma) ) \subseteq [0,T]\times\mc O $ yields that 
		$ \P(\Forall t\in [0,T]\colon X_{t}\in \Closure{\mc O}) = 1 $ (cf., e.g., \cite[Item~(ii) in Lemma 3.4]{StochasticFixedPointEquations}). 
	Combining this with the assumption that 
		$ \sup(\{ |g(x)|+|h(t,x)|\colon t\in [0,T], x\in\Closure{\mc O} \} \cup \{ 0 \} ) < \infty $ 
	assures that we have that 
		\begin{equation}
		\Exp{ | g( X_{T} ) | + \int_0^T | h( t, X_t ) | \,dt }
		\leq 
		\left[ \sup_{x\in\Closure{\mc O}} | g(x) | \right]
		+ 
		T 
		\left[ \sup_{t\in [0,T]} \sup_{x\in \Closure{\mc O}} | h(t,x) | \right]
		< \infty .
		\end{equation}
	This establishes~\eqref{integrability:claim} in the case $ \xi \in \mc O $. 
	This completes the proof of \cref{lem:integrability}. 
\end{proof} 

\begin{lemma} 
	\label{lem:smooth_solutions}
	Let $ d,m \in \N $, 
		$ T \in (0,\infty) $,
	let $ \langle\cdot,\cdot\rangle\colon\R^d\times\R^d\to\R $ be the standard Euclidean scalar product on $\R^d$, 
	let $ \mu \colon [0,T]\times\R^d\to\R^d $ and $ \sigma \colon [0,T]\times\R^d\to\R^{d\times m} $ be infinitely often differentiable functions with compact support, 
	let $ g \colon \R^d\to \R $ and $ h \colon [0,T]\times \R^d\to\R $ be infinitely often differentiable functions, 
	let $ (\Omega,\mathcal{F},\P,(\mathbb{F}_t)_{t\in [0,T]}) $ be a stochastic basis, 
	let $ W \colon [0,T]\times\Omega\to\R^m $ be a standard $(\mathbb{F}_t)_{t\in [0,T]}$-Brownian motion,  
	for every 
		$ t \in [0,T] $, 
		$ x \in \R^d $
	let 
		$ X^{t,x} = (X^{t,x}_s)_{s\in [t,T]}\colon [t,T]\times\Omega\to\R^d $ 
	be an $(\mathbb{F}_s)_{s\in [t,T]}$-adapted stochastic process with continuous sample paths satisfying that for all 
		$s\in [t,T]$ 
	we have $\P$-a.s.~that 
		\begin{equation}
		\label{smooth_solutions:ass1}
		X^{t,x}_s 
		= 
		x 
		+ 
		\int_t^s \mu(r,X^{t,x}_r)\,dr 
		+ 
		\int_t^s \sigma(r,X^{t,x}_r) \,dW_r,  
		\end{equation}
	and let 
		$ u \colon [0,T]\times\R^d \to \R $
	satisfy for all 
		$ t \in [0,T] $, 
		$ x \in \R^d $ 
	that 
		\begin{equation}
		\label{smooth_solutions:ass2}
		u(t,x) = 
		\Exp{ g( X^{t,x}_{T} ) 
			+ 
			\int_t^T h( s, X^{t,x}_{s} ) \,ds }
		\end{equation}
	(cf.~\cref{lem:integrability}). 
	Then 
		\begin{enumerate}[(i)] 
		\item \label{smooth_solutions:item1}
		we have that 
			$u\in C^{1,2}([0,T]\times\R^d,\R)$ 
		and	
		\item \label{smooth_solutions:item2} 
		we have for all 
			$t\in [0,T]$, 
			$x\in\R^d$ 
		that 
			$u(T,x) = g(x)$
		and 
			\begin{equation}
			\label{smooth_solutions:semilinear_kolmogorov_equation}
			(\tfrac{\partial}{\partial t}u)(t,x) 
			+ \tfrac12 \operatorname{Trace}\!\left( \sigma(t,x)[\sigma(t,x)]^{*} (\operatorname{Hess}_x u)(t,x) \right) 
			+ \langle \mu(t,x),(\nabla_x u)(t,x) \rangle 
			+ h(t,x) = 0 .
			\end{equation}
		\end{enumerate}
\end{lemma}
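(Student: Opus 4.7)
The plan is to exploit the hypothesis that $\mu$ and $\sigma$ are smooth with compact support, which makes them and all their derivatives globally bounded and Lipschitz. Under this, standard results on stochastic flows of SDEs (cf.\ Kunita's theory) ensure that for each $s\in[t,T]$ the random variable $x\mapsto X^{t,x}_s$ is almost surely infinitely often differentiable, that all partial derivatives satisfy the formal variational SDEs obtained by differentiating~\eqref{smooth_solutions:ass1}, and that all these derivatives admit moments of every order which are bounded locally uniformly in $(t,x)$. Together with the smoothness of $g$ and $h$, this lets me differentiate under the expectation in~\eqref{smooth_solutions:ass2} arbitrarily often in $x$, yielding that $u(t,\cdot)\in C^{\infty}(\R^d,\R)$ for every $t\in[0,T]$ and that $\nabla_x u$ and $\operatorname{Hess}_x u$ are continuous in $(t,x)$ via dominated convergence.

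Next, to handle the time variable, I would use the flow/Markov property of $X^{t,x}$ together with the tower property to write, for every $0\le t<t+\varepsilon\le T$ and $x\in\R^d$,
\begin{equation}
u(t,x)
=\Exp{\,u\bigl(t+\varepsilon,X^{t,x}_{t+\varepsilon}\bigr)+\int_t^{t+\varepsilon}h\bigl(s,X^{t,x}_s\bigr)\,ds\,}\!.
\end{equation}
Since the spatial regularity of $u$ is now in hand, I can apply It\^o's formula to $u(s,X^{t,x}_s)$ on $[t,t+\varepsilon]$ provided I first establish continuity of $\partial_t u$. For the latter I would appeal to the fact that $s\mapsto X^{t,x}_s$ is a semimartingale with bounded drift and diffusion and that the map $(t,x)\mapsto X^{t,x}_{\cdot}$ is continuous in an $L^p$-sense; combining this with the smoothness in $x$ and the Markov identity above identifies a candidate expression for $\partial_t u$ as $-\tfrac12\operatorname{Trace}(\sigma\sigma^*\operatorname{Hess}_x u)-\langle\mu,\nabla_x u\rangle-h$, which is continuous. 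This yields Item~\eqref{smooth_solutions:item1}.

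With $u\in C^{1,2}([0,T]\times\R^d,\R)$ in hand, the PDE in~\eqref{smooth_solutions:semilinear_kolmogorov_equation} follows by a standard local argument: apply It\^o to $u(s,X^{t,x}_s)$ between $t$ and $t+\varepsilon$, take expectations (the stochastic integral is a true martingale because $\nabla_x u$ is continuous and $\sigma$ is bounded), subtract the Markov identity displayed above, divide by $\varepsilon$, and let $\varepsilon\downarrow 0$; continuity of the integrand in $s$ yields the PDE pointwise. The terminal condition $u(T,x)=g(x)$ is immediate from~\eqref{smooth_solutions:ass2} since $X^{T,x}_T=x$ a.s.\ and the $ds$-integral over $[T,T]$ vanishes.

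The main obstacle I expect is the joint $(t,x)$-regularity of $u$, in particular the existence and continuity of $\partial_t u$ up to the terminal time; spatial smoothness and finite moments of the flow derivatives are routine because of the compact-support hypothesis, but justifying differentiation in $t$ requires carefully combining the Markov identity with $L^p$-continuity of $(t,x)\mapsto X^{t,x}_s$ and controlling the behavior near $s=t$. Once $u\in C^{1,2}$ is secured, the It\^o-based derivation of the PDE is entirely mechanical.
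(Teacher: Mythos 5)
Your route is genuinely different from the paper's. The paper autonomizes the SDE by adjoining time as a state coordinate, reduces the inhomogeneous problem to homogeneous ones via the Duhamel-type decomposition $u(t,x)=v(t,t,x)+\int_t^T w(T-s+t,t,x)\,ds$, and imports the $C^{1,2}$ regularity and the PDEs for $v$ and $w$ wholesale from Da Prato \& Zabczyk; you instead work directly with smoothness of the stochastic flow and the dynamic programming identity. The spatial half of your argument is sound, with one small caveat: $g$ and $h$ are \emph{not} assumed to have bounded derivatives, so to dominate the differentiated integrand you should use (as the paper does implicitly) that the compact support of $\mu$ and $\sigma$ confines $X^{t,x}$ either to a fixed box or to the single point $x$, which keeps everything local.

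The genuine gap is in the time variable, exactly where you flag the main obstacle. As written the argument is circular: you want to apply It\^o's formula to $s\mapsto u(s,X^{t,x}_s)$, which already requires $u\in C^{1,2}$, while your justification that $\partial_t u$ exists and is continuous is that the Markov identity ``identifies a candidate expression'' for it --- and that candidate is precisely the PDE you are trying to prove. Identifying a candidate does not establish differentiability. The standard repair is to apply It\^o's formula only in the spatial variable, to the frozen-time section $y\mapsto u(t+\varepsilon,y)$, which is $C^2$ by the part of the argument you have already completed; combined with the tower-property identity this gives
\begin{equation}
\begin{split}
\frac{u(t,x)-u(t+\varepsilon,x)}{\varepsilon}
=\frac{1}{\varepsilon}\,\E\biggl[\int_t^{t+\varepsilon}\Bigl(&\tfrac12\operatorname{Trace}\!\bigl(\sigma(s,X^{t,x}_s)[\sigma(s,X^{t,x}_s)]^{*}(\operatorname{Hess}_x u)(t+\varepsilon,X^{t,x}_s)\bigr)\\
&+\bigl\langle\mu(s,X^{t,x}_s),(\nabla_x u)(t+\varepsilon,X^{t,x}_s)\bigr\rangle+h(s,X^{t,x}_s)\Bigr)ds\biggr],
\end{split}
\end{equation}
and letting $\varepsilon\downarrow 0$ (using the joint continuity of $\nabla_x u$ and $\operatorname{Hess}_x u$ together with the continuity of the flow) produces a one-sided time derivative equal to a continuous function of $(t,x)$, which yields $u\in C^{1,2}$ and the PDE simultaneously; your final It\^o step then becomes redundant. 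With this correction the proposal is a valid, more self-contained alternative to the paper's citation-based reduction.
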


\begin{proof}[Proof of \cref{lem:smooth_solutions}] 
	Throughout this proof let 
		$ o \in (0,\infty) $, 
		$ \mf o \in (o,\infty) $, 
	assume that 
		$ (\operatorname{supp}(\mu)\cup\operatorname{supp}(\sigma)) \subseteq [0,T] \times (-o,o)^d $, 
	let $ \langle\langle\cdot,\cdot\rangle\rangle \colon \R^{d+1}\times \R^{d+1} \to \R$ be the standard Euclidean scalar product on $ \R^{d+1} $,
	let $ \mathfrak{m}\colon\R^{d+1}\to\R^{d+1} $, 
		$ \mathfrak{s}\colon\R^{d+1}\to\R^{(d+1)\times m} $, 
		$ \mathfrak{g}\colon\R^{d+1}\to\R $, 
	and
		$ \mathfrak{h}\colon\R^{d+1}\to\R $
	be infinitely often differentiable functions with bounded derivatives which satisfy for all 
		$ t \in [0,T] $, 
		$ x \in \R^d $, 
		$ y \in [-\mf o,\mf o]^d $ 
	that
		\begin{equation}
		\label{smooth_solutions:autonomized_coeffs}
		\begin{gathered}
		\mathfrak{m}(t,x) 
		= \begin{pmatrix}
		1 \\ \mu(t,x)
		\end{pmatrix}\in\R^{d+1}, \qquad
		\mathfrak{s}(t,x) 
		= \begin{pmatrix}
		0 \\ \sigma(t,x)
		\end{pmatrix}
		\in\R^{(d+1)\times m}, \\  
		\mathfrak{g}(t,y) = g(y) \in \R, \qquad\text{and}\qquad
%		\end{equation}
%		\begin{equation}
%		\label{}
		\mathfrak{h}(t,y) = h(t,y) \in \R 
		\end{gathered} 
		\end{equation}
	(cf., for instance, Seeley~\cite{Seeley1964Extension}), 
	for every 
		$ s \in [0,T] $, 
		$ t \in \R $, 
		$ x \in \R^d $ 
	let $Y^{s,(t,x)} = (Y^{s,(t,x)}_r)_{r\in [s,T]}\colon [s,T]\times\Omega\to\R^{d+1}$ be an $(\mathbb{F}_r)_{r\in [s,T]}$-adapted stochastic process with continuous sample paths satisfying that for all 
		$r\in [s,T]$ 
	we have $\P$-a.s.~that 
		\begin{equation}
		\label{smooth_solutions:autonomous_SDE}
		Y^{s,(t,x)}_r
		= \begin{pmatrix}
		t \\ x
		\end{pmatrix}
		+ \int_{s}^r \mathfrak{m}(Y^{s,(t,x)}_q)\,dq
		+ \int_{s}^r \mathfrak{s}(Y^{s,(t,x)}_q)\,dW_q   
		\end{equation}
	(cf., e.g., Karatzas \& Shreve~\cite[Theorem 5.2.9]{KaSh1991_BrownianMotionAndStochasticCalculus}), 
	for every 
		$ t \in [0,T] $, 
		$ x \in \R^d $ 
	let $Z^{t,x} = (Z^{t,x}_s)_{s\in [t,T]}\colon [t,T]\times\Omega\to\R^{d+1}$ satisfy for all 
		$ s \in [t,T]$ 
	that 
		$ Z^{t,x}_s = (s,X^{t,x}_{s}) $, 
	and let $v\colon [0,T]\times\R^{d+1}\to\R$ and $w\colon [0,T]\times\R^{d+1}\to\R$ satisfy for all 
		$ s \in [0,T] $, 
		$ t \in \R $, 
		$ x \in \R^{d} $ 
	that 
	\begin{equation}
	\label{classical_solutions:definition_of_v_and_w}
	v(s,t,x) 
	= 
	\Exp{\mathfrak{g}\big(Y^{s,(t,x)}_T\big)}
	\qquad\text{and}\qquad
	w(s,t,x)
	=
	\Exp{\mathfrak{h}\big(Y^{s,(t,x)}_T\big)}\!. 
	\end{equation}	
	Note that the assumption that $ ( \operatorname{supp}(\mu)\cup\operatorname{supp}(\sigma)) \subseteq [0,T]\times (-o,o)^d $ ensures that for all 
		$ t \in [0,T] $, 
		$ x \in \R^d \setminus (-o,o)^d $ 
	we have that 
		$ \P(\Forall s \in [t,T]\colon X^{t,x}_s = x) = 1 $ (cf., e.g., \cite[Item~(i) in Lemma 3.4]{StochasticFixedPointEquations}). 
	This implies that for all 
		$ t \in [0,T] $, 
		$ x\in\R^d\setminus (-o,o)^d $
	we have that
		\begin{equation}\label{smooth_solutions:outside_U}
		u(t,x) 
		= \Exp{ g( X^{t,x}_{T} ) 
			+ 
			\int_t^T h( s, X^{t,x}_{s} ) \,ds }
		= g(x) + \int_t^T h(s,x)\,ds . 
		\end{equation}
	The assumption that $ g $ and $ h $ are infinitely often differentiable  
	and the fact that $ (\operatorname{supp}(\mu)\cup\operatorname{supp}(\sigma)) \subseteq [0,T]\times (-o,o)^d $ 
	therefore assure that for all 
		$ t \in [0,T] $, 
		$ x \in \R^d \setminus [-o,o]^d $ 
	we have that 
		$ u|_{[0,T]\times(\R^d \setminus [-o,o]^d)} \in C^{1,2}([0,T]\times(\R^d \setminus [-o,o]^d),\R) $ 
	and 
		\begin{equation}
		\label{smooth_solutions:part_outside}
		\begin{split}
		0 & = 
		(\tfrac{\partial}{\partial t}u)(t,x) 
		+ 
		h(t,x) 
		\\
		& = (\tfrac{\partial}{\partial t}u)(t,x) 
		+ 
		\tfrac12 
		\operatorname{Trace}\!\left( 
		\sigma(t,x)[\sigma(t,x)]^{*}(\operatorname{Hess}_x u)(t,x)
		\right) 
		+ 
		\langle 
		\mu(t,x)
		, 
		(\nabla_x u)(t,x) 
		\rangle
		+
		h(t,x) 
		. 
		\end{split}
		\end{equation}
	Next note that, e.g., Da Prato \& Zabczyk~\cite[Theorem 7.4.5 and Theorem 7.5.1]{DaPZab2002_SecondOrderPDEsInHilbertSpaces} (cf.~also, e.g., G\={i}hman \& Skorokhod~\cite[Theorem 2.8.1 and Corollary 2.8.1]{GiSk1972_SDEs} and Andersson et al.~\cite[Theorem 1.1]{andersson2018regularity}) and \eqref{classical_solutions:definition_of_v_and_w} guarantee that
	\begin{enumerate}[(I)] 
		\item\label{smooth_solutions:proof_item1} 
			we have that $ v,w \in C^{1,2}([0,T]\times\R^{d+1},\R) $, 
		\item\label{smooth_solutions:proof_item2} 
			we have for all 
				$ s \in [0,T] $, 
				$ t \in \R $, 
				$ x \in \R^d $ 
			that 
				\begin{equation}
				\label{smooth_solutions:v_pde}
				\begin{split}
				& 
				- (\tfrac{\partial}{\partial s}v)(s,t,x) 
				\\
				& = 
				\tfrac12 \operatorname{Trace}\!\left( \mathfrak{s}(t,x)[\mathfrak{s}(t,x)]^{*}(\operatorname{Hess}_{(t,x)} v)(s,t,x) 	\right)
				+ \langle\langle \mathfrak{m}(t,x),	(\nabla_{(t,x)} v)(s,t,x) \rangle\rangle
				\\
				& 
				= 
				\tfrac12 \operatorname{Trace}\!\left( \sigma(t,x)[\sigma(t,x)]^{*}(\operatorname{Hess}_x v)(s,t,x) \right) 
				+ (\tfrac{\partial}{\partial t}v)(s,t,x) 
				+ \langle \mu(t,x), (\nabla_x v)(s,t,x) \rangle, 
				\end{split}
				\end{equation}
			and 
			\item we have for all 
				$ s \in [0,T] $, 
				$ t \in \R $, 
				$ x \in \R^d $ 
			that 
				\begin{align} \label{smooth_solutions:w_pde}
				\nonumber
				& - (\tfrac{\partial}{\partial s}w)(s,t,x) 
				\\
				& = 
				\tfrac12
				\operatorname{Trace}\!\left(
				\mathfrak{s}(t,x)[\mathfrak{s}(t,x)]^{*}
				(\operatorname{Hess}_{(t,x)} w)(s,t,x)
				\right)
				+
				\langle\langle 
				\mathfrak{m}(t,x),
				(\nabla_{(t,x)} w)(s,t,x)
				\rangle\rangle
				\\ \nonumber
				& = 
				\tfrac12 \operatorname{Trace}\!\left( 
				\sigma(t,x)[\sigma(t,x)]^{*}
				(\operatorname{Hess}_x w)(s,t,x)
				\right) 
				+ 
				(\tfrac{\partial}{\partial t}w)(s,t,x) 
				+ 
				\langle 
				\mu(t,x),
				(\nabla_x w)(s,t,x)
				\rangle.
				\end{align} 
		\end{enumerate} 
	Moreover, observe that \eqref{smooth_solutions:ass1}, \eqref{smooth_solutions:autonomized_coeffs}, and the fact that for all 
		$ t \in [0,T] $, 
		$ s \in [t,T] $, 
		$ x \in \R^d $ 
	we have that 
		$ Z^{t,x}_s = ( s, X^{t,x}_s ) $ 
	ensure that for all 
		$ t \in [0,T] $, 
		$ s \in [t,T] $, 
		$ x \in \R^d $ 
	we have $\P$-a.s.~that 
		\begin{equation}
		\label{smooth_solutions:SDE_for_Z}
		\begin{split}
		Z^{t,x}_s & = \begin{pmatrix} 
		s \\ X^{t,x}_{s}
		\end{pmatrix}
		= \begin{pmatrix}
		t \\ x 
		\end{pmatrix}
		+ 
		\int_t^s 
		\begin{pmatrix} 
		1 \\ \mu( r, X^{t,x}_{r} )
		\end{pmatrix}
		\,dr
		+ 
		\int_t^s 
		\begin{pmatrix}
		0 \\
		\sigma( r, X^{t,x}_{r} )
		\end{pmatrix}
		\,dW_r
		\\
		& = 
		\begin{pmatrix}
		t \\ x
		\end{pmatrix}
		+ \int_t^s \mathfrak{m}(Z^{t,x}_r)\,dr
		+ \int_t^s \mathfrak{s}(Z^{t,x}_r)\,dW_r . 
		\end{split}
		\end{equation}
	Combining this with the fact that for all 
		$ t \in [0,T] $, 
		$ x \in \R^d $  
	we have that $Z^{t,x}$ is an $(\mathbb{F}_s)_{s\in [t,T]}$-adapted stochastic process with continuous sample paths, \eqref{smooth_solutions:autonomous_SDE}, e.g., 
	Karatzas \& Shreve~\cite[Theorem 5.2.5]{KaSh1991_BrownianMotionAndStochasticCalculus}, 
	and \eqref{smooth_solutions:SDE_for_Z} 
	demonstrates that for all 
		$ t \in [0,T] $, 
		$ x \in \R^d $ 
	we have that  
		\begin{equation} 
		\label{smooth_solutions:indistinguishability_Y_Z}
		\P\left(\Forall s\in [t,T]\colon Z^{t,x}_s=Y^{t,(t,x)}_s\right) = 1.
		\end{equation}
	The fact that for all 
		$ t \in [0,T] $, 
		$ x \in (-\mf o,\mf o)^d $ 
	we have that 
		$ \P(\Forall s\in [t,T]\colon X^{t,x}_s \in [-\mf o,\mf o]^d) = 1 $, 
	\eqref{smooth_solutions:autonomized_coeffs}, 
	and \eqref{classical_solutions:definition_of_v_and_w} 
	therefore yield that for all 
		$ t \in [0,T] $, 
		$ x \in (-\mf o,\mf o)^d $ 
	we have that 
		\begin{equation}
		\label{smooth_solutions:v_eqn}
		v(t,t,x)
		=
		\EXPP{\mathfrak{g}( Y^{t,(t,x)}_T )}
		=
		\Exp{\mathfrak{g}( Z^{t,x}_T )}
		= 
		\Exp{\mathfrak{g}( T , X^{t,x}_{T} )}
		= 
		\Exp{g( X^{t,x}_{T} )}\!. 
		\end{equation}
	Furthermore, note that 
	\eqref{smooth_solutions:autonomized_coeffs},  
	\eqref{classical_solutions:definition_of_v_and_w}, 
	\eqref{smooth_solutions:indistinguishability_Y_Z}, 
	the fact that for all 
		$ t\in [0,T] $, 
		$ x \in (-\mf o,\mf o)^d $ 
	we have that 
		$ \P(\Forall s\in [t,T]\colon X^{t,x}_s \in [-\mf o,\mf o]^d) = 1 $, 
	and the fact that for all 
		$ t \in [0,T] $, 
		$ s \in [t,T] $, 
		$ x \in \R^d $, 
	 	$ B \in \Borel(\R) $ 
	we have that 
		$ \P(Y^{s,(t,x)}_T\in B) = \P(Y^{t,(t,x)}_{T-s+t}\in B) $ (cf., e.g., Klenke~\cite[Theorem 26.8]{Klenke2006_English}) 
	demonstrate that for all 
		$ t \in [0,T] $, 
		$ s \in [t,T] $, 
		$ x \in (-\mf o,\mf o)^d $
	we have that 
		\begin{equation}
		\label{smooth_solutions:w_eqn}
		\begin{split}
		w(T-s+t,t,x)
		& =
		\EXPP{\mathfrak{h}(Y^{T-s+t,(t,x)}_T)}
		=
		\Exp{\mathfrak{h}(Y^{t,(t,x)}_{s})}
		\\
		& = 
		\Exp{\mathfrak{h}( Z^{t,x}_s )}
		= 
		\Exp{\mathfrak{h}( s, X^{t,x}_s ) }
		=
		\Exp{h ( s, X^{t,x}_{s} ) }\!. 
		\end{split}
		\end{equation}
	This, \eqref{smooth_solutions:ass2}, and \eqref{smooth_solutions:v_eqn} show for all 
		$ t \in [0,T] $, 
		$ x \in (-\mf o,\mf o)^d $ 
	that 
		\begin{equation}
		\label{smooth_solutions:u_through_v_w}
		u(t,x) = v(t,t,x) + \int_t^T w(T-s+t,t,x) \,ds . 
		\end{equation}
	Combining this with the fact that 
		$v,w\in C^{1,2}([0,T]\times\R^{d+1},\R)$ 
	and the chain rule ensures that for all 
		$ t \in [0,T] $, 
		$ x \in (-\mf o,\mf o)^d $ 
	we have that 
		$ u|_{[0,T]\times (-\mf o,\mf o)^d}\in C^{1,2}([0,T]\times (-\mf o,\mf o)^d,\R) $ 
	and 
		\begin{equation}
		\label{smooth_solutions:u_diff1}
		\begin{split}
		& (\tfrac{\partial}{\partial t}u)(t,x) 
		= 
		(\tfrac{\partial}{\partial s}v)(t,t,x) 
		+ 
		(\tfrac{\partial}{\partial t}v) 
		(t,t,x) 
		+ 
		(\tfrac{\partial}{\partial t})\left[ 
		\int_t^T w(T-s+t,t,x)\,ds
		\right]\!. 
		\end{split}
		\end{equation}
	Furthermore, note that \eqref{smooth_solutions:v_pde} and \eqref{smooth_solutions:w_pde} yield that for all
		$ t \in [0,T] $, 
		$ s \in [t,T] $, 
		$ x \in \R^d $ 
	we have that
		\begin{equation}
		\label{smooth_solutions:u_diff2}
		\begin{split}
		& (\tfrac{\partial}{\partial s}v)(t,t,x) 
		+ 
		(\tfrac{\partial}{\partial t}v) 
		(t,t,x) 
		\\
		& = 
		- \tfrac12
		\operatorname{Trace}\!\left( 
		\sigma(t,x)[\sigma(t,x)]^{*}
		(\operatorname{Hess}_x v)(t,t,x)
		\right) 
		- \langle \mu(t,x),(\nabla_x v)(t,t,x) \rangle
		\end{split}
		\end{equation}
	and 
		\begin{equation}
		\label{smooth_solutions:u_diff3}
		\begin{split}
		& (\tfrac{\partial}{\partial s}w)(s,t,x) 
		+ 
		(\tfrac{\partial}{\partial t}w) 
		(s,t,x) 
		\\
		& = 
		- \tfrac12
		\operatorname{Trace}\!\left( 
		\sigma(t,x)[\sigma(t,x)]^{*}
		(\operatorname{Hess}_x w)(s,t,x)
		\right) 
		- \langle \mu(t,x),(\nabla_x w)(s,t,x) \rangle.
		\end{split}
		\end{equation}
	This ensures for all 
		$ t \in [0,T] $, 
		$ x \in \R^d $ 
	that 
		\begin{equation}
		\label{smooth_solutions:u_diff4}
		\begin{split}
		& (\tfrac{\partial}{\partial t})\!
		\left[ 
		\int_t^T w(T-s+t,t,x)\,ds
		\right] 
		\\
		& 
		= 
		-w(T,t,x) + 
		\int_t^t \left( 
		(\tfrac{\partial}{\partial s}w)(T-s+t,t,x) 
		+ 
		(\tfrac{\partial}{\partial t}w)(T-s+t,t,x)
		\right)\,ds 
		\\
		& 
		= 
		- w(T,t,x) 
		- \int_t^T 
		\tfrac12
		\operatorname{Trace}\!\left( 
		\sigma(t,x)[\sigma(t,x)]^{*}
		(\operatorname{Hess}_x w)(T-s+t,t,x)
		\right) 
		\,ds 
		\\
		& \quad 
		- \int_t^T \langle \mu(t,x),(\nabla_x w)(T-s+t,t,x) \rangle
		\,ds .
		\end{split}
		\end{equation}
	Next observe that the fact that $w\in C^{1,2}([0,T]\times\R^{d+1},\R)$ proves that for all 
		$t\in [0,T]$, 
		$x\in\R^d$ 
	we have that
		\begin{equation}
		\label{smooth_solutions:u_diff5}
		\begin{split}
		& \int_t^T \tfrac12
		\operatorname{Trace}\!\left( 
		\sigma(t,x)[\sigma(t,x)]^{*}
		(\operatorname{Hess}_x w)(T-s+t,t,x)
		\right) 
		\,ds 
		\\
		& = 
		\tfrac12 
		\operatorname{Trace}\!
		\left(
		\sigma(t,x)[\sigma(t,x)]^{*}
		\!\left[\operatorname{Hess}_x\!\left( 
		\int_t^T w(T-s+t,t,x)\,ds 
		\right)\right]
		\right) 
		\end{split}
		\end{equation}
	and 
	\begin{equation}
	\label{smooth_solutions:u_diff6}
	\begin{split}
	& \int_t^T \langle \mu(t,x),(\nabla_x w)(T-s+t,t,x) \rangle
	\,ds 
	= 
	\left\langle 
	\mu(t,x), 
	\nabla_x \!\left( 
	\int_t^T w(T-s+t,t,x)\,ds
	\right)
	\right\rangle
	. 
	\end{split}
	\end{equation}
	Moreover, note that \eqref{smooth_solutions:ass1} and \eqref{smooth_solutions:w_eqn} ensure for all 
		$ t\in [0,T] $, 
		$ x \in (-\mf o,\mf o)^d $ 
	that
		\begin{equation}
		w(T,t,x) = \Exp{h( t,X^{t,x}_{t}(x) )} = h(t,x).
		\end{equation}
	Combining this with \eqref{smooth_solutions:u_through_v_w}--\eqref{smooth_solutions:u_diff6} shows for all 	
		$ t \in [0,T] $, 
		$ x \in (-\mf o,\mf o)^d $ 
	that 
		\begin{equation}
		\begin{split}
		(\tfrac{\partial}{\partial t}u)(t,x) 
		=
		- 
		\tfrac12 \operatorname{Trace}\! \left(
		\sigma(t,x)[\sigma(t,x)]^{*}
		(\operatorname{Hess}_x u)(t,x)
		\right)
		- 
		\langle \mu(t,x),(\nabla_x u)(t,x)\rangle
		- h(t,x) . 
		\end{split}
		\end{equation}
	This and \eqref{smooth_solutions:part_outside} demonstrate for all 
	$t\in [0,T]$, 
	$x\in\R^d$ 
	that 
	$u\in C^{1,2}([0,T]\times\R^d,\R)$ 
	and 
	\begin{equation} \label{smooth_solutions:pde_verified}
	(\tfrac{\partial}{\partial t}u)(t,x) 
	=
	- 
	\tfrac12 \operatorname{Trace}\!\left(
	\sigma(t,x)[\sigma(t,x)]^{*}
	(\operatorname{Hess}_x u)(t,x)
	\right)
	- 
	\langle \mu(t,x),(\nabla_x u)(t,x)\rangle
	- h(t,x) .   
	\end{equation}
	This establishes Item~\eqref{smooth_solutions:item1}. 
	Furthermore, observe that \eqref{smooth_solutions:ass1} and \eqref{smooth_solutions:ass2} demonstrate that for all 		
		$x\in\R^d$ 
	we have that $u(T,x) = g(x)$. 
	Combining this with \eqref{smooth_solutions:pde_verified} establishes Item~\eqref{smooth_solutions:item2}. 
	This completes the proof of \cref{lem:smooth_solutions}. 
\end{proof}

\subsection{Basic properties of viscosity solutions of suitable PDEs} 
\label{subsec:def_and_elem_prop_viscosity_solutions}

\begin{definition}[Symmetric matrices]
	\label{symmetric_matrices}
	Let $ d \in \N $. Then we denote by $ \Sym_d $ the set given by $ \Sym_d = \{ A \in \R^{d\times d} \colon A^{*} = A \} $. 
\end{definition}	
	
\begin{definition}[Degenerate elliptic functions]
	\label{degenerate_elliptic}
	Let $ d \in \N $, 
		$ T \in (0,\infty) $, 
	let $ \mc O \subseteq \R^d $ be a non-empty open set, 
	and 
	let $ \langle\cdot,\cdot\rangle\colon \R^d\times\R^d\to\R$ be the standard Euclidean scalar product on $\R^d$. 
	Then we say that $ G $ is degenerate elliptic on $ (0,T) \times \mc O \times \R \times \R^d \times \Sym_d $ (we say that $ G $ is degenerate elliptic) if and only if 
	\begin{enumerate}[(i)]
		\item we have that $ G\colon (0,T) \times \mc O \times \R \times \R^d \times \Sym_d \to \R $ is a function from $ (0,T) \times \mc O \times \R \times \R^d \times \Sym_d $ to $ \R $ 
		and 
		\item we have for all 
			$ t \in (0,T) $, 
			$ x \in \mc O $, 
			$ r \in \R $, 
			$ p \in \R^d $, 
			$ A, B \in \Sym_{d} $ 
		with 
			$ \Forall y \in \R^d \colon \langle Ay,y\rangle \leq \langle By,y\rangle $ 
		that 
			$ G(t,x,r,p,A) \leq G(t,x,r,p,B) $
	\end{enumerate}
	(cf.~\cref{symmetric_matrices}). 
\end{definition}

\begin{definition}[Viscosity subsolutions] \label{def:viscosity_subsolution}
	Let $ d \in \N $, 
		$ T \in (0,\infty) $, 
	let $ \mc O \subseteq \R^d $ be a non-empty open set, 
	and let $ G \colon (0,T)\times\mc O \times \R \times \R^d \times \Sym_d \to \R $ be degenerate elliptic (cf.~\cref{symmetric_matrices,degenerate_elliptic}). 
	Then we say that $ u $ is a viscosity solution of $ (\frac{\partial}{\partial t}u)(t,x) + G( t, x, u(t,x), (\nabla_x u)(t,x), (\operatorname{Hess}_x u)(t,x) ) \geq 0 $ for $ ( t, x ) \in (0,T)\times\mc O $ (we say that $ u $ is a viscosity subsolution of $ (\frac{\partial}{\partial t}u)(t,x)  + G(t,x,u(t,x),(\nabla_x u)(t,x), \allowbreak (\operatorname{Hess}_x u)(t,x))=0 $ for $ (t,x) \in (0,T)\times\mc O $) if and only if there exists a set $ A $ such that 
	\begin{enumerate}[(i)]
		\item we have that $ (0,T) \times \mc O \subseteq A $, 
		\item we have that $ u \colon A \to \R $ is an upper semi-continuous function from $ A $ to $ \R $, 
		and 
		\item we have for all 
			$ t \in (0,T) $, 
			$ x \in \mc O $,
			$ \phi \in C^{1,2}((0,T)\times \mc O,\R) $ 
		with
			$\phi(t,x)=u(t,x)$ and 
			$\phi \geq u$ 
		that 
			\begin{equation} \label{viscosity_subsolution:defining_property}
			(\tfrac{\partial}{\partial t}\phi)(t,x) 
			+ G(t,x,\phi(t,x),(\nabla_x\phi)(t,x),(\operatorname{Hess}_x\phi)(t,x)) \geq 0. 
			\end{equation}
	\end{enumerate}
\end{definition}

\begin{definition}[Viscosity supersolutions] \label{def:viscosity_supersolution}
	Let $ d \in \N $, 
		$ T \in (0,\infty) $, 
	let $ \mc O \subseteq \R^d $ be a non-empty open set, 
	and 
	let $ G \colon (0,T) \times \mc O \times \R \times \R^d \times \Sym_d \to \R $ be degenerate elliptic (cf.~\cref{symmetric_matrices,degenerate_elliptic}). 
	Then we say that $ u $ is a viscosity solution of $ (\frac{\partial}{\partial t}u)(t,x) + G(t,x,u(t,x),(\nabla_x u)(t,x),(\operatorname{Hess}_x u)(t,x)) \leq 0 $ for $ (t,x) \in (0,T)\times\mc O $ (we say that $ u $ is a viscosity supersolution of $ (\frac{\partial}{\partial t}u)(t,x) + G(t,x,u(t,x),(\nabla_x u)(t,x), \allowbreak (\operatorname{Hess}_x u)(t,x))=0 $ for $ (t,x) \in (0,T)\times\mc O $) if and only if there exists a set $ A  $ such that 
	\begin{enumerate}[(i)]
		\item we have that $ (0,T)\times\mc O \subseteq A $, 
		\item we have that $ u \colon A \to \R $ is a lower semi-continuous function from $ A $ to $ \R $, 
		and 
		\item we have for all 
			$ t \in (0,T) $, 
			$ x \in \mc O $, 
			$ \phi \in C^{1,2}((0,T) \times \mc O,\R)$ 
		with 
			$ \phi(t,x) = u(t,x) $ 
		and 
			$ \phi \leq u $
		that 
			\begin{equation} \label{viscosity_supersolution:defining_property}
			(\tfrac{\partial}{\partial t}\phi)(t,x) 
			+ 
			G(t,x,\phi(t,x),(\nabla_x\phi)(t,x),(\operatorname{Hess}_x\phi)(t,x)) 
			\leq 0. 
			\end{equation}
	\end{enumerate}
\end{definition}

\begin{definition}[Viscosity solutions] \label{def:viscosity_solution}
	Let $ d \in \N $, 
		$ T \in (0,\infty) $, 
	let $ \mc O \subseteq \R^d $ be a non-empty open set, 
	and let $ G\colon (0,T) \times \mc O \times \R \times \R^d \times \Sym_d \to \R $ be degenerate elliptic (cf.~\cref{symmetric_matrices,degenerate_elliptic}). 
	Then we say that $ u $ is a viscosity solution of $ (\frac{\partial}{\partial t}u)(t,x)  + G(t,x,u(t,x),(\nabla_x u)(t,x),(\operatorname{Hess}_x u)(t,x))=0 $ for $ (t,x) \in (0,T)\times\mc O $ if and only if 
	\begin{enumerate}[(i)] 
		\item we have that $ u $ is a viscosity subsolution of 
		$ (\frac{\partial}{\partial t}u)(t,x) + G(t,x,u(t,x),(\nabla_x u)(t,x), (\operatorname{Hess}_x u)(t, \allowbreak x)) = 0 $ for $ (t,x) \in (0,T)\times\mc O $ 
		and 
		\item we have that $ u $ is a viscosity supersolution of $ (\frac{\partial}{\partial t}u)(t,x)  + G(t,x,u(t,x),(\nabla_x u)(t,x), (\operatorname{Hess}_x u)(t, \allowbreak x))=0 $ for $ (t,x) \in (0,T)\times\mc O $ 
	\end{enumerate}
	(cf.~\cref{def:viscosity_subsolution,def:viscosity_supersolution}). 
\end{definition}

\begin{lemma} \label{lem:classical_solutions_are_viscosity_solutions} 
	Let $ d \in \N $, 
		$ T \in (0,\infty) $, 
	let $ \mc O \subseteq \R^d $ be a non-empty open set, 
	and let $ G \colon (0,T) \times \mc O \times \R \times \R^d \times \Sym_d \to \R$ be degenerate elliptic (cf.~\cref{symmetric_matrices,degenerate_elliptic}). 
	Then  
	\begin{enumerate} [(i)]
		\item \label{classical_solutions_are_viscosity_solutions:item1}
		we have for every $ u \in C^{1,2}( (0,T) \times \mc O,\R)$ with  
			$ \Forall t \in (0,T), x \in \mc O \colon 
			(\tfrac{\partial}{\partial t}u)(t,x) + G(t,x,u(t,x), \allowbreak (\nabla_x u)(t,x),(\operatorname{Hess}_x u)(t,x)) 
			\geq 0 $
		that $ u $ is a viscosity solution of 
			\begin{equation} 
			(\tfrac{\partial}{\partial t}u)(t,x)
			+ 
			G(t,x,u(t,x),(\nabla_x u)(t,x),
			(\operatorname{Hess}_x u)(t,x)) 
			\geq 0
			\end{equation} 
		for $ (t,x) \in (0,T)\times\mc O $, 
		\item \label{classical_solutions_are_viscosity_solutions:item2} 
		we have for every $ u \in C^{1,2}( (0,T) \times \mc O,\R)$ with 
			$ \Forall t \in (0,T), x \in \mc O \colon 
			(\tfrac{\partial}{\partial t}u)(t,x)
			+ G(t,x,u(t,x), \allowbreak (\nabla_x u)(t,x), (\operatorname{Hess}_x u)(t,x)) \leq 0 $ 
		that $ u $ is a viscosity solution of 
			\begin{equation} 
			(\tfrac{\partial}{\partial t}u)(t,x)
			+ 
			G(t,x,u(t,x),(\nabla_x u)(t,x),
			(\operatorname{Hess}_x u)(t,x)) 
			\leq 0
			\end{equation} 
		for $ (t,x) \in (0,T)\times\mc O $, and 
		\item \label{classical_solutions_are_viscosity_solutions:item3}
		we have for every $ u \in C^{1,2}( (0,T) \times \mc O,\R)$ with 
			$ \Forall t \in (0,T), x \in \mc O \colon 
			(\tfrac{\partial}{\partial t}u)(t,x)
			+ G(t,x,u(t,x), \allowbreak (\nabla_x u)(t,x), (\operatorname{Hess}_x u)(t,x)) = 0 $ 
		that $ u $ is a viscosity solution of 
			\begin{equation} 
			(\tfrac{\partial}{\partial t}u)(t,x)
			+ 
			G(t,x,u(t,x),(\nabla_x u)(t,x),
			(\operatorname{Hess}_x u)(t,x)) 
			= 0
			\end{equation} 
		for $ (t,x) \in (0,T)\times\mc O $ 
		\end{enumerate}
	(cf.~\cref{def:viscosity_subsolution,def:viscosity_supersolution,def:viscosity_solution}). 
\end{lemma}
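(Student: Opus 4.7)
The plan is to prove Item~\eqref{classical_solutions_are_viscosity_solutions:item1}, then Item~\eqref{classical_solutions_are_viscosity_solutions:item2} by the analogous argument, and then derive Item~\eqref{classical_solutions_are_viscosity_solutions:item3} by combining Items~\eqref{classical_solutions_are_viscosity_solutions:item1}--\eqref{classical_solutions_are_viscosity_solutions:item2} with \cref{def:viscosity_solution}. For Item~\eqref{classical_solutions_are_viscosity_solutions:item1} I first note that any $u \in C^{1,2}((0,T)\times\mc O,\R)$ is continuous and hence upper semi-continuous on $(0,T)\times\mc O$, so the semi-continuity/domain requirement in \cref{def:viscosity_subsolution} is immediate (take $A = (0,T)\times\mc O$).

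The core step is then to verify \eqref{viscosity_subsolution:defining_property}. Fix $t \in (0,T)$, $x \in \mc O$, and $\phi \in C^{1,2}((0,T)\times\mc O,\R)$ with $\phi(t,x) = u(t,x)$ and $\phi \geq u$. The key observation is that the $C^{1,2}$ function $\phi - u$ attains a local (indeed global) minimum value of zero at $(t,x)$. Standard first- and second-order conditions for an interior minimum then yield
\begin{equation}
(\tfrac{\partial}{\partial t}\phi)(t,x) = (\tfrac{\partial}{\partial t}u)(t,x),\qquad
(\nabla_x\phi)(t,x) = (\nabla_x u)(t,x),
\end{equation}
and $(\operatorname{Hess}_x\phi)(t,x) - (\operatorname{Hess}_x u)(t,x)$ is positive semi-definite, i.e., for every $y \in \R^d$ we have $\langle (\operatorname{Hess}_x u)(t,x)\, y, y\rangle \leq \langle (\operatorname{Hess}_x\phi)(t,x)\, y, y\rangle$.

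Invoking the degenerate ellipticity of $G$ from \cref{degenerate_elliptic} with $r = u(t,x) = \phi(t,x)$ and $p = (\nabla_x u)(t,x) = (\nabla_x\phi)(t,x)$ then gives
\begin{equation}
G(t,x,u(t,x),(\nabla_x u)(t,x),(\operatorname{Hess}_x u)(t,x)) \leq G(t,x,\phi(t,x),(\nabla_x\phi)(t,x),(\operatorname{Hess}_x\phi)(t,x)),
\end{equation}
and combining this with the hypothesis $(\tfrac{\partial}{\partial t}u)(t,x) + G(t,x,u(t,x),(\nabla_x u)(t,x),(\operatorname{Hess}_x u)(t,x)) \geq 0$ together with the two derivative identities above yields \eqref{viscosity_subsolution:defining_property}. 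The proof of Item~\eqref{classical_solutions_are_viscosity_solutions:item2} is identical up to sign flips: if $\phi \leq u$ and $\phi(t,x) = u(t,x)$, then $u - \phi$ has a local minimum at $(t,x)$, which reverses the matrix inequality to $(\operatorname{Hess}_x\phi)(t,x) \leq (\operatorname{Hess}_x u)(t,x)$ in the quadratic-form sense, and degenerate ellipticity is again used in the direction provided by \cref{degenerate_elliptic}. There is no real obstacle here — the only subtlety is correctly tracking the direction of the matrix inequality and matching it against the monotonicity built into degenerate ellipticity; everything else is an unpacking of definitions.
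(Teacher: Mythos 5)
Your proof is correct and follows exactly the standard argument that the paper's (extremely terse) proof implicitly relies on: touching test functions from above force equality of first derivatives and a Loewner inequality between spatial Hessians, which degenerate ellipticity converts into the required inequality for $\phi$. The direction of the matrix inequality and of the monotonicity of $G$ are tracked correctly in both items, so nothing is missing.
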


\begin{proof}[Proof of \cref{lem:classical_solutions_are_viscosity_solutions}] 
	First, note that \eqref{viscosity_subsolution:defining_property} establishes Item~\eqref{classical_solutions_are_viscosity_solutions:item1}.
	Next observe that \eqref{viscosity_supersolution:defining_property} proves Item~\eqref{classical_solutions_are_viscosity_solutions:item2}. 
	Moreover, note that Item~\eqref{classical_solutions_are_viscosity_solutions:item1} and Item~\eqref{classical_solutions_are_viscosity_solutions:item2} 
	establish Item~\eqref{classical_solutions_are_viscosity_solutions:item3}. 
	This completes the proof of \cref{lem:classical_solutions_are_viscosity_solutions}. 	
\end{proof} 

\begin{lemma} \label{lem:local_maximum_enough}
	Let $ d \in \N $, 
		$ T \in (0,\infty) $, 
		$ \mf t \in (0,T) $, 
	let $ \mc O \subseteq \R^d $ be an open set, 
	let $ \mf x \in \mc O $, 
		$ \phi \in C^{1,2}( (0,T)\times\mc O, \R ) $, 
	let $ G \colon (0,T) \times \mc O \times \R \times \R^d \times \Sym_d \to \R $ be degenerate elliptic, 
	let $ u \colon (0,T) \times \mc O \to\R $ be a viscosity solution of 
		$ (\tfrac{\partial}{\partial t}u)(t,x) 
		+ 
		G(t,x,u(t,x),(\nabla_x u)(t,x),(\operatorname{Hess}_x u)(t,x)) \geq 0$ 
	for $ (t,x) \in (0,T)\times\mc O $, 
	and assume that $ u-\phi $ has a local maximum at $ (\mf t, \mf x) \in (0,T)\times\mc O $ 	(cf.~\cref{symmetric_matrices,degenerate_elliptic,def:viscosity_subsolution}). 
	Then 
		\begin{equation} 
		(\tfrac{\partial}{\partial t}\phi)(\mf t, \mf x) 
		+ 
		G( \mf t, \mf x,u(\mf t, \mf x),(\nabla_x\phi)(\mf t,\mf x),(\operatorname{Hess}_x\phi)(\mf t, \mf x)) 
		\geq 0. 
		\end{equation}
	\end{lemma}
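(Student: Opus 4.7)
The overall strategy is to upgrade the locally dominating test function $\phi$ into a \emph{globally} dominating test function $\psi \in C^{1,2}((0,T)\times\mc O,\R)$ that agrees with $\phi$ to second order at $(\mf t,\mf x)$, satisfies $\psi(\mf t,\mf x)=u(\mf t,\mf x)$, and satisfies $\psi\ge u$ throughout $(0,T)\times\mc O$. Once such a $\psi$ has been constructed, \cref{def:viscosity_subsolution} applied to $u$ with test function $\psi$ at $(\mf t,\mf x)$ yields exactly the claimed inequality, since a vertical shift does not affect derivatives and coincidence of the first- and second-order derivatives of $\psi$ and $\phi$ at $(\mf t,\mf x)$ makes \eqref{viscosity_subsolution:defining_property} for $\psi$ read identically to the conclusion of \cref{lem:local_maximum_enough} for $\phi$.

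I would carry this out in three steps. \textbf{Step 1.} Set $c := (u-\phi)(\mf t,\mf x)$ and, using openness of $(0,T)\times\mc O$ together with the local-maximum hypothesis, choose $r>0$ so small that the closed parabolic ball $B$ of radius $r$ about $(\mf t,\mf x)$ lies in $(0,T)\times\mc O$ and $u \le \phi + c$ holds on $B$. \textbf{Step 2.} Produce a smooth global majorant $\Phi\in C^\infty((0,T)\times\mc O,\R)$ of $u$: since $(0,T)\times\mc O$ is $\sigma$-compact, cover it by a locally finite family $\{U_i\}$ of precompact open subsets; upper semi-continuity of $u$ together with compactness of each $\Closure{U_i}$ guarantees $M_i := \sup_{\Closure{U_i}} u < \infty$; and for a smooth partition of unity $\{\rho_i\}$ subordinate to $\{U_i\}$ the function $\Phi := \sum_i M_i \rho_i$ is smooth and satisfies $\Phi(z) \ge \min_{i:\rho_i(z)>0} M_i \ge u(z)$ for every $z\in (0,T)\times\mc O$. \textbf{Step 3.} Glue via a cutoff: choose $\chi\in C^\infty((0,T)\times\mc O,[0,1])$ with $\chi\equiv 1$ on the open ball of radius $r/2$ about $(\mf t,\mf x)$ and $\operatorname{supp}(\chi)\subseteq B$, and set $\psi := \chi(\phi + c) + (1-\chi)\Phi$. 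On $B$ both $\phi+c$ and $\Phi$ dominate $u$, so the convex combination $\psi$ does; off $B$ we have $\psi = \Phi \ge u$; and on the ball of radius $r/2$ we have $\psi \equiv \phi + c$, so $\psi(\mf t,\mf x) = u(\mf t,\mf x)$ and $\psi$ shares with $\phi$ all first- and second-order derivatives at $(\mf t,\mf x)$. Applying \cref{def:viscosity_subsolution} to $\psi$ at $(\mf t,\mf x)$ now delivers the lemma.

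The main obstacle I expect is Step 2: producing a $C^{1,2}$ (in fact $C^\infty$) global majorant of the upper semi-continuous function $u$ is what makes the local-to-global upgrade feasible. It rests on $u$ being real-valued together with $\sigma$-compactness of the domain; this construction would fail if $u$ were permitted to attain $+\infty$ somewhere. The cutoff-based gluing in Step 3 and the final invocation of \cref{def:viscosity_subsolution} are then routine, as is the shrinking of $r$ in Step 1.
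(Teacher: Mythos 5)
Your proposal is correct and follows essentially the same route as the paper: both replace the locally touching $\phi$ by a globally dominating test function $\psi\in C^{1,2}$ that coincides with $\phi$ (up to the constant shift $c$) on a neighbourhood of $(\mf t,\mf x)$, and then invoke \cref{def:viscosity_subsolution} directly. The paper merely asserts the existence of such a $\psi$ from upper semi-continuity of $u$, whereas you supply the construction explicitly (partition-of-unity majorant plus cutoff gluing), which is a legitimate filling-in of that step.
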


\begin{proof}[Proof of \cref{lem:local_maximum_enough}] 
	First, observe that the fact that $ u $ is upper semi-continuous implies that there exist $ \psi \in C^{1,2}((0,T)\times\mc O,\R) $ and a non-empty open set $ \mc U \subseteq (0,T)\times\mc O $  which satisfy that 
	\begin{enumerate}[(i)]
		\item we have that $ (\mf t, \mf x) \in \mc U $, 
		\item we have for all 
			$ t \in (0,T) $, 
			$ x \in \mc O $ 
		that 
			$ u(\mf t, \mf x) - \psi( \mf t, \mf x ) \geq u( t, x ) - \psi( t, x ) $, 
		and 
		\item we have for all $ ( t, x ) \in \mc U $ that 
			$ \psi( t, x ) = \phi ( t, x ) $. 		
	\end{enumerate}
	Hence, we obtain that 
		\begin{equation} 
		\begin{split} 
		& 
		( \tfrac{\partial}{\partial t}\phi )( \mf t, \mf x ) + G( \mf t, \mf x, u( \mf t, \mf x ), ( \nabla_x \phi)( \mf t, \mf x ), (\operatorname{Hess}_x\phi)( \mf t, \mf x)) 
		\\
		& = 
		( \tfrac{\partial}{\partial t}\psi )( \mf t, \mf x ) + G( \mf t, \mf x, u( \mf t, \mf x ), ( \nabla_x \psi)( \mf t, \mf x ), (\operatorname{Hess}_x\psi)( \mf t, \mf x)) 
		\geq 0 . 
		\end{split} 
		\end{equation} 
	This completes the proof of  \cref{lem:local_maximum_enough}. 
\end{proof}

\begin{lemma} \label{lem:C2_test_functions_suffice} 
	Let $ d \in \N $, 
		$ T \in (0,\infty) $, 
	let $ \mc O \subseteq \R^d $ be a non-empty open set, 
	let $ G \colon (0,T) \times \mc O \times \R \times \R^d \times \Sym_d \to \R $ be degenerate elliptic and upper semi-continuous, 
	let $ u \colon (0,T) \times \mc O \to \R $ be upper semi-continuous, 
	and assume for all 
		$ t \in (0,T) $, 
		$ x \in \mc O $, 
		$ \phi \in \{ \psi \in C^2((0,T)\times\mc O,\R) \colon (u - \psi~\text{has a local maximum at}~(t,x)\in (0,T)\times\mc O) \} $ 
	that 
		\begin{equation} \label{C2_test_functions_suffice:ass}
		(\tfrac{\partial}{\partial t}\phi)(t,x) + G(t,x,u(t,x),(\nabla_x\phi)(t,x),(\operatorname{Hess}_x\phi)(t,x)) \geq 0  
		\end{equation} 
	(cf.~\cref{symmetric_matrices,degenerate_elliptic}). 
	Then $ u $ is a viscosity solution of 
		\begin{equation} \label{C2_test_functions_suffice:claim}
		(\tfrac{\partial}{\partial t}u)(t,x) + G(t,x,u(t,x),(\nabla_x u)(t,x),(\operatorname{Hess}_x u)(t,x)) \geq 0  
		\end{equation} 
	for $ (t,x) \in (0,T)\times\mc O $ (cf.~\cref{def:viscosity_subsolution}). 
\end{lemma}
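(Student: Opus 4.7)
The plan is to reduce the conclusion to the hypothesis by approximating each $C^{1,2}$ test function appearing in the viscosity subsolution definition by suitable $C^2$ test functions, and then passing to the limit via the upper semi-continuity of $G$. Fix $(t, x) \in (0, T) \times \mathcal{O}$ and $\phi \in C^{1,2}((0, T) \times \mathcal{O}, \R)$ with $\phi(t, x) = u(t, x)$ and $\phi \geq u$ on $(0, T) \times \mathcal{O}$; it suffices to prove
$(\partial_t \phi)(t, x) + G(t, x, u(t, x), (\nabla_x \phi)(t, x), (\operatorname{Hess}_x \phi)(t, x)) \geq 0$.

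For each $n \in \N$ I would construct a function $\phi_n \in C^2((0, T) \times \mathcal{O}, \R)$ such that $u - \phi_n$ has a local maximum at $(t, x)$ and such that the derivatives of $\phi_n$ at $(t, x)$ converge to those of $\phi$ as $n \to \infty$. The natural building block is
$\phi_n(s, y) = \phi(t, y) + (s - t)(\partial_t \phi)(t, x) + n(s - t)^2$,
which is jointly $C^2$ because $\phi(t, \cdot) \in C^2(\mathcal{O}, \R)$ (by $\phi \in C^{1,2}$) and because the $s$-dependent part is polynomial. A direct computation shows that $\phi_n(t, x) = u(t, x)$ and that the derivatives $(\partial_t \phi_n)(t, x)$, $(\nabla_x \phi_n)(t, x)$, $(\operatorname{Hess}_x \phi_n)(t, x)$ coincide exactly with the corresponding derivatives of $\phi$ (the quadratic correction $n(s - t)^2$ contributes nothing to the first derivative at $s = t$, and the $y$-dependence of $\phi_n$ is that of $\phi$ at time $t$). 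To verify the local-maximum property, I would invoke the identity
$\phi_n(s, y) - \phi(s, y) = n(s - t)^2 - \int_t^s \bigl[(\partial_r \phi)(r, y) - (\partial_t \phi)(t, x)\bigr] \, dr$,
the continuity of $\partial_t \phi$ at $(t, x)$, and the global domination $\phi \geq u$ to conclude that, on a suitable neighborhood of $(t, x)$ shrinking with $n$, one has $\phi_n \geq \phi \geq u$ locally.

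Once the sequence $(\phi_n)_{n \in \N}$ is in place, applying the hypothesis \eqref{C2_test_functions_suffice:ass} to each $\phi_n$ yields
$(\partial_t \phi_n)(t, x) + G(t, x, u(t, x), (\nabla_x \phi_n)(t, x), (\operatorname{Hess}_x \phi_n)(t, x)) \geq 0$
for every $n \in \N$. Taking $\limsup$ as $n \to \infty$ and invoking the upper semi-continuity of $G$ together with the convergence of the derivatives of $\phi_n$ at $(t, x)$ then yields the desired viscosity subsolution inequality for $\phi$.

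I expect the main technical step to be the verification that the quadratic correction $n(s - t)^2$ dominates the integral remainder $\int_t^s [(\partial_r \phi)(r, y) - (\partial_t \phi)(t, x)] \, dr$ on a suitable neighborhood of $(t, x)$; the bound is tightest along the line $\{s = t\}$, and a careful choice of this neighborhood (depending on $n$ and on the modulus of continuity of $\partial_t \phi$ at $(t, x)$) is required. If this modulus decays poorly, the construction may need to be refined by adding a further small correction whose contributions to the derivatives at $(t, x)$ vanish in the limit, so that the application of upper semi-continuity of $G$ still produces the conclusion.
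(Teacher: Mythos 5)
There is a genuine gap at exactly the point you flag as ``the main technical step'', and the proposed fix (taking $n$ large in the quadratic penalization $n(s-t)^2$) cannot work. For $u-\phi_n$ to have a local maximum at $(t,x)$ your argument needs
$\phi(s,y)-\phi(t,y)-(s-t)(\tfrac{\partial}{\partial t}\phi)(t,x)\leq n(s-t)^2$
on some neighborhood of $(t,x)$. Since $\phi$ is only $C^{1,2}$, this first-order Taylor remainder in time is $o(|s-t|)$ but need not be $O(|s-t|^2)$, and no finite $n$ makes $n(s-t)^2$ dominate a quantity of strictly lower order. Concretely, take $u(s,y)=\phi(s,y)=|s-t_0|^{3/2}$ near $(t_0,x_0)$ (extended suitably): then $\phi\in C^{1,2}$, $\phi\geq u$ with equality at $(t_0,x_0)$, and $(\tfrac{\partial}{\partial t}\phi)(t_0,x_0)=0$, so $\phi_n(s,y)=n(s-t_0)^2$ and $u(s,y)-\phi_n(s,y)=|s-t_0|^{3/2}-n(s-t_0)^2>0=u(t_0,x_0)-\phi_n(t_0,x_0)$ for all $s\neq t_0$ close to $t_0$; thus $u-\phi_n$ has a strict local \emph{minimum} there rather than a maximum, for every $n\in\N$. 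The hypothesis \eqref{C2_test_functions_suffice:ass} is therefore never applicable to your $\phi_n$, and the argument does not close. A domination of this kind is possible (cf.\ the function $\Psi$ built from a running modulus in the proof of \cref{lem:realisation_of_parabolic_superjets_by_derivatives_of_smooth_comparison_functions}), but the resulting correction is only $C^1$ in time, which is useless here precisely because the whole point is to produce test functions that are $C^2$ jointly.

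The paper avoids insisting that the maximum stay at $(t,x)$. It first replaces $\phi$ by $\psi_0(s,y)=\phi(s,y)+|s-t_0|^4+\Norm{y-x_0}^4$, so that $u-\psi_0$ has a \emph{strict} maximum at $(t_0,x_0)$; it then approximates $\psi_0$ by genuinely $C^2$ functions $\psi_n$ (e.g.\ by mollification) uniformly on compacta together with $\tfrac{\partial}{\partial t}$, $\nabla_x$, and $\operatorname{Hess}_x$. Strictness of the maximum forces $u-\psi_n$ to attain a local maximum at some nearby point $(t_n,x_n)\to(t_0,x_0)$; the hypothesis is applied there, one checks $u(t_n,x_n)\to u(t_0,x_0)$ using upper semi-continuity of $u$, and the conclusion follows from the upper semi-continuity of $G$. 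If you want to rescue your strategy you should adopt this ``strict maximum plus moving maximizer'' mechanism rather than trying to pin the maximizer at $(t,x)$ with an explicit penalization.
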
 

\begin{proof}[Proof of \cref{lem:C2_test_functions_suffice}]
	Throughout this proof let $ \norm{\cdot}\colon\R^d\to [0,\infty) $ be the standard Euclidean norm on $ \R^d $, 
	let $ \HSnorm{\cdot} \colon \R^{d\times d} \to [0,\infty) $ be the Frobenius norm on $ \R^{d\times d} $, 
	let 
		$ t_0 \in (0,T) $, 
		$ x_0 \in \mc O $, 
		$ \phi \in C^{1,2}((0,T)\times\mc O,\R) $ 
	satisfy for all 
		$ s \in (0,T) $, 
		$ y \in \mc O $ 
	that 
		$ \phi(s,y) \geq u(s,y) $ 
	and 
		$ \phi(t_0,x_0) = u(t_0,x_0) $, 
	let $ \psi_0 \in C^{1,2}((0,T)\times\mc O,\R) $ satisfy for all 
		$ s \in (0,T) $, 
		$ y \in \mc O $ 
	that 
		$ \psi_0(s,y) = \phi(s,y) + |s-t_0|^4 + \Norm{y-x_0}^4 $,  
	let 
		$ \eta \in (0,\infty) $ 
	satisfy that 
		$ \{ (s,y) \in \R\times\R^{d} \colon \max\{ |s-t_0|, \norm{y-x_0} \} \leq \eta \} \subseteq (0,T)\times\mc O $, 
	and let $ I_r \in [0,\infty) $, $ r \in (0,\eta] $, 
	satisfy for all 
		$ r \in (0,\eta] $ 
	that 
		$ I_r = \frac12 \inf \{ \psi_0(s,y) - u(s,y) \colon (s,y) \in (0,T)\times\mc O, r^2 \leq |s-t_0|^2+\Norm{y-x_0}^2 \leq \eta^2 \}  $.
	Observe that the fact that for all 
		$ (s,y) \in [ (0,T) \times\mc O ] \setminus \{(t_0,x_0)\} $ 
	we have that $ \psi_0(s,y) > \phi(s,y) $ ensures that for all 
		$ (s,y) \in [ (0,T) \times\mc O ] \setminus \{(t_0,x_0)\} $ 
	we have that 
		$ \psi_0(s,y) > u(s,y) $ 
	and 
		$ \psi_0(t_0,x_0) = u(t_0,x_0) $. 
	The assumption that $ u $ is upper semi-continuous hence guarantees that for all 
		$ r \in (0,\eta] $ 
	we have that 
		$ I_r \in (0,\infty) $.  
	Moreover, note that there exist 
		$ \psi_n \in C^2((0,T)\times\mc O,\R) $, $ n \in \N $, 
	which satisfy for all non-empty compact $ \mc K \subseteq (0,T)\times\mc O $ that 
		\begin{multline} \label{C2_test_functions_suffice:approximation_by_C2_functions}
		\limsup_{n\to\infty} 
		\Bigg[ 
		\sup_{(s,y)\in\mc K} \Big(  
			|(\tfrac{\partial}{\partial t}\psi_n)(s,y) - (\tfrac{\partial}{\partial t}\psi_0)(s,y)| 
			+ 
			| \psi_n(s,y) - \psi_0(s,y) | 
			\\
			+ 
			\norm{(\nabla_x\psi_n)(s,y) - (\nabla_x\psi_0)(s,y)} 
			+ 
			\HSnorm{(\operatorname{Hess}_x\psi_n)(s,y) - (\operatorname{Hess}_x\psi_0)(s,y)}
			\Big) 
		\Bigg] 
		= 0 . 
		\end{multline} 
	This implies that there exists 
		$ \mf n = ( \mf n_{ \varepsilon } )_{ \varepsilon \in (0,\infty) } \colon ( 0, \infty ) \to \N $ 
	which satisfies that for all 
		$ \varepsilon \in (0,\infty) $,  
		$ n \in \N \cap [\mf n_{\varepsilon},\infty) $ 
	we have that 
		\begin{equation} \label{C2_test_functions_suffice:defining_property_of_n_function}
		\sup\!\left\{ |\psi_n(s,y)-\psi_0(s,y)| \colon  (s,y)\in (0,T)\times\mc O, |s-t_0|^2+\norm{y-x_0}^2\leq \eta^2\right\} < \varepsilon.   
		\end{equation} 
	The fact that $ I_{\eta} \in (0,\infty) $ and \eqref{C2_test_functions_suffice:approximation_by_C2_functions} hence ensure that there exist
		$ (t_n, x_n) \in (0,T) \times \mc O $, $n \in \N $,  
	which satisfy for all 
		$ n \in \N \cap [\mf n_{I_{\eta}},\infty) $, 
		$ s \in (0,T) $, 
		$ y \in \mc O $ 
	with 
		$ | s - t_0 |^2 + \Norm{ y - x_0 }^2 \leq \eta^2 $  
	that 
		$ | t_n - t_0 |^2 + \Norm{ x_n - y_0 }^2 < \eta^2 $
	and 
		\begin{equation} \label{C2_test_functions_suffice:approximate_local_maximum}
		u(t_n,x_n) - \psi_n(t_n,x_n) 
		\geq 
		u(s,y) - \psi_n(s,y) .    
		\end{equation} 
 	Combining this with \eqref{C2_test_functions_suffice:ass} proves that for all 
 		$ n \in [\mf n_{I_{\eta}},\infty) $ 
 	we have that  
 		\begin{equation} \label{C2_test_functions_suffice:subsolution_inequality}
 		(\tfrac{\partial}{\partial t}\psi_n)(t_n,x_n) 
 		+ 
 		G(t_n,x_n,u(t_n,x_n),(\nabla_x \psi_n)(t_n,x_n),(\operatorname{Hess}_x \psi_n)(t_n,x_n)) \geq 0.  
 		\end{equation} 
 	Moreover, note that \eqref{C2_test_functions_suffice:defining_property_of_n_function} and  \eqref{C2_test_functions_suffice:approximate_local_maximum} imply that for all 
 		$ r \in (0,\eta) $, 
 		$ n \in [\mf n_{I_r},\infty) $ 
 	we have that 
 		$ |t_n-t_0|^2 + \norm{x_n-x_0}^2 \leq r^2 $. 
 	Therefore, we obtain that 
 		$ \limsup_{ n \to \infty} ( | t_n - t_0 |^2 + \Norm{x_n - x_0}^2 ) = 0 $. 
 	Combining this with \eqref{C2_test_functions_suffice:approximation_by_C2_functions}, \eqref{C2_test_functions_suffice:approximate_local_maximum},  	and the assumption that $ u $ is upper semi-continuous shows that 
 		\begin{equation} 
 		\begin{split} 
 		0 
 		& \geq \limsup_{n\to\infty} [u(t_n,x_n) - u(t_0,x_0)] 
 		\geq \liminf_{n\to\infty} [u(t_n,x_n) - u(t_0,x_0)] 
 		\\
 		& \geq \liminf_{n\to\infty} [\psi_n(t_n,x_n)-\psi_n(t_0,x_0)] 
 		= 0.  
 		\end{split} 
 		\end{equation}  
 	The fact that $ \limsup_{ n \to \infty} ( | t_n - t_0 |^2 + \Norm{x_n - x_0}^2 ) = 0 $, the fact that $ \psi_0 \in C^{1,2}((0,T)\times\mc O,\R) $, \eqref{C2_test_functions_suffice:approximation_by_C2_functions}, 
 	the assumption that $ G $ is upper semi-continuous, and \eqref{C2_test_functions_suffice:subsolution_inequality} hence demonstrate that 
 		\begin{equation}
 		\begin{split} 
 		& (\tfrac{\partial}{\partial t} \phi)(t_0,x_0) + G(t_0,x_0,u(t_0,x_0),(\nabla_x \phi)(t_0,x_0),(\operatorname{Hess}_x \phi)(t_0,x_0) 
 		\\
 		& = 
 		(\tfrac{\partial}{\partial t} \psi_0)(t_0,x_0) + G(t_0,x_0,u(t_0,x_0),(\nabla_x \psi_0)(t_0,x_0),(\operatorname{Hess}_x \psi_0)(t_0,x_0) \geq 0. 
		\end{split} 
 		\end{equation} 
 	This establishes \eqref{C2_test_functions_suffice:claim}. 
 	This completes the proof of \cref{lem:C2_test_functions_suffice}. 
\end{proof} 

\begin{definition}[Parabolic superjets] \label{def:parabolic_superjets}
	Let $ d \in \N $, 
		$ T \in (0,\infty) $, 
	let $ \mc O \subseteq \R^d $ be a non-empty open set, 
	let $ t \in ( 0, T ) $, 
		$ x \in \mc O $, 
	let $ \langle\cdot,\cdot\rangle \colon \R^d\times\R^d \to \R $ be the standard Euclidean scalar product on $ \R^d $, 
	let $ \norm{\cdot} \colon \R^d \to [0,\infty) $ be the standard Euclidean norm on $ \R^d $,  
	and let $ u \colon ( 0, T ) \times \mc O \to \R $ be a function. 
	Then we denote by $ (\mc P^{+}_{d,T,\mc O} u)(t,x) $ (we denote by $ (\mc P^{+}u)(t,x) $) the set given by 
		\begin{multline} \label{parabolic_superjets}
		(\mc P^{+}_{d,T,\mc O} u)(t,x) 
		= 
		(\mc P^{+} u)(t,x) 
		= 
		\bigg\{ 
		(b,p,A) \in \R \times \R^d \times \Sym_d \colon \\
		\limsup_{[(0,T)\times\mc O] \setminus \{(t,x)\} \ni (s,y) \to (t,x)}
		\left[
		\tfrac{u(s,y) - u(t,x) - b ( s - t ) 
			- \langle p, y - x \rangle
			- \frac12 \langle 
			A (y-x), y-x 
			\rangle
		}{|t-s| + \norm{ x - y }^2}  
		\right] 
		\leq 0
		\bigg\}
		\end{multline}
	(cf.~\cref{symmetric_matrices}). 
\end{definition}
	
\begin{definition}[Parabolic subjets] \label{def:parabolic_subjets}
	Let $ d \in \N $, 
		$ T \in (0,\infty) $, 
	let $ \mc O \subseteq \R^d $ be a non-empty open set, 
	let $ t \in ( 0, T ) $, 
		$ x \in \mc O $, 
	let $ \langle\cdot,\cdot\rangle \colon \R^d\times\R^d \to \R $ be the standard Euclidean scalar product on $ \R^d $, 
	let $ \norm{\cdot} \colon \R^d \to [0,\infty) $ be the standard Euclidean norm on $ \R^d $, 
	and let $ u \colon ( 0, T ) \times \mc O \to \R $ be a function. 
	Then we denote by $ ( \mc P^{-}_{d,T,\mc O}u)(t,x) $ (we denote by $ ( \mc P^{-} u )( t, x ) $) the set given by
		\begin{multline}
		( \mc P^{-}_{d,T,\mc O}u)(t,x)
		= 
		( \mc P^{-} u )( t, x ) 
		= 
		\bigg\{ 
		(b,p,A) \in \R \times \R^d \times \Sym_d \colon \\
		\liminf_{[(0,T)\times\mc O] \setminus \{(t,x)\} \ni (s,y) \to (t,x)}
		\left[
		\tfrac{u(s,y) - u(t,x) - b ( s - t ) 
			- \langle p, y - x \rangle
			- \frac12 \langle 
			A (y-x), y-x 
			\rangle
		}{|t-s| + \norm{ x - y }^2}  
		\right]  
		\geq 0
		\bigg\}
		\end{multline}
	(cf.~\cref{symmetric_matrices}). 
\end{definition} 

\begin{definition}[Generalized parabolic superjets] \label{def:relaxed_parabolic_superjets}
	Let $ d \in \N $, 
		$ T \in (0,\infty) $, 
	let $ \mc O \subseteq \R^d $ be a non-empty open set, 
	let $ t \in ( 0, T ) $, 
		$ x \in \mc O $, 
	let $ \langle\cdot,\cdot\rangle \colon \R^d\times\R^d \to \R $ be the standard Euclidean scalar product on $ \R^d $, 
	let $ \norm{\cdot} \colon \R^d \to [0,\infty) $ be the standard Euclidean norm on $ \R^d $, 
	and let $ u \colon ( 0, T ) \times \mc O \to \R $ be a function. 
	Then we denote by $ ( \JetClosure^{+}_{d,T,\mc O} u )( t, x ) $ (we denote by $ ( \JetClosure^{+} u )( t, x ) $) the set given by 
		\begin{multline}
		( \JetClosure^{+}_{d,T,\mc O} u )( t, x )
		= 
		( \JetClosure^{+} u )( t, x ) 
		= 
		\Bigg\{ 
		(b,p,A) \in \R \times \R^d \times \Sym_d \colon 
		\\
		\Bigg( 
		\begin{array}{c}\Exists (t_n,x_n,b_n,p_n,A_n)_{n\in\N} 
		\subseteq (0,T) \times \mc O \times \R \times  \R^d \times \Sym_d\colon  
		\\ (\Forall n\in\N\colon 
		(b_n,p_n,A_n) \in (\mc P^{+} u)(t_n,x_n)) 
		\quad \text{and}\quad 
		\\
		\lim_{n\to\infty} 
		(t_n,x_n,u(t_n,x_n),b_n,p_n,A_n) 
		= 
		(t,x,u(t,x),b,p,A)
		\end{array}
		\Bigg) 
		\Bigg\}
		\end{multline}
	(cf.~\cref{symmetric_matrices,def:parabolic_superjets}). 
\end{definition} 

\begin{definition}[Generalized parabolic subjets] \label{def:relaxed_parabolic_subjets}
	Let $ d \in \N $, 
		$ T \in (0,\infty) $, 
	let $ \mc O \subseteq \R^d $ be a non-empty open set, 
	let $ t \in ( 0, T ) $, 
		$ x \in \mc O $, 
	let $ \langle\cdot,\cdot\rangle \colon \R^d\times\R^d \to \R $ be the standard Euclidean scalar product on $ \R^d $, 
	let $ \norm{\cdot} \colon \R^d \to [0,\infty) $ be the standard Euclidean norm on $ \R^d $, 
	and let $ u \colon ( 0, T ) \times \mc O \to \R $ be a function. 
	Then we denote by $ (\JetClosure^{-}_{d,T,\mc O} u)( t, x ) $ (we denote by $ (\JetClosure^{-} u)( t, x ) $) the set given by
		\begin{multline}
		(\JetClosure^{-}_{d,T,\mc O} u)( t, x )
		= 
		(\JetClosure^{-} u)(t,x) 
		= 
		\Bigg\{ 
		(b,p,A) \in \R \times \R^d \times \Sym_d \colon 
		\\
		\Bigg( 
		\begin{array}{c}\Exists (t_n,x_n,b_n,p_n,A_n)_{n\in\N}
		\subseteq (0,T) \times \mc O \times \R \times \R^d \times \Sym_d\colon  
		\\
		(\Forall n\in\N\colon 
		(b_n,p_n,A_n) \in (\mc P^{-} u)(t_n,x_n) )
		\quad\text{and}\quad 
		\\
		\lim_{n\to\infty} 
		(t_n,x_n,u(t_n,x_n),b_n,p_n,A_n) 
		= 
		(t,x,u(t,x),b,p,A)
		\end{array}
		\Bigg) 
		\Bigg\}
		\end{multline}
	(cf.~\cref{symmetric_matrices,def:parabolic_subjets}). 
\end{definition}

\begin{lemma}\label{lem:realisation_of_parabolic_superjets_by_derivatives_of_smooth_comparison_functions}
	Let $ d \in \N $, 
		$ \varepsilon,T \in (0,\infty) $, 
	let $ \mc O \subseteq \R^d $ be a non-empty open set, 
	let $ u \colon (0,T)\times\mc O \to \R $ be upper semi-continuous, 
	and let 
		$ t \in (0,T) $, 
		$ x \in \mc O $,  
		$ (b,p,A) \in (\mc P^{+} u)(t,x) $. 
	Then there exists $ \phi \in C^{1,2}( (0,T)\times\mc O, \R ) $ such that 
		\begin{enumerate}[(i)] 
			\item \label{realisation_of_parabolic_superjets_by_derivatives_of_smooth_comparison_functions:item1} we have that $ (b,p,A+\varepsilon\operatorname{Id}_{\R^d}) = ( (\frac{\partial}{\partial t}\phi)(t,x), (\nabla_x \phi)(t,x), (\operatorname{Hess}_x \phi)(t,x) ) $ 
			and 
			\item \label{realisation_of_parabolic_superjets_by_derivatives_of_smooth_comparison_functions:item2} we have that $ u - \phi $ has a local maximum at $ ( t, x ) \in (0,T)\times\mc O $
		\end{enumerate}
	(cf.~\cref{def:parabolic_superjets}). 
\end{lemma}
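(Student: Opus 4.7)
The plan is to realise the target tuple $(b,p,A+\varepsilon\operatorname{Id}_{\R^d})$ at $(t,x)$ by perturbing the natural ``Taylor candidate'' given by the superjet. Setting
\begin{equation*}
\psi(s,y) = u(t,x) + b(s-t) + \langle p, y-x \rangle + \tfrac{1}{2}\langle A(y-x), y-x \rangle,
\end{equation*}
I would construct the test function in the form
\begin{equation*}
\phi(s,y) = \psi(s,y) + \tfrac{\varepsilon}{2}\norm{y-x}^2 + k(s-t),
\end{equation*}
where $k\colon \R\to[0,\infty)$ will be a $C^{1}$ function with $k(0)=k'(0)=0$. The spatial quadratic perturbation contributes exactly the additional $\varepsilon\operatorname{Id}_{\R^d}$ to the Hessian, and $k(s-t)$ contributes nothing to $(\partial_t\phi,\nabla_x\phi,\operatorname{Hess}_x\phi)(t,x)$, so Item~\eqref{realisation_of_parabolic_superjets_by_derivatives_of_smooth_comparison_functions:item1} is immediate from $((\partial_t\psi)(t,x),(\nabla_x\psi)(t,x),(\operatorname{Hess}_x\psi)(t,x))=(b,p,A)$.

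Next I would unpack the definition of $(b,p,A)\in(\mc P^{+}u)(t,x)$. Setting
\begin{equation*}
\omega(r) = \sup\!\left\{ \tfrac{[u(s,y)-\psi(s,y)]^+}{|s-t|+\norm{y-x}^2} : (s,y)\in(0,T)\times\mc O,\, 0 < |s-t|+\norm{y-x}^2 \leq r\right\}\!,
\end{equation*}
upper semi-continuity of $u$ gives $\omega(r)<\infty$ for $r\in(0,r_0]$ with some $r_0>0$, and the superjet condition forces $\omega(r)\to 0$ as $r\downarrow 0$. By construction, $u(s,y)-\psi(s,y)\leq \omega(|s-t|+\norm{y-x}^2)(|s-t|+\norm{y-x}^2)$ holds for all $(s,y)\in(0,T)\times\mc O$ with $0<|s-t|+\norm{y-x}^2\leq r_0$.

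The main technical step is to construct a $C^{1}$ function $k\colon\R\to[0,\infty)$ with $k(0)=k'(0)=0$ and $k(\tau)\geq 2|\tau|\omega(2|\tau|)$ for $|\tau|$ small. Since $\tau\mapsto 2|\tau|\omega(2|\tau|)=o(|\tau|)$ as $\tau\to 0$, this is a standard modulus-smoothing exercise. Concretely, I would first pick a continuous non-decreasing $\beta\colon[0,r_0/4]\to[0,\infty)$ with $\beta(0)=0$ and $\beta(r)\geq 4\omega(4r)$, then define $k(\tau)=\int_0^{|\tau|}\beta(s)\,ds$ for $|\tau|\leq r_0/4$ (extended smoothly and non-negatively to all of $\R$). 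Monotonicity of $\beta$ yields $k(\tau)\geq \tfrac{|\tau|}{2}\beta(|\tau|/2)\geq 2|\tau|\omega(2|\tau|)$, while continuity of $\beta$ at $0$ with $\beta(0)=0$ yields $k'(0)=0$ and continuity of $k'$ at $0$.

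Item~\eqref{realisation_of_parabolic_superjets_by_derivatives_of_smooth_comparison_functions:item2} would then follow from a two-region estimate. In the regime $|s-t|\leq\norm{y-x}^2$, we have $|s-t|+\norm{y-x}^2\leq 2\norm{y-x}^2$, so the error bound combined with $\omega(2\norm{y-x}^2)\leq \varepsilon/4$ (valid for $\norm{y-x}$ small enough) gives $u(s,y)-\psi(s,y)\leq \tfrac{\varepsilon}{2}\norm{y-x}^2$. In the regime $|s-t|\geq\norm{y-x}^2$, we have $|s-t|+\norm{y-x}^2\leq 2|s-t|$, so the bound gives $u(s,y)-\psi(s,y)\leq 2|s-t|\omega(2|s-t|)\leq k(s-t)$. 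Combining the two cases and using $\tfrac{\varepsilon}{2}\norm{y-x}^2\geq 0$ and $k\geq 0$ then yields $u(s,y)-\phi(s,y)\leq 0 = u(t,x)-\phi(t,x)$ on a neighbourhood of $(t,x)$, as required. The only subtle point in this plan is the construction of $k$; the rest is the clean two-scale decomposition of the superjet error.
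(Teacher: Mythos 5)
Your argument is correct, and it takes a genuinely different (though related) route from the paper's. The paper works with the error expression already containing the perturbed quadratic $A+\varepsilon\operatorname{Id}_{\R^d}$, divides its positive part by $|s-t|$ alone to get a quantity $\Phi(s,y)$, forms the non-decreasing modulus $\eta(r)=\sup\{\Phi(s,y):|s-t|\le|r|,\,\Norm{y-x}\le\rho\}$, and smooths this into a $C^{1}$ temporal bump $\Psi$ via a double-integral average. You instead divide the \emph{unperturbed} error by the natural superjet denominator $|s-t|+\Norm{y-x}^2$, obtaining $\omega$, and then perform a clean two-regime split: in the space-dominant regime you absorb the error into the extra $\frac{\varepsilon}{2}\Norm{y-x}^2$, and in the time-dominant regime you absorb it into a single-integral temporal bump $k$. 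Both constructions yield a $\phi\in C^{1,2}$ with the required first- and second-order data and the local-maximum property. A real advantage of your $\omega$-based route is that $\omega(r)\to 0$ as $r\downarrow 0$ is \emph{immediate} from the superjet limsup; in the paper's route the analogous fact $\eta(0^+)=0$ (without which $\Psi'(0)=0$ would fail, since $\eta(0^+)=c>0$ would give $\Psi(r)=4c|r|$) requires combining upper semi-continuity of $u$ with the superjet condition at $(t,x)$ and a sufficiently small choice of $\rho$, an argument the paper passes over rather tersely.

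Two small remarks. First, the finiteness of $\omega(r)$ for small $r$ is a consequence of the superjet limsup being $\le 0$ (which gives $\omega(r)\le\delta$ for $r\le r_\delta$), not really of upper semi-continuity of $u$, so the attribution in your text is slightly off though the conclusion stands. Second, your construction of $\beta$ needs one more word: a continuous non-decreasing majorant of $4\omega(4\,\cdot\,)$ vanishing at $0$ exists because $\omega$ is non-decreasing with $\omega(0^+)=0$ — e.g.\ take $\beta(r)=\frac{1}{r}\int_r^{2r}4\omega(4s)\,ds$ and then pass to its running supremum — so "standard modulus-smoothing" is indeed justified, but a referee might ask you to spell out exactly this line.
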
 

\begin{proof}[Proof of \cref{lem:realisation_of_parabolic_superjets_by_derivatives_of_smooth_comparison_functions}]
	Throughout this proof let 
		$ \Phi \colon (0,T)\times\mc O \to \R $
	satisfy for all 
		$ s \in (0,T) $, 
		$ y \in \mc O $ 
	that 
		\begin{equation} \label{realisation_of_parabolic_superjets_by_derivatives_of_smooth_comparison_functions:definition_of_Phi}
		\Phi(s,y) 
		= \begin{cases} 
		\max\!\left\{ \tfrac{ u(s,y) - u(t,x) - b(s-t) - \langle p,y-x \rangle - \frac12 \langle (A+\varepsilon\operatorname{Id}_{\R^d})(y-x), y-x \rangle }{ | s - t | } , 0 \right\} & \colon s \neq t \\
		0 & \colon s = t. 
		\end{cases} 
		\end{equation} 
	Observe that \eqref{parabolic_superjets} ensures that 
		\begin{equation} 
		\limsup_{ (0,T)\times\mc O\setminus\{(t,x)\} \ni (s,y) \to (t,x) } \left[ 
		\tfrac{ u(s,y) - u(t,x) - b(s-t) - \langle p,y-x \rangle - \frac12 \langle A(y-x), y-x \rangle }{ |s-t| + \norm{y-x}^2 } 
		\right] \leq 0.  
		\end{equation} 
	This and the assumption that $ \varepsilon \in ( 0, \infty ) $ imply that there exists $ \rho \in ( 0, \infty ) $ which satisfies that 
	\begin{enumerate}[(I)]
		\item we have that 
		$ [ t - \rho, t + \rho ] \times \{ y \in \R^d \colon \norm{ y - x } \leq \rho \} \subseteq ( 0, T ) \times \mc O $
		and 
		\item we have for all 
			$ s \in [ t - \rho, t + \rho ] $, 
			$ y \in \{ z \in \R^d \colon \norm{ z - x } \leq \rho \} $
		that 
			\begin{equation} \label{realisation_of_parabolic_superjets_by_derivatives_of_smooth_comparison_functions:epsilon_inequality}
			u(s,y)-u(t,x)-b(s-t)-\langle p,y-x \rangle-\tfrac12 \langle (A+\varepsilon\operatorname{Id}_{\R^d})(y-x), y-x \rangle \leq \varepsilon |s-t| .   
			\end{equation}
	\end{enumerate}
	Next let $ \eta \colon \R \to \R $ satisfy for all 
		$ r \in \R $ 
	that 
		\begin{equation} \label{realisation_of_parabolic_superjets_by_derivatives_of_smooth_comparison_functions:definition_of_eta}
		\eta( r ) = 
		\sup\!\left\{ \Phi(s,y) \colon (s,y) \in (0,T)\times\mc O, |s-t| \leq |r|, \norm{y-x}\leq \rho \right\}\!,
		\end{equation} 
	let $ \Psi \colon \R \to \R $ satisfy for all 
		$ r \in \R $ 
	that 
		\begin{equation} \label{realisation_of_parabolic_superjets_by_derivatives_of_smooth_comparison_functions:definition_of_Psi}
		\Psi( r ) 
		=
		\begin{cases}  
		\frac{2}{|r|} \int_0^{2r} \int_0^{s} \eta( \theta ) \,d\theta \,ds & \colon r \neq 0 \\
		0 & \colon r = 0, 
		\end{cases}
		\end{equation} 
	and let 
	$ \psi \colon (0,T)\times\mc O \to \R $ 	
	satisfy for all 
		$ s \in (0,T) $, 
		$ y \in \mc O $
	that 
		$ \psi( s, y ) = b(s-t) + \Psi( s - t ) + \langle p, y-x \rangle + \frac12 \langle (A+\varepsilon\operatorname{Id}_{\R^d})(y-x), y-x \rangle $. 
	Note that the assumption that $ u $ is upper semi-continuous, 
	\eqref{realisation_of_parabolic_superjets_by_derivatives_of_smooth_comparison_functions:definition_of_Phi}, \eqref{realisation_of_parabolic_superjets_by_derivatives_of_smooth_comparison_functions:epsilon_inequality},  \eqref{realisation_of_parabolic_superjets_by_derivatives_of_smooth_comparison_functions:definition_of_eta}, and \eqref{realisation_of_parabolic_superjets_by_derivatives_of_smooth_comparison_functions:definition_of_Psi} ensure that 
	\begin{enumerate}[(a)] 
		\item we have for all $ r \in [0,\infty)$, $ s \in [r,\infty) $ that $ \eta(r) \leq \eta(s) < \infty $, $ \eta(0) = 0 $, and $ \eta(-r) = \eta(r) $,
		\item we have that $ \Psi \in C^1(\R,\R) $, 
		\item we have that $ \Psi^{\prime}(0) = 0 $, 
		\item we have for all 
			$ s \in [t-\rho,t+\rho] $, 
			$ y \in \{ z \in \R^d \colon \Norm{z-x} \leq \rho \} $ 
		that 
			\begin{equation}
			\begin{split}  
			& 
			u(s,y) - u(t,x) - b(s-t) - \langle p, y-x \rangle - \tfrac12 \langle (A+\varepsilon\operatorname{Id}_{\R^d})(y-x), y-x \rangle 
			\\
			& \leq |s-t|\Phi(s,y) \leq |s-t|\eta(|s-t|), 
			\end{split}
			\end{equation}  
		and 
		\item we have for all 
			$ r \in \R $
		that 
			\begin{equation} 
			\Psi( r ) = \frac{ 2 }{ |r| } \int_0^{2|r|} \int_0^s \eta(\theta) \,d\theta\,ds 
			\geq \frac{2}{|r|} \int_{|r|}^{2|r|} \int_{|r|}^s \eta(r) \,d\theta\,ds 
			= |r| \eta(r) . 
			\end{equation} 
	\end{enumerate} 
	The fact that for all 
		$ s \in (0,T) $, 
		$ y \in \mc O $ 
	we have that 
		$ \psi(s,y) = b(s-t) + \Psi( s - t ) + \langle p, y-x \rangle + \frac12 \langle (A+\varepsilon\operatorname{Id}_{\R^d})(y-x), y-x \rangle $ 
	hence ensures that 
	\begin{enumerate}[(A)]
		\item we have that $ \psi \in C^{1,2}((0,T)\times\mc O,\R) $, 
		\item we have that $ (b,p,A+\varepsilon\operatorname{Id}_{\R^d}) = ((\frac{\partial}{\partial t}\psi)(t,x),(\nabla_x\psi)(t,x),(\operatorname{Hess}_x\psi)(t,x)) $,
		and 
		\item we have for all 
			$ s \in [t-\rho,t+\rho] $, 
			$ y \in \{z\in\R^d \colon \norm{z-x}\leq \rho \} $ 
		that 
			$ u(s,y)-\psi(s,y) \leq u(t,x) = u(t,x)-\psi(t,x) $. 
	\end{enumerate} 
	This establishes Items~\eqref{realisation_of_parabolic_superjets_by_derivatives_of_smooth_comparison_functions:item1} and \eqref{realisation_of_parabolic_superjets_by_derivatives_of_smooth_comparison_functions:item2}. 
	This completes the proof of \cref{lem:realisation_of_parabolic_superjets_by_derivatives_of_smooth_comparison_functions}. 
\end{proof}

\begin{lemma} \label{lem:equivalent_conditions_for_viscosity_solutions:left_to_right}
	Let $ d \in \N $, 
		$ T \in (0,\infty) $, 
	let $ \mc O \subseteq \R^d $ be a non-empty open set, 
	let $ G \colon (0,T) \times \mc O \times \R \times \R^d \times \Sym_d \to \R $ be degenerate elliptic and upper semi-continuous, 
	and let $ u \colon [0,T]\times \mc O \to \R $ be a viscosity solution of 
		\begin{equation} 
		(\tfrac{\partial}{\partial t} u)(t,x) 
		+ 
		G( t, x, u(t,x), (\nabla_x u)(t,x), (\operatorname{Hess}_x u)(t,x) ) 
		\geq 0
		\end{equation}
	(cf.~\cref{symmetric_matrices,degenerate_elliptic,def:viscosity_subsolution}). 
	Then we have for all 
		$ t \in (0,T) $, 
		$ x \in \mc O $, 
		$ (b,p,A) \in (\JetClosure^{+}u)(t,x) $ 
	that 
		\begin{equation}\label{equivalent_conditions_for_viscosity_solutions:left_to_right:claim}
			b + G(t,x,u(t,x),p,A) \geq 0 
		\end{equation}
	(cf.~\cref{def:relaxed_parabolic_superjets}).
\end{lemma}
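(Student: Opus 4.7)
I would prove the lemma in two stages: first for the (non-relaxed) parabolic superjet $\mathcal{P}^{+}u$, and then extend to its closure $\JetClosure^{+}u$ by a limit argument exploiting the upper semi-continuity of $G$ and of $u$.

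For the first stage, fix $(t,x) \in (0,T)\times\mc O$ and $(b,p,A) \in (\mathcal{P}^{+}u)(t,x)$ together with an arbitrary $\varepsilon \in (0,\infty)$. The plan is to invoke \cref{lem:realisation_of_parabolic_superjets_by_derivatives_of_smooth_comparison_functions} to obtain a test function $\phi \in C^{1,2}((0,T)\times\mc O, \R)$ with $((\tfrac{\partial}{\partial t}\phi)(t,x),(\nabla_x \phi)(t,x),(\operatorname{Hess}_x \phi)(t,x)) = (b,p,A+\varepsilon\operatorname{Id}_{\R^d})$ such that $u-\phi$ attains a local maximum at $(t,x)$. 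Then \cref{lem:local_maximum_enough} applies directly and yields
\begin{equation}
b + G(t,x,u(t,x),p,A+\varepsilon\operatorname{Id}_{\R^d}) \geq 0.
\end{equation}
Since for every $y \in \R^d$ we have $\langle Ay, y \rangle \leq \langle (A+\varepsilon \operatorname{Id}_{\R^d})y, y\rangle$, and $G$ is upper semi-continuous in its last argument, we can now let $\varepsilon \downarrow 0$. Upper semi-continuity of $G$ gives $\limsup_{\varepsilon \downarrow 0} G(t,x,u(t,x),p,A+\varepsilon\operatorname{Id}_{\R^d}) \leq G(t,x,u(t,x),p,A)$, so combining with the displayed inequality yields $b+G(t,x,u(t,x),p,A) \geq 0$.

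For the second stage, let $(b,p,A) \in (\JetClosure^{+}u)(t,x)$. Then by \cref{def:relaxed_parabolic_superjets} there exist sequences $(t_n,x_n) \in (0,T)\times\mc O$ and $(b_n,p_n,A_n) \in (\mathcal{P}^{+}u)(t_n,x_n)$ with
\begin{equation}
\lim_{n\to\infty} (t_n,x_n,u(t_n,x_n),b_n,p_n,A_n) = (t,x,u(t,x),b,p,A).
\end{equation}
By the first stage we already know that $b_n + G(t_n,x_n,u(t_n,x_n),p_n,A_n) \geq 0$ for every $n \in \N$. Taking $\limsup_{n\to\infty}$, using $b_n \to b$ and the upper semi-continuity of $G$ at $(t,x,u(t,x),p,A)$, we obtain $b + G(t,x,u(t,x),p,A) \geq \limsup_{n\to\infty}[b_n + G(t_n,x_n,u(t_n,x_n),p_n,A_n)] \geq 0$, establishing \eqref{equivalent_conditions_for_viscosity_solutions:left_to_right:claim}.

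The main conceptual subtlety, which is the only point that requires some care, is the $\varepsilon$-regularization in the first stage: \cref{lem:realisation_of_parabolic_superjets_by_derivatives_of_smooth_comparison_functions} does not produce a smooth test function realising the triple $(b,p,A)$ exactly but only the perturbed triple $(b,p,A+\varepsilon\operatorname{Id}_{\R^d})$, and one must therefore pass to the limit $\varepsilon \downarrow 0$ using degenerate ellipticity together with the upper semi-continuity of $G$. Once this is handled, the extension from $\mathcal{P}^{+}u$ to $\JetClosure^{+}u$ is a routine continuity argument.
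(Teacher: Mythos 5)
Your proposal is correct and follows essentially the same route as the paper's proof: realise the perturbed triple $(b,p,A+\varepsilon\operatorname{Id}_{\R^d})$ by a smooth test function via \cref{lem:realisation_of_parabolic_superjets_by_derivatives_of_smooth_comparison_functions}, apply \cref{lem:local_maximum_enough}, and then remove first the $\varepsilon$-perturbation and then pass from $(\mc P^{+}u)$ to $(\JetClosure^{+}u)$ using the upper semi-continuity of $G$ in each limit. The only cosmetic remark is that the final limits rely solely on upper semi-continuity of $G$, not on degenerate ellipticity (which by itself gives the inequality in the unhelpful direction).
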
 

\begin{proof}[Proof of \cref{lem:equivalent_conditions_for_viscosity_solutions:left_to_right}]
	Throughout this proof let  
		$ (t_n,x_n,b_n,p_n,A_n) \in (0,T)\times\mc O\times\R\times\R^d\times\Sym_d $, $ n\in\N_0 $,
	satisfy for all 
		$ n \in \N $ 
	that 
		$ (b_n,p_n,A_n) \in (\mc P^{+} u)(t_n,x_n) $
	and 
		\begin{equation} 
		\lim_{ m \to \infty } (t_m,x_m,u(t_m,x_m),b_m,p_m,A_m) = (t_0,x_0,u(t_0,x_0),b_0,p_0,A_0) .  
		\end{equation}
	Observe that \cref{lem:realisation_of_parabolic_superjets_by_derivatives_of_smooth_comparison_functions} ensures that there exist $ \phi_{\varepsilon,n} \in C^{1,2}((0,T)\times\mc O,\R) $, $ \varepsilon \in (0,\infty) $, $ n \in \N $, which satisfy that 
	\begin{enumerate}[(i)]
	\item we have for all
		$ \varepsilon \in (0,\infty) $,
		$ n \in \N $ 
	that 
		\begin{equation} 
		( b_n, p_n, A_n+\varepsilon\operatorname{Id}_{\R^d} ) 
		= ((\tfrac{\partial}{\partial t}\phi_{\varepsilon,n})(t_n,x_n), (\nabla_x\phi_{\varepsilon,n})(t_n,x_n),
		(\operatorname{Hess}_x \phi_{\varepsilon,n})(t_n,x_n) ) 
		\end{equation} 
	and 
	\item we have  for all
		$ \varepsilon \in (0,\infty) $,
		$ n \in \N $ 
	that 
		$ u - \phi_{\varepsilon,n} $ 
	has a local maximum at $ (t_n,x_n) \in (0,T)\times\mc O $.  
	\end{enumerate}
	\cref{lem:local_maximum_enough} therefore demonstrates that for all 
		$ \varepsilon \in (0,\infty) $, 
		$ n \in \N $ 
	we have that 
		\begin{equation} 
		\begin{split} 
		& 
		b_n + G( t_n, x_n, u( t_n, x_n ), p_n, A_n+\varepsilon\operatorname{Id}_{\R^d} ) 
		\\
		& = (\tfrac{\partial}{\partial t}\phi_{\varepsilon,n})(t_n,x_n) 
		+ 
		G( t_n, x_n, u(t_n,x_n), (\nabla_x \phi_{\varepsilon,n})(t_n,x_n), (\operatorname{Hess}_x \phi_{\varepsilon,n})(t_n,x_n) ) \geq 0 . 
		\end{split} 
		\end{equation} 
	The assumption that $ G $ is upper semi-continuous therefore ensures for all 
		$ n \in \N $ 
	that 
		\begin{equation} 
		\begin{split} 
		& b_n + G( t_n, x_n, u( t_n, x_n ), p_n, A_n ) 
		\geq \limsup_{ ( 0, \infty ) \ni \varepsilon \to 0 } \left[ b_n + G( t_n, x_n, u_n( t_n, x_n ), p_n, A_n+\varepsilon\operatorname{Id}_{\R^d} ) \right]
		\geq 0. 
		\end{split} 
		\end{equation} 
	Combining this with the assumption that $ G $ is upper semi-continuous proves that 
		\begin{equation} 
		b + G( t_0, x_0, u( t_0, x_0 ), p_0, A_0 ) 
		\geq 
		\limsup_{ n \to \infty } \left[ 
		b_n + G( t_n, x_n, u( t_n, x_n) )
		\right] 
		\geq 0 . 
		\end{equation} 
	This establishes \eqref{equivalent_conditions_for_viscosity_solutions:left_to_right:claim}. 
	This completes the proof of \cref{lem:equivalent_conditions_for_viscosity_solutions:left_to_right}. 
\end{proof} 

\begin{lemma} \label{lem:equivalent_conditions_for_viscosity_solutions:right_to_left}
	Let $ d \in \N $, 
		$ T \in (0,\infty) $, 
	let $ \mc O \subseteq \R^d $ be a non-empty open set, 
	let $ G \colon (0,T) \times \mc O \times \R \times \R^d \times \Sym_d \to \R$ be degenerate elliptic, 
	let $u\colon [0,T]\times\R^d\to\R$ be upper semi-continuous, 
	and assume for all 
		$ t \in (0,T) $, 
		$ x \in \mc O $, 
		$ (b,p,A) \in (\mc P^{+}u)(t,x) $ 
	that 
		\begin{equation}
		b + G(t,x,u(t,x),p,A) \geq 0 
		\end{equation}
	(cf.~\cref{symmetric_matrices,degenerate_elliptic,def:parabolic_superjets}). 
	Then we have that $u $ is a viscosity solution of 
		\begin{equation}
		(\tfrac{\partial}{\partial t} u)(t,x) 
		+ 
		G( t, x, u(t,x), (\nabla_x u)(t,x), (\operatorname{Hess}_x u)(t,x) ) 
		\geq 0
		\end{equation}
	for $ (t,x) \in (0,T)\times\mc O $
	(cf.~\cref{def:viscosity_subsolution}). 
\end{lemma}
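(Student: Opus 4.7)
The plan is to verify the defining condition of a viscosity subsolution in \cref{def:viscosity_subsolution}. Fix $t \in (0,T)$, $x \in \mc O$, and a test function $\phi \in C^{1,2}((0,T)\times\mc O,\R)$ with $\phi(t,x) = u(t,x)$ and $\phi \geq u$ on $(0,T) \times \mc O$. I aim to show that the derivative triple
\begin{equation*}
(b,p,A) := \left((\tfrac{\partial}{\partial t}\phi)(t,x),\,(\nabla_x\phi)(t,x),\,(\operatorname{Hess}_x\phi)(t,x)\right)
\end{equation*}
lies in the parabolic superjet $(\mc P^+u)(t,x)$; once this is done, the standing hypothesis yields $b + G(t,x,u(t,x),p,A) \geq 0$, and the identity $u(t,x) = \phi(t,x)$ then turns this into \eqref{viscosity_subsolution:defining_property}.

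To verify $(b,p,A) \in (\mc P^+u)(t,x)$, I would use the inequality $u(s,y) - u(t,x) \leq \phi(s,y) - \phi(t,x)$, valid for all $(s,y) \in (0,T)\times\mc O$, to reduce the problem to showing that
\begin{equation*}
E(s,y) := \phi(s,y) - \phi(t,x) - b(s-t) - \langle p, y-x\rangle - \tfrac{1}{2}\langle A(y-x), y-x\rangle
\end{equation*}
satisfies $E(s,y)/(|s-t| + \norm{y-x}^2) \to 0$ as $(s,y) \to (t,x)$. For this I would apply the mean value theorem separately in the two variables by decomposing $\phi(s,y) - \phi(t,x) = [\phi(s,y) - \phi(t,y)] + [\phi(t,y) - \phi(t,x)]$: the first bracket equals $(\tfrac{\partial}{\partial t}\phi)(\tau,y)(s-t)$ for some $\tau$ between $s$ and $t$, and the second admits a second-order Taylor expansion in $y$ at fixed time $t$ with remainder $\tfrac{1}{2}\langle \operatorname{Hess}_x \phi(t,\xi)(y-x), y-x\rangle$ where $\xi$ lies on the segment from $x$ to $y$. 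Substituting these and invoking joint continuity of $\tfrac{\partial}{\partial t}\phi$ and $\operatorname{Hess}_x\phi$ gives $E(s,y) = o(|s-t|) + o(\norm{y-x}^2) = o(|s-t| + \norm{y-x}^2)$, which is exactly the condition making $(b,p,A)$ belong to $(\mc P^+u)(t,x)$ in view of \eqref{parabolic_superjets}.

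The main (mild) subtlety I anticipate is the two-stage Taylor expansion just outlined: a single joint Taylor expansion of $\phi$ around $(t,x)$ would produce a cross term of the form $\langle \nabla_x\phi(s,x) - \nabla_x\phi(t,x), y-x\rangle$ of order $o(1)\cdot\norm{y-x}$, which cannot be absorbed into $o(|s-t| + \norm{y-x}^2)$ under mere $C^{1,2}$ regularity of $\phi$. Splitting the expansion into a time increment at fixed spatial argument $y$ followed by a spatial Taylor expansion at fixed time $t$ sidesteps this issue cleanly and keeps the error well controlled by the parabolic scale $|s-t| + \norm{y-x}^2$.
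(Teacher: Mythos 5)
Your proof is correct and takes the same approach as the paper: both reduce the claim to the observation that for any admissible test function $\phi$ the triple $\bigl((\tfrac{\partial}{\partial t}\phi)(t,x),(\nabla_x\phi)(t,x),(\operatorname{Hess}_x\phi)(t,x)\bigr)$ belongs to $(\mc P^{+}u)(t,x)$, after which the hypothesis and the identity $\phi(t,x)=u(t,x)$ give \eqref{viscosity_subsolution:defining_property}. The paper states that superjet membership without justification, while your two-stage mean-value/Taylor argument correctly supplies the details, and your remark that a single joint expansion of $\phi$ about $(t,x)$ would leave a cross term of order $o(1)\cdot\norm{y-x}$ that cannot be absorbed into $o(|s-t|+\norm{y-x}^2)$ under mere $C^{1,2}$ regularity is accurate.
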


\begin{proof}[Proof of \cref{lem:equivalent_conditions_for_viscosity_solutions:right_to_left}]
	First, observe that for all 
		$ t \in (0,T) $, 
		$ x \in \mc O $, 
		$ \phi \in C^{1,2}((0,T)\times\mc O,\R) $ 
	with 
		$ \phi \geq u $ 
	and 
		$ \phi(t,x) = u(t,x) $
	we have that 
		$ ((\frac{\partial}{\partial t}\phi)(t,x),(\nabla_x \phi)(t,x),(\operatorname{Hess}_x \phi)(t,x)) \in (\mc P^{+} u)(t,x) $. 
	Hence, we obtain that for all 
		$ t \in (0,T) $, 
		$ x \in \mc O $, 
		$ \phi \in C^{1,2}((0,T)\times\mc O,\R) $
	with 
		$ \phi \geq u $ 
	and 
		$ \phi(t,x) = u(t,x) $ 
	we have that 
		$ (\tfrac{\partial}{\partial t}\phi)(t,x) + G(t,x,u(t,x),(\nabla_x\phi)(t,x),(\operatorname{Hess}_x \phi)(t,x)) \geq 0 $. 
	This establishes that $ u $ is a viscosity solution of 
		\begin{equation} 
		(\tfrac{\partial}{\partial t}u)(t,x) 
		+ G(t,x,u(t,x),(\nabla_x u)(t,x),(\operatorname{Hess}_x u)(t,x)) \geq 0 
		\end{equation} 
	for $ (t,x) \in (0,T)\times\mc O $. 
	This completes the proof of \cref{lem:equivalent_conditions_for_viscosity_solutions:right_to_left}. 
\end{proof}

\subsection{Approximation results for viscosity solutions of suitable PDEs}
\label{subsec:approximation_of_viscosity_solutions}

\begin{lemma} \label{lem:stability_of_subsolutions}
	Let $ d \in \N $, 
		$ T \in (0,\infty) $, 
	let $ \mc O \subseteq \R^d $ be a non-empty open set, 
	let $ u_n \colon (0,T)\times\mc O \to \R $, $ n \in \N_0 $, be functions, 
	let $ G_n \colon (0,T)\times\mc O \times \R \times \R^d \times \Sym_d \to \R $, $ n \in \N_0 $, be degenerate elliptic, 
	assume that 
		$ G_0 $ is upper semi-continuous, 
	assume for all non-empty compact $ \mc K \subseteq (0,T) \times \mc O \times \R \times \R^d \times \Sym_d $ that 
		\begin{equation} \label{stability_of_subsolutions:locally_uniform_convergence}
		\limsup_{n\to\infty} 
		\left[ 
		\sup_{(t,x,r,p,A)\in\mc K} 
		\Big(  
		|u_n(t,x) - u_0(t,x)| + |G_n(t,x,r,p,A)-G_0(t,x,r,p,A)|
		\Big)
		\right] = 0, 
		\end{equation} 
	and assume for all 
		$ n \in \N $ 
	that $ u_n $ is a viscosity solution of 
		\begin{equation} \label{stability_of_subsolutions:viscosity_subsolutions}
		(\tfrac{\partial}{\partial t}u_n)(t,x) 
		+ 
		G_n(t,x,u_n(t,x),(\nabla_x u_n)(t,x),(\operatorname{Hess}_x u_n)(t,x)) \geq 0 
		\end{equation}
	for $ (t,x) \in (0,T)\times\mc O $ (cf.~\cref{def:viscosity_subsolution,degenerate_elliptic,symmetric_matrices}). 
	Then we have that $ u_0 $ is a viscosity solution of 
		\begin{equation} \label{stability_of_subsolutions:claim}
		(\tfrac{\partial}{\partial t}u_0)(t,x) 
		+ 
		G_0(t,x,u_0(t,x),(\nabla_x u_0)(t,x),(\operatorname{Hess}_x u_0)(t,x)) \geq 0 
		\end{equation}
	for $ (t,x) \in (0,T)\times\mc O $ (cf.~\cref{def:viscosity_subsolution}). 
\end{lemma}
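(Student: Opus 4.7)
The plan is to verify the viscosity subsolution property at an arbitrary interior point $(t_0, x_0) \in (0,T)\times \mc O$ via the classical perturbation-to-a-strict-maximum argument. I would fix $(t_0, x_0)$ and a test function $\phi \in C^{1,2}((0,T) \times \mc O, \R)$ with $\phi(t_0, x_0) = u_0(t_0, x_0)$ and $\phi \geq u_0$; I would also note that the locally uniform convergence $u_n \to u_0$ combined with upper semi-continuity of each $u_n$ automatically makes $u_0$ upper semi-continuous, so that \cref{def:viscosity_subsolution} is applicable to $u_0$. The key construction is the perturbed test function
\begin{equation}
\psi(s,y) = \phi(s,y) + |s-t_0|^4 + \norm{y-x_0}^4,
\end{equation}
for which a direct computation gives $(\tfrac{\partial}{\partial t}\psi)(t_0,x_0) = (\tfrac{\partial}{\partial t}\phi)(t_0,x_0)$, $(\nabla_x\psi)(t_0,x_0) = (\nabla_x\phi)(t_0,x_0)$, and $(\operatorname{Hess}_x\psi)(t_0,x_0) = (\operatorname{Hess}_x\phi)(t_0,x_0)$, and for which $\psi - u_0 \geq 0$ with strict inequality away from $(t_0, x_0)$.

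Next I would fix $\eta \in (0, \infty)$ small enough that $K = [t_0-\eta, t_0+\eta]\times\{y\in\R^d\colon \norm{y-x_0}\leq\eta\} \subseteq (0,T)\times\mc O$ and, for each $n \in \N$, select $(t_n, x_n) \in K$ at which the upper semi-continuous function $u_n - \psi$ attains its maximum on $K$. I would then argue that $(t_n, x_n) \to (t_0,x_0)$ and $u_n(t_n,x_n) \to u_0(t_0,x_0)$ as $n \to \infty$. Indeed, local uniform convergence on $K$ gives $\sup_K(u_n-\psi) \to \sup_K(u_0-\psi) = 0$, hence $u_n(t_n,x_n) - \psi(t_n,x_n) \to 0$; for any subsequential limit point $(t^*, x^*)\in K$ of $(t_n,x_n)$, upper semi-continuity of $u_0$ forces $\psi(t^*,x^*) \leq u_0(t^*,x^*)$, and the strict-minimum property of $\psi - u_0$ at $(t_0, x_0)$ then forces $(t^*,x^*) = (t_0, x_0)$.

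For all sufficiently large $n$ the point $(t_n, x_n)$ lies in the interior of $K$ (hence in $(0,T)\times\mc O$), so $u_n - \psi$ has a local maximum at $(t_n, x_n)$. Applying \cref{lem:local_maximum_enough} to the viscosity subsolution $u_n$ with test function $\psi$ yields
\begin{equation}
(\tfrac{\partial}{\partial t}\psi)(t_n, x_n) + G_n\!\left(t_n, x_n, u_n(t_n, x_n), (\nabla_x \psi)(t_n, x_n), (\operatorname{Hess}_x \psi)(t_n, x_n)\right) \geq 0
\end{equation}
for all large $n$. Continuity of the derivatives of $\psi$ produces $(\tfrac{\partial}{\partial t}\psi)(t_n,x_n)\to(\tfrac{\partial}{\partial t}\phi)(t_0,x_0)$ and analogous limits for $\nabla_x \psi$ and $\operatorname{Hess}_x \psi$; to handle the $G_n$ term I would split $G_n = (G_n - G_0) + G_0$, killing the first summand via locally uniform convergence on a compact neighbourhood of the limiting argument and controlling the second via upper semi-continuity of $G_0$ combined with convergence of the argument. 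Passing to $\limsup_{n\to\infty}$ then yields exactly the desired inequality at $(t_0, x_0)$.

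The principal delicate point is the choice of the quartic perturbation $|s-t_0|^4 + \norm{y-x_0}^4$ in place of the more naive quadratic one. A quadratic perturbation would shift $(\operatorname{Hess}_x\phi)(t_0,x_0)$ by an additive $+2\operatorname{Id}_{\R^d}$ term, and because degenerate ellipticity of $G_0$ only provides monotonicity (not continuity) in the Hessian slot, such a shift could not simply be discarded without introducing an auxiliary $\varepsilon\to 0$ limit. The quartic choice instead keeps all first- and second-order derivatives of the test function unchanged at $(t_0,x_0)$ while still rendering the minimum of $\psi - u_0$ strict, which is what makes the final limit step pass through cleanly.
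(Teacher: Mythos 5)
Your proof is correct, and it takes a genuinely different route from the paper's. The paper perturbs the test function with a quadratic, setting $\phi_{\varepsilon}(s,y) = \phi_0(s,y) + \tfrac{\varepsilon}{2}(|s-t_0|^2 + \norm{y-x_0}^2)$ for a family of small $\varepsilon$; because this shifts $(\operatorname{Hess}_x\phi_0)(t_0,x_0)$ by $\varepsilon\operatorname{Id}_{\R^d}$, the paper must run the $n\to\infty$ limit first and then pass $\varepsilon\to 0$ in a second step, invoking upper semi-continuity of $G_0$ twice. Your quartic perturbation $|s-t_0|^4 + \norm{y-x_0}^4$ has vanishing first and second derivatives at $(t_0,x_0)$, so it makes the maximum of $u_0-\psi$ strict without altering any derivative of the test function at the base point, and the whole argument collapses to a single $\limsup_{n\to\infty}$. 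Everything else is shared between the two proofs: upper semi-continuity of $u_0$ is inherited from the $u_n$ via locally uniform convergence; \cref{lem:local_maximum_enough} is used to reformulate the subsolution property in terms of local maxima; compactness of the small closed box forces the approximate maximizers to lie in its interior for large $n$ and to converge to $(t_0,x_0)$ together with the function values; and the split $G_n = (G_n - G_0) + G_0$, with locally uniform convergence killing the first term and upper semi-continuity of $G_0$ handling the second, closes the argument. Your variant is marginally cleaner and is also consistent with the paper's own toolbox — precisely this quartic-perturbation device appears inside the paper's proof of \cref{lem:C2_test_functions_suffice}. One cosmetic inaccuracy: a raw quadratic perturbation $\norm{y-x_0}^2$ would shift the Hessian by $2\operatorname{Id}_{\R^d}$, but the paper's scaled version shifts it by $\varepsilon\operatorname{Id}_{\R^d}$; this does not affect your point, which is that any nonzero multiple of the identity must be removed afterwards via an auxiliary limit.
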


\begin{proof}[Proof of \cref{lem:stability_of_subsolutions}]
	Throughout this proof let 
		$\norm{\cdot}\colon\R^d\to [0,\infty)$ 
	be the standard Euclidean norm on $\R^d$, 
	let $ t_0 \in (0,T) $, 
		$ x_0 \in \mc O $, 
		$ (\phi_{\varepsilon})_{\varepsilon\in (0,\infty)} \subseteq C^{1,2}((0,T)\times\mc O,\R)$
	satisfy for all 
		$ \varepsilon \in (0,\infty) $, 
		$ s \in (0,T) $, 
		$ y \in \mc O $ 
	that 
		$ \phi_0(t_0,x_0) = u_0(t_0,x_0) $, 
		$ \phi_0(s,y) \geq u_0(s,y) $,  
	and 
		\begin{equation} \label{stability_of_subsolutions:definition_of_phi_epsilon}
		\phi_{\varepsilon}(s,y) 
		=
		\phi_0(s,y) 
		+ 
		\tfrac{\varepsilon}{2} ( |s-t_0|^2 + \norm{ y - x_0 }^2 ),     
		\end{equation}
	and let $ \eta \in (0,\infty) $ satisfy that 
		$ \{ (s,y) \in \R\times\R^d \colon \max\{|s-t_0|,\Norm{ y - x_0 }\} \leq \eta \} \subseteq (0,T)\times\mc O $. 
	Note that \eqref{stability_of_subsolutions:locally_uniform_convergence}  and the fact that for all 
		$ n \in \N $ 
	we have that $ u_n $ is upper semi-continuous ensure that $ u_0 $ is upper semi-continuous. 
	Moreover, note that \eqref{stability_of_subsolutions:locally_uniform_convergence} assures that there exists 
		$ \mf n = ( \mf n_{ \varepsilon } )_{ \varepsilon \in (0,\infty) } \colon (0,\infty) \to \N $ 
	which satisfies for all 
		$ \varepsilon \in (0,\infty) $, 
		$ n \in \N \cap [\mf n_{\varepsilon},\infty) $ 
	that 
		\begin{equation}
		\sup\!
		\big\{
		|u_n(s,y) - u_0(s,y)| 
		\colon 
		(s,y) \in (0,T) \times \mc O, 
		\max\{ |s-t_0|, \Norm{ y - x_0 } \} \leq \eta
		\big\}
		< \tfrac{\varepsilon\eta^2}{4}. 
		\end{equation}
	Combining this with \eqref{stability_of_subsolutions:definition_of_phi_epsilon} implies that for all 
		$ \varepsilon \in (0,\infty) $,
		$ n \in \N \cap [\mf n_{\varepsilon},\infty) $, 
		$ s \in (0,T) $, 
		$ y \in \mc O $ 
	with 
		$ |s-t_0| \leq \eta $, 
		$\norm{y-x_0}\leq\eta$, 
	and 
		$|s-t_0|^2+\norm{y-x_0}^2\geq\eta^2$
	we have that 
		\begin{equation}
		\begin{split} 
		u_n(t_0,x_0)
		-
		\phi_{\varepsilon}(t_0,x_0) 
		& =
		u_n(t_0,x_0)
		- 
		\phi_0(t_0,x_0)
		=
		u_n(t_0,x_0) 
		- 
		u_0(t_0,x_0)
		> 
		-\tfrac{\varepsilon\eta^2 }{4}
		\\
		& 
		> 
		u_n(s,y)
		-
		u_0(s,y)
		-
		\tfrac{\varepsilon}{2}(|s-t_0|^2+\norm{y-x_0}^2)
		\\
		& \geq 
		u_n(s,y)
		-
		\phi_0(s,y)
		-
		\tfrac{\varepsilon}{2}(|s-t_0|^2+\norm{y-x_0}^2)
		= 
		u_n(s,y)
		-
		\phi_{\varepsilon}(s,y). 
		\end{split}
		\end{equation}
	The fact that for every 
		$ \varepsilon \in (0,\infty) $,
		$ n \in \N $ 
	we have that $ u_n - \phi_{\varepsilon} $ is upper semi-continuous therefore guarantees that there exist 
		$ \mf t = (\mf t^{(\varepsilon)}_n)_{ (\varepsilon,n) \in \R\times\N } \colon \R\times\N \to (0,T) $
	and 
		$ \mf x = (\mf x^{(\varepsilon)}_{n})_{ (\varepsilon,n) \in \R\times\N } \colon \R\times\N \to \mc O $
	which satisfy for all 
		$ \varepsilon \in (0,\infty) $, 
		$ n \in \N \cap [\mf n_{\varepsilon}, \infty) $, 
		$ s \in [t_0-\eta,t_0+\eta] $, 
		$ y \in \{ z \in \mc O \colon \Norm{ z - x_0 } \leq \eta \} $ 
	that 
		$ \mf t^{(\varepsilon)}_n \in (t_0-\eta,t_0+\eta) $, 
		$ \Norm{ \mf x^{(\varepsilon)}_{n} - x_0 } < \eta $, 
	and 
%		@@@
%	for every 
%		$ \varepsilon \in (0,\infty) $,
%		$ n \in \N \cap [\mf n_{\varepsilon},\infty) $ 
%	there exist 
%		$ t_{n}^{(\varepsilon)} \in (t_0-\eta,t_0+\eta) $, 
%		$ x_{n}^{(\varepsilon)} \in \mc O $ with 
%		$ \Norm{ x_{n}^{(\varepsilon)} - x_0 } < \eta $ 
%	which satisfy that for all 
%		$ s \in [t_0-\eta,t_0+\eta] $, 
%		$ y \in \mc O$ with $ \norm{ y - x_0 } \leq \eta $ 
%	we have that 
		\begin{equation} \label{stability_of_subsolutions:inequality_at_local_max}
		u_n(\mf t^{(\varepsilon)}_{n},\mf x^{(\varepsilon)}_{n})
		-
		\phi_{\varepsilon}(\mf t^{(\varepsilon)}_{n},\mf x^{(\varepsilon)}_{n})
		\geq 
		u_n(s,y)
		-
		\phi_{\varepsilon}(s,y) 
		. 
		\end{equation}
	\cref{lem:local_maximum_enough} and \eqref{stability_of_subsolutions:viscosity_subsolutions} hence prove that for all 
		$ \varepsilon \in (0,\infty)$,
		$ n \in\N\cap [\mf n_{\varepsilon},\infty)$ 
	we have that 
		\begin{equation} \label{stability_of_subsolutions:phi_epsilon_inequality}
		(\tfrac{\partial}{\partial t}\phi_{\varepsilon})(\mf t^{(\varepsilon)}_{n},\mf x^{(\varepsilon)}_{n}) 
		+ 
		G_n(\mf t^{(\varepsilon)}_{n},\mf x^{(\varepsilon)}_{n},u_n(\mf t^{(\varepsilon)}_{n},\mf x^{(\varepsilon)}_{n}),
		(\nabla_x \phi_{\varepsilon} )(\mf t^{(\varepsilon)}_{n},\mf x^{(\varepsilon)}_{n}),
		(\operatorname{Hess}_x \phi_{\varepsilon})(\mf t^{(\varepsilon)}_{n},\mf x^{(\varepsilon)}_{n}))
		\geq 0.      
		\end{equation}
	Moreover, note that \eqref{stability_of_subsolutions:locally_uniform_convergence}, \eqref{stability_of_subsolutions:definition_of_phi_epsilon}, and \eqref{stability_of_subsolutions:inequality_at_local_max} imply that for all 
		$ \varepsilon \in (0,\infty) $
	we have that 
		\begin{equation} \label{stability_of_subsolutions:convergence_of_approximate_maximizers}
		\begin{split}
		0 
		&
		= 
		\limsup_{n\to\infty} \left[ 
		u_0(t_0,x_0) - u_n(t_0,x_0)
		\right]
		= 
		\limsup_{n\to\infty} \left[ 
		\phi_{\varepsilon}(t_0,x_0) - u_n(t_0,x_0) 
		\right]
		\\
		& \geq 
		\limsup_{n\to\infty}
		\left[ 
		\phi_{\varepsilon}(\mf t^{(\varepsilon)}_{n},\mf x^{(\varepsilon)}_{n})
		- 
		u_n(\mf t^{(\varepsilon)}_{n},\mf x^{(\varepsilon)}_{n})
		\right]
		\\
		& = 
		\limsup_{n\to\infty} 
		\left[ 
		\phi_0(\mf t^{(\varepsilon)}_{n},\mf x^{(\varepsilon)}_{n})
		+ 
		\tfrac{\varepsilon}{2}
		( |\mf t^{(\varepsilon)}_{n}-t_0|^2 + \Norm{\mf x^{(\varepsilon)}_{n}-x_0}^2 ) 
		- 
		u_n(\mf t^{(\varepsilon)}_{n},\mf x^{(\varepsilon)}_{n})
		\right]
		\\
		& \geq 
		\limsup_{n\to\infty}
		\left[ 
		u_0(\mf t^{(\varepsilon)}_{n},\mf x^{(\varepsilon)}_{n})
		- 
		u_n(\mf t^{(\varepsilon)}_{n},\mf x^{(\varepsilon)}_{n})
		+ 
		\tfrac{\varepsilon}{2}
		( |\mf t^{(\varepsilon)}_{n}-t_0|^2 + \Norm{\mf x^{(\varepsilon)}_{n}-x_0}^2 )
		\right]
		\\
		& = 
		\limsup_{n\to\infty} 
		\left[ 
		\tfrac{\varepsilon}{2}
		( |\mf t^{(\varepsilon)}_{n}-t_0|^2 + \Norm{\mf x^{(\varepsilon)}_{n}-x_0}^2 ) 
		\right]\!.  
		\end{split}
		\end{equation}
	The fact that $ u_0 $ is upper semi-continuous and \eqref{stability_of_subsolutions:locally_uniform_convergence} hence ensure that for all 
		$ \varepsilon \in (0,\infty) $ 
	we have that 
		\begin{equation} \label{stability_of_subsolutions:convergence_of_function_values}
		\begin{split} 
		& \limsup_{n\to\infty} \left[u_n(\mf t^{(\varepsilon)}_{n},\mf x^{(\varepsilon)}_n) - u_0(t_0,x_0)\right] 
		\\
		& = \limsup_{n\to\infty} \left[u_n(\mf t^{(\varepsilon)}_{n},\mf x^{(\varepsilon)}_{n}) - u_0(\mf t^{(\varepsilon)}_{n},\mf x^{(\varepsilon)}_{n}) + u_0(\mf t^{(\varepsilon)}_{n},\mf x^{(\varepsilon)}_{n}) - u_0(t_0,x_0)\right] 		
		\\
		& = 
		\limsup_{n\to\infty} \left[u_0(\mf t^{(\varepsilon)}_{n},\mf x^{(\varepsilon)}_{n}) - u_0(t_0,x_0)\right] 
		\leq 0 . 
		\end{split} 
		\end{equation} 
	Moreover, note that the fact that $ \phi_0 \in C^{1,2}((0,T)\times\mc O,\R) $, \eqref{stability_of_subsolutions:locally_uniform_convergence}, and \eqref{stability_of_subsolutions:inequality_at_local_max} 
	prove that for all 
		$ \varepsilon \in (0,\infty) $ 
	we have that
		\begin{equation} 
		\begin{split}
		& \liminf_{n\to\infty} \left[u_n(\mf t^{(\varepsilon)}_{n},\mf x^{(\varepsilon)}_{n}) - u_0(t_0,x_0)\right] 
		\\
		& \geq 
		\liminf_{n\to\infty} \left[u_n(t_0,x_0) - u_0(t_0,x_0) + \phi_{\varepsilon}(\mf t^{(\varepsilon)}_{n},\mf x^{(\varepsilon)}_{n}) - \phi_{\varepsilon}(t_0,x_0)\right] 
		= 0 . 
		\end{split} 
		\end{equation}
	This and \eqref{stability_of_subsolutions:convergence_of_function_values} show for all 
		$ \varepsilon \in (0,\infty) $ 
	that 
		\begin{equation}
		\limsup_{n\to\infty} |u_n(\mf t^{(\varepsilon)}_{n},\mf x^{(\varepsilon)}_{n})-u_0(t_0,x_0)| = 0 . 
		\end{equation} 
	The assumption that $ G_0 $ is upper semi-continuous, the fact that $\phi_0\in C^{1,2}((0,T)\times\mc O,\R)$,  
	\eqref{stability_of_subsolutions:locally_uniform_convergence}, 
	\eqref{stability_of_subsolutions:definition_of_phi_epsilon}, and \eqref{stability_of_subsolutions:convergence_of_approximate_maximizers} hence imply that for all 
		$ \varepsilon \in (0,\infty) $ 
	we have that 
		$ \limsup_{n\to\infty} | (\frac{\partial}{\partial t}\phi_{\varepsilon})(\mf t^{(\varepsilon)}_{n},\mf x^{(\varepsilon)}_{n}) - (\frac{\partial}{\partial t}\phi_0)(t_0,x_0)| = 0 $ 
	and 
		\begin{equation} 
		\begin{split} 
		& 	G_0(t_0,x_0,\phi_0(t_0,x_0),(\nabla_x\phi_0)(t_0,x_0),
		(\operatorname{Hess}_x\phi_0)(t_0,x_0)+\varepsilon I)
		\\
		& = 
		G_0(t_0,x_0,u_0(t_0,x_0),(\nabla_x\phi_{\varepsilon})(t_0,x_0),
		(\operatorname{Hess}_x\phi_{\varepsilon})(t_0,x_0))
		\\ 
		& \geq 
		\limsup_{n\to\infty} 
		\left[ 
		G_0(\mf t^{(\varepsilon)}_{n},\mf x^{(\varepsilon)}_{n},u_n(\mf t^{(\varepsilon)}_{n},\mf x^{(\varepsilon)}_{n}),
		(\nabla_x \phi_{\varepsilon} )(\mf t^{(\varepsilon)}_{n},\mf x^{(\varepsilon)}_{n}),
		(\operatorname{Hess}_x \phi_{\varepsilon})(\mf t^{(\varepsilon)}_{n},\mf x^{(\varepsilon)}_{n}))
		\right] 
		\\
		& = 
		\limsup_{n\to\infty} 
		\left[ 
		G_n(\mf t^{(\varepsilon)}_{n},\mf x^{(\varepsilon)}_{n},u_n(\mf t^{(\varepsilon)}_{n},\mf x^{(\varepsilon)}_{n}),
		(\nabla_x \phi_{\varepsilon} )(\mf t^{(\varepsilon)}_{n},\mf x^{(\varepsilon)}_{n}),
		(\operatorname{Hess}_x \phi_{\varepsilon})(\mf t^{(\varepsilon)}_{n},\mf x^{(\varepsilon)}_{n}))
		\right] \!.
		\end{split} 
		\end{equation} 
	Combining this with \eqref{stability_of_subsolutions:phi_epsilon_inequality} assures for all 
		$\varepsilon\in (0,\infty)$ 
	that 
	\begin{align}
	& 
	(\tfrac{\partial}{\partial t}\phi_0)(t_0,x_0)
	+ 
	G_0(t_0,x_0,\phi_0(t_0,x_0),(\nabla_x\phi_0)(t_0,x_0),
	(\operatorname{Hess}_x\phi_0)(t_0,x_0)+\varepsilon \operatorname{Id}_{\R^d})
	\geq 0.  
	\end{align}
	The assumption that $ G_0 $ is upper semi-continuous therefore demonstrates that 
		\begin{equation}
		(\tfrac{\partial}{\partial t}\phi_0)(t_0,x_0)
		+ 
		G_0(t_0,x_0,u_0(t_0,x_0),(\nabla_x\phi_0)(t_0,x_0),
		(\operatorname{Hess}_x\phi_0)(t_0,x_0))
		\geq 0. 
		\end{equation}
	This establishes \eqref{stability_of_subsolutions:claim}. 
	This completes the proof of  \cref{lem:stability_of_subsolutions}. 
\end{proof} 

\begin{cor} \label{cor:stability_of_supersolutions}
	Let $ d \in \N $, 
		$ T \in (0,\infty) $, 
	let $ \mc O \subseteq \R^d $ be a non-empty open set, 
	let $ u_n \colon (0,T)\times\mc O \to \R $, $ n \in \N_0 $, be functions, 
	let $ G_n \colon (0,T)\times\mc O \times \R \times \R^d \times \Sym_d \to \R $, $ n \in \N_0 $, be degenerate elliptic, 
	assume that $ G_0 $ is lower semi-continuous, 
	assume for all non-empty compact $ \mc K \subseteq (0,T) \times \mc O \times \R \times \R^d \times \Sym_d $ that 
		\begin{equation} \label{stability_of_supersolutions:convergence_assumption}
		\limsup_{n\to\infty} 
		\left[ 
		\sup_{(t,x,r,p,A)\in\mc K} 
		\Big(  
		|u_n(t,x) - u_0(t,x)| + |G_n(t,x,r,p,A)-G_0(t,x,r,p,A)|
		\Big)
		\right] = 0, 
		\end{equation} 
	and assume for all 
		$ n \in \N $ 
	that $ u_n $ is a viscosity solution of 
		\begin{equation} \label{stability_of_supersolutions:supersolution_assumption}
		(\tfrac{\partial}{\partial t}u_n)(t,x) 
		+ 
		G_n(t,x,u_n(t,x),(\nabla_x u_n)(t,x),(\operatorname{Hess}_x u_n)(t,x)) \leq 0 
		\end{equation}
	for $ (t,x) \in (0,T)\times\mc O $ (cf.~\cref{symmetric_matrices,degenerate_elliptic,def:viscosity_supersolution}). 
	Then we have that $ u_0 $ is a viscosity solution of 
		\begin{equation} 
		(\tfrac{\partial}{\partial t}u_0)(t,x) 
		+ 
		G_0(t,x,u_0(t,x),(\nabla_x u_0)(t,x),(\operatorname{Hess}_x u_0)(t,x)) \leq 0 
		\end{equation}
	for $ (t,x) \in (0,T)\times\mc O $ (cf.~\cref{def:viscosity_supersolution}). 			
\end{cor}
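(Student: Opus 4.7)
My plan is to reduce \cref{cor:stability_of_supersolutions} to \cref{lem:stability_of_subsolutions} via the standard duality between viscosity subsolutions and supersolutions: a function $u$ is a viscosity supersolution of $(\tfrac{\partial}{\partial t}u)(t,x)+G(t,x,u(t,x),(\nabla_x u)(t,x),(\operatorname{Hess}_x u)(t,x))\leq 0$ if and only if $-u$ is a viscosity subsolution of $(\tfrac{\partial}{\partial t}v)(t,x)+\widetilde G(t,x,v(t,x),(\nabla_x v)(t,x),(\operatorname{Hess}_x v)(t,x))\geq 0$, where $\widetilde G(t,x,r,p,A)=-G(t,x,-r,-p,-A)$. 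Concretely, I would set $v_n=-u_n$ and $\widetilde G_n(t,x,r,p,A)=-G_n(t,x,-r,-p,-A)$ for every $n\in\N_0$, and aim to verify the hypotheses of \cref{lem:stability_of_subsolutions} for the sequences $(v_n)_{n\in\N_0}$ and $(\widetilde G_n)_{n\in\N_0}$.

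First I would check that each $\widetilde G_n$ is degenerate elliptic: if $A,B\in\Sym_d$ satisfy $\langle Ay,y\rangle\leq\langle By,y\rangle$ for all $y\in\R^d$, then the same inequality holds with $A,B$ replaced by $-B,-A$, and the degenerate ellipticity of $G_n$ gives $G_n(t,x,r,p,-B)\leq G_n(t,x,r,p,-A)$, i.e.\ $\widetilde G_n(t,x,r,p,A)\leq\widetilde G_n(t,x,r,p,B)$. Second, since $G_0$ is lower semi-continuous and $(t,x,r,p,A)\mapsto(t,x,-r,-p,-A)$ is a homeomorphism, $\widetilde G_0$ is upper semi-continuous. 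Third, for the convergence hypothesis~\eqref{stability_of_subsolutions:locally_uniform_convergence}, I would observe that every non-empty compact $\mc K\subseteq(0,T)\times\mc O\times\R\times\R^d\times\Sym_d$ is sent by this homeomorphism to another non-empty compact set $\widetilde{\mc K}$, and that $|v_n(t,x)-v_0(t,x)|=|u_n(t,x)-u_0(t,x)|$ while $|\widetilde G_n(t,x,r,p,A)-\widetilde G_0(t,x,r,p,A)|=|G_n(t,x,-r,-p,-A)-G_0(t,x,-r,-p,-A)|$, so the locally uniform convergence assumed in~\eqref{stability_of_supersolutions:convergence_assumption} directly transfers.

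The core step is to check that each $v_n$ with $n\in\N$ is a viscosity subsolution of $(\tfrac{\partial}{\partial t}v_n)(t,x)+\widetilde G_n(t,x,v_n(t,x),(\nabla_x v_n)(t,x),(\operatorname{Hess}_x v_n)(t,x))\geq 0$ for $(t,x)\in(0,T)\times\mc O$. Upper semi-continuity of $v_n$ follows from lower semi-continuity of $u_n$. Given $(t,x)\in(0,T)\times\mc O$ and $\phi\in C^{1,2}((0,T)\times\mc O,\R)$ with $\phi(t,x)=v_n(t,x)$ and $\phi\geq v_n$, I would set $\psi=-\phi\in C^{1,2}((0,T)\times\mc O,\R)$ and note that $\psi(t,x)=u_n(t,x)$ and $\psi\leq u_n$. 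The viscosity supersolution property~\eqref{stability_of_supersolutions:supersolution_assumption} of $u_n$ then yields $(\tfrac{\partial}{\partial t}\psi)(t,x)+G_n(t,x,\psi(t,x),(\nabla_x\psi)(t,x),(\operatorname{Hess}_x\psi)(t,x))\leq 0$, which, multiplying by $-1$ and substituting $\psi=-\phi$, rearranges precisely to the required subsolution inequality for $\phi$.

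Having verified all the hypotheses, I would invoke \cref{lem:stability_of_subsolutions} applied to $(v_n)_{n\in\N_0}$ and $(\widetilde G_n)_{n\in\N_0}$ to deduce that $v_0$ is a viscosity subsolution of the $\widetilde G_0$-equation on $(0,T)\times\mc O$. A final translation via $u_0=-v_0$ and the same test-function reversal $\phi\leftrightarrow-\phi$ as above then yields that $u_0$ is a viscosity supersolution of the original equation, completing the proof. I do not expect a substantive obstacle here: everything is bookkeeping around the sign flip, and the only point requiring minor care is the consistent tracking of which regularity assumption on $G_0$ (upper vs.\ lower semi-continuity) is needed when passing between the two formulations.
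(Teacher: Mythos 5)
Your proposal is correct and follows the same strategy as the paper's proof: define $v_n=-u_n$ and $\widetilde G_n(t,x,r,p,A)=-G_n(t,x,-r,-p,-A)$, verify that the sign flip carries degenerate ellipticity, the semicontinuity hypothesis, the locally uniform convergence, and the viscosity subsolution property across, and then invoke \cref{lem:stability_of_subsolutions}. The paper states these verifications more tersely, but the argument is the same.
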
 

\begin{proof}[Proof of \cref{cor:stability_of_supersolutions}]
	Throughout this proof let 
		$ v_n \colon (0,T)\times\mc O \to \R $, $ n \in \N_0 $, 
	and 
		$ H_n \colon (0,T)\times\mc O \times \R \times \R^d \times \Sym_d \to \R $, $ n \in \N_0 $, 
	satisfy for all 
		$ n \in \N_0 $, 
		$ t \in (0,T) $, 
		$ x \in \mc O $, 
		$ r \in \R $, 
		$ p \in \R^d $, 
		$ A \in \Sym_d $ 
	that 
		$ v_n(t,x) = -u_n(t,x) $ 
	and 
		$ H_n(t,x) = -G_n(t,x,-r,-p,-A) $. 
	Observe that the assumption that $ G_0 $ is lower semi-continuous ensures that $ H_0 $ is upper semi-continuous. 
	Moreover, note that the assumption that for all 
		$ n \in \N_0 $ 
	we have that $ G_n $ is degenerate elliptic shows that we have for all 
		$ n \in \N_0 $ 
	that $ H_n $ is degenerate elliptic.  
	Furthermore, observe that \eqref{stability_of_supersolutions:supersolution_assumption} assures that for all
		$ n \in \N $ 
	we have that $ v_n $ is a viscosity solution of 
		\begin{equation} \label{stability_of_supersolutions:v_subsolutions}
		(\tfrac{\partial}{\partial t}v_n)(t,x) + H_n(t,x,v_n(t,x),(\nabla_x v_n)(t,x),(\operatorname{Hess}_x v_n)(t,x)) \geq 0 
		\end{equation} 
	for $ (t,x) \in (0,T)\times\mc O $. 
	In addition, note that \eqref{stability_of_supersolutions:convergence_assumption} proves that for all non-empty compact $ \mc K \subseteq (0,T)\times\mc O\times\R\times\R^d\times\Sym_d $ we have that 
		\begin{equation} 
		\limsup_{n\to\infty} 
		\left[ 
		\sup_{(t,x,r,p,A)\in\mc K} 
		\Big(  
		|v_n(t,x) - v_0(t,x)| + |H_n(t,x,r,p,A)-H_0(t,x,r,p,A)|
		\Big)
		\right] = 0.  
		\end{equation} 
	Combining this, \eqref{stability_of_supersolutions:v_subsolutions}, 
	the fact that $ H_0 $ is upper semi-continuous, and
	the fact that for all $ n \in \N_0 $ we have that $ H_n $ is degenerate elliptic with \cref{lem:stability_of_subsolutions} demonstrates that $ v_0 $ is a viscosity solution of 
		\begin{equation} 
		(\tfrac{\partial}{\partial t}v_0)(t,x) + H_0(t,x,v_0(t,x),(\nabla_x v_0)(t,x),(\operatorname{Hess}_x v_0)(t,x)) \geq 0 
		\end{equation} 
	for $ (t,x) \in (0,T)\times\mc O $. 
	Hence, we obtain that $ u_0 $ is a viscosity solution of 
		\begin{equation} 
		(\tfrac{\partial}{\partial t}u_0)(t,x) + G_0(t,x,u_0(t,x),(\nabla_x u_0)(t,x),(\operatorname{Hess}_x u_0)(t,x)) \leq 0 
		\end{equation} 
	for $ (t,x) \in (0,T)\times\mc O $. 
	This completes the proof of  \cref{cor:stability_of_supersolutions}. 	
\end{proof}

\begin{cor}	\label{cor:stability_result_for_viscosity_solutions}
	Let $ d \in \N $, 
		$ T \in (0,\infty) $,
	let $ \mc O \subseteq  \R^d $ be a non-empty open set, 
	let $ u_n \colon (0,T) \times \mc O \to \R $, $ n \in \N_0 $, be functions, 
	let $ G_n \colon (0,T) \times \mc O \times \R \times \R^d \times \Sym_d \to \R $, $ n \in \N_0 $, be degenerate elliptic, 
	assume that $ G_0 \colon (0,T) \times \mc O \times \R \times \R^d \times \Sym_d \to \R $ is continuous, 
	assume for all non-empty compact $\cK \subseteq (0,T) \times \mc O \times \R \times \R^d \times \Sym_d $ that 
		\begin{equation} 
		\limsup_{n\to\infty} 
		\left[ 
		\sup_{(t,x,r,p,A) \in \cK} 
		\Big( 
		|G_n(t,x,r,p,A) - G_0(t,x,r,p,A)|
		+ 
		|u_n(t,x) - u_0(t,x)|
		\Big) \right] = 0, 
		\end{equation}
	and assume for all 
		$n\in\N$ 
	that $u_n$ is a viscosity solution of 
		\begin{equation} 
		(\tfrac{\partial}{\partial t}u_n)(t,x) 
		+ 
		G_n(t,x,u_n(t,x),
		(\nabla_x u_n)(t,x),
		(\operatorname{Hess}_x u_n)(t,x))
		= 0
		\end{equation}
	for $ (t,x) \in (0,T)\times\mc O $ (cf.~\cref{symmetric_matrices,degenerate_elliptic,def:viscosity_solution}). 
	Then we have that $ u_0 $ is a viscosity solution of 
		\begin{equation} \label{stability_result_for_viscosity_solutions:claim}
		(\tfrac{\partial}{\partial t}u_0)(t,x) 
		+ 
		G_0(t,x,u_0(t,x),
		(\nabla_x u_0)(t,x),
		(\operatorname{Hess}_x u_0)(t,x))
		= 0
		\end{equation}
	for $ (t,x) \in (0,T)\times\mc O $ (cf.~\cref{def:viscosity_solution}). 
\end{cor}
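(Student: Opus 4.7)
The plan is to derive the conclusion as an immediate combination of \cref{lem:stability_of_subsolutions} and \cref{cor:stability_of_supersolutions}, using only that a viscosity solution is by definition both a viscosity subsolution and a viscosity supersolution, together with the fact that continuity of $G_0$ implies both upper and lower semi-continuity of $G_0$.

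First, I would invoke \cref{def:viscosity_solution} to conclude that for every $n \in \N$ the function $u_n$ is a viscosity subsolution and a viscosity supersolution of the PDE associated with $G_n$. Next, since $G_0$ is continuous, it is in particular upper semi-continuous; together with the assumed locally uniform convergence of $(u_n,G_n)$ to $(u_0,G_0)$, the fact that each $G_n$ is degenerate elliptic, and the fact that each $u_n$ is a viscosity subsolution, \cref{lem:stability_of_subsolutions} yields that $u_0$ is a viscosity solution of
\begin{equation}
(\tfrac{\partial}{\partial t}u_0)(t,x) + G_0(t,x,u_0(t,x),(\nabla_x u_0)(t,x),(\operatorname{Hess}_x u_0)(t,x)) \geq 0
\end{equation}
for $(t,x) \in (0,T)\times\mc O$.

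Symmetrically, continuity of $G_0$ implies lower semi-continuity of $G_0$, and the remaining hypotheses of \cref{cor:stability_of_supersolutions} follow verbatim from our assumptions, so applying \cref{cor:stability_of_supersolutions} gives that $u_0$ is a viscosity solution of
\begin{equation}
(\tfrac{\partial}{\partial t}u_0)(t,x) + G_0(t,x,u_0(t,x),(\nabla_x u_0)(t,x),(\operatorname{Hess}_x u_0)(t,x)) \leq 0
\end{equation}
for $(t,x) \in (0,T)\times\mc O$. Invoking \cref{def:viscosity_solution} once more then establishes \eqref{stability_result_for_viscosity_solutions:claim}. There is no real obstacle here; the work has already been done in \cref{lem:stability_of_subsolutions} and \cref{cor:stability_of_supersolutions}, and this corollary is essentially a bookkeeping step that records the sub/super split and then reassembles it.
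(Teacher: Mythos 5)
Your proposal is correct and follows exactly the same route as the paper's own proof: apply \cref{lem:stability_of_subsolutions} (using that continuity of $G_0$ implies upper semi-continuity) to get the subsolution property, apply \cref{cor:stability_of_supersolutions} (using lower semi-continuity) to get the supersolution property, and combine via \cref{def:viscosity_solution}. Nothing is missing.
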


\begin{proof}[Proof of \cref{cor:stability_result_for_viscosity_solutions}]
	First, observe that \cref{lem:stability_of_subsolutions} ensures that $ u_0 $ is a viscosity solution of 
		\begin{equation} \label{stability_results_for_viscosity_solutions:viscosity_subsolutions}
			(\tfrac{\partial}{\partial t}u_0)(t,x) + G_0(t,x,u_0(t,x),(\nabla_x u_0)(t,x),(\operatorname{Hess}_x u_0)(t,x)) \geq 0 
		\end{equation} 
	for $ (t,x) \in (0,T)\times\mc O $. 
	Next note that \cref{cor:stability_of_supersolutions} proves that $ u_0 $ is a viscosity solution of 
		\begin{equation} 
		(\tfrac{\partial}{\partial t}u_0)(t,x) + G_0(t,x,u_0(t,x),(\nabla_x u_0)(t,x),(\operatorname{Hess}_x u_0)(t,x)) \leq 0 
		\end{equation} 
	for $ (t,x) \in (0,T)\times\mc O $. 
	Combining this with \eqref{stability_results_for_viscosity_solutions:viscosity_subsolutions} establishes \eqref{stability_result_for_viscosity_solutions:claim}.  
	This completes the proof of  \cref{cor:stability_result_for_viscosity_solutions}.
\end{proof}

\subsection{Approximation results for solutions of stochastic differential equations (SDEs)} 
\label{subsec:approximation_sdes}

\begin{lemma} \label{lem:stability_for_sdes}
	Let $d,m\in\N$, 
    	$T\in (0,\infty)$, 
 	let $\norm{\cdot}\colon\R^d\to [0,\infty)$ be the standard Euclidean norm on $\R^d$, 
 	let $\HSnorm{\cdot}\colon\R^{d\times m}\to [0,\infty)$ be the Frobenius norm on $\R^{d\times m}$,    
 	let $\mc O \subseteq  \R^d$ be an non-empty open set, 
 	let 
 	   $\mu_{n}\in C([0,T]\times\mc O,\R^d)$, $n\in\N_0$, 
 	and 
 	   $\sigma_{n}\in C([0,T]\times\mc O,\R^{d\times m})$, $n\in\N_0$,   
 	satisfy for all 
 		$ n \in \N_0 $ 
 	that 
 		\begin{equation}
  		\sup_{t\in [0,T]} 
  		\sup_{x\in\mc O} \sup_{y\in \mc O\setminus\{x\}} 
  		\left( 
 		\frac{\norm{
    	\mu_{n}(t,x) 
    	- 
    	\mu_{n}(t,y) } 
  		+ 
  		\HSnorm{ 
    	\sigma_{n}(t,x) 
    	- 
    	\sigma_{n}(t,y)} }{\norm{ x - y }} 
  		\right) 
  		< \infty, 
 		\end{equation} 
	assume that 
 		\begin{equation}
 		\label{stability_for_sdes:convergence_ass}
  		\limsup_{n\to\infty} 
  		\left[
  		\sup_{t\in [0,T]}
  		\sup_{x\in \mc O}
  		\Big( 
  		\norm{ \mu_{n}(t,x) - \mu_{0}(t,x) } 
  		+ 
  		\HSnorm{ \sigma_{n}(t,x) -  \sigma_{0}(t,x) } 
  		\Big) 
  		\right] 
  		= 
  		0,  
 		\end{equation}
	let $(\Omega,\mathcal{F},\P,(\mathbb{F}_t)_{t\in [0,T]})$ be a stochastic basis, 
 	let $W\colon [0,T]\times\Omega\to\R^m$ be a standard $(\mathbb{F}_t)_{t\in [0,T]}$-Brownian motion, 
	and for every 
		$n\in\N_0$, 
    	$t\in [0,T]$, 
    	$x\in \mc O$ 
 	let $X^{n,t,x} = (X^{n,t,x}_{s})_{s\in [t,T]}\colon [t,T]\times\Omega\to\mc O$ 
    be an $(\mathbb{F}_s)_{s\in [t,T]}$-adapted stochastic process with continuous sample paths satisfying that for all 
    	$s\in [t,T]$ 
    we have $\P$-a.s.~that 
		\begin{equation}\label{stability_for_sdes:sde}
  		X^{n,t,x}_{s} 
  		= 
  		x 
  		+ 
  		\int_t^s 
  		\mu_{n}( r, X^{n,t,x}_{r} ) \,dr
  		+ 
  		\int_t^s \sigma_{n}( r, X^{n,t,x}_{r} ) \,dW_r . 
 		\end{equation}
	Then 
		\begin{equation}
		 \label{stability_for_sdes:claim}
		\limsup_{n\to\infty} 
		\left[ 
		 \sup_{t\in [0,T]}
		 \sup_{s\in [t,T]}
		 \sup_{x\in \mc O}
		 \left( 
		 \Exp{ \norm{ X^{n,t,x}_{s} - X^{0,t,x}_{s} }^2 }
		 \right) 
		\right] 
		= 
		0. 
		\end{equation} 
\end{lemma}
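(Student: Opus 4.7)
The plan is a Gronwall estimate for the $L^2$-difference of the two solutions, with a mild localization to handle the (a priori unknown) integrability of the processes over the possibly unbounded domain $\mc O$. Let
\begin{equation*}
\Lambda := \sup_{r\in[0,T]}\sup_{y,z\in\mc O,\, y\neq z}\frac{\norm{\mu_0(r,y)-\mu_0(r,z)}+\HSnorm{\sigma_0(r,y)-\sigma_0(r,z)}}{\norm{y-z}} \in [0,\infty),
\end{equation*}
which is finite by hypothesis (the case $n=0$), and let
\begin{equation*}
\varepsilon_n := \sup_{r\in[0,T]}\sup_{y\in\mc O}\bigl(\norm{\mu_n(r,y)-\mu_0(r,y)}+\HSnorm{\sigma_n(r,y)-\sigma_0(r,y)}\bigr),
\end{equation*}
so that $\lim_{n\to\infty}\varepsilon_n=0$ by \eqref{stability_for_sdes:convergence_ass}.

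Next I would fix $n\in\N$, $t\in[0,T]$, $x\in\mc O$ and, since no a priori moment bound on $X^{n,t,x}$ is available, introduce the stopping times $\tau_R:=\inf\{s\in[t,T]\colon\max(\norm{X^{n,t,x}_s},\norm{X^{0,t,x}_s})\geq R\}$ (with $\inf\emptyset:=T$). Continuity of the sample paths yields $\tau_R\nearrow T$ almost surely as $R\to\infty$. For $s\in[t,T]$ set $\Delta^R_s:=X^{n,t,x}_{s\wedge\tau_R}-X^{0,t,x}_{s\wedge\tau_R}$, which is bounded by $2R$ and in particular square-integrable. Subtracting the two SDEs in \eqref{stability_for_sdes:sde} (evaluated at $s\wedge\tau_R$) and adding and subtracting $\mu_0(r,X^{n,t,x}_r)$ in the drift and $\sigma_0(r,X^{n,t,x}_r)$ in the diffusion decomposes each increment into a coefficient-closeness piece, whose integrand is bounded by $\varepsilon_n$, and a Lipschitz piece, whose integrand is bounded by $\Lambda\,\norm{\Delta^R_r}$.

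Squaring, taking expectations, and applying $(a+b)^2\leq 2a^2+2b^2$, the Cauchy--Schwarz inequality to the drift integral, and the It\^{o} isometry to the stochastic integral, together with the two bounds above, yields an estimate of the form
\begin{equation*}
\Exp{\norm{\Delta^R_s}^2}\leq 4(T+1)\!\left(T\varepsilon_n^2+\Lambda^2\int_t^s\Exp{\norm{\Delta^R_r}^2}dr\right)
\end{equation*}
valid for every $s\in[t,T]$. Gronwall's inequality then gives $\Exp{\norm{\Delta^R_s}^2}\leq C\,\varepsilon_n^2$ for a constant $C\in(0,\infty)$ depending only on $T$ and $\Lambda$, crucially independent of $R$, $n$, $t$, $s$, and $x$. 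Sending $R\to\infty$ and invoking Fatou's lemma, together with $X^{n,t,x}_{s\wedge\tau_R}\to X^{n,t,x}_s$ almost surely by path continuity, removes the localization and produces the same uniform bound for $\Exp{\norm{X^{n,t,x}_s-X^{0,t,x}_s}^2}$. Passing to the supremum over $s\in[t,T]$, $t\in[0,T]$, and $x\in\mc O$ on the left and then letting $n\to\infty$ on the right establishes \eqref{stability_for_sdes:claim}.

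I do not expect any substantial obstacle. The only technicality is the stopping-time localization, which is needed because the hypotheses do not furnish a priori $L^2$-bounds on $X^{n,t,x}$ when $\mc O$ is unbounded. Everything else is a textbook Gronwall argument whose uniformity in $t$, $s$, and $x$ rests on the fact that both the Lipschitz constant $\Lambda$ of $(\mu_0,\sigma_0)$ and the coefficient-closeness $\varepsilon_n$ are supremum-type quantities independent of the spatial point.
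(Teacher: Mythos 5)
Your proof is correct and follows essentially the same Gronwall strategy as the paper: the same add-and-subtract decomposition of $\mu_n(r,X^{n,t,x}_r)-\mu_0(r,X^{0,t,x}_r)$ (and likewise for $\sigma$) into a coefficient-closeness piece controlled by $\varepsilon_n$ and a Lipschitz piece controlled by $\Lambda$, the same Cauchy--Schwarz/It\^o-isometry estimate, and the same observation that the resulting constant is uniform in $t$, $s$, $x$. The one place you deviate is in how you justify the finiteness needed to run Gronwall: you localize with $\tau_R$ and pass to the limit via Fatou, while the paper simply invokes Karatzas \& Shreve, Theorem 5.2.9, to get $\sup_{s\in[t,T]}\Exp{\norm{X^{n,t,x}_s}^2}<\infty$ directly. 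Your remark that ``no a priori moment bound is available'' is not quite accurate --- the uniform-in-$(x,y)$ Lipschitz hypothesis on $\mc O$ gives linear growth centered at $x$, which is exactly what the cited result needs, and this is the route the paper takes --- but your localization-plus-Fatou argument is also valid and is marginally more self-contained, at the cost of an extra step.
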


\begin{proof}[Proof of \cref{lem:stability_for_sdes}]
	Throughout this proof let 
		$ L \in \R $
	satisfy for all 
		$t\in [0,T]$, 
		$x,y\in\mc O$ 
	that 
		\begin{equation}
		\label{stability_for_sdes:lipschitz_constant}
		\Norm{ 
			\mu_{0}(t,x) - \mu_{0}(t,y)
		} 
		+ 
		\HSnorm{
			\sigma_{0}(t,x) - \sigma_{0}(t,y)  
		} 
		\leq 
		L \Norm{ x - y } . 
		\end{equation}
	Note that, e.g., Karatzas \& Shreve~\cite[Theorem 5.2.9]{KaSh1991_BrownianMotionAndStochasticCalculus} 
	ensures that for all
		$ n \in \N_0 $, 
		$ t \in [0,T] $, 
		$ x \in \mc O $
	we have that 
		\begin{equation}
		\label{stability_for_sdes:l2_integrability}
		\sup_{s\in [t,T]}
		\Exp{\norm{X^{n,t,x}_{s}}^2} 
		< \infty . 
		\end{equation}
    Next observe that \eqref{stability_for_sdes:sde} proves that for all 
		$ n \in \N $, 
		$ t \in [0,T] $, 
		$ s \in [t,T] $, 
		$ x \in \mc O $ 
	we have $\P$-a.s.~that
		\begin{equation}
		\label{stability_for_sdes:difference_equation}
		X^{n,t,x}_{s} 
		-
		X^{0,t,x}_{s} 
		= 
		\int_t^s ( \mu_n( r, X^{n,t,x}_{r} ) - \mu_{0}(r, X^{0,t,x}_{r} ) ) \,dr
		+ 
		\int_t^s ( \sigma_n(r, X^{n,t,x}_{r} ) - \sigma_0(r, X^{0,t,x}_{r} ) ) \,dW_r . 
		\end{equation}
	Minkowski's inequality and It\^o's isometry hence ensure that for all 
		$ n \in \N $, 
		$ t \in [0,T] $, 
		$ s \in [t,T] $, 
		$ x \in \mc O $
	we have that 
		\begin{equation}
		\begin{split}
		\left( 
		\Exp{ 
		\norm{ X^{n,t,x}_{s} - X^{0,t,x}_{s} }^2 
		}
		\right)^{\!\nicefrac12} 
		& \leq 
		\int_t^s \left( 
			\Exp{ \norm{ \mu_n( r, X^{n,t,x}_{r} ) - \mu_{0}( r, X^{0,t,x}_{r} ) }^2} \right)^{\!\nicefrac{1}{2}} \,dr  
		\\
		& \quad 
		+ 
		\left(
		\Exp{\norm{ \int_t^s ( \sigma_n( r, X^{n,t,x}_{r} ) - \sigma_0( r, X^{0,t,x}_{r} ) ) \,dW_r }^2
		} 
		\right)^{\!\!\nicefrac{1}{2}}
		\\
		&
		\leq 
		\int_t^s \left(
		\Exp{ \norm{ \mu_n( r, X^{n,t,x}_{r} ) - \mu_{0}( r, X^{0,t,x}_{r} ) 
		}^2}
		\right)^{\!\nicefrac{1}{2}} \,dr  
		\\
		& \quad 
		+ 
		\left(
		\int_t^s 
		\Exp{\HSnorm{ \sigma_n( r, X^{n,t,x}_{r} ) - \sigma_{0}( r, X^{0,t,x}_{r} )
			}^2}
		\!\,dr
		\right)^{\!\!\nicefrac{1}{2}} . 
		\end{split}
		\end{equation}
	The fact that for all 
		$ a,b \in \R $
	we have that 
		$ (a+b)^2 \leq 2 a^2+2 b^2 $ 
	and the Cauchy-Schwarz inequality therefore show that for all 
		$ n \in \N $, 
		$ t \in [0,T] $, 
		$ s \in [t,T] $,
		$ x \in \mc O $ 
	we have that 
		\begin{equation} 
		\begin{split} 
		\Exp{\norm{ X^{n,t,x}_{s} - X^{0,t,x}_{s}}^2}
		& \leq 
		2
		\left[ 
		\int_t^s 
		\left(
		\Exp{
			\norm{ 
				\mu_n( r, X^{n,t,x}_{r} ) 
				- 
				\mu_0( r, X^{0,t,x}_{r} ) 
			}^2}\right)^{\!\nicefrac12}\,dr 
		\right]^2
		\\
		& \quad 
		+
		2 
		\int_t^s 
		\Exp{\HSnorm{ 
				\sigma_n( r, X^{n,t,x}_{r} ) - 
				\sigma_0( r, X^{0,t,x}_{r} )
			}^2}
		\!\,dr 
		\\
		&  
		\leq 
		2T \int_t^s 
		\Exp{
			\norm{ 
				\mu_{n}( r, X^{n,t,x}_{r} ) 
				- 
				\mu_{0}( r, X^{0,t,x}_{r} ) 
			}^2}\!\,dr 
		\\
		& \quad 
		+
		2 
		\int_t^s 
		\Exp{\HSnorm{ 
				\sigma_{n}( r, X^{n,t,x}_{r} ) 
				- 
				\sigma_{0}( r, X^{0,t,x}_{r} )
			}^2}
		\!\,dr 
		\end{split} 
		\end{equation} 
	This and the fact that for all 
		$ a,b \in \R $ 
	we have that 
		$ (a+b)^2 \leq 2 a^2 + 2 b^2 $ 
	prove that for all 
		$n\in\N$, 
		$t\in [0,T]$, 
		$s\in [t,T]$, 
		$x\in \mc O$ 
	we have that   		
		\begin{equation}
		\begin{split} 		
		&\Exp{\norm{ X^{n,t,x}_{s} - X^{0,t,x}_{s}}^2}	
		\\
		& 
		\leq 
		2T \int_t^s \bigg(
		2\,\Exp{\norm{ \mu_{n}( r, X^{n,t,x}_{r} ) - \mu_0( r, X^{n,t,x}_{r} ) }^2}
		+ 
		2\,\Exp{\norm{ \mu_{0}( r, X^{n,t,x}_{r} ) - \mu_0( r, X^{0,t,x}_{r} ) }^2} \bigg) \,dr 
		\\
		& 
		+ 
		2 \int_t^s \bigg( 2 \, \Exp{\HSnorm{			\sigma_n( r, X^{n,t,x}_{r} ) - 
				\sigma_{0}( r, X^{n,t,x}_{r} )
			}^2}
		+ 
		2 \, \Exp{
			\HSnorm{
				\sigma_0( r, X^{n,t,x}_{r} ) - 
				\sigma_0( r, X^{0,t,x}_{r} )
			}^2}
		\bigg) \,dr. 
		\end{split} 
		\end{equation}
	Combining this with \eqref{stability_for_sdes:lipschitz_constant} demonstrates that for all 
		$ n \in \N $, 
		$ t \in [0,T] $, 
		$ s \in [t,T] $, 
		$ x \in \mc O $ 
	we have that   
		\begin{equation}
		\begin{split}
		& \Exp{\norm{ X^{n,t,x}_{s} - X^{0,t,x}_{s}}^2}
		\leq 
		4L^2 (T+1) 
		\int_t^s 
		\Exp{\norm{ X^{n,t,x}_{r} - X^{0,t,x}_{r} }^2}
		\!\,dr
		\\
		&  
		+
		4 T(T+1) \left[
		\sup_{r\in [0,T]}
		\sup_{y\in \R^d} 
		\left(
		\norm{ 
			\mu_n(r,y) 
			- 
			\mu_0(r,y)
		}^2
		+
		\HSnorm{ 
			\sigma_n(r,y) - \sigma_0(r,y) 
		}^2
		\right)
		\right]\!. 
		\end{split}
		\end{equation}
	Gronwall's inequality and  \eqref{stability_for_sdes:l2_integrability} hence imply that for all 
		$ n \in \N $, 
		$ t \in [0,T] $, 
		$ s \in [t,T] $, 
		$ x\in \mc O $ 
	we have that 
		\begin{equation}
		\begin{split} 
		& 
		\Exp{
			\norm{ 
				X^{n,t,x}_{s}
				- 
				X^{0,t,x}_{s}
			}^2
		}
		\\
		& 
		\leq 4T(T+1) 
		\left[
		\sup_{r\in [0,T]}
		\sup_{y\in \R^d} 
		\left(
		\norm{ 
			\mu_n(r,y) - \mu_0(r,y)
		}^2 
		+
		\HSnorm{ 
			\sigma_n(r,y) - \sigma_0(r,y) 
		}^2
		\right)
		\right] 
		e^{4L^2 T(T+1)}. 
		\end{split}
		\end{equation}   
	This and \eqref{stability_for_sdes:convergence_ass} establish \eqref{stability_for_sdes:claim}. 
	This completes the proof of  \cref{lem:stability_for_sdes}. 
\end{proof}

\subsection{Existence results for viscosity solutions of linear inhomogeneous Kolmogorov PDEs}
\label{subsec:viscosity_inhomogeneous}

\begin{lemma} \label{lem:viscosity_solution_compact_lipschitz}
	Let $ d,m \in \N $, 
		$ T \in (0,\infty) $,
	let $ \langle\cdot,\cdot\rangle \colon \R^d\times\R^d \to \R $ be the standard Euclidean scalar product on $ \R^d $,
	let $ \norm{\cdot} \colon \R^d\to [0,\infty) $ be the standard Euclidean norm on $ \R^{d} $, 
	let $ \HSnorm{\cdot} \colon \R^{d\times m}\to [0,\infty) $ be the Frobenius norm on $ \R^{d\times m} $,    
	let $ \mu \in C( [0,T] \times \R^d, \R^d ) $,  
    	$ \sigma \in C( [0,T] \times \R^d, \R^{d\times m} ) $,  
    	$ g \in C(\R^d,\R) $, 
    	$ h \in C([0,T] \times \R^d,\R) $, 
    assume that $ \mu $ and $ \sigma $ have compact supports, 
    assume that 
	    \begin{equation}   
	     \sup_{t\in [0,T]}
	     \sup_{x\in \R^d}
	     \sup_{y\in \R^d\setminus\{x\}}
	     \left[ 
	      \frac{
	      \norm{ \mu(t,x) - \mu(t,y) } 
	      + 
	      \HSnorm{ \sigma(t,x) 
	      - 
	      \sigma(t,y) }}{\norm{ x - y }}
	     \right] 
	     < \infty,
	    \end{equation}
	let $ (\Omega,\mathcal{F},\P, (\mathbb{F}_t)_{t\in [0,T]}) $ 
    be a stochastic basis, 
	let $ W \colon [0,T]\times\Omega\to\R^m $ be a standard $ (\mathbb{F}_t)_{t\in [0,T]} $-Brownian motion, 
	for every 
 		$ t \in [0,T] $, 
 		$ x \in \R^d $
 	let $ X^{t,x} = (X^{t,x}_{s})_{s\in [t,T]}\colon [t,T]\times\Omega\to\R^d $ be an $(\mathbb{F}_{s})_{s\in [t,T]}$-adapted stochastic process with continuous sample paths satisfying that for all 
	 	$ s \in [t,T] $ 
	we have $\P$-a.s.~that
	 	\begin{equation}
	 	\label{viscosity_solution_compact_lipschitz:ass1}
	  	X^{t,x}_{s} 
	  	= 
	  	x 
	  	+ 
	  	\int_t^s \mu( r,X^{t,x}_{r} )\,dr 
	  	+ 
	 	\int_t^s \sigma( r,X^{t,x}_{r} )\,dW_r, 
		\end{equation}
	and let $ u \colon [0,T]\times\R^d \to \R $ satisfy for all
		$ t \in [0,T] $, 
		$ x \in \R^d $ 
	that 
		\begin{equation} \label{viscosity_solution_compact_lipschitz:definition_of_u}
		u(t,x) = \Exp{ g(X^{t,x}_{T}) + \int_t^T h( s,X^{t,x}_{s} )\,ds }
		\end{equation} 
	(cf.~\cref{lem:integrability}). 
	Then we have that $u$ is a viscosity solution of 
		\begin{equation} \label{viscosity_solution_compact_lipschitz:claim}
		(\tfrac{\partial}{\partial t} u)(t,x) 
		+
		\tfrac12 \operatorname{Trace}\!\left( 
		\sigma(t,x)[\sigma(t,x)]^{*}(\operatorname{Hess}_x u)(t,x)
		\right)
		+
		\langle \mu(t,x),(\nabla_x u)(t,x)\rangle
		+ 
		h(t,x) = 0  
	\end{equation}
	with $ u(T,x) = g(x) $ for $(t,x) \in (0,T)\times\R^d$ (cf.~\cref{def:viscosity_solution}).
\end{lemma}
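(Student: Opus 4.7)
The plan is to reduce the problem to the smooth case treated in \cref{lem:smooth_solutions} by means of a regularization of the coefficients and data, and then to pass to the limit using the stability result \cref{cor:stability_result_for_viscosity_solutions}.

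First, I would fix an open bounded set $\mc O \subseteq \R^d$ with $(\operatorname{supp}(\mu) \cup \operatorname{supp}(\sigma)) \subseteq [0,T]\times\mc O$ and fix a slightly larger open bounded $\mathfrak O$ with $\Closure{\mc O}\subseteq \mathfrak O$. Standard mollification combined with a smooth spatial cutoff supported in $\mathfrak O$ produces sequences of infinitely differentiable, compactly supported functions $\mu_n\colon [0,T]\times\R^d\to\R^d$ and $\sigma_n\colon [0,T]\times\R^d\to\R^{d\times m}$ with $(\operatorname{supp}(\mu_n)\cup\operatorname{supp}(\sigma_n))\subseteq [0,T]\times\mathfrak O$, that are uniformly Lipschitz in space and that converge uniformly on $[0,T]\times\R^d$ to $\mu$ and $\sigma$. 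Mollification also yields $g_n \in C^\infty(\R^d,\R)$ and $h_n \in C^\infty([0,T]\times\R^d,\R)$ with $g_n\to g$ and $h_n\to h$ locally uniformly. For every $n$, I would let $X^{n,t,x}$ be the unique strong solution of the SDE driven by $(\mu_n,\sigma_n)$ started at $(t,x)$ (e.g., via Karatzas \& Shreve~\cite[Theorem 5.2.9]{KaSh1991_BrownianMotionAndStochasticCalculus}) and set $u_n(t,x) = \Exp{g_n(X^{n,t,x}_T) + \int_t^T h_n(s,X^{n,t,x}_s)\,ds}$. \cref{lem:smooth_solutions} then shows that $u_n \in C^{1,2}([0,T]\times\R^d,\R)$ classically solves the corresponding Kolmogorov equation with terminal value $g_n$, and \cref{lem:classical_solutions_are_viscosity_solutions} upgrades this to a viscosity solution.

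The analytical core of the argument is to prove that $u_n\to u$ locally uniformly on $[0,T]\times\R^d$. Because $(\operatorname{supp}(\mu_n)\cup\operatorname{supp}(\sigma_n))\subseteq [0,T]\times\mathfrak O$ and $(\operatorname{supp}(\mu)\cup\operatorname{supp}(\sigma))\subseteq [0,T]\times\mathfrak O$, the support-confinement result \cite[Lemma 3.4]{StochasticFixedPointEquations} gives that for $x\in\R^d\setminus\Closure{\mathfrak O}$ both $X^{n,t,x}$ and $X^{t,x}$ are identically equal to $x$, while for $x\in\Closure{\mathfrak O}$ both processes remain in $\Closure{\mathfrak O}$ almost surely. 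On the exterior region the identities $u_n(t,x) = g_n(x)+\int_t^T h_n(s,x)\,ds$ and $u(t,x) = g(x)+\int_t^T h(s,x)\,ds$ together with the locally uniform convergence of $g_n,h_n$ give the claim. On the compact region $\Closure{\mathfrak O}$ I would apply \cref{lem:stability_for_sdes} to obtain $L^2$-convergence $\Exp{\Norm{X^{n,t,x}_s-X^{t,x}_s}^2}\to 0$ uniformly in $(t,s,x)$ with $0\le t\le s\le T$ and $x\in\Closure{\mathfrak O}$, and then combine this with uniform continuity of $g$ on $\Closure{\mathfrak O}$ and of $h$ on $[0,T]\times\Closure{\mathfrak O}$ (these sets being compact) and with the uniform convergence $g_n\to g$, $h_n\to h$ on these compacta, in a direct triangle-inequality argument, to conclude $u_n\to u$ uniformly on $[0,T]\times\Closure{\mathfrak O}$. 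In particular, $u$ is continuous.

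With the locally uniform convergence in hand, I would apply \cref{cor:stability_result_for_viscosity_solutions} to the functions
\begin{equation}
G_n(t,x,r,p,A) = \tfrac12\operatorname{Trace}(\sigma_n(t,x)[\sigma_n(t,x)]^{*}A) + \langle \mu_n(t,x),p\rangle + h_n(t,x),
\end{equation}
which are continuous and degenerate elliptic and which converge locally uniformly to the analogous $G_0$ built from $\mu,\sigma,h$, in order to deduce that $u$ is a viscosity solution of \eqref{viscosity_solution_compact_lipschitz:claim} on $(0,T)\times\R^d$. The terminal condition $u(T,x)=g(x)$ follows from \eqref{viscosity_solution_compact_lipschitz:definition_of_u} and the fact that $\P(X^{T,x}_T=x)=1$. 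The main obstacle is the uniform-convergence step: one must carefully coordinate the mollification parameters so that the Lipschitz estimates underlying \cref{lem:stability_for_sdes} are uniform in $n$ and the resulting $L^2$-error can be absorbed against the moduli of continuity of $g$ and $h$ on $\Closure{\mathfrak O}$ uniformly in the starting point $x$ and the starting time $t$.
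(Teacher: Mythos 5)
Your proposal is correct and uses the same three building blocks as the paper's own proof: the smooth-case Feynman--Kac result \cref{lem:smooth_solutions}, the SDE stability estimate \cref{lem:stability_for_sdes}, and the viscosity stability result \cref{cor:stability_result_for_viscosity_solutions}, together with the exterior/interior decomposition via the support-confinement argument. The organizational difference is genuine though. The paper runs a two-parameter approximation with separate indices $n$ (coefficients) and $k$ (data $g,h$): for a fixed $k$ it bounds $\EXP{|\mathfrak g_k(\mathfrak X^{n,t,x}_T)-\mathfrak g_k(X^{t,x}_T)|}$ via the gradient bound $\sup_{y}\|(\nabla\mathfrak g_k)(y)\|$, a constant that blows up as $k\to\infty$, which forces the iterated limit $n\to\infty$ first, then $k\to\infty$, and hence two separate applications of \cref{cor:stability_result_for_viscosity_solutions}. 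You replace the Lipschitz-gradient estimate by the uniform continuity of $g$ and $h$ on the compact reference set, splitting $\EXP{|g(X^{n,t,x}_T)-g(X^{t,x}_T)|}$ via the modulus of continuity and a Markov-inequality tail estimate against the $L^2$-error furnished by \cref{lem:stability_for_sdes}. This collapses everything to a single approximation index and a single application of \cref{cor:stability_result_for_viscosity_solutions}, which is a cleaner and more elementary route. One small remark: your closing concern about ``coordinating the mollification parameters so that the Lipschitz estimates underlying \cref{lem:stability_for_sdes} are uniform in $n$'' is actually moot --- in the proof of that lemma the Gronwall constant only involves the Lipschitz constant of the limit coefficients $\mu_0,\sigma_0$, not of the approximants, so no coordination is required beyond uniform convergence of $\mu_n\to\mu$ and $\sigma_n\to\sigma$.
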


\begin{proof}[Proof of \cref{lem:viscosity_solution_compact_lipschitz}] 
	Throughout this proof let $\mc K = (\operatorname{supp}(\mu)\cup\operatorname{supp}(\sigma)) \subseteq [0,T]\times\R^d$, 
	let $ \rho \in (0,\infty) $ satisfy $ \mc K \subseteq [0,T]\times (-\rho,\rho)^d $, 
% 	let $( -\rho, \rho )^d \subseteq \R^d$ be an open ball which satisfies $\mc K\subseteq [0,T]\times ( -\rho, \rho )^d$,  
 	let $ \mathfrak{m}_n \in C^{\infty}([0,T]\times\R^d,\R^d) $, $ n\in\N $, 
 	and $ \mathfrak{s}_n \in C^{\infty}([0,T]\times\R^d,\R^{d\times m}) $, $ n\in\N $, 
	satisfy  
	    $ \bigcup_{n\in\N} [\operatorname{supp}(\mathfrak{m}_{n}) \cup \operatorname{supp}(\mathfrak{s}_{n})] 
	    \subseteq [0,T]\times (-\rho,\rho)^d $
 	and 
	 	\begin{equation}
	 	\label{viscosity_solution_compact_lipschitz:mu_sigma_approximations}
	 	\limsup_{n\to\infty} 
	 	\left[
	 	\sup_{t\in [0,T]}\sup_{x\in\R^d} 
	 	\Big( 
	 	\norm{ \mathfrak{m}_{n}(t,x) - \mu(t,x) } 
	 	+ 
	 	\HSnorm{ \mathfrak{s}_{n}(t,x) - \sigma(t,x) }
	 	\Big) 
	 	\right] 
	 	= 
	 	0, 
	 	\end{equation}
	let	$ \mathfrak{g}_{n} \in C^{\infty}(\R^d,\R)$, $ n \in \N $, and $ \mathfrak{h}_{n} \in C^{\infty}([0,T]\times\R^d,\R)$, $ n \in \N $, satisfy for all 		
	 	$ n \in \N $ 
	that 
		\begin{equation}
	 	\label{viscosity_solution_compact_lipschitz:g_and_h_approximation}
	 	\begin{split}
	 	\sup\nolimits_{ t \in [0,T] }
	 	\sup\nolimits_{ x \in \R^d, \norm{x} \leq n }
	  	\big( | \mathfrak{g}_{n}(x) - g(x) | + | \mathfrak{h}_{n}(t,x) - h(t,x) | \big)
 	 	\leq \tfrac{1}{n},              
 		\end{split}
 		\end{equation}
	let $ G^{n,k}\colon (0,T)\times\R^d\times\R\times\R^{d}\times\Sym_{d}\to\R $, $ n,k \in \N_0 $, 
	satisfy for all 
		$ n,k\in\N $,
		$ t \in (0,T) $, 
		$ x \in \R^d $, 
		$ r \in \R $, 
		$ p \in \R^{d} $, 
		$ A \in \Sym_{d} $ 
	that 
		\begin{equation}
		G^{0,0}( t, x , r , p , A ) 
		= 
		\tfrac12
		\operatorname{Trace}\!\left( 
		\sigma(t,x)\sigma(t,x)^{*}A\right) 
		+ 
		\langle \mu(t,x), p\rangle
		+ h(t,x), 
		\end{equation}
		\begin{equation}
		G^{0,k}( t, x, r, p, A ) 
		= 
		\tfrac12 \operatorname{Trace}\!\left( 
		\sigma(t,x)\sigma(t,x)^{*}A\right) 
		+ 
		\langle \mu(t,x), p \rangle
		+ \mathfrak{h}_{k}(t,x), 
		\end{equation}
	and 
		\begin{equation}
		G^{n,k}( t, x, r, p, A ) 
		= 
		\tfrac12 \operatorname{Trace}\!\left( 
		\mathfrak{s}_n(t,x)\mathfrak{s}_n(t,x)^{*}A \right) 
		+
		\langle \mathfrak{m}_n(t,x), p\rangle
		+ 
		\mathfrak{h}_k(t,x), 
		\end{equation}
 	for every 
 		$ t \in [0,T] $, 
 		$ x \in \R^d $, 
 		$ n \in \N $ 
 	let $ \mathfrak{X}^{n,t,x} = (\mathfrak{X}^{n,t,x}_{s})_{s\in [t,T]}\colon [t,T]\times\Omega\to\R^d $ be an $(\mathbb{F}_s)_{s\in [t,T]}$-adapted stochastic process with continuous sample paths satisfying that for all 
 		$ s \in [t,T] $ 
 	we have $\P$-a.s.~that 
 		\begin{equation}\label{viscosity_solution_compact_lipschitz:approximation_SDEs}
		\mathfrak{X}^{n,t,x}_{s} 
  		= 
  		x + \int_t^s \mathfrak{m}_{n}(r,\mathfrak{X}^{n,t,x}_{r})\,dr + \int_t^s \mathfrak{s}_{n}(r,\mathfrak{X}^{n,t,x}_{r})\,dW_r  
		\end{equation}
	(cf., for example, Karatzas \& Shreve~\cite[Theorem 5.2.9]{KaSh1991_BrownianMotionAndStochasticCalculus}), 
	and let $\mathfrak{u}^{n,k}\colon [0,T]\times\R^d\to\R$, $n\in\N_0$, $k\in\N$, satisfy for all 
		$ n,k \in \N $, 
		$ t \in [0,T] $, 
	 	$ x \in \R^d $
	that
		\begin{equation}
		\label{viscosity_solution_compact_lipschitz:definition_u_nk}
		 \mathfrak{u}^{n,k}(t,x) 
		 = 
		 \Exp{ 
		 \mathfrak{g}_{k}(\mathfrak{X}^{n,t,x}_{T}) 
		 + 
		 \int_t^T 
		 \mathfrak{h}_{k}(s,\mathfrak{X}^{n,t,x}_{s})\,ds
		 } 
		\end{equation}
	and 
	\begin{equation}
	\label{viscosity_solution_compact_lipschitz:definition_u_0k}
	 \mathfrak{u}^{0,k}(t,x) 
	 = 
	 \Exp{ 
	 \mathfrak{g}_{k}( X^{t,x}_{T} ) 
	 + 
		 \int_t^T 
	 \mathfrak{h}_{k}(s, X^{t,x}_{s} ) \,ds
	 } 
	\end{equation}
	(cf.~\cref{lem:integrability}). 
	Note that \cref{lem:smooth_solutions} (applied with
		$ g \is \mathfrak{g}_{k} $, 
		$ h \is \mathfrak{h}_{k} $, 
		$ \mu \is \mathfrak{m}_{n} $, 
		$ \sigma \is \mathfrak{s}_{n} $, 
		$ X^{t,x} \is \mf X^{n,t,x} $ 
	for 
		$ n \in \N $, 
		$ t \in [0,T] $, 
		$ x \in \mc O $
	in the notation of \cref{lem:smooth_solutions})
	establishes that for all 
		$ n,k \in \N $, 
		$ t \in [0,T] $, 
		$ x \in \R^d $ 
	we have that 
		$\mathfrak{u}^{n,k}\in C^{1,2}([0,T]\times\R^d,\R)$, 
		$\mathfrak{u}^{n,k}(T,x) = \mathfrak{g}_{k}(x)$, 
	and 
		\begin{multline}
		 (\tfrac{\partial }{\partial t}\mathfrak{u}^{n,k})(t,x) 
		 + 
		 \tfrac12 
		 \operatorname{Trace}\!\left(\mathfrak{s}_{n}(t,x)[\mathfrak{s}_{n}(t,x)]^{*}(\operatorname{Hess}_x \mathfrak{u}^{n,k})(t,x)\right)  
		 + 
		 \langle \mathfrak{m}_{n}(t,x),(\nabla_x \mathfrak{u}^{n,k})(t,x)\rangle
		 \\
		 + 
		 \mathfrak{h}_{k}(t,x) = 0 . 
		 \end{multline}
	\cref{lem:classical_solutions_are_viscosity_solutions} hence implies that for all 
		$ n,k \in \N $ 
	we have that 
		$ \mathfrak{u}^{n,k} $ 
	is a viscosity solution of 
		\begin{multline}
		\label{viscosity_solution_compact_lipschitz:u_nk_viscosity_solution}
		(\tfrac{\partial}{\partial t} \mathfrak{u}^{n,k})(t,x) 
		+
		\tfrac12 
		\operatorname{Trace}\!\left(\mathfrak{s}_{n}(t,x)[\mathfrak{s}_{n}(t,x)]^{*}(\operatorname{Hess}_x \mathfrak{u}^{n,k})(t,x)\right)   
		+ 
		\langle \mathfrak{m}_{n}(t,x),(\nabla_x \mathfrak{u}^{n,k})(t,x)\rangle
		\\
		+ 
		\mathfrak{h}_{k}(t,x) = 0 
		\end{multline}
	for $(t,x) \in (0,T) \times \R^d$.
	Next note that \eqref{viscosity_solution_compact_lipschitz:ass1}, 
	\eqref{viscosity_solution_compact_lipschitz:approximation_SDEs}, and
 	the fact that for all 
 		$ n \in \N $ 
    we have that 
		$ (\operatorname{supp}(\mathfrak{m}_n)
		\cup \operatorname{supp}(\mathfrak{s}_n)
		\cup \operatorname{supp}(\mu)
		\cup \operatorname{supp}(\sigma)) \subseteq [0,T]\times( -\rho, \rho )^d $
	demonstrate that for all 
	    $ n \in \N $, 
	    $ t \in [0,T] $, 
	    $ x \in \R^d \setminus ( -\rho, \rho )^d $
    we have that  
	    $ \P(\Forall s\in [t,T]\colon \mathfrak{X}^{n,t,x}_{s} = x = X^{t,x}_{s})=1 $
	(cf., e.g., \cite[Item~(i) in Lemma 3.4]{StochasticFixedPointEquations}).
 	Hence, we obtain for all 
 	    $ n,k \in \N $,
    	$ t \in [0,T] $, 
    	$ x \in \R^d \setminus ( -\rho, \rho )^d $
    that 
    	$ \mathfrak{u}^{n,k}(t,x) = \mathfrak{u}^{0,k}(t,x) $. 
    Combining this with \eqref{viscosity_solution_compact_lipschitz:definition_u_nk} and \eqref{viscosity_solution_compact_lipschitz:definition_u_0k}     
	assures that for all 
	    $ n,k \in \N $ 
	we have that 
		\begin{equation}
		\label{viscosity_solution_compact_lipschitz:u_estimate}
		\begin{split}
		&
		\sup_{t\in [0,T]} \sup_{x \in \R^d} \Big[ | \mathfrak{u}^{n,k}(t,x) - \mathfrak{u}^{0,k}(t,x) | \Big]
		= 
		\sup_{t\in [0,T]} \sup_{x \in ( -\rho, \rho )^d} \Big[ | \mathfrak{u}^{n,k}(t,x) - \mathfrak{u}^{0,k}(t,x) | \Big]
		\\
		& 
		\leq 
		\sup_{t\in [0,T]} \sup_{x \in ( -\rho, \rho )^d} \left( 
		\Exp{ | \mathfrak{g}_{k}(  \mathfrak{X}^{n,t,x}_{T} ) - 
		 \mathfrak{g}_{k}( X^{t,x}_{T} ) | }
		  + 
		  \Exp{
		  \int_t^T 
		   | \mathfrak{h}_{k}( s, \mathfrak{X}^{n,t,x}_{s} ) 
		   - \mathfrak{h}_{k}( s, X^{t,x}_{s} ) | \,ds }
		  \right) 
		  \!. 
		 \end{split} 
		\end{equation}
	Moreover, observe that the fact that for all 
		$ k \in \N $
	we have that 
		$ \mathfrak{g}_{k} \in C^{\infty}( \R^d, \R ) $, 
	the fact that for all 
		$ k \in \N $ 
	we have that 
    	$ \mathfrak{h}_{k} \in C^{\infty}( [0,T] \times \R^d, \R ) $, 
	the fact that $ ( -\rho, \rho )^d $ is convex, 
	the fact that $ [ -\rho, \rho ]^d $ is compact, 
	and the fact that for all 
	    $ n \in \N $,
    	$ t \in [0,T] $, 
    	$ x \in ( -\rho, \rho )^d $ 
    we have that 
		$ \P(\Forall s\in [t,T]\colon X^{t,x}_{s}\in [ -\rho, \rho ]^d)
		=
		\P(\Forall s\in [t,T]\colon \mathfrak{X}^{n,t,x}_{s}\in [-\rho,\rho]^d) 
		= 
		1 $ 
	(cf., e.g., \cite[Item~(ii) in Lemma 3.4]{StochasticFixedPointEquations}) yields that for all 
		$ n,k \in \N $ 
	we have that 
		\begin{equation}
		\label{viscosity_solution_compact_lipschitz:g_estimate}
		 \begin{split}
		 & 
		 \sup_{t\in [0,T]} 
		 \sup_{x\in ( -\rho, \rho )^d} 
		 \left( 
		 \Exp{
		 | \mathfrak{g}_{k}( \mathfrak{X}^{n,t,x}_{T} ) - 
		 \mathfrak{g}_{k}( X^{t,x}_{T} ) | }
		  \right) 
		 \\
		 & 
		 \leq 
		 \sup_{t\in [0,T]} 
		 \sup_{x\in ( -\rho, \rho )^d} 
		 \left( 
		 \Exp{ \left( \sup_{ y \in ( -\rho, \rho )^d } \norm{ (\nabla \mathfrak{g}_{k})(y) } \right) \! 
		 \Norm{ \mathfrak{X}^{n,t,x}_{T} - X^{t,x}_{T} } } 
		  \right) 
		  \\
		  & 
		  \leq 
		 \underset{< \infty}{
		 \underbrace{
		 \left( 
		 \sup_{ y \in ( -\rho, \rho )^d } 
		 \norm{ (\nabla \mathfrak{g}_{k})(y) }
		 \right) 
		 }}
		 \left[ 
		 \sup_{t\in [0,T]} 
		 \sup_{ x \in ( -\rho, \rho )^d } 
		 \left( 
		 \Exp{
		  \Norm{ \mathfrak{X}^{n,t,x}_{T} - X^{t,x}_{T} }
		  } 
		  \right)
		  \right] 
		 \end{split}
		\end{equation}	
	and 
		\begin{equation}
		\label{viscosity_solution_compact_lipschitz:h_estimate}
		\begin{split}
		 & \sup_{t\in [0,T]} 
		 \sup_{ x \in ( -\rho, \rho )^d } 
		 \left( 
		  \Exp{\int_t^T 
		  | \mathfrak{h}_{k}( s, \mathfrak{X}^{n,t,x}_{s} ) 
		  - \mathfrak{h}_{k}( s, X^{t,x}_{s} ) | \,ds }
		 \right) 
		 \\
		 & 
		 \leq 
		 \sup_{ t \in [0,T] } 
		 \sup_{ x \in ( -\rho, \rho )^d } 
		 \left( 
		 \Exp{ \int_t^T 
		  	\left( \sup_{y\in ( -\rho, \rho )^d }
		    \norm{ (\nabla_x \mathfrak{h}_k)(s,y) } 
		 \right) \!
		 \Norm{ \mathfrak{X}^{n,t,x}_{s} - X^{t,x}_{s} } \,ds }
		 \right) 
		 \\
		 & 
		 \leq
		 \left( 
		 \sup_{ t \in [0,T] }
		 \sup_{ x \in ( -\rho, \rho )^d }
		 \norm{ (\nabla_x \mathfrak{h}_{k})(t,x) } \right) 
		 \left[
		 \sup_{ t \in [0,T] } 
		 \sup_{ x \in ( -\rho, \rho )^d } 
		 \left( 
		  \Exp{\int_t^T 
		  \Norm{ \mathfrak{X}^{n,t,x}_{s} - X^{t,x}_{s} } \,ds }
		 \right)
		 \right]\!. 
		\end{split}
		\end{equation}
	Furthermore, note that \cref{lem:stability_for_sdes}  ensures that 
		\begin{equation}
		 \limsup_{n\to\infty} 
		 \left[
		 \sup_{ t \in [0,T] }
		 \sup_{ x \in ( -\rho, \rho )^d } 
		 \left( 
		 \Exp{
		  \Norm{ \mathfrak{X}^{n,t,x}_{T} - X^{t,x}_{T} }
		  }
		  + 
		  \Exp{\int_t^T 
		   \Norm{ \mathfrak{X}^{n,t,x}_{s} - X^{t,x}_{s} } \,ds }
		 \right)
		 \right]
		  = 
		  0. 
		\end{equation}
	Combining this with \eqref{viscosity_solution_compact_lipschitz:u_estimate}--\eqref{viscosity_solution_compact_lipschitz:h_estimate} guarantees that for all 
		$ k \in \N $ 
    we have that 
		\begin{equation}
		\label{viscosity_solution_compact_lipschitz:u_nk_convergence}
		 \limsup_{n\to\infty} 
		 \left[ 
		 \sup_{t\in [0,T]}
		 \sup_{x\in \R^d}
		 \Big(
		 | \mathfrak{u}^{n,k}(t,x) - \mathfrak{u}^{0,k}(t,x) |
		 \Big)
		 \right]
		 = 0. 
		\end{equation}
	Moreover, observe that \eqref{viscosity_solution_compact_lipschitz:u_nk_viscosity_solution} proves that for all 
		$ n,k \in \N $ 
	we have that $\mathfrak{u}^{n,k}$ is a viscosity solution of 
		\begin{equation}
		\label{viscosity_solution_compact_lipschitz:G_nk}
		 (\tfrac{\partial }{\partial t}\mathfrak{u}^{n,k})(t,x) 
		 + 
		 G^{n,k}(t,x,\mathfrak{u}^{n,k}(t,x),(\nabla_x \mathfrak{u}^{n,k})(t,x),(\operatorname{Hess}_x\mathfrak{u}^{n,k})(t,x))
		 = 
		 0
		\end{equation}
	for $(t,x) \in (0,T) \times \R^d$. 
	Furthermore, observe that \eqref{viscosity_solution_compact_lipschitz:mu_sigma_approximations} yields that for all non-empty compact $\cC\subseteq (0,T) \times \cO \times \R \times \R^{d} \times \Sym_{d}$ we have that 
		\begin{equation}
		 \begin{split} 
		 & 
		 \limsup_{n\to\infty} 
		 \left[\sup_{(t,x,r,p,A) \in \cC} 
		 \left|
		 G^{n,k}(t,x,r,p,A) - G^{0,k}(t,x,r,p,A)
		 \right|
		 \right]
		 \\
		 & 
		 \leq 
		 \limsup_{n\to\infty}
		 \left[\sup_{(t,x,r,p,A) \in \cC}
		 \Big( \norm{ \mu(t,x) - \mathfrak{m}_n(t,x) } \norm{ p } \Big)   
		 \right]
		 \\
		 & 
		 +
		 \limsup_{n\to\infty}
		 \left[\sup_{(t,x,r,p,A) \in \cC}
		 \Big( \HSnorm{ \mathfrak{s}_n(t,x)[\mathfrak{s}_n(t,x)]^{*}-\sigma(t,x)[\sigma(t,x)]^{*}} \HSnorm{ A }	\Big) 
		 \right]
		= 0 .  
		 \end{split}
		 \end{equation}
	This, \eqref{viscosity_solution_compact_lipschitz:u_nk_convergence}, \eqref{viscosity_solution_compact_lipschitz:G_nk}, the fact that $ G^{0,0} $ is continuous, and  \cref{cor:stability_result_for_viscosity_solutions} demonstrate that for all
		$ k \in \N $ 
	we have that $ \mathfrak{u}^{0,k} $ is a viscosity solution of 
		\begin{equation} \label{viscosity_solution_compact_lipschitz:uk_viscosity_solution}
		(\tfrac{\partial }{\partial t}\mathfrak{u}^{0,k})(t,x)
	 	+ 
	 	G^{0,k}(t,x,\mathfrak{u}^{0,k}(t,x),(\nabla_x \mathfrak{u}^{0,k})(t,x),(\operatorname{Hess}_x\mathfrak{u}^{0,k})(t,x))
		= 0
		\end{equation}
	for $(t,x)\in (0,T)\times\R^d$. 
	Moreover, observe that \eqref{viscosity_solution_compact_lipschitz:g_and_h_approximation} ensures that for all compact $ \mc C \subseteq [0,T]\times\R^d $ 
	we have that 
		\begin{equation}
		\label{viscosity_solution_compact_lipschitz:gk_convergence_to_g}
		 \begin{split}
		 & 
		 \limsup_{k\to\infty} 
		 \left[\sup_{(t,x)\in \mc C} 
		 \Exp{ | \mathfrak{g}_{k}( X^{t,x}_{T} ) - g( X^{t,x}_{T} ) | } 
		 \right]
		 \leq 
		 \limsup_{k\to\infty} 
		 \left[
		 \sup_{\substack{(t,x)\in \mc C, \\ \norm{x}\leq k}} 
		 \Exp{ | \mathfrak{g}_{k} ( X^{t,x}_{T} ) - g( X^{t,x}_{T} ) | } 
		 \right]
		 \\
		 & \leq 
		 \limsup_{k\to\infty} 
		 \left[ 
		 \sup_{t\in [0,T]}
		 \sup_{\substack{x\in \mc C \cup ( -\rho, \rho )^d, \\ \norm{x}\leq k}}
		 \Exp{ | \mathfrak{g}_{k} ( X^{t,x}_{T} ) - g( X^{t,x}_{T} ) | } 
		 \right] 
		 \\
		 & \leq 
		 \limsup_{k\to\infty} 
		 \left[ \sup_{t\in [0,T]}
		 \sup_{ \substack{x\in\R^d, \\ \norm{x}\leq k}} 
		 | \mathfrak{g}_{k} ( x ) - g ( x ) | 
		 \right] 
		 \leq 
		 \limsup_{k\to\infty} 
		 \left(\frac{1}{k}\right) = 0
		 \end{split}
		\end{equation}
	and 
		\begin{equation}
		\label{viscosity_solution_compact_lipschitz:hk_convergence_to_h}
		\begin{split}
		 & \limsup_{k\to\infty} 
		 \left[\sup_{(t,x)\in \cC}
		 \Exp{\int_t^T 
		 \left| 
		 \mathfrak{h}_{k} ( s, X^{t,x}_s ) - 
		 h( s, X^{t,x}_{s} ) 
		 \right| 
		 \,ds
		 } 
		 \right] 
		 \\
		 &  
		 = 
		 \limsup_{k\to\infty} 
		 \left[ \sup_{ \substack{ (t,x)\in \cC, \\ \norm{ x }\leq k} } 
		 \Exp{\int_t^T \left|
		 	\mathfrak{h}_{k}( s, X^{t,x}_{s} ) 
		 	- 
		 h(s, X^{t,x}_{s} ) 
		 	\right| 
		 \,ds }
		 \right] 
		 \\
		 & 
		 \leq 
		 \limsup_{k\to\infty} 
		 \left[ 
		  \sup_{t\in [0,T]}
		  \sup_{\substack{x\in\R^d, \\ \norm{x}\leq k}}
		  \Exp{ 
		  	\int_t^T \left|
		  	\mathfrak{h}_{k}( s, X^{t,x}_{s} ) 
		  	- 
		  	h(s, X^{t,x}_{s} ) 
		  	\right| 
		  	\,ds 
		  }
		 \right] 
		 \\
		 & \leq 
		 \limsup_{k\to\infty} 
		 \left[ 
		 T \sup_{t\in [0,T]} 
		 \sup_{ \substack{ x \in \R^d, \\ \norm{ x }\leq k} } 
		 \left| \mathfrak{h}_k(t,x) - h(t,x) \right|
		 \right] 
		 \leq 
		 \limsup_{k\to\infty} \left( \frac{T}{k} \right) = 0. 
		 \end{split}
		\end{equation}
	Combining this with \eqref{viscosity_solution_compact_lipschitz:definition_of_u}, \eqref{viscosity_solution_compact_lipschitz:definition_u_0k}, and \eqref{viscosity_solution_compact_lipschitz:gk_convergence_to_g} proves that for all compact $\cC\subseteq (0,T)\times\R^d$ we have that 
		\begin{equation}
		\label{viscosity_solution_compact_lipschitz:uk_convergence}
		 \limsup_{k\to\infty} 
		 \left[
		 \sup_{(t,x)\in \cC} 
		 \left| \mathfrak{u}^{0,k}(t,x) - u(t,x) \right| 
		 \right]
		 = 0 . 
		\end{equation}
	\cref{cor:stability_result_for_viscosity_solutions}, the fact that for all non-empty compact $ \mc C \subseteq (0,T)\times\R^d\times\R\times\R^d\times\Sym_d $ we have that 
		$ \limsup_{ k \to \infty } [\sup_{ (t,x,r,p,A) \in \mc C } | G^{0,k}(t,x,r,p,A) - G^{0,0}(t,x,r,p,A) | ] = 0 $,  \eqref{viscosity_solution_compact_lipschitz:uk_viscosity_solution}, and \eqref{viscosity_solution_compact_lipschitz:uk_convergence}
	show that $u$ is a viscosity solution of 
		\begin{equation}
		 (\tfrac{\partial}{\partial t}u)(t,x) 
		 + 
		 G^{0,0}(t,x,u(t,x),(\nabla_x u)(t,x),(\operatorname{Hess}_x u)(t,x))
		 = 0
		\end{equation}
	for $(t,x) \in (0,T)\times\R^d$. 
	This assures that $u$ is a viscosity of 
		\begin{equation} \label{viscosity_solution_compact_lipschitz:finally_there} 
		 (\tfrac{\partial}{\partial t}u)(t,x) 
		  +
		  \tfrac12 \operatorname{Trace}\!\left(
		   \sigma(t,x)[\sigma(t,x)]^{*}(\operatorname{Hess}_x u)(t,x)
		  \right)
		  +
		  \langle \mu(t,x),(\nabla_x u)(t,x)\rangle
		  + 
		  h(t,x) = 0  
		\end{equation}
	for $ (t,x) \in (0,T) \times \R^d $. 
	Next note that \eqref{viscosity_solution_compact_lipschitz:ass1} and \eqref{viscosity_solution_compact_lipschitz:definition_of_u} ensure that for all 
		$ x \in \R^d $ 
	we have that 
		$ u(T,x) = g(x) $.  
	Combining this with \eqref{viscosity_solution_compact_lipschitz:finally_there} establishes \eqref{viscosity_solution_compact_lipschitz:claim}. 
	This completes the proof of  \cref{lem:viscosity_solution_compact_lipschitz}.
\end{proof}

\begin{prop}
\label{prop:viscosity_solution_lyapunov}
	Let $ d,m \in \N $, 
	    $ T \in (0,\infty) $, 
  	let $ \mc O \subseteq \R^d $ be a non-empty open set, 
	let $ \langle\cdot,\cdot\rangle \colon \R^d\times\R^d \to \R $ be the standard Euclidean scalar product on $ \R^d $, 
	let $ \norm{\cdot} \colon \R^d \to [0,\infty) $ be the standard Euclidean norm on $ \R^d $, 
	let $ \HSnorm{\cdot} \colon \R^{d\times m} \to [0,\infty) $ be the Frobenius norm on $ \R^{d\times m} $,
	for every 
    	$r\in (0,\infty)$ 
    let 
    	$ O_r \subseteq \cO$ 
    satisfy 
    	$ O_r = \{ x\in\cO\colon (\norm{x}\leq r~\text{and}~\{y\in\R^d\colon \norm{y-x} < \nicefrac{1}{r}\} \subseteq \cO) \} $,
	let $ g \in C( \cO, \R ) $, 
		$ h \in C( [0,T] \times \cO, \R ) $, 
		$ \mu \in C( [0,T] \times \cO, \R^d ) $,  
	    $ \sigma \in C( [0,T] \times \cO, \R^{d\times m} ) $, 
	    $ V \in C^{1,2}([0,T]\times\cO,(0,\infty)) $, 
	assume for all 
    	$r\in (0,\infty)$ 
    that
	    \begin{equation}
	     \sup\!
	     \left(
	     \left\{
	      \frac{
	      \norm{ 
	        \mu(t,x) - \mu(t,y) } 
	      + 
	      \HSnorm{
	        \sigma(t,x) - \sigma(t,y)}
	      }
	      {
	      \norm{ x - y }
	      }
		  \colon 
		  t\in [0,T], 
		  x,y\in O_r, 
		  x \neq y
		  \right\}
		  \cup \{0\}
	     \right) 
	     < 
	     \infty, 
	    \end{equation}
	assume for all 
	    $ t \in [0,T] $, 
    	$ x \in \mc O $ 
    that 
		\begin{equation}
		(\tfrac{\partial }{\partial t} V)(t,x) 
		+ 
		\tfrac12 
		\operatorname{Trace}\!\left(
		\sigma(t,x)[\sigma(t,x)]^{*} 
		(\operatorname{Hess}_x V)(t,x) 
		\right) 
		+ 
		\langle \mu(t,x),(\nabla_x V)(t, x) \rangle
		\leq 0,   
		\end{equation}
    assume that
		$ \sup_{ r \in (0,\infty) } [ \inf_{ t \in [0,T] } \inf_{ x \in \cO\setminus O_r } V(t,x) ] = \infty $ 
	and 
		$ \inf_{ r \in (0,\infty) } [ \sup_{ t \in [0,T] } \sup_{ x \in \cO\setminus O_r } ( \frac{|g(x)|}{V(T,x)} + 		\frac{|h(t,x)|}{V(t, x)} ) ] = 0 $, 
%	let 
%	    @@@
%	satisfy that 
% 		\begin{equation}
% 		\inf_{r\in (0,\infty)} 
%		\left[ 
%    	\sup_{t\in [0,T]}
%    	\sup_{x\in \cO\setminus O_r} 
%  		\left( 
%  		\frac{|g(x)|}{V(T,x)} 
%  		+ 
%  		\frac{|h(t,x)|}{V(t, x)}
%  		\right) 
%		\right]= 0, 
%		\end{equation}
	let $ ( \Omega, \mathcal{F}, \P, (\mathbb{F}_t )_{ t \in [0,T] } ) $ be a stochastic basis, 
	let $ W \colon [0,T]\times\Omega\to\R^m $ be a standard $(\mathbb{F}_t)_{t\in [0,T]}$-Brownian motion, 
	for every 
		$ t \in [0,T] $, 
		$ x \in \mc O $ 
	let 
    	$ X^{t,x} = (X^{t,x}_s)_{s\in [t,T]}\colon \allowbreak [t,T]\times\Omega \to \mc O $ 
    be an $ (\mathbb{F}_s)_{s\in [t,T]} $-adapted stochastic process with continuous sample paths satisfying that for all 
    	$ s \in [t,T] $ 
    we have $\P$-a.s.~that 
		\begin{equation}
		\label{viscosity_solution_lyapunov:X_SDE}
		    X^{t,x}_s 
		    = 
		    x 
		    + 
		    \int_t^s 
		    \mu( r, X^{t,x}_{r} )\,dr
		    + 
		    \int_t^s
		    \sigma( r, X^{t,x}_{r} )\,dW_r,  
		\end{equation}
	and let $ u \colon [0,T]\times\R^d\to\R $ satisfy for all 
		$ t \in [0,T] $, 
		$ x \in \R^d $
	that 
		\begin{equation} \label{viscosity_solution_lyapunov:definition_of_u}
		u(t,x) = \Exp{g( X^{t,x}_{T} )
			+ 
			\int_t^T h( s, X^{t,x}_s )\,ds}\!.
		\end{equation}  
	Then we have that $u$ is a viscosity solution of 
		\begin{equation}\label{viscosity_solution_lyapunov:claim}
		(\tfrac{\partial}{\partial t} u)(t,x) 
		+ 
		\tfrac12 \operatorname{Trace}\!\left( 
		\sigma(t,x)[\sigma(t,x)]^{*}(\operatorname{Hess}_x u)(t,x)
		\right) 
		+ 
		\langle 
		\mu(t,x),(\nabla_x u)(t,x) 
		\rangle
		+ 
		h(t,x) 
		= 
		0 
		\end{equation}
	with $ u(T,x) = g(x) $ for $(t,x)\in (0,T)\times\cO$ (cf.~\cref{def:viscosity_solution}). 
\end{prop}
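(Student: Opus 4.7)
The plan is to approximate $\mu, \sigma, g, h$ by objects with compact support so that Lemma~\ref{lem:viscosity_solution_compact_lipschitz} applies, and then to pass to the limit by means of Corollary~\ref{cor:stability_result_for_viscosity_solutions}. The Lyapunov function $V$ supplies the uniformity needed in the limiting procedure.

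For each sufficiently large $n \in \N$ fix a smooth cutoff $\chi_n \in C^\infty(\R^d,[0,1])$ satisfying $\chi_n = 1$ on $O_n$ and $\operatorname{supp}(\chi_n) \subseteq O_{n+1}$. Define $\mu_n = \chi_n \mu$, $\sigma_n = \chi_n \sigma$, $g_n = \chi_n g$, $h_n = \chi_n h$, all extended by $0$ to $[0,T] \times \R^d$. These functions are continuous; $\mu_n, \sigma_n$ are globally Lipschitz on $\R^d$ with compact support; and $g_n, h_n$ are bounded. Let $X^{n,t,x}$ denote the $\R^d$-valued strong solution of the SDE driven by $W$ with coefficients $\mu_n, \sigma_n$, and set $u_n(t,x) = \Exp{g_n(X^{n,t,x}_T) + \int_t^T h_n(s,X^{n,t,x}_s)\,ds}$. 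Lemma~\ref{lem:viscosity_solution_compact_lipschitz} then asserts that each $u_n$ is a continuous viscosity solution on $(0,T) \times \R^d$ of the PDE obtained by replacing $(\mu, \sigma, h)$ by $(\mu_n, \sigma_n, h_n)$ in~\eqref{viscosity_solution_lyapunov:claim}, and hence the restriction of $u_n$ to $(0,T) \times \mc O$ is a viscosity solution there as well.

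The crucial step is to establish locally uniform convergence $u_n \to u$ on $(0,T) \times \mc O$. Fix a compact $K \subseteq (0,T) \times \mc O$, choose $R$ with $K \subseteq (0,T) \times O_R$, and for $(t,x) \in K$ set $\tau_n^{t,x} = \inf\{ s \in [t,T] \colon X^{t,x}_s \notin O_n\}$ with $\inf \emptyset = \infty$. Since $\chi_n = 1$ on $O_n$, pathwise uniqueness yields $X^{n,t,x} \equiv X^{t,x}$ on $[t, \tau_n^{t,x} \wedge T]$, so that $u(t,x) - u_n(t,x)$ vanishes on $\{\tau_n^{t,x} > T\}$. Applying It\^o's formula to $V(s \wedge \tau_n^{t,x}, X^{t,x}_{s \wedge \tau_n^{t,x}})$ together with the Lyapunov inequality makes this process a bounded non-negative supermartingale, and combined with the divergence assumption $\inf\{V(s,y) \colon s \in [0,T], y \in \mc O \setminus O_n\} \to \infty$ this yields
\begin{equation*}
\sup_{(t,x) \in K} \P\bigl(\tau_n^{t,x} \leq T\bigr) \;\longrightarrow\; 0
\qquad\text{and}\qquad
\sup_{(t,x) \in K} \sup_{s\in [t,T]} \Exp{V(s, X^{t,x}_s)} \;<\; \infty.
\end{equation*}
Splitting $\Exp{|g(X^{t,x}_T)|\mathbf{1}_{\tau_n^{t,x} \leq T}}$ according to whether $X^{t,x}_T \in O_\rho$ or $X^{t,x}_T \in \mc O \setminus O_\rho$ for an auxiliary parameter $\rho$: the first piece is dominated by $\|g\|_{L^\infty(O_\rho)} \P(\tau_n^{t,x} \leq T)$, while the second is absorbed via the growth hypothesis $\sup_{y \in \mc O \setminus O_\rho} |g(y)|/V(T,y) \to 0$ combined with $\Exp{V(T, X^{t,x}_T)} \leq V(t,x)$. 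The analogous decomposition handles the $h$-integral as well as the $X^{n,t,x}$-contribution, using the pointwise bounds $|g_n| \leq |g|$ and $|h_n| \leq |h|$ on $\mc O$ together with the pathwise agreement of the two processes up to $\tau_n^{t,x}$. First choosing $\rho$ large and then $n$ large delivers $\sup_{(t,x) \in K} |u(t,x) - u_n(t,x)| \to 0$.

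Since $\mu_n, \sigma_n, h_n$ agree with $\mu, \sigma, h$ on $(0,T) \times O_n$ and $\bigcup_n O_n = \mc O$, these coefficients converge locally uniformly on $(0,T) \times \mc O$. Corollary~\ref{cor:stability_result_for_viscosity_solutions} applied on the domain $\mc O$ thus yields that $u$ is a viscosity solution of~\eqref{viscosity_solution_lyapunov:claim} on $(0,T) \times \mc O$, while the terminal identity $u(T,x) = g(x)$ is immediate from $X^{T,x}_T = x$ and the definition of $u$. The principal obstacle is the uniform convergence $u_n \to u$ on compacts: handling the $X^{n,t,x}$-side of the tail expectation, where no Lyapunov inequality is directly available after the process leaves $O_n$, requires delicately combining the Lyapunov supermartingale bound for $X^{t,x}$, the growth hypotheses on $g$ and $h$, and the pathwise agreement $X^{n,t,x} = X^{t,x}$ up to the common exit time $\tau_n^{t,x}$.
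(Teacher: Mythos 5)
Your overall strategy — truncate the coefficients and the data, invoke Lemma~\ref{lem:viscosity_solution_compact_lipschitz} on the truncated problem, and pass to the limit via Corollary~\ref{cor:stability_result_for_viscosity_solutions} — is indeed the same as the paper's. The difference, and the source of a genuine gap, is that you truncate everything with a \emph{single} parameter $n$, whereas the paper uses a \emph{two-parameter} scheme and this is not cosmetic.

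The problem arises exactly where you flag it but do not resolve it: the contribution $\Exp{|g_n(X^{n,t,x}_T)|\,\mathbf{1}_{\tau_n^{t,x}\leq T}}$. Because $g_n=\chi_n g$ is supported in $O_{n+1}$, the only a priori bound is $\sup_{\mc O}|g_n|\leq \sup_{\overline{O_{n+1}}}|g|$, and this quantity grows with $n$ (the hypotheses merely force $|g|/V(T,\cdot)\to 0$ near $\partial\mc O$, not that $g$ is bounded). Thus the natural estimate
\begin{equation*}
\Exp{|g_n(X^{n,t,x}_T)|\,\mathbf{1}_{\tau_n^{t,x}\leq T}}
\leq \Big(\sup_{\overline{O_{n+1}}}|g|\Big)\,\P\big(\tau_n^{t,x}\leq T\big)
\leq \Big(\sup_{\overline{O_{n+1}}}|g|\Big)\,\frac{V(t,x)}{\inf_{[0,T]\times(\mc O\setminus O_n)}V}
\end{equation*}
involves the ratio of $\sup_{\overline{O_{n+1}}}V$ to $\inf_{[0,T]\times(\mc O\setminus O_n)}V$, which need not stay bounded (take $V(t,x)$ proportional to $e^{\Norm{x}^2}$, exactly the kind of Lyapunov function used for the heat-equation corollary, so that the ratio grows like $e^{2n}$). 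The alternative route — split by $X^{n,t,x}_T\in O_\rho$ or not, absorb the tail with the growth hypothesis, and use a moment bound $\Exp{V(T,X^{n,t,x}_T)}\lesssim V(t,x)$ — is blocked because no Lyapunov inequality is available for the cut-off SDE: $V$ is a supersolution of the generator with $\mu,\sigma$, but multiplying by $\chi_n,\chi_n^2$ destroys the sign of $(\partial_t V)+\tfrac12\operatorname{Trace}(\sigma_n\sigma_n^*\operatorname{Hess}V)+\langle\mu_n,\nabla V\rangle$, so the Lyapunov supermartingale argument does not transfer to $X^{n,t,x}$. Pathwise agreement up to $\tau_n^{t,x}$ tells you nothing about $X^{n,t,x}$ \emph{after} $\tau_n^{t,x}$, which is precisely the event you must estimate. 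So the ``delicate combination'' you invoke is not available.

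The paper avoids this by decoupling the approximation of the data from the approximation of the SDE: it fixes compactly supported $\mathfrak g_k,\mathfrak h_k$ (indexed by $k$), whose sup norms are constants independent of the SDE index $n$, establishes $|\mathfrak u^{n,k}-\mathfrak u^{0,k}|\leq \tfrac{2}{n}[\sup|\mathfrak g_k|+T\sup|\mathfrak h_k|]\,V(t,x)$ using only the Lyapunov bound for the \emph{original} process $X^{t,x}$ (via the stopping time built from $V$) and the crude $L^\infty$-bound on $\mathfrak g_k,\mathfrak h_k$ on the $\mathfrak X^{n,t,x}$-side, lets $n\to\infty$ with $k$ fixed, and only afterwards lets $k\to\infty$ using the $V$-weighted convergence $\mathfrak g_k\to g$, $\mathfrak h_k\to h$. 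To repair your argument you would need to introduce this second index and take the two limits iteratively rather than diagonally.
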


\begin{proof}[Proof of \cref{prop:viscosity_solution_lyapunov}]
	Throughout this proof let 
	    $ \mathfrak{g}_n \in C( \R^d, \R ) $, $ n \in \N $, 
	and $ \mathfrak{h}_n \in C( [0,T]\times\R^d, \R ) $, $ n \in \N $, 
	be compactly supported functions which satisfy  
   	    $ \big[\bigcup_{n\in\N}\operatorname{supp}(\mathfrak{h}_n)\big] \subseteq [0,T]\times\mc O $,
	    $ \big[\bigcup_{n\in\N}\operatorname{supp}(\mathfrak{g}_{n})\big] \subseteq \mc O $, 
    and 
	    \begin{equation} 
	    \label{viscosity_solution_lyapunov:gh_approximation}
	     \limsup_{n\to\infty}
	     \left[ 
	      \sup_{t\in [0,T]}
	      \sup_{x\in \cO} 
	      \left( 
	       \frac{| \mathfrak{g}_{n}(x) - g(x) |}
	       { V(T,x) }
	       +
	       \frac{| \mathfrak{h}_{n}(t,x) - h(t,x) |
	       }
	       { V(t,x) }
	      \right) 
	     \right] 
	     = 
	     0, 
	    \end{equation}
	let $ \mathfrak{m}_{n}\in C( [0,T]\times\R^d, \R^d ) $, $ n \in \N $, 
	and $ \mathfrak{s}_{n}\in C( [0,T]\times\R^d, \R^{d\times m} ) $, $n\in\N$, 
	satisfy that 
		\begin{enumerate}[(I)]
 		\item \label{viscosity_solution_lyapunov:proof_item1} 
 		we have for all 
 			$ n \in \N $ 
 		that 
 			\begin{equation}
% 			\label{viscosity_solution_lyapunov:lipschitz_continuity_for_mu_sigma_approximations}
  			\sup_{t\in [0,T]}
  			\sup_{x,y\in \R^d, x\neq y}
  			\left[ 
   			\frac{ \norm{ \mathfrak{m}_n(t,x) - \mathfrak{m}_n(t,y) }
    		+ \norm{ \mathfrak{s}_n(t,x) - \mathfrak{s}_n(t,y) } }{ \norm{ x-y } }
  			\right] < \infty, 
 			\end{equation}
  		\item \label{viscosity_solution_lyapunov:proof_item2} 
  		we have for all 
	  		$ n \in \N $, 
	  		$ t \in [0,T] $, 
	  		$ x \in \mc O $ 
  		that 
			\begin{equation}
		 	\label{viscosity_solution_lyapunov:mus_coinciding_on_V_leq_n}
		 	\mathbbm{1}_{\{V\leq n\}}(t,x)
		  	\left[
		   	\norm{ 
		    \mathfrak{m}_n(t,x) - \mu(t,x)} 
		   	+ 
		   	\HSnorm{
		    \mathfrak{s}_n(t,x) - \sigma(t,x) }
		 	\right] 
		  	= 0, 
		 	\end{equation}
 		and
 		\item \label{viscosity_solution_lyapunov:proof_item3} 
 		we have for all 
    		$ n \in \N $, 
    		$ t \in [0,T] $, 
    		$ x \in \R^d\setminus\{V\leq n+1\} $
 		that 
			$ \norm{ \mathfrak{m}_n(t,x) } + \HSnorm{ \mathfrak{s}_n(t,x) } = 0 $, 
\end{enumerate}
	for every 
	 	$ n \in \N $, 
	 	$ t \in [0,T] $, 
	 	$ x \in \R^d $ 
	let 
	 	$ \mathfrak{X}^{n,t,x}=(\mathfrak{X}^{n,t,x}_{s})_{s\in [t,T]}\colon [t,T]\times\Omega \to \R^d $ 
	be an $(\F_s)_{s\in [t,T]}$-adapted stochastic process with continuous sample paths satisfying that for all 
	 	$ s \in [t,T] $ 
	we have $\P$-a.s.~that
		\begin{equation}
		\label{viscosity_solution_lyapunov:X_n_equation}
	 	\mathfrak{X}^{n,t,x}_{s} 
	 	= 
	 	x + \int_t^s \mathfrak{m}_n( r, \mathfrak{X}^{n,t,x}_{r} ) \,dr + \int_t^s \mathfrak{s}_n( r, \mathfrak{X}^{n,t,x}_{r} ) \,dW_r 
	 	\end{equation}
	(cf., for instance, Karatzas \& Shreve~\cite[Theorem 5.2.9]{KaSh1991_BrownianMotionAndStochasticCalculus}), 
 	let 
 		$ \mathfrak{u}^{n,k}\colon [0,T]\times\R^d \to \R $, $n \in \N_0 $, $ k \in \N $, 
 	satisfy for all 
 		$ n,k \in \N $, 
 		$ t \in [0,T] $, 
 		$ x \in \R^d $
 	that 
	 	\begin{equation}
	   	\mathfrak{u}^{n,k}(t,x) = \Exp{\mathfrak{g}_k(\mathfrak{X}^{n,t,x}_{T})+ 
	   	\int_t^T \mathfrak{h}_k( s, \mathfrak{X}^{n,t,x}_{s} )\,ds } 
	 	\end{equation}
 	and 
 		\begin{equation} \label{viscosity_solution_lyapunov:definition_u_k}
  		\mathfrak{u}^{0,k}(t,x) = 
  		\Exp{\mathfrak{g}_k(X^{t,x}_T) 
  		+
  		\int_t^T \mathfrak{h}_k( s, X^{t,x}_{s})\,ds} 
 		\end{equation}
 	(cf., e.g., \cite[Lemma 2.1]{StochasticFixedPointEquations}), 
 	and for every 
 		$ n \in \N $, 
 		$ t \in [0,T] $, 
 		$ x \in \mc O $
 	let  
		$ \tau^{t,x}_n \colon \Omega \to [t,T] $ 
	satisfy 
		$ \tau^{t,x}_n 
		= 
		\inf(\{s\in [t,T]\colon \allowbreak \max\{V(s,\mf X^{n,t,x}_{s}),V(s,X^{t,x}_{s})\}\geq n\}\cup \{T\})$. 
	Next observe that \cref{lem:viscosity_solution_compact_lipschitz} (applied with 
		$ \mu \is \mathfrak{m}_n $, 
		$ \sigma \is \mathfrak{s}_{n} $, 
		$ g \is \mathfrak{g}_k $, 
		$ h \is \mathfrak{h}_k $ 
	for $ k,n \in \N $
	in the notation of \cref{lem:viscosity_solution_compact_lipschitz}) ensures that for all
    	$ n,k \in \N $ 
 	we have that $ \mathfrak{u}^{n,k} $ is a viscosity solution of 
 		\begin{multline}\label{viscosity_solution_lyapunov:unk_viscosity_solution}
		(\tfrac{\partial}{\partial t} \mathfrak{u}^{n,k})(t,x) 
 		+ 
 		\tfrac12 \operatorname{Trace}\!\left( 
  		\mathfrak{s}_{n}(t,x)[\mathfrak{s}_{n}(t,x)]^{*}(\operatorname{Hess}_x \mathfrak{u}^{n,k})(t,x)
 		\right) 
 		+ 
 		\langle \mathfrak{m}_{n}(t,x),(\nabla_x \mathfrak{u}^{n,k})(t,x) \rangle
 		\\ 
 		+ 
 		\mathfrak{h}_{k}(t,x) 
 		= 
 		0	  
 		\end{multline}
 	for $(t,x) \in (0,T)\times\R^d$. 
 	Moreover, observe that  Items~\eqref{viscosity_solution_lyapunov:proof_item1}--\eqref{viscosity_solution_lyapunov:proof_item3} and \eqref{viscosity_solution_lyapunov:X_n_equation} assure that for all 
 		$ n \in \N $, 
 		$ t \in [0,T] $, 
 		$ x \in \mc O $ 
 	we have that 
 		\begin{equation}
  		\P\!\left(\Forall s\in [t,T]\colon \mathbbm{1}_{\{s\leq \tau^{t,x}_n\}} \mathfrak{X}^{n,t,x}_{s} = \mathbbm{1}_{\{s\leq \tau^{t,x}_n\}} X^{t,x}_s 
		\right)
  		= 1
 		\end{equation}
 	(cf., e.g., \cite[Lemma 3.5]{StochasticFixedPointEquations}). 
 	This implies that for all 
 		$ n,k \in \N $, 
 		$ t \in [0,T] $, 
 		$ x \in \mc O $ 
 	we have that 
		\begin{equation}
		\begin{split}
		& \Exp{ | \mathfrak{g}_k( \mathfrak{X}^{n,t,x}_{T} ) - \mathfrak{g}_{k}( X^{t,x}_{T} ) | }
		\\
		& = 
		\Exp{ \mathbbm{1}_{\{\tau^{t,x}_n < T\}} | \mathfrak{g}_{k}( \mathfrak{X}^{n,t,x}_{T} ) - \mathfrak{g}_{k}( X^{t,x}_{T} ) | }
		\leq 
		2
		\left[ \sup_{y\in\cO} |\mathfrak{g}_k(y)| \right] \P(\tau^{t,x}_n < T)
		\end{split}
		\end{equation}
	and
		\begin{equation}
		\begin{split}
		&
		\int_t^T  
		\Exp{ | \mathfrak{h}_{k}( s, \mathfrak{X}^{n,t,x}_{s} ) - \mathfrak{h}_{k}( s, X^{t,x}_{s} ) | }\!\,ds
		\\
		& 
		= 
		\int_t^T 
		\Exp{
			\mathbbm{1}_{\{ \tau^{t,x}_n < T \}} | \mathfrak{h}_{k}( s, \mathfrak{X}^{n,t,x}_{s} ) 
			- \mathfrak{h}_{k}( s, X^{t,x}_{s} ) | }\!\,ds
		\leq 
		2 T \left[ 
		\sup_{s\in [0,T]}\sup_{y\in\cO} |\mathfrak{h}_{k}(s,y)|\right]
		\P(\tau^{t,x}_n < T). 
		\end{split}
		\end{equation} 
	Combining this with the fact that for all 
		$ t \in [0,T] $, 
		$ x \in \mc O $, 
		$ n \in \N $ 
	we have that $ \EXP{ V( \tau^{t,x}_n, X^{t,x}_{ \tau^{t,x}_n } ) } \leq V( t, x ) $ (cf., e.g., \cite[Lemma 3.1]{StochasticFixedPointEquations}) proves that for all 
		$ n,k \in \N $
	we have that 
		\begin{equation}
		\begin{split}
		|\mathfrak{u}^{n,k}(t,x)-\mathfrak{u}^{0,k}(t,x)| 
		&
		\leq 
		2
		\left[ 
		\sup_{y\in \cO}
		|\mathfrak{g}_{k}(y)| 
		+ 
		T 
		\sup_{s\in [0,T]}
		\sup_{y\in \cO} 
		|\mathfrak{h}_{k}(s,y)|
		\right]
		\P(\tau^{t,x}_n < T)
		\\
		& 
		\leq  
		2
		\left[ 
		\sup_{y\in \cO}
		|\mathfrak{g}_{k}(y)| 
		+ 
		T 
		\sup_{s\in [0,T]}
		\sup_{y\in \cO} 
		|\mathfrak{h}_{k}(s,y)|
		\right]
		\P\!\left( V(\tau^{t,x}_n, X^{t,x}_{\tau^{t,x}_n}) \geq n \right)
		\\
		& 
		\leq 
		\frac{2}{n}
		\left[ 
		\sup_{y\in \cO}
		|\mathfrak{g}_{k}(y)| 
		+ 
		T 
		\sup_{s\in [0,T]}
		\sup_{y\in \cO} 
		|\mathfrak{h}_{k}(s,y)|
		\right] 
		\Exp{ V(\tau^{t,x}_n, X^{t,x}_{\tau^{t,x}_n}) }
		\\
		& 
		\leq 
		\frac{2}{n}
		\left[ 
		\sup_{y\in \cO}
		|\mathfrak{g}_{k}(y)| 
		+ 
		T 
		\sup_{s\in [0,T]}
		\sup_{y\in \cO} 
		|\mathfrak{h}_{k}(s,y)|
		\right] 
		V(t,x). 
		\end{split}
		\end{equation}
	This demonstrates that for all 
 		$ k \in \N $ 
 	and all compact $ \cK \subseteq [0,T]\times \mc O $ we have that 
		\begin{equation} \label{viscosity_solution_lyapunov:locally_uniform_convergence}
 		\limsup_{k\to\infty} 
 		\left[ 
    		\sup_{(t,x)\in\cK} 
        	\Big( |\mathfrak{u}^{n,k}(t,x) - \mathfrak{u}^{0,k}(t,x)| \Big) 
 			\right] 
 		= 0. 
 		\end{equation} 
 	In addition, note that the assumption that $ \sup_{ r \in (0,\infty) } [ \inf_{ t\in [0,T], x \in \R^d \setminus O_r } V(t,x) ] = \infty $ and \eqref{viscosity_solution_lyapunov:mus_coinciding_on_V_leq_n} ensure that for all compact $ \cK \subseteq [0,T]\times\mc O $ we have that 
 		\begin{equation} 
 		\limsup_{ n \to \infty } 
 		\left[ 
 		\sup_{ (t,x) \in \mc K } \Big( \norm{ \mf m_n(t,x) - \mu(t,x) } + \HSnorm{ \mf s_n(t,x) - \sigma(t,x) } \Big) 
 		\right] = 0 . 
 		\end{equation} 
 	\cref{cor:stability_result_for_viscosity_solutions}, 
 	\eqref{viscosity_solution_lyapunov:unk_viscosity_solution}, 
 	and 
	\eqref{viscosity_solution_lyapunov:locally_uniform_convergence}
 	yield that for all
 		$ k \in \N $ 
 	we have that  
		$ \mathfrak{u}^{0,k} $ 
 	is a viscosity solution of 
		\begin{multline}
		\label{viscosity_solution_lyapunov:un_viscosity_solution}
  		(\tfrac{\partial}{\partial t} \mathfrak{u}^{0,k})(t,x) 
  		+ 
  		\tfrac12 
  		\operatorname{Trace}\!\left(\sigma(t,x)[\sigma(t,x)]^{*}(\operatorname{Hess}_x \mathfrak{u}^{0,k})(t,x)\right)
  		+ 
  		\langle \mu(t,x), (\nabla_x \mathfrak{u}^{0,k})(t,x) \rangle 
  		\\
  		+ 
  		h^{(k)}(t,x) 
  		= 0 
 		\end{multline}
 	for $(t,x) \in (0,T) \times \cO$. 
 	Moreover, note that \eqref{viscosity_solution_lyapunov:definition_of_u}, \eqref{viscosity_solution_lyapunov:gh_approximation}, and  \eqref{viscosity_solution_lyapunov:definition_u_k} prove that for all compact   
 		$ \mc K \subseteq (0,T)\times\mc O $ 
 	we have that 
 		\begin{equation} 
 		\limsup_{ k \to \infty } \left[ \sup_{ (t,x) \in \mc K } | \mathfrak{u}^{0,k}(t,x) - u(t,x) | \right]  = 0
 		\end{equation} 
 	(cf., e.g., \cite[Item~(iv) in Lemma 2.2]{StochasticFixedPointEquations}). 
 	This, \eqref{viscosity_solution_lyapunov:gh_approximation}, \eqref{viscosity_solution_lyapunov:un_viscosity_solution}, and \cref{cor:stability_result_for_viscosity_solutions} demonstrate that $u$ is a viscosity solution of 
		\begin{equation} \label{viscosity_solution_lyapunov:u_viscosity_solution}
		(\tfrac{\partial}{\partial t} u)(t,x) 
  		+ 
  		\tfrac12 
  		\operatorname{Trace}\!\left(
  		\sigma(t,x)[\sigma(t,x)]^{*}  (\operatorname{Hess}_x u)(t,x)\right)
  		+ 
  		\langle 
  		\mu(t,x),(\nabla_x u)(t,x)\rangle 
  		+ 
  		h(t,x) 
  		= 0
 		\end{equation} 
 	for $ (t,x) \in (0,T)\times\mc O $.  
 	Next note that \eqref{viscosity_solution_lyapunov:definition_of_u} ensures that for all 
 		$ x \in \R^d $ 
 	we have that 
 		$ u(T,x) = g(x) $. 
 	This and \eqref{viscosity_solution_lyapunov:u_viscosity_solution} establish \eqref{viscosity_solution_lyapunov:claim}. 
 	This completes the proof of  \cref{prop:viscosity_solution_lyapunov}.
\end{proof} 

\section{Semilinear Kolmogorov PDEs}
\label{sec:semilinear_kolmogorov_pdes}

In this section we establish in \cref{thm:existence_of_fixpoint} in \cref{subsec:viscosity_semilinear} below, the main result of this article, a one-to-one correspondence between suitable solutions of certain SFPEs and suitable viscosity solutions of certain semilinear Kolmogorov PDEs and we thereby obtain an existence, uniqueness, and Feynman--Kac type representation result for viscosity solutions of semilinear Kolmogorov PDEs. 
Our proof of \cref{thm:existence_of_fixpoint} employs the following four constituents: (i) the existence and uniqueness result for solutions of SFPEs in \cite[Theorem 3.8]{StochasticFixedPointEquations}, (ii) the Feynman--Kac type representation result for viscosity solutions of linear inhomogeneous Kolmogorov PDEs in \cref{prop:viscosity_solution_lyapunov} in \cref{subsec:viscosity_inhomogeneous} above, (iii) the uniqueness result for viscosity solutions of suitable degenerate parabolic PDEs in \cref{prop:uniqueness_viscosity_semilinear} in \cref{subsec:uniqueness_viscosity_semilinear} below, and (iv) the existence and uniqueness result for solutions of SDEs in \cref{lem:krylov_existence} in \cref{subsec:sde_existence} below. 

In \cref{subsec:uniqueness_viscosity_semilinear} we establish in \cref{prop:uniqueness_viscosity_semilinear} under suitable assumptions that a semilinear Kolmogorov PDE with Lipschitz continuous nonlinearity possesses at most one viscosity solution which satisfies a certain growth condition. 
\cref{prop:uniqueness_viscosity_semilinear} generalizes Hairer et al.~\cite[Corollary 4.14]{HaHuJe2017_LossOfRegularityKolmogorov} with respect to the possible time dependence of the drift and diffusion coefficient functions of the PDE as well as with respect to the possible appearance of a one-sided Lipschitz continuous nonlinearity in the PDE. 
Our proof of \cref{prop:uniqueness_viscosity_semilinear} is strongly inspired by Hairer et al.~\cite[Section 4.3]{HaHuJe2017_LossOfRegularityKolmogorov}. 
Our proof of \cref{prop:uniqueness_viscosity_semilinear} employs the comparison result for viscosity sub- and supersolutions of suitable degenerate parabolic PDEs in \cref{cor:comparison_viscosity}. \cref{cor:comparison_viscosity}, in turn, is a rather direct consequence of \cref{lem:viscosity_domination}. 
Our proof of \cref{lem:viscosity_domination} is strongly inspired by, e.g., Crandall et al.~\cite[Section 8]{UsersGuide}, Hairer et al.~\cite[Corollary 4.11]{HaHuJe2017_LossOfRegularityKolmogorov}, and Imbert \& Silvestre~\cite[Section 2.3]{ImbertSilvestre_FullyNonlinearParabolic}. 
For completeness we provide in \cref{subsec:uniqueness_viscosity_semilinear} a detailed proof for \cref{lem:viscosity_domination}. 
Our proof of \cref{lem:viscosity_domination} is based on the well-known result in Crandall et al.~\cite[Proposition 3.7]{UsersGuide} (cf.\,also Hairer et al.~\cite[Lemma 4.9]{HaHuJe2017_LossOfRegularityKolmogorov}), which we recall in \cref{lem:lemma_cil_called_elementary} below, and on a special case of the result in Crandall et al.~\cite[Theorem 8.3]{UsersGuide} (cf.\ also Peng~\cite[Theorem 2.1 in Appendix C]{peng2010nonlinear}), which we recall in \cref{lem:jensen_ishii_lemma} below. 
In \cref{subsec:sde_existence} we establish in \cref{lem:krylov_existence} an existence result for solutions of SDEs with drift and diffusion coefficient functions which satisfy certain Lipschitz and coercivity type conditions. 
\cref{lem:krylov_existence} is essentially well-known in the scientific literature (see, e.g., Gy\"ongy \& Krylov~\cite[Corollary 2.6]{GyoengyKrylov1995_existenceStrong}). 
For completeness we provide in \cref{subsec:sde_existence} a detailed proof for \cref{lem:krylov_existence}. 
In \cref{subsec:viscosity_semilinear} we establish in \cref{thm:existence_of_fixpoint} an existence, uniqueness, and Feynman--Kac type representation result for viscosity solutions of semilinear Kolmogorov PDEs. 
Our proof of \cref{thm:existence_of_fixpoint} is based on the existence and uniqueness result for solutions of SFPEs in \cite[Theorem 3.8]{StochasticFixedPointEquations}, the Feynman--Kac type representation result for viscosity solutions of linear inhomogeneous Kolmogorov PDEs in \cref{prop:viscosity_solution_lyapunov} in \cref{subsec:viscosity_inhomogeneous}, the uniqueness result for viscosity solutions in \cref{prop:uniqueness_viscosity_semilinear} in \cref{subsec:uniqueness_viscosity_semilinear}, and the existence and uniqueness result for solutions of SDEs in \cref{lem:krylov_existence} in \cref{subsec:sde_existence}. 
We conclude this article by providing in \cref{cor:existence_of_fixpoint_product_lyapunov}, \cref{existence_of_fixpoint_polynomial_growth}, and \cref{cor:existence_bounded_heat_equation_type} below several sample applications of \cref{thm:existence_of_fixpoint}. 

\subsection{Uniqueness results for viscosity solutions of semilinear Kolmogorov PDEs}
\label{subsec:uniqueness_viscosity_semilinear}

\begin{lemma}
\label{lem:lemma_cil_called_elementary}
	Let $ d \in \N $, 
	let $ \norm{\cdot} \colon \R^d \to [0,\infty) $ be a norm on $ \R^d $, 
 	let $ \mc O \subseteq \R^d $ be a non-empty set, 
 	let $ \eta \colon \mc O \to \R $ be upper semi-continuous,
	let $ \phi \colon \mc O \to [0,\infty) $ be lower semi-continuous,  
 	assume that 
 		$ \inf_{\alpha\in (0,\infty)} [ \sup_{ y \in \cO } ( \eta( y ) - \alpha \phi( y ) ) ] \in \R $, 
	and let 
		$ x = (x_{\alpha})_{\alpha\in (0,\infty)}\colon (0,\infty) \to \mc O $
    satisfy that 
	    \begin{equation}
	    \label{lemma_cil_called_elementary:approximative_maxima}
	     \limsup_{\alpha\to\infty} 
	     \left[ 
	     \sup_{y\in\cO}
	     \big(\eta(y) - \alpha\phi(y) \big)
	     - 
	     \big(\eta(x_{\alpha}) 
	     - \alpha \phi(x_{\alpha}) 
	     \big) 
	     \right]
	     = 
	     0.  
	    \end{equation}
 	Then 
 		\begin{enumerate}[(i)]
 		\item \label{lemma_cil_called_elementary:item1}
  		we have that 
  			$ \limsup_{\alpha\to\infty} [ \alpha \phi(x_{\alpha}) ] = 0 $ 
  		and 
  		\item \label{lemma_cil_called_elementary:item2} 
  		we have for all 
  			$ \mf x \in \mc O $ 
  		and all 
  			$ \alpha_n \in (0,\infty) $, $ n \in \N $, 
		with 
  			$ \limsup_{n\to\infty} \norm{ x_{\alpha_n} - \mf x  } = 0 < \infty = \liminf_{n \to \infty} \alpha_n $ 
		that 
  			$ \phi(\mf x ) = 0 $ 
  		and 
  			$ \eta(\mf x ) = \lim_{\alpha\to\infty} 
  			[ \sup_{y\in \cO} (\eta(y)-\alpha\phi(y)) ]
  			= \sup_{y\in \phi^{-1}(0)} \eta(y) $. 
 \end{enumerate} 
\end{lemma}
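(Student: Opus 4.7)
The plan is to exploit the monotonicity in $\alpha$ of the function $M_\alpha := \sup_{y\in\cO}(\eta(y) - \alpha\phi(y))$. Since $\phi \geq 0$, the map $(0,\infty)\ni\alpha\mapsto M_\alpha \in \R\cup\{\infty\}$ is non-increasing, and the hypothesis that $\inf_{\alpha>0}M_\alpha\in\R$ therefore gives that $M_\alpha\in\R$ for all $\alpha$ sufficiently large and that $\lim_{\alpha\to\infty}M_\alpha = \inf_{\alpha>0}M_\alpha =: M \in \R$. I will also introduce the non-negative defect $\delta_\alpha := M_\alpha - (\eta(x_\alpha) - \alpha\phi(x_\alpha))$, which by assumption~\eqref{lemma_cil_called_elementary:approximative_maxima} satisfies $\lim_{\alpha\to\infty}\delta_\alpha = 0$.

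For Item~\eqref{lemma_cil_called_elementary:item1} I will carry out the following two-scale comparison. For arbitrary $\alpha,\beta\in(0,\infty)$ with $\beta>\alpha$, the defining property of $M_\alpha$ yields $\eta(x_\beta) - \alpha\phi(x_\beta)\leq M_\alpha$, and rewriting the left-hand side as $[\eta(x_\beta) - \beta\phi(x_\beta)] + (\beta-\alpha)\phi(x_\beta)\geq M_\beta - \delta_\beta + (\beta-\alpha)\phi(x_\beta)$ produces the inequality $(\beta-\alpha)\phi(x_\beta)\leq M_\alpha - M_\beta + \delta_\beta$. Specializing to $\alpha = \beta/2$ gives $\beta\phi(x_\beta)\leq 2(M_{\beta/2} - M_\beta) + 2\delta_\beta$, and since $M_{\beta/2}-M_\beta\to M-M = 0$ and $\delta_\beta\to 0$ as $\beta\to\infty$, we conclude that $\limsup_{\alpha\to\infty}\alpha\phi(x_\alpha) = 0$, as desired.

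For Item~\eqref{lemma_cil_called_elementary:item2}, fix $\mf x\in\cO$ and $\alpha_n\in(0,\infty)$, $n\in\N$, with $x_{\alpha_n}\to\mf x$ and $\alpha_n\to\infty$. Item~\eqref{lemma_cil_called_elementary:item1} gives $\phi(x_{\alpha_n})\to 0$, so the lower semi-continuity of $\phi$ together with $\phi\geq 0$ forces $\phi(\mf x) = 0$. For the value of $\eta(\mf x)$, note first that every $y\in\phi^{-1}(\{0\})$ satisfies $\eta(y) = \eta(y) - \alpha\phi(y) \leq M_\alpha$ for all $\alpha>0$, so $\sup_{y\in\phi^{-1}(\{0\})}\eta(y)\leq M$; in particular $\eta(\mf x)\leq M$. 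For the reverse direction I will use that $\eta(x_{\alpha_n})\geq M_{\alpha_n} - \delta_{\alpha_n} + \alpha_n\phi(x_{\alpha_n})\geq M_{\alpha_n}-\delta_{\alpha_n}$; the upper semi-continuity of $\eta$ then yields $\eta(\mf x)\geq\limsup_{n\to\infty}\eta(x_{\alpha_n})\geq\lim_{n\to\infty}M_{\alpha_n} = M$. Combining both inequalities gives $\eta(\mf x) = M = \sup_{y\in\phi^{-1}(\{0\})}\eta(y) = \lim_{\alpha\to\infty}M_\alpha$, completing the proof.

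The argument is essentially soft; the only place where one must be slightly careful is the two-scale step in Item~\eqref{lemma_cil_called_elementary:item1}, because a one-scale comparison only yields $\phi(x_\alpha)\to 0$ rather than the sharper $\alpha\phi(x_\alpha)\to 0$. Splitting the parameter range by a factor of two (or by any other fixed ratio greater than one) is precisely what turns the convergence $M_\alpha\to M$ into a rate good enough to absorb the multiplicative factor $\alpha$, and this is the only subtle point in the proof.
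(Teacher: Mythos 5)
Your proposal is correct and follows essentially the same route as the paper's proof: the paper also introduces $S_\alpha=\sup_{y\in\cO}(\eta(y)-\alpha\phi(y))$ and the defect $\varepsilon_\alpha$, obtains Item~(i) from the identical two-scale comparison $\tfrac{\alpha}{2}\phi(x_\alpha)\leq S_{\alpha/2}-S_\alpha+\varepsilon_\alpha$, and derives Item~(ii) from the same lower/upper semi-continuity chain. No substantive differences.
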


\begin{proof}[Proof of \cref{lem:lemma_cil_called_elementary}] 
 	Throughout this proof let 
		$ S_{\alpha} \in (-\infty,\infty] $, $ \alpha \in (0,\infty) $, 
 	and  
		$ \varepsilon_{\alpha}\in [0,\infty] $, $ \alpha \in (0,\infty) $, 
	satisfy for all
		$ \alpha \in (0,\infty) $ 
	that 
		\begin{equation} \label{lemma_cil_called_elementary:S_alpha_epsilon_alpha}
	  	S_{\alpha} 
	  	= 
	  	\sup_{y\in\cO} 
	  	\big( 
	  	  \eta(y) - \alpha\phi(y) 
	  	\big) 
	  	\qandq
	  	\varepsilon_{\alpha} 
	  	= 
	  	\sup_{y\in\cO} 
	  	\big( 
	  	  \eta(y) - \alpha\phi(y) 
	  	\big)
	  	- 
	  	\big( 
	  	  \eta(x_{\alpha}) - \alpha\phi(x_{\alpha}) 
	  	\big).  
	 	\end{equation}
	Observe that \eqref{lemma_cil_called_elementary:S_alpha_epsilon_alpha} assures that for all 
 		$ \alpha \in (0,\infty) $ 
 	we have that 
	 	$ S_{\alpha} = \eta(x_{\alpha}) - \alpha \phi(x_{\alpha}) + \varepsilon_{\alpha} $. 
	Moreover, note that \eqref{lemma_cil_called_elementary:approximative_maxima} ensures that 
 		$ \lim_{\alpha\to\infty}  S_{\alpha} \in \R $ 
 	and 
 		$ \liminf_{\alpha\to\infty} \varepsilon_{\alpha} = \limsup_{\alpha\to\infty} \varepsilon_{\alpha} = 0 $. 
	Hence, we obtain that there exists 
		$ \mf a \in (0,\infty) $ 
	such that 
		$ \big( \bigcup_{ \alpha \in [ \mf a, \infty ) } \{ S_{\alpha}, \varepsilon_{\alpha} \} \big) \subseteq \R $.
	This yields that 
 		\begin{equation} \label{lemma_cil_called_elementary:proving_item1}
 		\begin{split}
 		0 & \leq \limsup_{\alpha\to\infty} \left(\tfrac{\alpha}{2} \phi(x_{\alpha}) \right) 
 		= \limsup_{\alpha\to\infty} \left[ \big( \eta(x_{\alpha}) - \tfrac{\alpha}{2} \phi(x_{\alpha}) \big) 
 		- \big( \eta(x_{\alpha}) - \alpha \phi(x_{\alpha}) \big) \right] 
 		\\ & \leq \limsup_{\alpha\to\infty} \left[ \sup_{y\in\cO} \big( \eta(y) - \tfrac{\alpha}{2} \phi(y) \big) - \big(  \eta(x_{\alpha}) - \alpha \phi(x_{\alpha}) \big) \right] 
 		= \limsup_{\alpha\to\infty} \left( S_{\nicefrac{\alpha}{2}} - S_{\alpha} + \varepsilon_{\alpha} \right) 
 		= 0. 
 		\end{split}
 		\end{equation}
	This establishes Item~\eqref{lemma_cil_called_elementary:item1}.  
 	It remains to establish Item\ \eqref{lemma_cil_called_elementary:item2}. 
 	For this let $ \mf x  \in \mc O $ 
 	and let $ \alpha_n \in (0,\infty) $, $ n \in \N $, 
 	satisfy $ \liminf_{n\to\infty} \alpha_n = \infty $ and $ \limsup_{n\to\infty} \norm{ x_{\alpha_n} - \mf x } = 0 $. 
 	Note that \eqref{lemma_cil_called_elementary:proving_item1} ensures that 
 		$ \liminf_{\alpha\to\infty} \phi(x_{\alpha}) = \limsup_{\alpha\to\infty} \phi(x_{\alpha}) = 0 $.
 	Combining this with the assumption that $ \phi $ is lower semi-continuous demonstrates that 
		\begin{equation} \label{lemma_cil_called_elementary:phi_of_mf_x_is_zero}
  		0 \leq \phi(\mf x ) \leq \liminf_{n\to\infty} \phi(x_{\alpha_n}) = 0.   
  		\end{equation}
 	The assumption that $ \eta $ is upper semi-continuous and the fact that for all 
 		$ y \in \mc O $ 
 	we have that 
 		$ \phi( y ) \geq 0 $ 
 	hence imply that 
 		\begin{equation}
  		\begin{split}
  		\eta(\mf x ) 
  		& 
  		\geq 
  		\limsup_{n\to\infty} 
  		\eta(x_{\alpha_n}) 
  		\geq 
  		\limsup_{n\to\infty} 
  		\big( 
  		\eta(x_{\alpha_n}) 
  		- 
  		\alpha_n \phi(x_{\alpha_n})
  		\big)
  		= 
  		\limsup_{n\to\infty} 
  		\left( 
  		S_{\alpha_n} 
  		- 
  		\varepsilon_{\alpha_n}
  		\right) 
  		\\
  		& 
  		= 
  		\lim_{n\to\infty} 
  		S_{\alpha_n}
  		\geq 
  		\sup_{y\in\phi^{-1}(0)}
  		\eta(y)
  		\geq 
  		\eta(\mf x ). 
  		\end{split}
 		\end{equation}
	This and \eqref{lemma_cil_called_elementary:phi_of_mf_x_is_zero} establish Item \eqref{lemma_cil_called_elementary:item2}. 
 	This completes the proof of  \cref{lem:lemma_cil_called_elementary}. 
\end{proof}

\begin{lemma} \label{lem:jensen_ishii_lemma}
	Let $ d,k \in \N $, 
    	$ \varepsilon,T \in (0,\infty) $, 
 	let $ \norm{\cdot} \colon (\cup_{m\in\N} \R^m) \to [0,\infty) $ satisfy for all 
 		$ m \in \N $, 
 		$ x=(x_1,x_2,\ldots,x_m) \in \R^m $ 
 	  that 
 		$ \norm{x} = (\sum_{i=1}^m |x_i|^2)^{\nicefrac12} $, 
 	let $ \HSnorm{\cdot} \colon (\cup_{m\in\N} \R^{m \times m} ) \to [0,\infty) $ satisfy for all 
		$ m \in \N $, 
 		$ A \in \R^{m\times m} $ 
 	that 
 		$ \HSnorm{A} = \sup_{x\in \R^m\setminus\{0\}} (\norm{Ax}\norm{x}^{-1}) $,  	
	let $ \mc O \subseteq \R^d $ be a non-empty open set, 
	let $ \Phi = ( \Phi(t,x) )_{ (t,x) \in (0,T)\times\mc O^k } \in C^{1,2}((0,T)\times\cO^k, \R) $, 
 	let $ G_i \colon (0,T) \times \mc O \times \R \times \R^d \times \Sym_d \to \R $, $ i \in \{1,2,\ldots,k\} $, satisfy for all $ i \in \{1,2,\ldots,k\} $ that $ G_i $ is degenerate elliptic and upper semi-continuous,
 	let $ u_i \colon (0,T) \times \mc O \to \R $, $ i \in \{1,2,\ldots, k\} $, satisfy for all 
    	$ i \in \{1,2,\ldots,k\} $ 
    that $ u_i $ is a viscosity solution of 
	    \begin{equation}   
	    \label{jensen_ishii:viscosity_solution} 
	     (\tfrac{\partial}{\partial t} u_i)(t,x) 
	     + G_i(t,x,u_i(t,x),(\nabla_x u_i)(t,x),(\operatorname{Hess}_x u_i)(t,x)) 
	     \geq 0 
	    \end{equation}
	for $ (t,x) \in (0,T) \times \mc O $, 
	and let $ (\mf t, \mf x) = (\mf t, \mf x_1, \mf x_2,\ldots, \mf x_k )\in (0,T) \times \mc O^k $ be a global maximum 
    point of  
    	$ (0,T) \times \cO^k \ni (t,x) = (t,x_1,x_2,\ldots,x_k) \mapsto [\sum_{i=1}^k u_i(t,x_i)] - \Phi(t,x_1,x_2,\ldots,x_k) \in \R $
    (cf.~\cref{symmetric_matrices,degenerate_elliptic,def:viscosity_subsolution}).    	
	Then there exist 
 		$ b_1, b_2, \ldots, b_k \in \R $, 
		$ A_1, A_2, \ldots, A_k \in \Sym_{d} $ 
	such that for all 
		$ i \in \{1,2,\ldots,k\} $ 
	we have that 
		$ ( b_i, (\nabla_{x_i}\Phi)( \mf t, \mf x), A_i ) \in (\JetClosure^{+} u_i)( \mf t, \mf x_i ) $, 
		$ \sum_{i=1}^k b_i = (\frac{\partial}{\partial t}\Phi)( \mf t, \mf x ) $, 
	and 
		\begin{equation}
		\label{jensen_ishii_lemma:matrix_inequality}
		-\left[\frac{1}{\varepsilon} + \HSnorm{ (\operatorname{Hess}_{x} \Phi)( \mf t, \mf x ) } \right] \operatorname{Id}_{\R^{kd}}
  		\leq 
  		\begin{pmatrix}
   		A_1 & \ldots & 0 \\
   		\vdots & \ddots & \vdots \\
   		0 & \ldots & A_k
  		\end{pmatrix}
  		\leq 
  		(\operatorname{Hess}_x \Phi)( \mf t, \mf x ) 
  		+ 
  		\varepsilon 
  		[ (\operatorname{Hess}_x \Phi)( \mf t, \mf x ) ]^2  
 		\end{equation}
 	(cf.~\cref{def:relaxed_parabolic_superjets}). 
\end{lemma}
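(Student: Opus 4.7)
The plan is to recognize \cref{lem:jensen_ishii_lemma} as a direct instance of the parabolic theorem of sums and reduce it to Crandall, Ishii, and Lions~\cite[Theorem 8.3]{UsersGuide}. First I would observe that the only property of the functions $u_i$ that actually enters the conclusion is upper semi-continuity on $(0,T)\times\mc O$, which is built into \cref{def:viscosity_subsolution}; the particular form of the operators $G_i$ and the subsolution inequality in \eqref{jensen_ishii:viscosity_solution} play no role in what is to be proved. In this sense the hypothesis \eqref{jensen_ishii:viscosity_solution} is only being used as a convenient way to guarantee upper semi-continuity of each $u_i$.

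Next I would verify the hypotheses of Theorem 8.3 in \cite{UsersGuide}. That theorem is stated precisely for a function of the form
\begin{equation}
w(t,x_1,\ldots,x_k)
= \left[\textstyle\sum_{i=1}^k u_i(t,x_i)\right] - \Phi(t,x_1,\ldots,x_k)
\end{equation}
with $u_i$ upper semi-continuous, $\Phi \in C^{1,2}$, and $w$ attaining a local maximum on $(0,T)\times\mc O^k$; since by assumption $(\mf t,\mf x)$ is a \emph{global} maximum point it is a fortiori a local maximum, so all the hypotheses are in place. Applying the cited theorem with the given $\varepsilon \in (0,\infty)$ produces numbers $b_1,\ldots,b_k\in\R$ and symmetric matrices $A_1,\ldots,A_k\in\Sym_d$ such that $(b_i,(\nabla_{x_i}\Phi)(\mf t,\mf x),A_i)\in (\JetClosure^{+}u_i)(\mf t,\mf x_i)$, $\textstyle\sum_{i=1}^k b_i = (\tfrac{\partial}{\partial t}\Phi)(\mf t,\mf x)$, and the matrix sandwich inequality \eqref{jensen_ishii_lemma:matrix_inequality}.

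The remaining point is purely notational: the matrix inequality in Theorem 8.3 of \cite{UsersGuide} is stated with the operator $2$-norm of $(\operatorname{Hess}_x\Phi)(\mf t,\mf x)$, which by the definition of $\HSnorm{\cdot}$ in the present lemma coincides with $\HSnorm{(\operatorname{Hess}_x\Phi)(\mf t,\mf x)}$ (despite the typographic suggestion of the Hilbert--Schmidt norm, the paper defines $\HSnorm{A}=\sup_{x\neq 0}\norm{Ax}\norm{x}^{-1}$). Consequently the inequality \eqref{jensen_ishii_lemma:matrix_inequality} transfers verbatim and the proof is complete.

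If one instead sought a self-contained argument, the main obstacle would be to establish jointly the superjet condition and the two-sided matrix bound. This is the analytic heart of the Crandall--Ishii lemma and is carried out in two steps: a sup-convolution regularization of the $u_i$ combined with Jensen's maximum principle to replace the merely upper semi-continuous functions by twice differentiable approximants at which the superjets are realized by classical derivatives, followed by a finite-dimensional linear-algebra argument (Alexandroff-type) to match the Hessians of the smoothed objects with the Hessian of $\Phi$ up to the $\varepsilon$-loss; both steps are spelled out in \cite[Section 8 and Appendix]{UsersGuide}. Since the cited reference covers exactly the parabolic version we need, I would not reproduce this argument here.
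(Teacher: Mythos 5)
Your reduction to Crandall--Ishii--Lions~\cite[Theorem~8.3]{UsersGuide} is the right reference, and you correctly note that $\HSnorm{\cdot}$ here denotes the operator norm and that a global maximum is a fortiori local. But your claim that the subsolution inequality~\eqref{jensen_ishii:viscosity_solution} ``plays no role'' is wrong, and it hides a real gap: Theorem~8.3 of \cite{UsersGuide} has a hypothesis beyond upper semi-continuity of the $u_i$, namely a one-sided growth condition on the time component of parabolic superjets near the maximizer --- roughly, that for every $M>0$ there is a $C$ with $b \leq C$ whenever $(b,p,A)\in(\mc P^{+}u_i)(t,x)$, with $(t,x)$ near the maximizer and $|u_i(t,x)|+\norm{p}+\HSnorm{A}\leq M$. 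This is not automatic for upper semi-continuous functions; e.g.\ $u(t,x)=-\sqrt{|t-t_0|}$ has $(b,0,0)\in(\mc P^{+}u)(t_0,x)$ for every $b\in\R$, so the condition fails. It is exactly here that the paper uses both~\eqref{jensen_ishii:viscosity_solution} and the upper semi-continuity of the $G_i$.

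Moreover the sign conventions do not match, and your proposal does not account for this. In \cref{def:viscosity_subsolution} (via \cref{lem:equivalent_conditions_for_viscosity_solutions:left_to_right}) the subsolution property of $u_i$ yields $b + G_i(t,x,u_i(t,x),p,A)\geq 0$ on superjet elements, a \emph{lower} bound on $b$, which is useless for verifying the CIL growth hypothesis. The paper therefore first reverses time, setting $v_i(t,x)=u_i(T-t,x)$ and $\Psi(t,x)=\Phi(T-t,x)$; since $(b,p,A)\in(\mc P^{+}v_i)(t,x)$ if and only if $(-b,p,A)\in(\mc P^{+}u_i)(T-t,x)$, the subsolution property becomes $b\leq G_i(T-t,x,\ldots)$, an upper bound that is locally finite by the upper semi-continuity of $G_i$. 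Only then is Theorem~8.3 applied, to the $v_i$ and $\Psi$ at $(T-\mf t,\mf x)$, and the resulting superjet data is translated back by negating the time components (consistently with $(\partial_t\Psi)(T-\mf t,\mf x)=-(\partial_t\Phi)(\mf t,\mf x)$). Without both the time reversal and the verification of the growth hypothesis, your direct application of Theorem~8.3 to the $u_i$ and $\Phi$ does not go through.
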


\begin{proof}[Proof of \cref{lem:jensen_ishii_lemma}] 
	Throughout this proof let 
		$ v_i \colon (0,T) \times \mc O \to \R $, $ i \in \{1,2,\ldots,k\} $, 
	satisfy for all 
		$ i \in \{1,2,\ldots,k\} $, 
		$ t \in (0,T) $, 
		$ x \in \mc O $ 
	that 
		$ v_i(t,x) = u_i(T-t,x) $ 
	and let $ \Psi \colon (0,T)\times\mc O^k \to \R $ satisfy for all 
		$ t \in (0,T) $, 
		$ x = (x_1,x_2,\ldots,x_k) \in \mc O^k $ 
	that 
		\begin{equation}
		\Psi(t,x_1,x_2,\ldots,x_k) 
		= 
		\Phi(T-t, x_1,x_2,\ldots,x_k). 
		\end{equation} 
	Observe that \eqref{jensen_ishii:viscosity_solution} guarantees that
	\begin{enumerate}[(i)] 
		\item we have for all 
			$ i \in \{1,2,\ldots,k\} $ 
		that $ v_i $ is upper semi-continuous, 
		\item we have that $ \Psi\in C^{1,2}((0,T)\times\cO^k,\R) $, and 
		\item we have that $ ( T - \mf t, \mf x_1, \mf x_2, \ldots, \mf x_k ) $ is a global maximum point of $ (0,T)\times\cO^k \ni (t,x_1,x_2,\ldots,x_k) \mapsto (\sum_{i=1}^k v_i(t,x) ) - \Psi(t,x_1,x_2,\ldots,x_k) \in \R $.
	\end{enumerate}
	In addition, note that \eqref{parabolic_superjets} ensures that for all 
		$ i \in \{1,2,\ldots,k\} $, 
		$ t \in (0,T) $, 
		$ x \in \mc O $
	we have that 
		$ (\mc P^{+} v_i)(t,x) = \{ (b,p,A) \in \R\times\R^d\times\Sym_d \colon (-b,p,A) \in (\mc P^{+} u_i)(T-t,x) \} $
	and 
		\begin{equation} \label{jensen_ishii_lemma:transformation_of_superjets} 
		(\JetClosure^{1,2}_{+} v_i)(t,x) = \{ (b,p,A) \in \R\times\R^d\times\Sym_d \colon (-b,p,A) \in (\JetClosure^{+}u_i)(T-t,x) \}.
		\end{equation}   
	The fact that for all $ i \in \{1,2,\ldots,k\} $ we have that $u_i$ is a viscosity solution of 
		\begin{equation}
		(\tfrac{\partial}{\partial t} u_i)(t,x) + G_i(t,x,u_i(t,x),(\nabla_x u_i)(t,x),(\operatorname{Hess}_x u_i)(t,x) ) \geq 0 
		\end{equation} 
	for $ (t,x) \in (0,T)\times\mc O $ and \cref{lem:equivalent_conditions_for_viscosity_solutions:left_to_right} hence imply that for all 
		$ i \in \{1,2,\ldots,k\} $, 
		$ t \in (0,T) $, 
		$ x \in \mc O $, 
		$ (b,p,A) \in (\mc P^{+} v_i)(t,x) $ 
	we have that 
		\begin{equation} 
		b - G_i(T-t,x,v_i(t,x),p,A) = -[-b+G_i(T-t,x,u_i(T-t,x),p,A)] \leq 0 . 
		\end{equation}  
	This and the assumption that for all $ i \in \{1,2,\ldots,k\} $ we have that $ G_i $ is upper semi-continuous ensure that for all 
		$ i \in \{1,2,\ldots,k\} $, 
		$ M \in (0,\infty) $ 
	and all compact $ \mc K \subseteq (0,T)\times\mc O $ we have that 
		\begin{equation} 
		\begin{split}
		& 
		\sup\!\left\{ b \in \R \colon (b,p,A) \in (\mc P^{+} v_i)(t,x), (t,x) \in \mc K, |v_i(t,x)| + \norm{p} + \HSnorm{A} \leq M \right\} 
		\\
		& \leq \sup\!\left\{ G_i(T-t,x,v_i(t,x),p,A) \colon \left( \begin{array}{c} (b,p,A) \in (\mc P^{+} v_i)(t,x), (t,x) \in \mc K, \\
		|v_i(t,x)| + \norm{p} + \HSnorm{A} \leq M \end{array} \right) \right\} 
		\\
		& \leq \sup\!\big\{ G_i(s,y,r,p,A) \colon (T-s,y) \in \mc K, |r| + \norm{p} + \HSnorm{A} \leq M \big\} < \infty. 
		\end{split} 
		\end{equation} 
	Crandall et al.~\cite[Theorem 8.3]{UsersGuide} (applied with 
		$ k \is k $, 
		$ u_i \is v_i $, 
		$ \mc O_i \is \mc O $, 
		$ \varphi \is \Psi $, 
		$ \hat{t} \is T-\mf t $, 
		$ \hat{x} \is \mf x $ 
	for $ i \in \{ 1, 2, \ldots, k \} $ in the notation of Crandall et al.~\cite[Theorem 8.3]{UsersGuide}) hence guarantees that there exist 
		$ a_1, a_2, \ldots, a_k \in \R $, 
		$ A_1, A_2, \ldots, A_k \in \Sym_d $
	which satisfy that 
		\begin{enumerate}[(I)]
			\item 
			we have for all 
				$ i \in \{1,2,\ldots,k\} $  
			that 
				$ ( a_i, (\nabla_{x_i}\Psi)( T-\mf t, \mf x_1,\ldots, \mf x_k), A_i ) 
				\in (\JetClosure^{+} v_i)( T-\mf t, \mf x_i ) $,  
	 		\item we have that 
				\begin{equation}
				\begin{split}
				& -\left[ \frac{1}{\varepsilon} + \HSnorm{ (\operatorname{Hess}_x \Psi)( T - \mf t, \mf x ) }\right] \operatorname{Id}_{\R^{kd}}
				\\
				& \leq 
			  	\begin{pmatrix}
			  	 A_1 & \ldots & 0 \\
			  	\vdots & \ddots & \vdots \\
			  	 0 & \ldots & A_k 
			  	\end{pmatrix}
			  	\leq 
			  	(\operatorname{Hess}_x \Psi)( T - \mf t, \mf x ) + 
			  	\varepsilon [ ( \operatorname{Hess}_x \Psi )( T - \mf t, \mf x ) ]^2,
			  	\end{split}
				\end{equation}
	  		and  
			\item we have that 
				$ \sum_{i=1}^k a_i = (\tfrac{\partial }{\partial t}\Psi)( T - \mf t, \mf x_1, \mf x_2, \ldots, \mf x_k ) $. 
	\end{enumerate}
	This and \eqref{jensen_ishii_lemma:transformation_of_superjets} prove that  
		\begin{enumerate}[(A)]
			\item we have for all
				$ i \in \{ 1, 2, \ldots, k \} $ 
			that $ ( -a_i, (\nabla_{x_i}\Phi)(\mf t, \mf x_1, \ldots, \mf x_k ), A_i ) \in (\JetClosure^{+} u_i)( \mf t, \mf x_i ) $, 
		 	\item we have that 
		 		\begin{equation}
		 		\begin{split} 
			  	& -\left[ \frac{1}{\varepsilon} + \HSnorm{(\operatorname{Hess}_x \Phi)( \mf t, \mf x ) }\right] \operatorname{Id}_{\R^{kd}}  
		  		\\
		  		& \leq 
		 		\begin{pmatrix}
		 		A_1 & \ldots & 0 \\
				\vdots & \ddots & \vdots \\
		 		0  & \ldots & A_k 
		 		\end{pmatrix}
		 		\leq 
		 		(\operatorname{Hess}_x \Phi)( \mf t, \mf x ) + 
		 		\varepsilon [(\operatorname{Hess}_x \Phi)(\mf t, \mf x ) ]^2,
		 		\end{split}
		 		\end{equation}
			and 
			\item we have that 
				$ \sum_{i=1}^k (-a_i) = (\frac{\partial }{\partial t}\Phi)( \mf t, \mf x_1, \mf x_2, \ldots, \mf x_k ) $. 
		\end{enumerate}
This establishes \eqref{jensen_ishii_lemma:matrix_inequality}. This completes the proof of  \cref{lem:jensen_ishii_lemma}. 
\end{proof}

\begin{lemma} \label{lem:viscosity_domination}
 	Let $ d,k \in \N $, 
	    $ T \in (0,\infty) $, 
	let $ \langle\cdot,\cdot\rangle \colon \R^d\times\R^d \to\R $ be the standard Euclidean scalar product on $ \R^d $, 
 	let $ \norm{\cdot}\colon\R^d\to [0,\infty)$ be the  
    standard Euclidean norm on $\R^d$,
	let $ \mc O \subseteq  \R^d $ be a non-empty open set, 
	for every 
		$ r \in (0,\infty) $ 
    let $ O_r \subseteq \mc O $ satisfy $ O_r = \{ x \in \mc O \colon ( \norm{x}\leq r~\text{and}~\{y\in\R^d  \colon \norm{y-x}<\nicefrac{1}{r} \} \subseteq \mc O ) \} $,
 	let $ G_i \colon (0,T)\times\mc O \times\R\times\R^d\times\Sym_d\to\R $, $ i \in \{1,2,\ldots,k\} $, satisfy for all 
 		$ i \in \{1,2,\ldots,k\} $ 
 	that $ G_i $ is degenerate elliptic and upper semi-continuous, 
	let $ u_i \colon [0,T]\times \mc O \to \R $, $ i \in \{1,2,\ldots,k\} $, satisfy for all 
		$ i \in \{1,2,\ldots,k\} $ 
	that $ u_i $ is a viscosity solution of 
    	\begin{equation}
    	(\tfrac{\partial}{\partial t} u_i)(t,x) 
     	+ 
     	G_i(t,x,u_i(t,x),
     	(\nabla_x u_i)(t,x),
		(\operatorname{Hess}_{x} u_i)(t,x)) 
		\geq 0 
		\end{equation} 
	for $ (t,x) \in (0,T)\times\mc O $, 
	assume that  
		\begin{equation} \label{comparison_viscosity:boundary_condition}
		\sup_{ x \in \mc O} 
		\left[\sum_{i=1}^k u_i(T,x)\right] \leq 0     
		\qquad\text{and}\qquad
		\lim_{n\to\infty} 
		\left[
		\sup_{t\in (0,T)}
		\sup_{x\in \cO\setminus O_n} 
		\left(\sum_{i=1}^k 
		u_i(t,x) 
		\right)
		\right]
		\leq 0,   
		\end{equation}
	and assume for all 
		$ t^{(n)} \in (0,T) $, $ n \in \N_0 $, 
	and all
    	$ ( x_i^{(n)}, r_i^{(n)}, A_i^{(n)} ) \in \mc O \times \R \times \Sym_d $, $ n \in \N_0 $, $ i \in \{1,2,\ldots,k\} $, 
    with 
    	$ \limsup_{ n \to \infty } [ | t^{(n)} - t^{(0)} | + \Norm{ x^{(n)}_1 - x^{(0)}_1 } + \sqrt{n} \sum_{i=2}^k \Norm{ x_i^{(n)}-x_{i-1}^{(n)} } ] = 0 < \liminf_{n\to\infty} [ \sum_{i=1}^k r_i^{(n)} ] = \limsup_{n\to\infty} [ \sum_{i=1}^k r_i^{(n)} ] \leq  \sup_{n\in\N} [ \sum_{i=1}^k |r_i^{(n)}| ] < \infty $
    and 
        $ \Forall ( n \in \N, z_1, \ldots, z_k \in \R^d ) \colon 
        -5 \sum_{i=1}^k \norm{ z_i }^2 \leq \sum_{i=1}^k \langle z_i, A_i^{(n)} z_i \rangle \leq 5 \sum_{i=2}^k \norm{ z_i - z_{i-1} }^2 $
    that  
	    \begin{equation}
	    \label{viscosity_domination:limsup_ass}
	     \limsup_{n\to\infty} 
	     \left[ 
	      \sum_{i=1}^k 
	      G_i( 
	       t^{(n)}, 
	       x_i^{(n)}, 
	       r_i^{(n)}, 
	       n ( \mathbbm{1}_{[2,k]}(i) 
	       [x_i^{(n)}-x_{i-1}^{(n)}] 
	       + 
	       \mathbbm{1}_{[1,k-1]}(i) 
	       [x_i^{(n)} - x_{i+1}^{(n)}] ), 
	       nA^{(n)}_i
	      )      
	     \right] 
	     \leq 0 
	    \end{equation}
	(cf.~\cref{symmetric_matrices,degenerate_elliptic,def:viscosity_subsolution}).
Then we have for all 
	$t\in (0,T]$, 
	$x\in\cO$ 
that 
	$\sum_{i=1}^k u_i(t,x) \leq 0$. 
\end{lemma}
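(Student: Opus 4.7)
The plan is to argue by contradiction via a $k$-fold doubling of variables. Suppose $\sum_{i=1}^k u_i(\mf t, \mf x) > 0$ for some $(\mf t, \mf x) \in (0,T]\times\mc O$; the terminal condition in \eqref{comparison_viscosity:boundary_condition} forces $\mf t < T$. For a small $\eta \in (0,\infty)$ the function $(t,x) \mapsto \sum_i u_i(t,x) - \eta/t$ is upper semi-continuous, tends to $-\infty$ as $t \to 0^+$, satisfies $\sum_i u_i(T,\cdot) - \eta/T \leq -\eta/T < 0$, is $\leq -\eta/T$ on $\mc O \setminus O_N$ for large $N$ by the far-field clause of \eqref{comparison_viscosity:boundary_condition}, and still has a strictly positive supremum; hence this supremum is attained at some interior point of $(0,T) \times \mc O$.

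Next, for $\alpha \in \N$ I consider
\[
\Phi_\alpha(t,x_1,\ldots,x_k) = \sum_{i=1}^k u_i(t,x_i) - \frac{\alpha}{2}\sum_{i=2}^k \norm{x_i - x_{i-1}}^2 - \frac{\eta}{t}
\]
on $(0,T]\times\mc O^k$. The same compactness argument, enhanced by the $\alpha$-term which penalises non-diagonal configurations, shows $\Phi_\alpha$ attains its strictly positive supremum at some $(t^{(\alpha)}, x_1^{(\alpha)},\ldots, x_k^{(\alpha)}) \in (0,T) \times \mc O^k$. \cref{lem:lemma_cil_called_elementary} applied with penalty $\phi(x_1,\ldots,x_k) = \tfrac12 \sum_{i=2}^k \norm{x_i-x_{i-1}}^2$ then gives $\alpha \sum_{i=2}^k \norm{x_i^{(\alpha)} - x_{i-1}^{(\alpha)}}^2 \to 0$ as $\alpha \to \infty$, whence in particular $\sqrt{\alpha}\sum_{i=2}^k \norm{x_i^{(\alpha)} - x_{i-1}^{(\alpha)}} \to 0$; moreover every cluster point has the form $(\mf s, \mf y,\ldots,\mf y) \in (0,T) \times \mc O^k$ with $\sum_i u_i(\mf s,\mf y) - \eta/\mf s > 0$. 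Extracting a subsequence, I may assume $(t^{(\alpha)}, x_1^{(\alpha)}) \to (\mf s, \mf y)$ and that $\sum_i u_i(t^{(\alpha)}, x_i^{(\alpha)})$ converges to a finite positive limit.

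Now apply \cref{lem:jensen_ishii_lemma} with $\varepsilon = 1/\alpha$ and test function $\Phi(t,x_1,\ldots,x_k) = \tfrac{\alpha}{2}\sum_{i=2}^k\norm{x_i - x_{i-1}}^2 + \eta/t$. This yields $b_i^{(\alpha)} \in \R$ and $A_i^{(\alpha)} \in \Sym_d$ with $(b_i^{(\alpha)}, p_i^{(\alpha)}, A_i^{(\alpha)}) \in (\JetClosure^+ u_i)(t^{(\alpha)}, x_i^{(\alpha)})$, where $p_i^{(\alpha)} = \alpha\bigl(\mathbbm{1}_{[2,k]}(i)(x_i^{(\alpha)}-x_{i-1}^{(\alpha)}) + \mathbbm{1}_{[1,k-1]}(i)(x_i^{(\alpha)}-x_{i+1}^{(\alpha)})\bigr)$, $\sum_{i=1}^k b_i^{(\alpha)} = \eta/(t^{(\alpha)})^2$, and \eqref{jensen_ishii_lemma:matrix_inequality} holds. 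A spectral computation with the block-tridiagonal Hessian $B$ of $\tfrac12 \sum_{i=2}^k\norm{x_i - x_{i-1}}^2$ (a discrete Neumann Laplacian tensored with $\operatorname{Id}_{\R^d}$, of operator norm $\leq 4$, so that $B^2 \leq 4B$) combined with $\varepsilon = 1/\alpha$ gives, setting $\tilde A_i^{(\alpha)} := \alpha^{-1}A_i^{(\alpha)}$, the bilinear bound
\[
-5\sum_{i=1}^k \norm{z_i}^2 \leq \sum_{i=1}^k \langle z_i, \tilde A_i^{(\alpha)} z_i\rangle \leq 5 \sum_{i=2}^k \norm{z_i - z_{i-1}}^2
\]
for all $z_1,\ldots,z_k \in \R^d$, which matches precisely the one demanded in hypothesis \eqref{viscosity_domination:limsup_ass}.

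To conclude, \cref{lem:equivalent_conditions_for_viscosity_solutions:left_to_right} and the viscosity subsolution property yield $b_i^{(\alpha)} + G_i(t^{(\alpha)}, x_i^{(\alpha)}, u_i(t^{(\alpha)}, x_i^{(\alpha)}), p_i^{(\alpha)}, \alpha \tilde A_i^{(\alpha)}) \geq 0$; summing over $i$ and taking $\limsup_{\alpha \to \infty}$ along the subsequence, hypothesis \eqref{viscosity_domination:limsup_ass} (applied with $n = \alpha$, $r_i^{(n)} = u_i(t^{(\alpha)}, x_i^{(\alpha)})$, matrices $\tilde A_i^{(\alpha)}$) forces $\eta/\mf s^2 \leq 0$, a contradiction. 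The main technical obstacle is securing all preconditions of \eqref{viscosity_domination:limsup_ass} simultaneously, particularly the two-sided matrix bound with the specific constant $5$: this pins down the exact choice $\varepsilon = 1/\alpha$ in Jensen-Ishii and relies on the sharp spectral estimate for the discrete Neumann Laplacian. The secondary difficulty is the initial compactness step, which jointly exploits the terminal and far-field clauses of \eqref{comparison_viscosity:boundary_condition} and the $\eta/t$ penalty to rule out escape to $t \to 0^+$, $t \to T$, and $x \in \mc O \setminus O_n$ as $n \to \infty$.
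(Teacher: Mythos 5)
Your proposal is correct and follows essentially the same route as the paper's proof: contradiction via $k$-fold doubling of variables with the penalty $\tfrac{\alpha}{2}\sum_{i=2}^k\norm{x_i-x_{i-1}}^2$, \cref{lem:lemma_cil_called_elementary} for the decay of the penalty term, \cref{lem:jensen_ishii_lemma} with $\varepsilon=\nicefrac{1}{\alpha}$ together with the operator-norm bound $4$ on the block Hessian to obtain the two-sided matrix estimate with constant $5$, and finally hypothesis \eqref{viscosity_domination:limsup_ass}; the only cosmetic difference is that you place the $\nicefrac{\eta}{t}$ term inside the Jensen--Ishii test function, whereas the paper subtracts $\nicefrac{\delta}{t}$ from each $u_i$ and transforms the operators $G_i$ into $H_i$. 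One sign slip: \cref{lem:jensen_ishii_lemma} gives $\sum_{i=1}^k b_i^{(\alpha)}=(\tfrac{\partial}{\partial t}\Phi)(t^{(\alpha)},x^{(\alpha)})=-\eta\,[t^{(\alpha)}]^{-2}$ rather than $+\eta\,[t^{(\alpha)}]^{-2}$, which is in fact the sign you need in order to deduce $\sum_{i=1}^k G_i(\ldots)\geq \eta\,[t^{(\alpha)}]^{-2}>0$ and hence the contradiction you state.
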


\begin{proof}[Proof of \cref{lem:viscosity_domination}]
	We intend to prove that for all 
		$ t \in (0,T] $, 
		$ x \in \mc O $
	we have that 
		$ \sum_{i=1}^k u_i(t,x) \leq 0 $ 
	by showing that for all 
 	 	$ \delta \in (0,\infty) $,  
 		$ t \in (0,T] $, 
 		$ x \in \mc O $
 	we have that 
 		$ \sum_{i=1}^k u_i(t,x) \leq \frac{k\delta}{t} $. 
	Throughout this proof let  
		$ \delta \in (0,\infty) $, 
	let $ v_i \colon [0,T]\times \mc O \to [-\infty,\infty) $, $ i \in \{1,2,\ldots,k\} $, satisfy for all 
		$ i \in \{1,2,\ldots,k\} $, 
	 	$ t \in [0,T] $, 
	 	$ x \in \mc O $ 
 	that 
 		\begin{equation} 
 		v_i(t,x) = 
 		\begin{cases} 
 		u_i(t,x) - \frac{\delta}{t} & \colon t > 0 \\
		-\infty & \colon t = 0, 
		\end{cases}
 		\end{equation}  
	let $ H_i \colon (0,T) \times \mc O \times \R \times \R^d \times \Sym_d \to \R $, $ i \in \{1,2,\ldots,k\} $, satisfy for all 
 		$ i \in \{1, 2, \ldots, k \} $, 
 		$ t \in (0,T) $, 
 		$ x \in \mc O $, 
 		$ r \in \R $,
 		$ p \in \R^d $, 
 		$ A \in \Sym_d $
 	that 
		\begin{equation} \label{viscosity_domination:H_definition}
		 H_i(t,x,r,p,A)  
		 = 
		 G_i\!\left(t,x,r+\tfrac{\delta}{t},p,A\right)
		 -\tfrac{\delta}{t^2}, 
		\end{equation}
	let 
		$ \Phi \colon [0,T]\times \mc O^k \to [0,\infty) $ 
	and  
		$ \eta \colon [0,T]\times\mc O^k \to [-\infty,\infty) $ 
	satisfy for all 
		$ t \in [0,T] $, 
		$ x = (x_1,x_2,\ldots,x_k) \in \mc O^k$ 
	that 
		$ \eta(t,x) = \sum_{i=1}^k v_i(t,x_i)$ 
	and 
		\begin{equation} \label{viscosity_domination:Phi_definition}
		\Phi(t,x) = \frac12 \left[ \sum_{i=2}^k \norm{ x_i - x_{i-1} }^2 \right]\!,  	
		\end{equation}  
	let $ \mc S \in (-\infty,\infty] $ satisfy $ \mc S = \sup_{ t \in [0,T] } \sup_{ x \in \mc O } [ \sum_{i=1}^k v_i(t,x) ] $, 
	let  
		$ S_{\alpha,r} \in (-\infty,\infty] $, $\alpha,r\in [0,\infty) $, 
	satisfy for all 
		$ \alpha,r \in [ 0, \infty ) $
	that  
		\begin{equation} 
		S_{\alpha,r} 
		= \sup_{t \in [0,T]} \sup_{x\in (O_r)^k} \left[\eta(t,x)-\alpha\Phi(t,x)\right]\!, 
		\end{equation} 
	and let 
		$ \HSnorm{\cdot} \colon \R^{(kd)\times(kd)} \to [0,\infty) $ 
	satisfy for all 
		$ A \in \R^{(kd)\times(kd)} $
	that 
		\begin{equation} 
		\HSnorm{A} = \sup \left\{ \left[\sum_{i=1}^{kd} |y_{i}|^2\right]^{\nicefrac12}\left[\sum_{i=1}^{kd} |x_i|^2\right]^{-\nicefrac12} \colon \left(  \begin{array}{c} x=(x_1,x_2,\ldots,x_{kd}) \in \R^{kd}\setminus\{0\}, \\
		y=(y_1,y_2,\ldots,y_{kd}) \in \R^{kd},\\
		 y=Ax 
		 \end{array} \right)\right\} .
		\end{equation} 
	Observe that \eqref{comparison_viscosity:boundary_condition}, the fact that $\sup_{x\in\cO}\big[\sum_{i=1}^k v_i(0,x)\big] = -\infty$, and the fact that for all 			
		$ i \in \{1,2,\ldots,k\} $ 
	we have that 
		$ v_i \leq u_i $ 
	yield that 
		\begin{equation}\label{comparison_viscosity:boundary_condition_v}
		 \sup_{ x \in \mc O } \left[ \sum_{i=1}^k v_i(T,x) \right] \leq 0 
		 \qandq
		 \limsup_{n\to\infty} 
		 \left[ 
		 \sup_{t\in [0,T]} 
		 \sup_{x\in \mc O\setminus\mc O_n}
		 \sum_{i=1}^k 
		 v_i(t,x) 
		 \right] 
		 \leq 0. 
		 \end{equation} 
	Moreover, observe that the assumption that for all 
		$ i \in \{1,2,\ldots,k\} $ 
	we have that 
		$ u_i $ 
	is upper semi-continuous implies that for all 
		$ i \in \{1,2\ldots,k\} $ 
	we have that 
		$ v_i $ 
	is upper semi-continuous. 
	Furthermore, note that  \eqref{viscosity_domination:H_definition} shows that for all 
		$ i \in \{1,2,\ldots,k\} $ 
	we have that $ v_i $ is a viscosity solution of 
		\begin{equation}\label{viscosity_domination:transformed_equation}
  		(\tfrac{\partial}{\partial t} v_i)(t,x) 
  		+ 
  		H_i(t,x,v_i(t,x),(\nabla_x v_i)(t,x),(\operatorname{Hess}_x v_i)(t,x)) \geq 0 
		\end{equation}
	for $ (t,x) \in (0,T)\times\mc O $. 
	Next we claim that for all 
		$ t \in [0,T] $, 
 		$ x \in \mc O $ 
	we have that 
		\begin{equation} \label{viscosity_domination:auxiliary_claim}
		\mc S
		= \sup_{t\in [0,T]}\sup_{x\in\mc O} \left[\sum_{i=1}^k v_i(t,x)\right] \leq 0. 
		\end{equation} 
	We intend to prove \eqref{viscosity_domination:auxiliary_claim} by contradiction. For this assume that $ \mc S \in (0,\infty] $. 
 	Observe that the hypothesis that $ \mc S \in (0,\infty] $ and \eqref{comparison_viscosity:boundary_condition_v} ensure that there exists $ N \in \N $ which satisfies that 
 		\begin{enumerate}[(i)]
 		\item we have that $ O_N \neq \emptyset $, 
 		\item we have that $ O_N $ is compact, and  
 		\item we have that 
		 	$ \sup_{t\in [0,T]} \sup_{x\in O_N} [ \sum_{i=1}^k v_i(t,x) ] = \mc S $. 
	 	\end{enumerate}
 	The fact that for all 
 		$ i \in \{1,2,\ldots,k\} $ 
 	we have that $ v_i $ is upper semi-continuous hence proves that $ \mc S \in (0,\infty) $. 
 	Moreover, note that the fact that for all 
 		$ i \in \{1,2,\ldots,k\} $ 
 	we have that 
 		$ \sup_{x\in\mc O} v_i(0,x) = -\infty $ 
 	yields that  
	 	$ \mc S = \sup_{ t \in (0,T] } \sup_{ x \in O_N } [\sum_{i=1}^k v_i(t,x)] $. 
	Note that the fact that $ \Phi \in C( [0,T]\times\mc O^k, \R ) $ and the fact that for all 
		$ i \in \{1,2,\ldots,k\} $ 
	we have that 
		$ v_i $ is upper semi-continuous  
	ensure that for all 
		$ \alpha \in (0,\infty) $ 
	we have that 
		$ [0,T]\times \mc (O_N)^k \ni (t,x) \mapsto \eta(t,x) - \alpha\Phi(t,x) \in [-\infty,\infty) $  
	is upper semi-continuous. 
	This and the fact that $ O_N $ is compact prove that there exist 
		$ t^{(\alpha)} \in [0,T] $, $ \alpha \in (0,\infty) $, 
	and 
		$ x^{(\alpha)} =	 (x_1^{(\alpha)},x_2^{(\alpha)},\ldots,x_k^{(\alpha)}) \in (O_N)^k $, $ \alpha \in (0,\infty) $, 
	which satisfy for all 
		$ \alpha \in (0,\infty) $ 
	that 
		\begin{equation}
		\begin{split}
		& 
		\eta( t^{ (\alpha) }, x^{ (\alpha) } ) - \alpha \Phi( t^{ (\alpha) }, x^{ (\alpha) } )
		= 
		\sup_{t\in [0,T]}
		\sup_{x\in (\cO_N)^k}
		\left[ 
		\eta(t,x) - \alpha \Phi(t,x)
		\right] 
		= 
		S_{\alpha,N}. 
		\end{split}
		\end{equation}
	Moreover, note that the fact that for all 
		$ t \in [0,T] $, 
		$ y \in \mc O $ 
	we have that 
		$ \eta(t,y,y,\ldots,y) = \sum_{i=1}^k v_i(t,y) $
	and the fact that for all 
		$ t \in [0,T] $, 
		$ y \in \mc O $ 
	we have that 
		$ \Phi(t,y,y,\ldots,y) = 0 $
	imply that for all 
		$ \alpha \in (0,\infty) $ 
	we have that 
		\begin{equation}\label{viscosity_domination:lower_bound_on_S_alpha}
		S_{\alpha,N} 
		\geq 
		\sup_{t\in [0,T]}
		\sup_{y\in \cO} 
		\Big[ 
		 \eta(t,y,y,\ldots,y) 
		 - 
		 \alpha \Phi(t,y,y,\ldots,y) 
		\Big]
		= 
		\sup_{t\in [0,T]}
		\sup_{y\in \cO} 
		\left[ 
		\sum_{i=1}^k v_i(t,y) 
		\right] 
		= \mc S > 0. 
 		\end{equation}
	Combining this with the fact that for all 
		$ \alpha, \beta \in (0,\infty) $ 
	with 
		$ \alpha \geq \beta $ 
	we have that 
		$ S_{\alpha,N} \leq S_{\beta,N} $ 
	demonstrates that 
		$ \liminf_{\alpha \to \infty} S_{\alpha,N} 
		= \limsup_{\alpha \to \infty} S_{\alpha,N}
		\in [ \mc S, \infty ) \subseteq \R $. 
	Moreover, note that \eqref{viscosity_domination:lower_bound_on_S_alpha} and the fact that for all 
		$ \alpha \in (0,\infty) $ 
	we have that 
		$ \sup_{ x \in \mc O^k } [\eta(0,x)-\alpha\Phi(0,x)] = -\infty $ 
	yield that for all 
		$ \alpha \in (0,\infty) $
	we have that 
		\begin{equation} 
		S_{ \alpha, N } 
		= \sup_{ t \in (0,T] } \sup_{ x \in (O_N)^k } [\eta(t,x)-\alpha\Phi(t,x)]. 
		\end{equation}
	Item~\eqref{lemma_cil_called_elementary:item1} in \cref{lem:lemma_cil_called_elementary} (applied with 
		$ \mc O \is (0,T]\times (O_N)^k $, 
		$ \eta \is \eta|_{ (0,T]\times (O_N)^k }$, 
		$ \phi \is \Phi|_{ (0,T]\times (O_N)^k }$, 
		$ x \is ( (0,\infty) \ni \alpha \mapsto (t^{(\alpha)},x^{(\alpha)}) \in (0,T] \times (O_N)^k )$ 
	in the notation of \cref{lem:lemma_cil_called_elementary}) hence ensures that 
		\begin{equation}\label{viscosity_domination:alpha_phi_goes_to_zero}
  		0 
  		= 
  		\limsup_{\alpha\to\infty} 
  		\left[ 
  		\alpha \Phi( t^{(\alpha)}, 
 		 x^{(\alpha)})
  		\right]
  		= 
  		\limsup_{\alpha\to\infty} 
  		\left[ 
  		\tfrac{\alpha}{2} \sum_{i=2}^k 
  		\Norm{ 
    		x_i^{(\alpha)} 
    		-
    		x_{i-1}^{(\alpha)}
    	}^2 \right]\!. 
 		\end{equation}
	In addition, note that the fact that $ O_N $ is compact guarantees that there exist
		$ \mf t \in [0,T] $,
		$ \mf x = ( \mf x_1, \mf x_2, \ldots, \mf x_k ) \in (O_N)^k $,  
		$ (\alpha_n)_{ n \in \N } \subseteq \N $
	which satisfy 
 		$ \liminf_{n\to\infty} \alpha_n = \infty $ 
 	and 
 		$ \limsup_{n\to \infty} [ | t^{(\alpha_n)} - \mf t | + \Norm{ x^{(\alpha_n)} - \mf x } ] = 0 $. 
	Furthermore, observe that the fact that $ \eta $ is upper semi-continuous and the fact that $ \Phi $ is continuous imply that 
		\begin{equation}
  		\eta( \mf t, \mf x ) 
  		\geq 
  		\limsup_{n\to\infty} 
  		\left[ 
  		\eta(t^{(\alpha_n)},x^{(\alpha_n)})
  			- \alpha_{n} \Phi(t^{(\alpha_n)},x^{(\alpha_n)})
  		\right] 
  		\geq \mc S > 0. 
 		\end{equation}
	Combining this with the fact that for all 
		$ x \in \mc O^k $ 
	we have that 
		$ \eta(0,x) = -\infty $ 
	shows that $ \mf t \in (0,T] $. 
	Item\ \eqref{lemma_cil_called_elementary:item2} of \cref{lem:lemma_cil_called_elementary} hence ensures that 
 		$ 0 
 		= \Phi( \mf t, \mf x )
  		= \tfrac12 \sum_{i=2}^k \Norm{ \mf x_i - \mf x_{i-1} }^2 $
	and 
 		$ \eta( \mf t, \mf x ) 
 		= \sup_{ (t,x) \in [\Phi^{-1}(0)]\cap[(0,T]\times (O_N)^k ] } \eta(t,x) $. 
	Therefore, we obtain for all 
		$ i \in \{1, 2, \ldots, k \} $ 
	that 
		$ \mf x_i = \mf x_1 $ 
	and 
 		\begin{equation}
  		\mc S 
  		\leq \lim_{\alpha\to\infty} S_{\alpha,N} 
  		= \sum_{i=1}^k v_i( \mf t, \mf x_i ) 
  		= \eta( \mf t, \mf x ) 
  		= \sup_{ t \in [0,T] } \sup_{ y \in O_N } \left[ \sum_{i=1}^k v_i(t,y) \right] \leq \mc S. 
 		\end{equation}
 	Combining this with \eqref{comparison_viscosity:boundary_condition_v} ensures that $ \mf t \in (0,T) $. 
 	This implies that there exists 
 		$ \mf j \in \N $ 
 	such that for all 
 		$ n \in \N \cap [ \mf j, \infty ) $ 
 	we have that
 		$ t^{(\alpha_n)} \in (0,T) $.
	Hence, we obtain that there exists a non-empty set $ \mc N \subseteq \N $ which satisfies that 
 		$ \mc N = \{ \alpha_n \colon n\in\N, t^{(\alpha_n)} \in (0,T) \} $. 
	Note that \cref{lem:jensen_ishii_lemma} (applied with 	
		$ \varepsilon \is \frac1{n} $,
		$ \mc O \is (0,T)\times\mc O^k $, 
		$ \Phi \is n\Phi|_{(0,T)\times\mc O^k}$, 
		$ (u_i)_{i\in\{1,2,\ldots,k\}} \is (v_i|_{(0,T)\times\mc O})_{i\in\{1,2,\ldots,k\}} $,
		$ \mf t \is t^{(n)} $, 
 		$ \mf x \is x^{(n)} $
 	for $ n \in \mc N $
	in the notation of \cref{lem:jensen_ishii_lemma}) guarantees that there exist 
		$ b^{(n)}_1, b^{(n)}_2, \ldots, b^{(n)}_k \in \R $, 
		$ A^{(n)}_1, A^{(n)}_2, \ldots, A^{(n)}_k \in \Sym_d $
 	which satisfy that 
 		\begin{enumerate}[(I)]
 		\item \label{viscosity_domination:suitable_semijets} 
 		we have for all
	 		$ n \in \mc N $, 
 			$ i \in \{1,2,\ldots,k\} $ 
 		that 
 			\begin{equation}
			(b^{(n)}_i, n (\nabla_{x_i}\Phi)(t^{(n)},x^{(n)}),n A^{(n)}_i)\in(\JetClosure^{+} v_i) (t^{(n)},x_i^{(n)}),
			\end{equation}
		\item \label{viscosity_domination:sum_b} 
		we have for all
			$ n \in \mc N $ 
		that 
			$ 
			\sum_{i=1}^k b^{(n)}_i 
  			=
  			(\frac{\partial}{\partial t} \Phi)(t^{(n)},x^{(n)})
			=  0
			$, 
		and 
 		\item \label{viscosity_domination:matrix_inequalities} 
 		we have for all
 			$ n \in \mc N $ 
 		that 
			\begin{equation}
		 	\begin{split}
		  	& 
		  	-\left( n + n \HSnorm{ (\operatorname{Hess}_{x}\Phi)(t^{(n)},x^{(n)}) }\right) \operatorname{Id}_{\R^{kd}} 
		  	\\
		  	& \leq 
		  	n
		  	\begin{pmatrix}
		  	 A^{(n)}_1 & \ldots & 0 \\
		  	 \vdots & \ddots & \vdots \\
		  	 0 & \ldots & A^{(n)}_k
		  	\end{pmatrix}
		  	\leq 
		  	n (\operatorname{Hess}_x \Phi)(t^{(n)},x^{(n)}) 
		  	+ 
		  	n 
		  	[(\operatorname{Hess}_x \Phi)(t^{(n)},x^{(n)})]^2. 
		  	\end{split}
		 	\end{equation}
		\end{enumerate}
	Observe that the fact that for all 
		$ t \in (0,T) $, 
		$ x \in \mc O^k $ 
	we have that 
		$ (\operatorname{Hess}_x \Phi)(t,x) = (\operatorname{Hess}_x \Phi)(0,0) $ 
 	and Item~\eqref{viscosity_domination:matrix_inequalities} show that for all 
 		$ n \in \mc N $ 
 	we have that 
 		\begin{equation}\label{viscosity_domination:matrix_inequality_with_zero}
 		\begin{split}
  		&
  		-\left( 
  		1
  		+ 
  		\HSnorm{ (\operatorname{Hess}_{x}\Phi)(0,0) } \right) \operatorname{Id}_{\R^{kd}}
  		\\
  		& \leq 
  		\begin{pmatrix}
  		 A^{(n)}_1 & \ldots & 0 \\
   		\vdots & \ddots & \vdots \\
  		 0 & \ldots & A^{(n)}_k
  		\end{pmatrix}
 		\leq 
  		(\operatorname{Hess}_x \Phi)(0,0) 
  		+ 
  		[(\operatorname{Hess}_x \Phi)(0,0)]^2.
  		\end{split}
  \end{equation}
	Moreover, note that \cref{lem:equivalent_conditions_for_viscosity_solutions:left_to_right},
	Item~\eqref{viscosity_domination:suitable_semijets}, and 
	\eqref{viscosity_domination:transformed_equation} ensure that for all 
		$ n \in \mathcal{N} $,
		$ i \in \{1,2,\ldots,k\} $ 
 	we have that 
 		\begin{equation}
		b_i^{(n)} 
		+ 
		H_i(
		t^{(n)},
		x_i^{(n)},
		v_i(t^{(n)},x_i^{(n)}),
		n (\nabla_{x_i}\Phi)(t^{(n)},x^{(n)})
		,
		nA^{(n)}_i)
		\geq 0. 
		\end{equation}
	Combining this and Item~\eqref{viscosity_domination:sum_b} with \eqref{viscosity_domination:H_definition} proves that for all 
 		$ n \in \mc N $ 
	we have that 
		\begin{equation}\label{viscosity_domination:first_sum_inequality}
		\sum_{i=1}^k 
  		G_i( t^{(n)}, x_i^{(n)}, v_i(t^{(n)},x_i^{(n)}) + \frac{\delta}{t^{(n)}}, n (\nabla_{x_i} \Phi)(t^{(n)},x^{(n)}), n A^{(n)}_i) 
  		\geq 
  		\frac{k\delta}{[t^{(n)}]^2}. 
 		\end{equation}
	Next let 
		$ ( \mathbf{t}^{(n)},
		\mathbf{x}_i^{(n)},
		\mathbf{r}_i^{(n)},
		\mathbf{A}_{i}^{(n)} ) 
		\in 
		(0,T) \times \cO \times \R \times \Sym_{d} $, $ n \in \N $, $ i \in \{1,2,\ldots,k\} $, 
	satisfy for all 
	 	$ i \in \{1,2,\ldots,k\} $, 
		$ n \in \N $ 
	that 
		\begin{equation} \label{viscosity_domination:definition_sequence}
  		\begin{split}
 		& ( \mathbf{t}^{(n)},
 		\mathbf{x}_i^{(n)},
 		\mathbf{r}_i^{(n)},
 		\mathbf{A}_{i}^{(n)} ) = 
		\begin{cases}
   		\big( 
   		t^{(n)}, 
   		x^{(n)}_i, 
   		v_i(t^{(n)},
   		x^{(n)}_i)
   		+ 
   		\frac{\delta}{t^{(n)}},
   		A^{(n)}_i
   		\big) & 
   		\colon n \in \mc N 
   		\\
   		\big( \mf t, \mf x_i,  
   		\frac{ \mc S }{ k } + \frac{\delta}{ \mf t }, 0 \big) 
   		& \colon \text{else}. 
  		\end{cases}
  		\end{split}
 		\end{equation}
 	Observe that the fact that 
 		$ \limsup_{ \mc N \ni n \to \infty} [ t^{(n)} - \mf t | + \Norm{ x^{(n)} - \mf x } ] 
 		= \limsup_{ n \to \infty} [ | t^{(\alpha_n)} - \mf t | + \Norm{ x^{(\alpha_n)} - \mf x } ] = 0 $
 	implies that 
		\begin{equation}\label{viscosity_domination:t_n_x_n_1_convergence}
		\limsup_{n\to\infty} \left[ | \mathbf{t}^{(n)} - \mf t | + \Norm{ \mathbf{x}_1^{(n)} - \mf x_1 } \right] = 0.  
		\end{equation}
 	Moreover, note that the fact that for all 
 		$ i \in \{1,2,\ldots,k\} $ 
 	we have that 
 		$ \mf x_i = \mf x_1 $,
 	\eqref{viscosity_domination:alpha_phi_goes_to_zero}, and \eqref{viscosity_domination:definition_sequence} ensure that 
 		\begin{equation}\label{viscosity_domination:x_n_convergence}
		\begin{split}
  		0 
  		\leq 
  		\limsup_{n\to\infty} 
  		\left[
  		\sqrt{n} 
  		\sum_{i=2}^k 
   	 	\Norm{ 
    		\mathbf{x}^{(n)}_i
    		- 
    		\mathbf{x}^{(n)}_{i-1} 
    	} \right]
  		& =
  \limsup_{n\to\infty} 
  \left[n\left( \sum_{i=2}^k 
    \Norm{ \mathbf{x}^{(n)}_i - 
  \mathbf{x}^{(n)}_{i-1} }
  \right)^{\!\!2}
  \right]^{\nicefrac12}
  \\
  & \leq 
  \sqrt{k}
  \limsup_{n\to\infty} 
  \left[ 
   n \sum_{i=2}^k \Norm{ 
   \mathbf{x}^{(n)}_i - 
   \mathbf{x}^{(n)}_{i-1} }^2
  \right]^{\nicefrac12}
  = 0.
  \end{split}
 \end{equation}
	In addition, observe that \eqref{viscosity_domination:definition_sequence} and the fact that $ \limsup_{\mc N \ni n\to\infty} | \eta(t^{(n)},x^{(n)}) - \mc S | = 0 $ prove that 
		\begin{equation}
		\label{viscosity_domination:sum_r_i_n_convergent}
		 \liminf_{n\to\infty} 
		 \left[
		 \sum_{i=1}^k \mathbf{r}^{(n)}_i
		 \right]
		 = 
		 \limsup_{n\to\infty} 
		 \left[
		 \sum_{i=1}^k \mathbf{r}^{(n)}_i
		 \right]
		 =
%		 \lim_{\alpha\to\infty} 
%		 S_{\alpha} + \frac{k\delta}{t^{(0)}} \geq 
		 \mc S + \frac{ k \delta }{ \mf t }
		 > 0. 
		\end{equation}
 	Furthermore, note that the fact that  $\{(\mathbf{t}^{(n)},\mathbf{x}^{(n)})\in (0,T)\times\cO^k\colon n\in\N\} \cup \{ ( \mf t, \mf x ) \} $ is compact and the fact that for all 
		$ i \in \{1,2,\ldots,k\} $ 
	we have that $ v_i $ is upper semi-continuous guarantee that  
		\begin{equation} \label{viscosity_domination:r_i_n_bounded_above}
		\sup\!\big\{ \mathbf{r}^{(n)}_i \colon n\in\N,  i\in\{1,2,\ldots,k\} \big\} < \infty . 
		\end{equation} 
	Moreover, observe that \eqref{viscosity_domination:sum_r_i_n_convergent} ensures that 
		$ \sup\!\big\{ | \sum_{i=1}^k \mathbf{r}^{(n)}_i | \colon  n \in \N  \big\} < \infty $.
	Combining this with \eqref{viscosity_domination:r_i_n_bounded_above} implies that 
	 	\begin{equation}
		\label{viscosity_domination:sum_of_abs_r_i_n_bounded}
  		\sup_{n\in\N} 
  		\left[ 
  		\sum_{i=1}^k |\mathbf{r}^{(n)}_i|
  		\right] < \infty. 
 		\end{equation}
 	Next note that \eqref{viscosity_domination:Phi_definition} ensures that $ \HSnorm{(\operatorname{Hess}_x \Phi)(0,0)} \leq 4 $ (cf., for instance, (4.41) in Hairer et al.~\cite{HaHuJe2017_LossOfRegularityKolmogorov}). 
 	This, \eqref{viscosity_domination:definition_sequence}, and \eqref{viscosity_domination:matrix_inequality_with_zero} imply that for all 
 		$ n \in \N $, 
 		$ z_1, z_2, \ldots, z_k \in \R^d $ 
	we have that 
 		\begin{equation}\label{viscosity_domination:bounds_for_A_i_n}
		 - 5 \left[ \sum_{i=1}^k 
    	\Norm{z_i}^2 \right]
 		\leq 
		\sum_{i=1}^k 
		\langle z_i, \mathbf{A}^{(n)}_i z_i \rangle
		\leq 
		5 \left[ \sum_{i=2}^k \Norm{z_i - z_{i-1}}^2 \right]\!. 
		\end{equation}
	In addition, observe that \eqref{viscosity_domination:Phi_definition} guarantees that for all 
		$ t \in (0,T) $, 
		$ x=(x_1,x_2,\ldots,x_k) \in \mc O^k $ 
	we have that 
		\begin{equation}
	 	\label{viscosity_domination:Phi_Gradient}
	 	\begin{split}
	  	(\nabla_{x_i}\Phi)(t,x) 
	  	& = 
	  	\begin{cases} 
	  	 x_1 - x_2 & \colon 1 = i < k \\
	  	 2x_i - x_{i-1} - x_{i+1} & \colon 1 < i < k \\
	  	 x_k - x_{k-1} & \colon 1 < i = k \\
	  	 0 & \colon 1 = i = k
	  	\end{cases}
	  	\\
	  	& =
	  	\mathbbm{1}_{[2,k]}(i)
	    [
	    x_i-x_{i-1}
	    ]
	    + 
	    \mathbbm{1}_{[1,k-1]}(i)
	    [
	    x_i-x_{i+1}
	    ]. 
	  	\end{split}
		\end{equation}
	Combining \eqref{viscosity_domination:limsup_ass} and 
 	\eqref{viscosity_domination:first_sum_inequality} with  \eqref{viscosity_domination:t_n_x_n_1_convergence}--\eqref{viscosity_domination:bounds_for_A_i_n} 
 	hence ensures that 
		\begin{multline}
  		0 
  		< \frac{ k \delta }{ \mf t^2 }
  		= \limsup_{n\to\infty} \frac{ k \delta }{ [ t^{(n)} ]^2 }
  		\leq 
  		\limsup_{n\to\infty} 
  		\Bigg[ 
  		 \sum_{i=1}^k 
  		 G_i\bigg( 
    		\mathbf{t}^{(n)},
    		\mathbf{x}_i^{(n)},
    		\mathbf{r}_i^{(n)},
    		n
    		\big( 
    		\mathbbm{1}_{[2,k]}(i)
    		[
    		\mathbf{x}_i^{(n)}-
    		\mathbf{x}_{i-1}^{(n)}
    		]
    		\\ 
    		+ 
    		\mathbbm{1}_{[1,k-1]}(i)
    		[
    		\mathbf{x}_i^{(n)}-
    		\mathbf{x}_{i+1}^{(n)}
   			 ]
   		 \big), 
   	 	n \mathbf{A}_{i}^{(n)} 
   		\bigg)
   		\Bigg] 
   		\leq 0. 
  		\end{multline}	
 	This contradiction implies that $ \mc S \leq 0 $. 
 	Therefore, we obtain that for all 
 		$ t \in (0,T] $, 
 		$ y \in \mc O $ 
 	we have that 
 		$ \sum_{i=1}^k u_i( t, y ) \leq \frac{ k \delta }{ t } $. 
	This completes the proof of \cref{lem:viscosity_domination}. 
\end{proof}

\begin{cor} \label{cor:comparison_viscosity}
	Let $ d \in \N $, 
		$ T \in (0,\infty) $, 
	let $ \langle\cdot,\cdot\rangle \colon \R^d\times\R^d \to \R $ be the standard Euclidean scalar product on $\R^d$, 
	let $ \norm{\cdot} \colon \R^d \to [0,\infty) $ be the standard Euclidean norm on $\R^d$, 
	let $ \mc O \subseteq \R^d $ be a non-empty open set, 
	for every 
		$ r \in (0,\infty) $ 
	let $ O_r \subseteq \mc O $ satisfy $ O_r = \{x\in\mc O \colon (\norm{x}\leq r~\text{and}~\{y\in\R^d\colon\norm{y-x}<\nicefrac{1}{r}\}\subseteq\mc O)\} $,
	let $ G \in C( (0,T) \times \mc O \times \R \times \R^d \times \Sym_d, \R ) $, 
		$ u,v \in C([0,T]\times\cO,\R) $, 
	assume that 
	    \begin{equation} 
     	\sup_{ x \in \mc O } 
     	\left( u(T,x) - v(T,x) \right) 
     	\leq 0 
	 	\qquad\text{and}\qquad 
	 	\inf_{ r \in ( 0, \infty ) } 
	 	\left[ 
	 	\sup_{ t \in (0,T)}
	 	\sup_{ x \in \mc O \setminus O_r } ( u(t,x) - v(t,x) )
	 	\right]
		\leq 0,
		\end{equation}
 	assume that $ G $ is degenerate elliptic, 
	assume that $ u $ is a viscosity solution of 
		\begin{equation}
	    \label{uniqueness_viscosity}
	     (\tfrac{\partial }{\partial t}u)(t,x) 
	     + 
	     G(t,x,u(t,x),(\nabla_x u)(t,x),(\operatorname{Hess}_x u)(t,x)) \geq 0
	    \end{equation}
    for $ (t,x)\in (0,T)\times\mc O $,  
	assume that $ v $ is a viscosity solution of 
    	\begin{equation}
    	(\tfrac{\partial }{\partial t}v)(t,x) 
     	+ 
     	G(t,x,v(t,x),(\nabla_x v)(t,x),(\operatorname{Hess}_x v)(t,x)) \leq 0
    \end{equation}
	for $(t,x)\in (0,T)\times\cO$, 
	and assume for all 
    	$ t_n \in (0,T) $, $ n \in \N_0 $, 
    all
    	$ ( x_n, r_n, A_n ) \in \mc O \times \R \times \Sym_d $, $ n \in \N_0 $,  
    and all
    	$ (\mf x_n, \mf r_n, \mf A_n) \in \mc O \times \R \times \Sym_d $, $ n \in \N_0 $, 
    with   
    	$ \limsup_{n\to\infty} [ | t_n - t_0 | + \norm{ x_n - x_0 } + \sqrt{n}  \norm{ x_n - \mf x_n } ] = 0 < \liminf_{ n \to \infty } ( r_n - \mf r_n ) = \limsup_{ n \to \infty } ( r_n - \mf r_n ) \leq \sup_{n\in\N} (|r_n| + |\mf r_n|) < \infty $ 
    and 
    	$ \Forall ( n \in \N, z, \mf z \in \R^d ) \colon \langle z,A_n z \rangle - \langle \mf z, \mf A_n \mf z \rangle \leq 5 \norm{ z - \mf z }^2 $
    that 
	    \begin{equation} \label{comparison_viscosity:ass}
	     \limsup_{n\to\infty} 
	     \left[ G( t_n, x_n, r_n, n( x_n - \mf x_n ), n A_n ) - G( t_n, \mf x_n, \mf r_n, n( x_n - \mf x_n ), n \mf A_n ) \right]  
	     \leq 0  
	    \end{equation}
	(cf.~\cref{symmetric_matrices,degenerate_elliptic,def:viscosity_subsolution,def:viscosity_supersolution}). 
	Then we have for all 
 		$ t \in [0,T] $, 
 		$ x \in \mc O $ 
 	that $ u(t,x) \leq v(t,x) $.     
\end{cor}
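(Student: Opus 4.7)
The plan is to reduce \cref{cor:comparison_viscosity} to the case $k=2$ of the general domination lemma \cref{lem:viscosity_domination}. Specifically, I would set $u_1 = u$, $u_2 = -v$ on $[0,T] \times \mc O$, $G_1 = G$, and define $G_2 \colon (0,T) \times \mc O \times \R \times \R^d \times \Sym_d \to \R$ by $G_2(t,x,r,p,A) = -G(t,x,-r,-p,-A)$. Then the desired conclusion $u(t,x) \leq v(t,x)$ is exactly $u_1(t,x) + u_2(t,x) \leq 0$.

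The first step is to verify the preconditions of \cref{lem:viscosity_domination}. Since $G \in C(\cdots,\R)$, both $G_1$ and $G_2$ are continuous, in particular upper semi-continuous, and a short check using the degenerate ellipticity of $G$ shows that $G_2$ is also degenerate elliptic. The function $u_1 = u$ is continuous and is, by hypothesis, a viscosity subsolution of the corresponding equation. For $u_2 = -v$, if $\phi \in C^{1,2}((0,T)\times\mc O,\R)$ satisfies $\phi \geq u_2$ and $\phi(t,x) = u_2(t,x)$, then $-\phi \leq v$ with equality at $(t,x)$, so the supersolution property of $v$ yields
\begin{equation}
-(\tfrac{\partial}{\partial t}\phi)(t,x) + G(t,x,v(t,x),-(\nabla_x\phi)(t,x),-(\operatorname{Hess}_x\phi)(t,x)) \leq 0,
\end{equation}
which is precisely $(\tfrac{\partial}{\partial t}\phi)(t,x) + G_2(t,x,u_2(t,x),(\nabla_x\phi)(t,x),(\operatorname{Hess}_x\phi)(t,x)) \geq 0$. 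Hence $u_2$ is a viscosity subsolution in the sense required. The boundary data assumptions transfer directly: $\sup_{x\in\mc O}[u_1(T,x)+u_2(T,x)] = \sup_{x\in\mc O}(u(T,x)-v(T,x)) \leq 0$, and since $O_r$ is non-decreasing in $r$, the infimum over $r$ of the outer-region supremum coincides with the limit as $n\to\infty$ along $O_n$, giving the second inequality in \eqref{comparison_viscosity:boundary_condition}.

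The key step is verifying hypothesis \eqref{viscosity_domination:limsup_ass} with $k=2$. Given sequences $t^{(n)}, x_1^{(n)}, x_2^{(n)}, r_1^{(n)}, r_2^{(n)}, A_1^{(n)}, A_2^{(n)}$ satisfying the listed conditions, I would set $t_n = t^{(n)}$, $x_n = x_1^{(n)}$, $\mf x_n = x_2^{(n)}$, $r_n = r_1^{(n)}$, $\mf r_n = -r_2^{(n)}$, $A_n = A_1^{(n)}$, $\mf A_n = -A_2^{(n)}$ and check that the convergence, boundedness, and matrix-type requirements of \eqref{comparison_viscosity:ass} hold. In particular, $r_n - \mf r_n = r_1^{(n)}+r_2^{(n)}$ is eventually positive and bounded, while the specialization $z_1 = z$, $z_2 = \mf z$ of the two-sided bound on $\sum_i \langle z_i, A_i^{(n)} z_i\rangle$ gives $\langle z,A_n z\rangle - \langle \mf z, \mf A_n \mf z\rangle = \langle z,A_1^{(n)} z\rangle + \langle \mf z, A_2^{(n)}\mf z\rangle \leq 5\|z-\mf z\|^2$. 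Using the identity $G_2(t^{(n)},x_2^{(n)},r_2^{(n)},n(x_2^{(n)}-x_1^{(n)}),nA_2^{(n)}) = -G(t^{(n)},\mf x_n,\mf r_n,n(x_n-\mf x_n),n\mf A_n)$, hypothesis \eqref{comparison_viscosity:ass} translates exactly into the required $\limsup \leq 0$.

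With all hypotheses confirmed, \cref{lem:viscosity_domination} yields $u(t,x) - v(t,x) \leq 0$ for every $(t,x) \in (0,T]\times\mc O$. For $t=0$, continuity of $u$ and $v$ on $[0,T]\times\mc O$ gives $u(0,x)-v(0,x) = \lim_{t\downarrow 0}(u(t,x)-v(t,x)) \leq 0$. I do not anticipate any serious obstacle: the main point is to carefully transcribe the bookkeeping between the $k=2$ form of \cref{lem:viscosity_domination} and the pairwise form of \cref{cor:comparison_viscosity}, particularly the sign flips induced by encoding $-v$ as a subsolution and the rewriting of $G_2$ in terms of $G$.
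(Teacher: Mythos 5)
Your proposal is correct and follows essentially the same route as the paper's proof: the paper likewise sets $H(t,x,r,p,A)=-G(t,x,-r,-p,-A)$, checks that $-v$ is a viscosity subsolution for $H$, translates \eqref{comparison_viscosity:ass} into the hypothesis \eqref{viscosity_domination:limsup_ass} of \cref{lem:viscosity_domination} with $k=2$, $u_1=u$, $u_2=-v$, and then extends the conclusion to $t=0$ by continuity. The sign bookkeeping in your identification $\mf r_n=-r_2^{(n)}$, $\mf A_n=-A_2^{(n)}$ matches the paper's computation exactly.
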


\begin{proof}[Proof of \cref{cor:comparison_viscosity}]
	Throughout this proof let 
 		$ H\colon (0,T)\times \mc O\times\R\times\R^d\times\Sym_{d} \to \R $ 
	satisfy for all 
 		$ t \in (0,T) $, 
 		$ x \in \mc O $, 
 		$ r \in \R $, 
 		$ p \in \R^d $, 
 		$ A \in \Sym_d $ 
 	that 
		\begin{equation} \label{comparison_viscosity:definition_of_H}
	  	H(t,x,r,p,A) = -G(t,x,-r,-p,-A).
	 	\end{equation}
	Note that \eqref{comparison_viscosity:definition_of_H} ensures that $H$ is degenerate elliptic. 
	Moreover, observe that \eqref{comparison_viscosity:definition_of_H} and the assumption that $ G \in C( (0,T)\times\mc O\times\R\times\R^d\times\Sym_d, \R ) $ assure that $ H \in C( (0,T)\times\mc O\times\R\times\R^d\times\Sym_d, \R ) $.
	In addition, note that \eqref{comparison_viscosity:definition_of_H} implies that $ -v $ is a viscosity solution of 
		\begin{equation}
		(\tfrac{\partial }{\partial t}(-v))(t,x) 
		+
		H\big(t,x,(-v)(t,x),(\nabla_x (-v))(t,x),(\operatorname{Hess}_x (-v))(t,x)\big)
		\geq 0	
		\end{equation}
	for $(t,x) \in (0,T) \times \cO$. 
 	Moreover, observe that \eqref{comparison_viscosity:ass} guarantees that for all 
 		$ t_n \in (0,T) $, $ n \in \N_0 $, 
 	all 
 		$ ( x_n, r_n, A_n ) \in \mc O \times \R \times \Sym_d $, $ n \in \N_0 $, 
 	and all 
 		$ ( \mf x_n, \mf r_n, \mf A_n ) \in \mc O \times \R \times \Sym_d $, $ n \in \N_0 $, 
 	with 
 		$ \limsup_{n\to\infty} [ | t_n - t_0 | + \norm{ x_n - x_0 } + \sqrt{n}\norm{ x_n - \mf x_n } ] = 0 < \liminf_{ n \to \infty } ( r_n + \mf r_n ) = \limsup_{ n \to \infty } ( r_n + \mf r_n ) \leq \sup_{n\in\N} (|r_n| + | \mf r_n | ) < \infty $
	and 
		$ \Forall ( n \in \N, z, \mf z \in \R^d ) \colon -5 ( \norm{ z }^2 + \norm{ \mf z }^2 ) \leq \langle z, A_n z\rangle + \langle \mf z, \mf A_n \mf z \rangle \leq 5 \norm{ z - \mf z }^2 $
	we have that 	
		\begin{equation}
 		\begin{split}
  		& 
  		\limsup_{n\to\infty} 
  		\big[ G( t_n, x_n, r_n, n( x_n - \mf x_n ), n A_n )
   		+ H( t_n, \mf x_n, \mf r_n, n( \mf x_n - x_n ), n \mf A_n ) \big] 
  		\\
  		& =
  		\limsup_{n\to\infty} 
  		\big[ G( t_n, x_n, r_n, n( x_n - \mf x_n ), n A_n )
   		- G( t_n, \mf x_n, -\mf r_n, n( x_n - \mf x_n ), -n \mf A_n ) \big] 
  		\leq 0 . 
 		\end{split}
 		\end{equation} 
	\cref{lem:viscosity_domination} (applied with 
 		$ k \is 2 $, 
 		$ u_1 \is u $, 
 		$ u_2 \is -v $, 
 		$ G_1 \is G $, 
 		$ G_2 \is H $ 
 	in the notation of \cref{lem:viscosity_domination}) hence ensures that for all 
 		$ t \in (0,T] $, 
 		$ x \in \mc O $ 
 	we have that 
 		$ u(t,x) - v(t,x) \leq 0 $. 
	The assumption that $ u, v \in C( [0,T]\times\mc O, \R ) $ therefore ensures that for all 
		$ t \in [0,T] $, 
		$ x \in \mc O $ 
	we have that 
 		$ u(t,x) \leq v(t,x) $. 
	This completes the proof of \cref{cor:comparison_viscosity}. 
\end{proof}

\begin{prop}
\label{prop:uniqueness_viscosity_semilinear}
	Let $ d, m \in \N $, 
    	$ L, T \in (0,\infty) $, 
 	let $ \langle\cdot,\cdot\rangle\colon \R^d\times\R^d \to \R $ be the standard Euclidean scalar product on $ \R^d $, 
 	let $ \norm{\cdot} \colon \R^d\to [0,\infty) $ be the standard Euclidean norm on $ \R^d $, 
 	let $ \HSnorm{\cdot} \colon \R^{d\times m} \to [0,\infty) $ be the Frobenius norm on $ \R^{d\times m} $, 
 	let $ \mc O \subseteq \R^d $ be a non-empty open set,
 	for every
    	$ r \in (0,\infty) $ 
    let $ O_r \subseteq \mc O $ satisfy 
    	$ O_r = \{ x \in \mc O \colon ( \norm{x}\leq r~\text{and}~\{y\in\R^d\colon \norm{y-x} < \nicefrac{1}{r}\} \subseteq \mc O ) \} $, 
    let $ \mu \in C([0,T]\times\mc O,\R^d) $, 
		$ \sigma \in C([0,T]\times\mc O,\R^{d\times m}) $, 
		$ f \in C([0,T]\times\mc O\times\R,\R) $, 
		$ g \in C(\mc O,\R) $, 
    	$ V \in C^{1,2}([0,T]\times\mc O,(0,\infty)) $, 
    assume for all 
		$ r \in (0,\infty) $ 
	that 
    	\begin{equation} \label{unique_viscosity_solution:local_Lipschitz_type_assumption}
    	\sup
     	\!
     	\left(
     	\left\{
     	\frac{    
     	\norm{ \mu(t,x) - \mu(t,y) } 
     	+ 
     	\HSnorm{ \sigma(t,x) - \sigma(t,y) }
    	}{
    	\norm{ x - y }}\colon 
		t \in [0,T],
		x,y \in O_r, 
		x \neq y
		\right\}
     	\cup
     	\{0\}
     	\right)
     	< \infty, 
    	\end{equation}
	assume for all 
    	$ t \in [0,T] $, 
    	$ x \in \mc O $, 
    	$ v,w \in \R $ 
    that 
    	$ ( f( t, x, v ) - f( t, x, w ) )( v - w ) \leq L | v - w |^2 $, 
		$ \limsup_{r\to\infty} [ \sup_{s\in [0,T]} \sup_{y\in \mc O\setminus O_r} ( \frac{|f(s,y,0)|}{V(s,y)} ) ] = 0 $, 
	and 
	    \begin{equation} \label{unique_viscosity_solution:inequality_for_V}
	    (\tfrac{\partial}{\partial t}V)(t,x) 
	    + 
	    \tfrac12
	    \operatorname{Trace}\!\big( 
	        \sigma(t,x)[\sigma(t,x)]^{*}
	        (\operatorname{Hess}_x V)(t,x)
	    \big) 
	    + 
	    \langle \mu(t,x), (\nabla_x V)(t,x) 
	    \rangle
	    \leq 0,
	    \end{equation}
	and let 
		$ u_1, u_2 \in \{ {\bf u} \in C([0,T]\times\cO,\R) \colon \limsup_{r\to \infty} 
		[ \sup_{t\in [0,T]} 
		\allowbreak
		\sup_{x\in\cO\setminus O_r} ( \frac{|{\bf u}(t,x)|}{V(t,x)} ) ]  
		= 0 \} $
	satisfy for all $ i \in \{1,2\} $ that $ u_i $ is a viscosity solution of 
		\begin{multline} \label{unique_viscosity_solution:viscosity_solution_ass}
		(\tfrac{\partial}{\partial t} u_i)(t,x) 
		+ 
		\tfrac12 
		\operatorname{Trace}\!\big( 
		\sigma(t,x)[\sigma(t,x)]^{*}
		(\operatorname{Hess}_x u_i)(t,x)
		\big) 
		+ 
		\langle \mu(t,x), 
		(\nabla_x u_i)(t,x) 
		\rangle
		\\
		+ 
		f(t,x,u_i(t,x)) 
		= 
		0	    
		\end{multline}
	with $u_i(T,x) = g(x)$ for $(t,x) \in (0,T)\times\cO$ (cf.~\cref{def:viscosity_solution}). Then we have for all 
		$ t \in [0,T] $, 
		$ x \in \mc O $ 
	that 
		$ u_1(t,x) = u_2(t,x) $. 
\end{prop}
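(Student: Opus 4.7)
The plan is to reduce uniqueness to two applications of the comparison principle in \cref{cor:comparison_viscosity} by first transforming the unknowns to convert the one-sided Lipschitz nonlinearity into a strictly monotone decreasing one, and then perturbing one of the solutions by a small multiple of the Lyapunov function $V$ to secure the asymptotic boundary condition.

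First, I would fix $\lambda \in (L,\infty)$ and introduce, for $i \in \{1,2\}$, the functions $v_i(t,x) := e^{\lambda(T-t)} u_i(t,x)$ together with the transformed nonlinearity $\tilde f(t,x,v) := e^{-\lambda(T-t)} f(t,x, e^{\lambda(T-t)} v) - \lambda v$. A direct change-of-variables argument shows each $v_i$ is a viscosity solution of $(\tfrac{\partial}{\partial t} v_i)(t,x) + \tfrac12 \operatorname{Trace}(\sigma(t,x)[\sigma(t,x)]^* (\operatorname{Hess}_x v_i)(t,x)) + \langle \mu(t,x), (\nabla_x v_i)(t,x)\rangle + \tilde f(t,x,v_i(t,x)) = 0$ on $(0,T)\times\cO$ with $v_i(T,\cdot) = g$, and a short computation using the one-sided Lipschitz hypothesis on $f$ yields the strict monotonicity bound $(\tilde f(t,x,v)-\tilde f(t,x,w))(v-w) \leq (L-\lambda)(v-w)^2$ for all $t,x,v,w$. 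The $V$-growth control on $u_i$ transfers to $v_i$ uniformly in $t$ since $|v_i|/V \leq e^{\lambda T} |u_i|/V$.

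Next, I would fix $\varepsilon \in (0,\infty)$ and set $\tilde v_1^\varepsilon := v_1 - \varepsilon V$. Given any $C^{1,2}$ test function $\phi$ from above for $\tilde v_1^\varepsilon$ at some point $(t_0,x_0)$, applying the subsolution property of $v_1$ to the shifted test function $\phi + \varepsilon V$ (admissible because $V \in C^{1,2}$) together with the Lyapunov inequality \eqref{unique_viscosity_solution:inequality_for_V} and the non-positivity of $\tilde f(t_0,x_0,\phi(t_0,x_0)+\varepsilon V(t_0,x_0)) - \tilde f(t_0,x_0,\phi(t_0,x_0))$ (from strict monotonicity and $\varepsilon V(t_0,x_0)>0$) yields that $\tilde v_1^\varepsilon$ is again a viscosity subsolution of the transformed PDE. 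The terminal inequality $\sup_{x\in\cO}(\tilde v_1^\varepsilon(T,x) - v_2(T,x)) = \sup_{x\in\cO}(-\varepsilon V(T,x)) \leq 0$ is immediate, and the asymptotic condition for \cref{cor:comparison_viscosity} follows because for all $r$ large enough the inequality $|v_1-v_2|/V \leq \varepsilon$ holds uniformly on $[0,T]\times(\cO\setminus O_r)$, whence $\tilde v_1^\varepsilon - v_2 = (v_1-v_2) - \varepsilon V \leq 0$ there.

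The main obstacle is verifying the asymptotic hypothesis \eqref{comparison_viscosity:ass} for the transformed operator $G_\lambda(t,x,r,p,A) := \tfrac12 \operatorname{Trace}(\sigma(t,x)[\sigma(t,x)]^*A) + \langle \mu(t,x), p\rangle + \tilde f(t,x,r)$. Applying the matrix bound to the columns of $\sigma(t_n,x_n)^\ast$ and $\sigma(t_n,\mf x_n)^\ast$ and summing bounds the diffusion contribution by a constant times $n\Norm{\sigma(t_n,x_n)-\sigma(t_n,\mf x_n)}^2$, which by the local Lipschitz hypothesis on $\sigma$ (combined with the fact that $(x_n)_{n\in\N}$ and $(\mf x_n)_{n\in\N}$ eventually lie in a common set $O_r$) is bounded by a constant times $(\sqrt{n}\Norm{x_n-\mf x_n})^2 \to 0$; the drift contribution vanishes similarly via local Lipschitz continuity of $\mu$. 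For the nonlinearity one inserts $\tilde f(t_n,x_n,\mf r_n)$ between $\tilde f(t_n,x_n,r_n)$ and $\tilde f(t_n,\mf x_n,\mf r_n)$: the first difference is bounded by $(L-\lambda)(r_n-\mf r_n)$ via the monotonicity estimate and has strictly negative limit because $\liminf_{n\to\infty}(r_n-\mf r_n)>0$ and $L-\lambda<0$, while the second vanishes by joint continuity of $\tilde f$. Applying \cref{cor:comparison_viscosity} thus yields $\tilde v_1^\varepsilon \leq v_2$ on $[0,T]\times\cO$; letting $\varepsilon\downarrow 0$ gives $v_1\leq v_2$, and exchanging the roles of $u_1$ and $u_2$ yields the reverse inequality, whence $u_1 = u_2$.
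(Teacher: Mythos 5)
Your proof is correct modulo a sign typo and takes a genuinely different route from the paper's. The paper normalizes by the Lyapunov function: it sets $\V = e^{-Lt}V$ and $v_i = u_i/\V$, pushing both the growth control and the Lipschitz constant into a quotient-rule-transformed operator $H$ (see \eqref{unique_viscosity_solution:definition_of_H}), which makes the boundary conditions for \cref{cor:comparison_viscosity} immediate but requires the lengthy trace-term estimates \eqref{unique_viscosity_solution:trace_term:1}--\eqref{unique_viscosity_solution:trace_term:3} when verifying \eqref{comparison_viscosity:ass}. You instead leave the second-order operator untouched: multiply by a time-exponential to make the nonlinearity strictly decreasing (mind the sign — you want $u_i = e^{\lambda(T-t)}v_i$, equivalently $v_i = e^{-\lambda(T-t)}u_i$; your formula for $\tilde f$ and the claimed bound $(\tilde f(t,x,v)-\tilde f(t,x,w))(v-w)\leq(L-\lambda)(v-w)^2$ are consistent with this, but as written your definition $v_i = e^{\lambda(T-t)}u_i$ has the exponent flipped and would instead give $(L+\lambda)(v-w)^2$), subtract $\varepsilon V$ to secure the asymptotic boundary condition while preserving the subsolution property via the Lyapunov inequality \eqref{unique_viscosity_solution:inequality_for_V} and the strict decrease of $\tilde f$, and let $\varepsilon\downarrow 0$ at the end. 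Because the operator you feed to \cref{cor:comparison_viscosity} then carries no derivatives of $V$, checking \eqref{comparison_viscosity:ass} reduces to the standard local-Lipschitz and Cauchy--Schwarz estimates plus the decisive negativity $(L-\lambda)r_0 < 0$. The trade-off is the extra $\varepsilon$-perturbation and the test-function argument showing $v_1 - \varepsilon V$ remains a subsolution; both routes reach the same conclusion through the same comparison lemma.
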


\begin{proof}[Proof of \cref{prop:uniqueness_viscosity_semilinear}]
	Throughout this proof let 
		$ \llbracket \cdot \rrbracket \colon \big( \bigcup_{ a,b=1 }^{\infty} \R^{a\times b} \big) \to [ 0,\infty ) $
	satisfy for all 
		$ a,b \in \N $, 
		$ A = (A_{i,j})_{(i,j)\in\{1,2,\ldots,a\}\times\{1,2,\ldots,b\}} \in \R^{a\times b} $ 
	that 
		\begin{equation} \label{prop:uniqueness_viscosity_semilinear:definition_frobenius_norms} 
		\llbracket A \rrbracket = \left[ \sum_{i=1}^a\sum_{j=1}^b |A_{i,j}|^2 \right]^{\nicefrac12},
		\end{equation}  	
	let 
		$ \V \colon [0,T]\times\mc O \to (0,\infty) $
	satisfy for all 
		$ t \in [0,T] $,
		$ x \in \mc O $ 
	that 
		$ \V(t,x) = e^{-Lt} V(t,x) $,  
	let $ v_i \colon [0,T]\times \mc O \to\R $, $ i \in \{1,2\} $, 
	satisfy for all 
 		$ i \in \{1,2\} $,
 		$ t \in [0,T] $, 
 		$ x \in \mc O $ 
 	that 
 		$ v_i(t,x) = \frac{u_i(t,x)}{\mathbb{V}(t,x)} $, 
 	let 
 		$ G \colon (0,T) \times \mc O \times \R \times \R^d \times \Sym_{d} \to \R $ 
 	satisfy for all 
 		$ t \in (0,T) $, 
 		$ x \in \mc O $, 
 		$ r \in \R $, 
 		$ p \in \R^d $, 
 		$ A \in \Sym_d $ 
 	that 
 		\begin{equation} \label{uniqueness_viscosity_semilinear:definition_of_G}
  		G(t,x,r,p,A) 
  		= 
  		\tfrac12\operatorname{Trace}\!\big( 
   		\sigma(t,x)[\sigma(t,x)]^{*} A
  		\big) 
  		+ 
  		\langle \mu(t,x), p \rangle
  		+ 
  		f(t,x,r), 
 		\end{equation}
	and let 
 		$ H\colon (0,T)\times \mc O \times \R \times \R^d\times \Sym_{d} \to \R $
	satisfy for all 
 		$ t \in (0,T) $, 
 		$ x \in \mc O $, 
 		$ r \in \R $, 
 		$ p \in \R^d $, 
 		$ A \in \Sym_{d} $
 	that 
		\begin{multline} \label{unique_viscosity_solution:definition_of_H}
		H(t,x,r,p,A) 
		= 
		\tfrac{r}{\mathbb{V}(t,x)}
		(\tfrac{\partial}{\partial t}\V)(t,x)
		+ 
		\tfrac{1}{\mathbb{V}(t,x)}
		G\big( 
		t, 
		x, 
		r\mathbb{V}(t,x), 
		\mathbb{V}(t,x) p 
		+ 
		r(\nabla_x \mathbb{V})(t,x) , \\
		\mathbb{V}(t,x) A 
		+ 
		p [(\nabla_x \mathbb{V})(t,x)]^{*} 
		+ 
		(\nabla_x \mathbb{V})(t,x)p^{*} 
		+ 
		r(\operatorname{Hess}_x \mathbb{V})(t,x)
		\big).   
		\end{multline}
	Observe that \eqref{prop:uniqueness_viscosity_semilinear:definition_frobenius_norms} proves that for all 
		$ A \in \R^{d\times m} $ 
	we have that 
		$ \llbracket A \rrbracket = \HSnorm{ A } $. 
	Next note that \eqref{unique_viscosity_solution:inequality_for_V} implies  that for all 
		$ t \in [0,T] $, 
		$ x \in \mc O $ 
	we have that 
		$ \V \in C^{1,2}( [0,T] \times \mc O, (0,\infty) ) $ 
	and 
		\begin{equation}
		\label{unique_viscosity_solution:Z_supersolution}
		(\tfrac{\partial}{\partial t}\V)(t,x) 
		+ 
		\tfrac12 
		\operatorname{Trace}\!\big( 
		\sigma(t,x)[\sigma(t,x)]^{*}
		(\operatorname{Hess}_x\mathbb{V})(t,x)
		\big) 
		+ 
		\langle 
		\mu(t,x), (\nabla_x\mathbb{V})(t,x) 
		\rangle
		+ 
		L \mathbb{V}(t,x) 
		\leq 0 . 
		\end{equation}
	Moreover, observe that \eqref{uniqueness_viscosity_semilinear:definition_of_G} ensures that $ G \in C( (0,T) \times \mc O \times \R \times \R^d \times \Sym_d, \R ) $ is degenerate elliptic. 
	In addition, note that \eqref{unique_viscosity_solution:definition_of_H} proves that  $ H \in C( (0,T) \times \mc O \times \R \times \R^d \times \Sym_d, \R ) $ is degenerate elliptic. 
 	Next observe that the assumption that for all 
 		$ i \in \{1,2\} $, 
 		$ x \in \mc O $  
 	we have that 
 		$ u_i(T,x) = g(x) $
 	shows that for all  
	 	$ x \in \mc O $ 
	we have that 
 		\begin{equation} \label{unique_viscosity_solution:initial_inequality}
 		v_1(T,x) \leq v_2(T,x) \leq v_1(T,x).
 		\end{equation}
 	In addition, note that the assumption that $ \limsup_{r\to\infty} [ \sup_{t \in [0,T]} \sup_{x\in\mc O \setminus O_r} (\frac{|u_1(t,x)|+|u_2(t,x)|}{V(t,x)} ) ] = 0 $ implies that 
		\begin{equation} \label{unique_viscosity_solution:boundary_condition}
 		\begin{split}
  		& \limsup_{r\to\infty} 
  		\left[ 
   		\sup_{ t \in [0,T] } 
   		\sup_{ x \in \mc O \setminus O_r } 
			| v_1(t,x) - v_2(t,x) |
 	 	\right]
 		=
  		0.  
 		\end{split}
 		\end{equation}
 	Furthermore, observe that \eqref{unique_viscosity_solution:viscosity_solution_ass} and \eqref{unique_viscosity_solution:definition_of_H} ensure that for all 
 		$ i \in \{ 1, 2 \} $ 
	we have that $v_i$ is a viscosity solution of 
		\begin{equation}
		(\tfrac{\partial}{\partial t} v_i)(t,x) 
		+ 
		H\big(t,x,v_i(t,x),(\nabla_x v_i)(t,x), (\operatorname{Hess}_x v_i)(t,x)\big) = 0
		\end{equation}
	for $ (t,x) \in (0,T) \times \mc O $ (cf., for example, Hairer et al.~\cite[Lemma 4.12]{HaHuJe2017_LossOfRegularityKolmogorov}).
	We intend to prove that $ u_1 = u_2 $ by applying \cref{cor:comparison_viscosity} to obtain that $v_1 \leq v_2$ and $v_2\leq v_1$. 
	Next let $ e_1, e_2, \ldots, e_m \in \R^m $
	satisfy 
		$ e_1 = (1,0,\ldots,0) $, $e_2 = (0,1,0,\ldots,0) $, $\ldots$, $ e_m = (0,\ldots,0,1) $, 
	let 
		$ t_n \in (0,T) $, $ n \in \N_0 $, 
	satisfy $ \limsup_{ n \to \infty } |t_n-t_0| = 0 $,
	and let 
		$ ( x_n, r_n, A_n ) \in \mc O \times \R \times \Sym_d $, $ n \in \N_0 $, 
	and $ ( \mf x_n, \mf r_n, \mf A_n ) \in \mc O \times \R \times \Sym_d $, $ n \in \N_0 $, 
	satisfy  
		$ \limsup_{n\to\infty} [ | t_n - t_0 | + \Norm{ x_n - x_0 } + \sqrt{n} \Norm{ x_n - \mf x_n } ] = 0 < r_0 = \liminf_{n\to\infty} ( r_n - \mf r_n ) = \limsup_{n\to\infty} ( r_n - \mf r_n ) \leq \sup_{n\in\N} ( | r_n | + | \mf r_n | ) < \infty $
 	and 
 		$ \Forall ( n \in \N, z, \mf z \in \R^d ) \colon 
 		\langle z, A_n z \rangle - \langle \mf z, \mf A_n \mf z \rangle \leq 5 \norm{ z - \mf z }^2 $.  
%	In the next step we prove that
%		$ \limsup_{n\to\infty} [ H(t_n, x_n, r_n, n(x_n-\mf x_n), nA_n) - H(t_n, \mf x_n, \mf r_n, n(x_n-\mf x_n) , n\mf A_n) ] \leq 0 $. 
	Observe that \eqref{unique_viscosity_solution:local_Lipschitz_type_assumption} and the fact that $ \limsup_{n\to\infty} [|t_n-t_0| + \norm{x_n-x_0} + \sqrt{n} \norm{ x_n - \mf x_n } ] = 0 $ ensure that 
 		\begin{equation} 
 		\limsup_{n\to\infty} \Big[ n \HSnorm{ \sigma(t_n,x_n) - \sigma(t_n,\mf x_n) }^2 \Big] = 0. 
 		\end{equation}
 	This, the fact that for all 
		$ B \in \Sym_d $, 
		$ C \in \R^{d\times m} $
	we have that 
		$ \operatorname{Trace}\!\left( CC^{\ast}B \right) = \sum_{i=1}^m \langle Ce_i, BCe_i \rangle $,  
	and the assumption that for all 
		$ n \in \N $, 
		$ z,\mf z \in \R^d $ 
	we have that 
		$ \langle z, A_n z \rangle - \langle \mf z, \mf A_n \mf z \rangle \leq 5 \norm{ z - \mf z }^2 $
	prove that 
		\begin{equation} \label{unique_viscosity_solution:trace_term:1}
		\begin{split}
		& 
   		\limsup_{n\to\infty} 
  		\left[
   		\tfrac12 
   		\operatorname{Trace}\!\left( 
   		\tfrac{\sigma(t_n,x_n)[\sigma(t_n,x_n)]^{*}}{\mathbb{V}(t_n,x_n)} \mathbb{V}(t_n, x_n) n A_n 
    	- \tfrac{\sigma(t_n,\mf x_n)[\sigma(t_n,\mf x_n)]^{*}}{\mathbb{V}(t_n,\mf x_n)}\mathbb{V}(t_n,\mf x_n) n \mf A_n    \right)
	    \right]
  		\\[1ex]
  		& 
  		= 
  		\limsup_{n\to\infty}
  		\left[ 
  		\tfrac{n}{2}
  		\operatorname{Trace}\!\left( 
  		\sigma(t_n,x_n)[\sigma(t_n,x_n)]^{*}A_n 
  		-
  		\sigma(t_n,\mf x_n)
  		[\sigma(t_n,\mf x_n)]^{*}\mf A_n
  		\right) 
  		\right] 
  		\\
		& 
  		=
  		\limsup_{n\to\infty} 
  		\bigg[
    	\tfrac{n}{2} 
  		\sum_{i=1}^m
    	\left(\langle 
    	\sigma(t_n,x_n)e_i
    	, 
    	A_n 
    	\sigma(t_n,x_n)e_i
    	\rangle
    	- 
    	\langle
    	\sigma(t_n,\mf x_n)e_i, 
    	\mf A_n 
    	\sigma(t_n,\mf x_n)e_i\rangle
  		\right) 
  		\bigg]
  		\\
  		& 
		\leq 
		\limsup_{n\to\infty} 
		\left[ 
		\sum_{i=1}^m 
		  \tfrac52 n \norm{
	 	\sigma(t_n,x_n)e_i
	 	-
	 	\sigma(t_n,\mf x_n)e_i 
		}^2 \right]
		= 
		\tfrac52 
		\limsup_{n\to\infty} \left[ 
		n \HSnorm{
		\sigma(t_n,x_n)
		- 
	  	\sigma(t_n,\mf x_n) 
	  	}^2 \right] 
	  	\\
	  	& = 0 . 
		\end{split}  
		\end{equation}
	Next observe that \eqref{unique_viscosity_solution:local_Lipschitz_type_assumption} and the fact that $ \V \in C^{1,2}([0,T]\times\mc O, (0,\infty) ) $ ensure that for all compact $ \mc K \subseteq \mc O $ we have that there exists 
		$ \mf c \in \R $
	such that for all 
		$ s \in [0,T] $, 
		$ y_1,y_2 \in \mc K $ 
	we have that 
		\begin{multline} 
		\left\llbracket \frac{\sigma(s,y_1)\left[\sigma(s,y_1)\right]^{*}}{\mathbb{V}(s,y_1)} - \frac{\sigma(s,y_2)\left[\sigma(s,y_2)\right]^{*}}{\mathbb{V}(s,y_2)} 
		\right\rrbracket
		+ 
		\Norm{ (\nabla_x \V)(s,y_1) - (\nabla_x \V)(s,y_2) }
		\leq \mf c \Norm{ y_1 - y_2 }.   
		\end{multline} 
	The fact that 
 		$ \limsup_{n\to\infty} [|t_n-t_0| + \norm{x_n-x_0}] = 0 $ 
	and the fact that 
		$ \limsup_{n\to\infty} [\sqrt{n}\norm{x_n-\mf x_n}] = 0 $
	hence guarantee that 
		\begin{equation}
 		\begin{split}
 		& \limsup_{n\to\infty} \left[n\norm{x_n-\mf x_n}
 		\left\llbracket \tfrac{\sigma(t_n,x_n)[\sigma(t_n,x_n)]^{*}}{\mathbb{V}(t_n,x_n)}
 		-\tfrac{\sigma(t_n,\mf x_n)[\sigma(t_n,\mf x_n)]^{*}}{\mathbb{V}(t_n,\mf x_n)} \right\rrbracket
 		\right]
 		\\[1ex]
 		&   
 		= 0 =
 		\limsup_{n\to\infty} \left[n\norm{x_n-\mf x_n}
  		\norm{(\nabla_x \mathbb{V})(t_n,x_n)-
 		(\nabla_x \mathbb{V})(t_n,\mf x_n)}
 		\right] . 
 		\end{split}
 		\end{equation} 
	Combining this with the fact that for all 
	 	$ B \in \Sym_d $,
	 	$ v,w \in \R^d $ 
	we have that 
		$ \operatorname{Trace} (Bvw^{*}) 
		= \operatorname{Trace} (w^{*}Bv) 
		= w^{*}Bv 
		= \langle w, Bv \rangle 
		= \langle Bw,v \rangle 
		= \langle v, Bw \rangle 
		= v^{*}Bw 
		= \operatorname{Trace}(v^{*}Bw) 
		= \operatorname{Trace}(Bwv^{*}) $ 
	yields that
 		\begin{align} \label{unique_viscosity_solution:trace_term:2}
		& \nonumber
   		\limsup_{n\to\infty} 
  		\bigg[
   		\tfrac12 
   		\operatorname{Trace}\!\bigg( 
   		\tfrac{\sigma(t_n,x_n)[\sigma(t_n,x_n)]^{*}}{\mathbb{V}(t_n,x_n)} 
   		\big( n ( x_n - \mf x_n ) [(\nabla_x \mathbb{V})(t_n,x_n)]^{*} + (\nabla_x \mathbb{V})(t_n,x_n) n ( x_n - \mf x_n )^{*}  \big) 
		\\ \nonumber
   		& 
  		\quad  
   		-
   		\tfrac{\sigma(t_n,\mf x_n)[\sigma(t_n,\mf x_n)]^{*}}{\mathbb{V}(t_n,\mf x_n)}
   		\big( 
   		n ( x_n - \mf x_n ) 
   		[(\nabla_x \mathbb{V})(t_n,\mf x_n)]^{*} 
   		+ 
   		(\nabla_x \mathbb{V})(t_n, \mf x_n)
   		n ( x_n - \mf x_n )^{*}
   		\big) 
   		\bigg)
	    \bigg]
  		\\ \nonumber
  		&  
  		=
  		\limsup_{n\to\infty}
  		\bigg[
   		\left\langle
    	\tfrac{\sigma(t_n,x_n)[\sigma(t_n,x_n)]^{*}}{\mathbb{V}(t_n,x_n)} 
   		 n ( x_n - \mf x_n ),
   		(\nabla_x \mathbb{V})(t_n,x_n)
   		\right\rangle
   		\\ \nonumber   
   		& \quad 
   		-
   		\left\langle
   		\tfrac{\sigma(t_n,\mf x_n)[\sigma(t_n,\mf x_n)]^{*}}{\mathbb{V}(t_n,\mf x_n)} 
    	n ( x_n - \mf x_n ),  
   		(\nabla_x \mathbb{V})(t_n,\mf x_n)
		\right\rangle
  		\bigg]
  		\\
  		&  
  		= 
  		\limsup_{n\to\infty} 
  		\bigg[ 
  		 \left\langle
  		  \left( 
  			  \tfrac{\sigma(t_n,x_n)[\sigma(t_n,x_n)]^{*}}{\mathbb{V}(t_n,x_n)} 
  			 -
  		  \tfrac{\sigma(t_n,\mf x_n)[\sigma(t_n,\mf x_n)]^{*}}{
  		  \mathbb{V}(t_n,\mf x_n)}
  		 \right)
  		 n ( x_n - \mf x_n ),
  		 (\nabla_x \mathbb{V})(t_n,x_n)
  		 \right\rangle
  		 \\ \nonumber
  		 & \quad 
  	 	+ 
  	 	\left\langle
   		\tfrac{\sigma(t_n,\mf x_n)[\sigma(t_n,\mf x_n)]^{*}}{\mathbb{V}(t_n,\mf x_n)} n (x_n - \mf x_n),
   		(\nabla_x \mathbb{V})(t_n,x_n)
   		 - 
   		(\nabla_x \mathbb{V})(t_n,\mf x_n)
   		\right\rangle
  		\bigg] 
  		\\ \nonumber
  		& 
  		\leq 
  		\limsup_{n\to\infty}
  		\bigg[ 
   		\left\llbracket
   		\tfrac{\sigma(t_n,x_n)[\sigma(t_n,x_n)]^{*}}
   		{\mathbb{V}(t_n,x_n)} 
   		- 
   		\tfrac{\sigma(t_n,\mf x_n)[\sigma(t_n,\mf x_n)]^{*}}
   		{\mathbb{V}(t_n,\mf x_n)}
   		\right\rrbracket
		n \norm{ x_n - \mf x_n }
		\norm{ (\nabla_x \mathbb{V})(t_n,x_n) } 
   		\\ \nonumber
   		& \quad
   		+ 
		\left\llbracket 
		\tfrac{\sigma(t_n,\mf x_n)[\sigma(t_n,\mf x_n)]^{*}}{\mathbb{V}(t_n,\mf x_n)} 
   		\right\rrbracket
		n \norm{ x_n - \mf x_n } 
   		\norm{ 
    	(\nabla_x \mathbb{V})(t_n,x_n) - 
    	(\nabla_x \mathbb{V})(t_n,\mf x_n)
  		} 
  		\bigg]
 		\\ \nonumber
 		& = 0 . 
		\end{align}
	Moreover, note that the fact that 
		$ (0,T) \times \mc O \ni (s,y) \mapsto \tfrac{\sigma(s,y)[\sigma(s,y)]^{*}}{\mathbb{V}(s,y)} (\operatorname{Hess}_x \mathbb{V})(s,y) \in \R^{d\times d}	$ is continuous, 
	the fact that 
		$ \limsup_{n\to\infty} [ \sqrt{n} \norm{x_n-\mf x_n} ] = 0 $, 
	and the fact that 
		$ \limsup_{n\to\infty} [ |t_n-t_0| + \norm{x_n-x_0} ] = 0 $ 
	guarantee that 
		\begin{equation}		
		\begin{split}
 		0 & = 
 		\limsup_{n\to\infty} 
 		\left|
 \operatorname{Trace}\!\left(
  \tfrac{\sigma(t_n,x_n)[\sigma(t_n,x_n)]^{*}}{\mathbb{V}(t_n,x_n)}(\operatorname{Hess}_x \mathbb{V})(t_n,x_n)
  -
    \tfrac{\sigma(t_0,x_0)[\sigma(t_0,x_0)]^{*}}{\mathbb{V}(t_0,x_0)}(\operatorname{Hess}_x \mathbb{V})(t_0,x_0)
 \right)
 \right|  
 \\
 & 
 = 
 \limsup_{n\to\infty} 
 \left| 
 \operatorname{Trace}\!\left(
 \tfrac{\sigma(t_n,\mf x_n)[\sigma(t_n,\mf x_n)]^{*}}{\mathbb{V}(t_n,\mf x_n)}(\operatorname{Hess}_x \mathbb{V})(t_n,\mf x_n)
 -  \tfrac{\sigma(t_0,x_0)[\sigma(t_0,x_0)]^{*}}{\mathbb{V}(t_0,x_0)}(\operatorname{Hess}_x \mathbb{V})(t_0,x_0)
 \right)
 \right|. 
\end{split}
\end{equation}
	This and the fact that 
		$ 0 < r_0 
		= \liminf_{n\to\infty} (r_n-\mf r_n)
		= \limsup_{n\to\infty} (r_n-\mf r_n) 
		\leq \sup_{n\in\N} (|r_n|+|\mf r_n|) < \infty $ 
	ensure that 
 		\begin{align} \label{unique_viscosity_solution:trace_term:3}
		\nonumber 
		& \limsup_{n\to\infty}
  		\left[
   		\tfrac12 
   		\operatorname{Trace}\!\big( 
   		\tfrac{\sigma(t_n,x_n)[\sigma(t_n,x_n)]^{*}}{\mathbb{V}(t_n,x_n)} 
   		r_n (\operatorname{Hess}_x \mathbb{V})(t_n,x_n) 
   		- 
   		\tfrac{\sigma(t_n,\mf x_n)\left[\sigma(t_n,\mf x_n)\right]^{*}}{\mathbb{V}(t_n,\mf x_n)} 
   		\mf r_n (\operatorname{Hess}_x \mathbb{V})(t_n,\mf x_n) 
   		\big) 
  		\right]
  		\\
  		& \,
  		= 
  		\tfrac12
  		\limsup_{n\to\infty} 
  		\bigg[ (r_n - \mf r_n) \operatorname{Trace}\!\left( \tfrac{\sigma(t_n,x_n)[\sigma(t_n,x_n)]^{*}}{\mathbb{V}(t_n,x_n)} 
		( \operatorname{Hess}_x \mathbb{V})(t_n,x_n)
   		\right) 
   		\\ \nonumber 
   		& 
   		\quad + 
   		\mf r_n 
   		\operatorname{Trace}\!\left( 
    	\tfrac{\sigma(t_n,x_n)[\sigma(t_n,x_n)]^{*}}{\mathbb{V}(t_n,x_n)} 
    	( \operatorname{Hess}_x \mathbb{V})(t_n,x_n) 
   		- 
    	\tfrac{\sigma(t_n,\mf x_n)[\sigma(t_n,\mf x_n)]^{*}}{\mathbb{V}(t_n,\mf x_n)} 
   		( \operatorname{Hess}_x \mathbb{V})(t_n,\mf x_n)
   		\right) 
  		\bigg] 
  		\\ \nonumber
  		& \, 
  		=
  		\tfrac{r_0}{2 \mathbb{V}(t_0,x_0)}
  		\operatorname{Trace}\!\left( 
   		\sigma(t_0,x_0)
   		[\sigma(t_0,x_0)]^{*} 
   		(\operatorname{Hess}_x \mathbb{V})(t_0,x_0)
  		\right) . 
  		\end{align}
 	Combining this with \eqref{unique_viscosity_solution:trace_term:1} and \eqref{unique_viscosity_solution:trace_term:2} demonstrates that
 		\begin{equation} \label{unique_viscosity_solution:trace_term}
 		\begin{split}
 		& \limsup_{n\to\infty} 
 		\bigg[ \tfrac12 \operatorname{Trace}\!\bigg( \tfrac{\sigma(t_n,x_n)[\sigma(t_n,x_n)]^{*}}{\mathbb{V}(t_n,x_n)} \big( 
		\mathbb{V}(t_n, x_n) n A_n + n (x_n - \mf x_n) \left[(\nabla_x \mathbb{V})(t_n,x_n)\right]^{*} \\
		& \quad  + 
		(\nabla_x \mathbb{V})(t_n,x_n) n (x_n - \mf x_n)^{*} 
		+ r_n (\operatorname{Hess}_x \mathbb{V})(t_n,x_n)
		\big) \bigg)    
		\\
		& \quad - 
		\tfrac12 \operatorname{Trace}\!\bigg( 
	    \tfrac{\sigma(t_n,\mf x_n)[\sigma(t_n,\mf x_n)]^{*}}{\mathbb{V}(t_n,\mf x_n)}
	    \big( 
	    \mathbb{V}( t_n, \mf x_n ) n \mf A_n 
	    + 
	    n (x_n - \mf x_n) 
	    \left[(\nabla_x \mathbb{V})(t_n,\mf x_n)\right]^{*}
	    \\
	    & \quad + 
		(\nabla_x \mathbb{V})(t_n,\mf x_n) n (x_n - \mf x_n)^{*} 
		+ 
		\mf r_n (\operatorname{Hess}_x \mathbb{V})(t_n,\mf x_n) \big) \bigg) \bigg] 
		\\
		& 
		\leq 
		\tfrac{r_0}{2 \mathbb{V}(t_0, x_0)}
		  \operatorname{Trace}\!\big( 
		   \sigma(t_0,x_0)
		   [\sigma(t_0,x_0)]^{*} 
		   (\operatorname{Hess}_x \mathbb{V})(t_0,x_0)
		  \big). 
	  	\end{split}
		\end{equation}
	Moreover, observe that the fact that 
 		$ (0,T) \times \mc O \ni (s,y) \mapsto 	\frac{1}{\mathbb{V}(s,y)}(\frac{\partial }{\partial t}\mathbb{V})(s,y)\in\R $
 	is continuous and the fact that 
 		$ 0 < r_0 = \liminf_{n\to\infty} (r_n-\mf r_n) = \limsup_{n\to\infty} (r_n-\mf r_n) \leq \sup_{n\in\N} (|r_n|+|\mf r_n|) < \infty $
	show that 
		\begin{equation} \label{unique_viscosity_solution:r_dV_term}
		\begin{split}
		& \limsup_{n\to\infty} 
		\left[ 
		\tfrac{r_n}{\mathbb{V}(t_n,x_n)}
		 \left(\tfrac{\partial }{\partial t} \mathbb{V} \right)\!(t_n,x_n) - \tfrac{\mf r_n}{\mathbb{V}(t_n,\mf x_n)} 
		 \left(\tfrac{\partial }{\partial t} \mathbb{V} \right)\!(t_n,\mf x_n) 
		 \right]
 		\\
 		& 
 		= \limsup_{n\to\infty} 
 		\left[ 
 		\tfrac{r_n-\mf r_n}{\mathbb{V}(t_n,x_n)} 
		 \left(\tfrac{\partial}{\partial t} \mathbb{V}\right)\!(t_n,x_n)
 		+ 
 		\mf r_n
  		\left( 
  		\tfrac{\left(\frac{\partial}{\partial t} \mathbb{V}\right)(t_n,x_n)}{\mathbb{V}(t_n,x_n)} 
	  	- 
  		\tfrac{\left(\frac{\partial}{\partial t} \mathbb{V}\right)(t_n,\mf x_n)}{\mathbb{V}(t_n,\mf x_n)} 
  		\right) 
  		\right]
  		= 
  		\tfrac{r_0}{\mathbb{V}(t_0,x_0)} 
  		\left(\tfrac{\partial}{\partial t} \mathbb{V} \right)\!(t_0,x_0) . 
 		\end{split}
		\end{equation}
	Next note that \eqref{unique_viscosity_solution:local_Lipschitz_type_assumption}, 
	the fact that $ \limsup_{n\to\infty} [|t_n-t_0| + \norm{x_n-x_0}] = 0 $, 
 	and the fact that $ \limsup_{n\to\infty} [\sqrt{n}\norm{x_n-\mf x_n}] = 0	$ 
 	imply that
 		\begin{equation} 
  		\limsup_{n\to\infty} 
  		\Big[ n \Norm{\mu(t_n,x_n)-\mu(t_n,\mf x_n)}
   		 \Norm{x_n-\mf x_n} \Big] = 0.
  		\end{equation}  
 	This, the fact that 
 		$ (0,T) \times \mc O \ni (s,y) \mapsto \langle \frac{\mu(s,y)}{\mathbb{V}(s,y)}, (\nabla_x \mathbb{V})(s,y)	\rangle \in\R $	 
	is continuous, and 
	the fact that 
		$ 0 < r_0 
		= \liminf_{n\to\infty} (r_n-\mf r_n)
		= \limsup_{n\to\infty} (r_n-\mf r_n) 
		\leq \sup_{n\in\N} (|r_n|+|\mf r_n|) < \infty $ 
	yield that
		 \begin{equation} \label{unique_viscosity_solution:mu_term}
		 \begin{split}
		  & \limsup_{n\to\infty}
		  \bigg[
		  \tfrac{1}{\mathbb{V}(t_n,x_n)}
		  \left\langle 
		   \mu(t_n,x_n), \mathbb{V}(t_n,x_n)
		   n (x_n - \mf x_n )
		   + 
		   r_n(\nabla_x \mathbb{V})(t_n,x_n) 
		  \right\rangle
		  \\
		  & \quad - 
		  \tfrac{1}{\mathbb{V}(t_n,\mf x_n)}
		  \left\langle 
		   \mu(t_n,\mf x_n), \mathbb{V}(t_n, \mf x_n) n (x_n - \mf x_n ) 
		   + 
		   \mf r_n (\nabla_x \mathbb{V})(t_n, \mf x_n)
		  \right\rangle
		  \bigg] 
		  \\
		  & 
		  = 
		  \limsup_{n\to\infty} 
		  \bigg[
		   \langle 
		    \mu(t_n,x_n) - \mu(t_n,\mf x_n), n(x_n-\mf x_n) 
		   \rangle
		   \\
		   & \quad 
		   + 
		   r_n
		   \left\langle 
		   \tfrac{\mu(t_n,x_n)}{\mathbb{V}(t_n,x_n)},(\nabla_x \mathbb{V})(t_n,x_n)
		   \right\rangle
		   -
		   \mf r_n
		   \left\langle 
		   \tfrac{\mu(t_n,\mf x_n)}{\mathbb{V}(t_n,
		   \mf x_n)}, 
		   (\nabla_x \mathbb{V})(t_n,\mf x_n)
		   \right\rangle
		  \bigg]
		  \\
		  & 
		  \leq \limsup_{n\to\infty} 
		  \big[ 
		   \norm{ 
		    \mu(t_n,x_n)
		    - 
		    \mu(t_n,\mf x_n)
		   } 
		   n \norm{ x_n - \mf x_n }
		  \big] 
		  \\
		  & \quad 
		  + 
		  \limsup_{n\to\infty} 
		  \left[ 
		  (r_n-\mf r_n)
		   \left\langle 
		    \tfrac{\mu(t_n,x_n)}{\mathbb{V}(t_n,x_n)},(\nabla_x \mathbb{V})(t_n,x_n)
		   \right\rangle
		  \right]
		  \\
		  & \quad
		  + 
		  \limsup_{n\to\infty} 
		  \left[
		  \mf r_n
		  \left(
		   \left\langle 
		    \tfrac{\mu(t_n,x_n)}{\mathbb{V}(t_n,x_n)},
		    (\nabla_x \mathbb{V})(t_n,x_n)
		   \right\rangle
		   - 
		   \left\langle 
		    \tfrac{\mu(t_n,\mf x_n)}{\mathbb{V}(t_n,\mf x_n)},
		    (\nabla_x \mathbb{V})(t_n,\mf x_n)
		   \right\rangle
		   \right) 
		  \right]
		  \\[1ex]
		  & 
		  =  
		  \tfrac{r_0}{\mathbb{V}(t_0,x_0)} \langle 
		  \mu(t_0,x_0), 
		  (\nabla_x \mathbb{V})(t_0,x_0)
		  \rangle
		  . 
		  \end{split}
		 \end{equation}
	Furthermore, note that the assumption that $ f \in C( [0,T]\times\mc O \times\R,\R) $ proves that for all compact $ \mc K  \subseteq [0,T] \times \mc O \times \R $ we have that 
		\begin{multline} 
		\limsup_{ (0,\infty) \ni \varepsilon \to 0 } \left[ 
		\sup\!\left(\left\{ | f(s_1,y_1,a_1) - f(s_2,y_2,a_2) | \colon 
		\left( 
		\begin{array}{c} 
		(s_1,y_1,a_1),(s_2,y_2,a_2) \in \mc K, \\
		|a_1-a_2| + |s_1-s_2| \leq \varepsilon, \\
		\norm{y_1-y_2} \leq \varepsilon
		\end{array} \right) \right\} \cup \{0\}\right) \right] 
		\\
		= 0.  
		\end{multline}
	This and the assumption that for all 
		$ s \in [0,T] $, 
		$ y \in \mc O $, 
		$ a,b \in \R $ 
	we have that 
		$ ( f( s, y, a ) - f( s, y, b ) ) ( a - b ) \leq L | a - b |^2 $ 
	imply that 
		\begin{equation}\label{unique_viscosity_solution:f_term}
		\begin{split}
		& \limsup_{n\to\infty}
		\left[
		\tfrac{f(t_n, x_n, r_n \mathbb{V}(t_n,x_n))}{\mathbb{V}(t_n,x_n)} 
   		- 
   		\tfrac{f(t_n, \mf x_n, \mf r_n \mathbb{V}(t_n,\mf x_n))}{\mathbb{V}(t_n,\mf x_n)}  \right] 
   		\\
   		&
   		= \limsup_{n\to\infty} 
  		\left[ 
		\tfrac{f(t_n,x_n,r_n \mathbb{V}(t_n,x_n))}{\mathbb{V}(t_n,x_n)}   
   		- 
   		\tfrac{f(t_n,\mf x_n,r_n \mathbb{V}(t_n,\mf x_n))}{\mathbb{V}(t_n,\mf x_n)}
   		+ 
   		\tfrac{f(t_n,\mf x_n,r_n \mathbb{V}(t_n,\mf x_n))}{\mathbb{V}(t_n,\mf x_n)}
   		- 
   		\tfrac{f(t_n,\mf x_n,\mf r_n \mathbb{V}(t_n,\mf x_n))}{\mathbb{V}(t_n,\mf x_n)}
  		\right]  
  		\\
  		& 
  		\leq 
  		\limsup_{n\to\infty} \left[ \tfrac{f(t_n,x_n,r_n \mathbb{V}(t_n,x_n))}{\mathbb{V}(t_n,x_n)}   
   		- 
   		\tfrac{f(t_n,\mf x_n,r_n \mathbb{V}(t_n,\mf x_n))}{\mathbb{V}(t_n,\mf x_n)} \right]
  		+ 
  		\limsup_{n\to\infty}
  		\left[ \tfrac{L (r_n \mathbb{V}(t_n,\mf x_n)-\mf r_n \mathbb{V}(t_n,\mf x_n) )}{\mathbb{V}(t_n,\mf x_n)} \right] 
  		\\[1ex]
  		& 
  		= \limsup_{n\to\infty} \left[ L ( r_n - \mf r_n ) \right]
  		= L r_0. 
 		\end{split}
 		\end{equation}
 	Combining 
 		\eqref{unique_viscosity_solution:definition_of_H},
 	 	\eqref{unique_viscosity_solution:Z_supersolution},
 	 	\eqref{unique_viscosity_solution:trace_term}, 
 	 	\eqref{unique_viscosity_solution:r_dV_term}, 
	and 
 	 	\eqref{unique_viscosity_solution:mu_term} 
 	hence demonstrates that 
 		\begin{equation} 
 		\begin{split}
		& \limsup_{n\to\infty} 
  		\left[
   		H( t_n, x_n, r_n, n(x_n-\mf x_n) , nA_n ) 
   		- 
   		H( t_n, \mf x_n, \mf r_n, n ( x_n - \mf x_n ), n\mf A_n ) 
  		\right] 
  		\\
  		& \leq 
  		\tfrac{r_0}{\mathbb{V}(t_0,x_0)}
  		\bigg[
   		(\tfrac{\partial }{\partial t}\V)(t_0,x_0) 
  	 	+ 
   		\tfrac12
  		\operatorname{Trace}\!\left( 
    	\sigma(t_0,x_0)[\sigma(t_0,x_0)]^{*}(\operatorname{Hess}_x \mathbb{V})(t_0,x_0)
   		\right) 
   		\\
   		& \quad 
   		+ 
   		\langle 
   		\mu(t_0,x_0), 
   		(\nabla_x \mathbb{V})(t_0,x_0)
   		\rangle
		+ 
   		L \mathbb{V}(t_0,x_0)
  		\bigg]
  		\leq 
  		0 .  
  		\end{split}
 		\end{equation}
	This, \eqref{unique_viscosity_solution:initial_inequality}, \eqref{unique_viscosity_solution:boundary_condition}, and \cref{cor:comparison_viscosity} guarantee that $v_1 \leq v_2$ and $v_2 \leq v_1$. 
 	Therefore, we obtain that $v_1 = v_2$. 
 	This establishes $u_1 = u_2$. 
 	This completes the proof of  \cref{prop:uniqueness_viscosity_semilinear}. 
\end{proof}

\subsection{Existence results for solutions of SDEs} 
\label{subsec:sde_existence}

\begin{prop} \label{lem:krylov_existence}
	Let $ d,m \in \N $, 
		$ T \in (0,\infty) $, 
	let $ \langle\cdot,\cdot\rangle \colon \R^d\times\R^d \to \R $ be the standard Euclidean scalar product on $ \R^d $, 
	let $ \norm{\cdot}\colon\R^d\to [0,\infty)$ be the standard Euclidean norm on $ \R^d $, 
	let $ \HSnorm{\cdot} \colon \R^{d\times m} \to [0,\infty) $ be the Frobenius norm on $ \R^{d\times m} $, 
	let $ \mc O \subseteq \R^d $ be a non-empty open set, 
	for every 
		$ r \in (0,\infty) $ 
	let	$ O_r \subseteq \mc O $ satisfy $ O_r = \{x\in\mc O\colon ( \norm{x}\leq r~\text{and}~\{y\in\R^d\colon\norm{y-x}<\nicefrac{1}{r}\} \subseteq \mc O ) \}$,
	let $ \mu \in C([0,T]\times\mc O,\R^d) $, 
		$ \sigma \in C([0,T]\times\mc O,\R^{d\times m}) $ 
	satisfy for all 
		$ r \in (0,\infty) $ 
	that 
		\begin{equation} 
		\sup
		\!\left(
		\left\{ 
		\frac{
			\norm{\mu(t,x) - \mu(t,y)} 
			+ 
			\HSnorm{\sigma(t,x) - \sigma(t,y)}    
		}
		{ \norm{ x - y } } \colon 
		t\in [0,T], 
		x,y\in O_r, 
		x\neq y
		\right\}
		\cup 
		\{ 0 \}
		\right) < \infty, 
		\end{equation}
	let 
		$ V \in C^{1,2}([0,T]\times\mc O,(0,\infty))$ 
	satisfy 
		$ \limsup_{r\to\infty} \left[ \inf_{t\in [0,T]} \inf_{x\in\cO\setminus O_r} V(t,x) \right] = \infty	$, 
	assume for all 
		$ t \in [0,T] $, 
		$ x \in \mc O $ 
	that 
		\begin{equation}\label{krylov_existence:supersolution}
		(\tfrac{\partial}{\partial t}V)(t,x) 
		+ 
		\tfrac{1}{2}
		\operatorname{Trace}\!\left( 
		\sigma(t,x)[\sigma(t,x)]^{*}(\operatorname{Hess}_x V)(t,x) \right) 
		+ 
		\langle \mu(t,x),(\nabla_x V)(t,x) \rangle 
		\leq 0, 
		\end{equation}
	let $ ( \Omega,\cF,\P,(\F_t)_{t\in [0,T]} )$ be a stochastic basis, 
	let $ W \colon [0,T]\times\Omega\to\R^m $ be a standard $(\F_t)_{t\in [0,T]}$-Brownian motion, 
	and let $ \xi \in \mc O $. 
	Then there exists an up to indistinguishability unique $(\F_t)_{t\in [0,T]}$-adapted stochastic process 
		$ X = (X_t)_{t\in [0,T]} \colon [0,T]\times\Omega\to\mc O $ 
	with continuous sample paths such that for all 
		$ t \in [0,T] $ 
	we have $\P$-a.s.~that 
		\begin{equation} \label{krylov_existence:claim}
		X_t = \xi + \int_0^t \mu( s, X_s )\,ds + \int_0^t \sigma( s, X_s ) \,dW_s. 
		\end{equation} 
\end{prop}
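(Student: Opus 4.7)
The strategy is the standard localization argument via the Lyapunov function $V$, mirroring the approximation scheme used in \cref{prop:viscosity_solution_lyapunov}. First, for each $n\in\N$, I would construct globally Lipschitz continuous, compactly supported functions $\mu_n\in C([0,T]\times\R^d,\R^d)$ and $\sigma_n\in C([0,T]\times\R^d,\R^{d\times m})$ satisfying $\mathbbm{1}_{\{V\leq n\}}(t,x)(\norm{\mu_n(t,x)-\mu(t,x)}+\HSnorm{\sigma_n(t,x)-\sigma(t,x)})=0$ for all $(t,x)\in [0,T]\times\mc O$ and $\norm{\mu_n(t,x)}+\HSnorm{\sigma_n(t,x)}=0$ for all $(t,x)\in [0,T]\times (\R^d\setminus\{V\leq n+1\})$; such extensions are possible because the sub-level sets $\{V\leq n\}$ are compact in $\mc O$ (by the coercivity assumption on $V$), the local Lipschitz hypothesis provides uniform Lipschitz bounds there, and Seeley-type extensions then allow one to produce globally defined Lipschitz extensions that vanish outside a slightly larger sub-level set.

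Next, by Karatzas--Shreve~\cite[Theorem 5.2.9]{KaSh1991_BrownianMotionAndStochasticCalculus}, for each $n\in\N$ there exists an up to indistinguishability unique $(\F_t)_{t\in [0,T]}$-adapted stochastic process $X^n=(X^n_t)_{t\in [0,T]}\colon [0,T]\times\Omega\to\R^d$ with continuous sample paths satisfying for all $t\in [0,T]$ that $\P$-a.s.~$X^n_t=\xi+\int_0^t \mu_n(s,X^n_s)\,ds+\int_0^t\sigma_n(s,X^n_s)\,dW_s$. Define the stopping times $\tau_n=\inf(\{t\in [0,T]\colon V(t,X^n_t)\geq n\}\cup\{T\})$. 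The Lyapunov supersolution inequality~\eqref{krylov_existence:supersolution} combined with It\^o's formula (precisely the content of \cite[Lemma 3.1]{StochasticFixedPointEquations}, applicable since on $\{V\leq n\}$ the coefficients $\mu_n,\sigma_n$ coincide with $\mu,\sigma$ up to the stopping time $\tau_n$) yields $\Exp{V(\tau_n,X^n_{\tau_n})}\leq V(0,\xi)$. By Markov's inequality this gives $\P(\tau_n<T)\leq V(0,\xi)/[\inf_{t\in [0,T],\,x\in\cO\setminus O_{r(n)}} V(t,x)]$ for a suitable $r(n)\to\infty$, and since $V$ is coercive, $\P(\tau_n<T)\to 0$ as $n\to\infty$.

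Because $\mu_n,\sigma_n$ and $\mu_{n+1},\sigma_{n+1}$ agree on $\{V\leq n\}$, the coincidence lemma \cite[Lemma 3.5]{StochasticFixedPointEquations} ensures $\P(\Forall t\in [0,\tau_n]\colon X^n_t=X^{n+1}_t)=1$, so the processes are consistent on a nested family of stopping intervals whose union exhausts $[0,T]$ up to a $\P$-null set. I would then define $X_t$ on the event $\{\tau_n=T\}$ by $X_t=X^n_t$ (extended arbitrarily on the null set $\cap_n\{\tau_n<T\}$, say to the constant $\xi$), verify $(\F_t)$-adaptedness using completeness of the filtration, note that $X$ takes values in $\mc O$ since $X^n_{t\wedge\tau_n}$ stays in the compact set $\{V\leq n\}\subseteq\mc O$, and obtain~\eqref{krylov_existence:claim} by passing to the limit in the equation for $X^n$ restricted to $[0,\tau_n]$.

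Uniqueness follows by the same truncation: given any two $(\F_t)$-adapted solutions $X,Y$ with continuous $\mc O$-valued sample paths, define $\varrho_n=\inf(\{t\in [0,T]\colon\max\{V(t,X_t),V(t,Y_t)\}\geq n\}\cup\{T\})$; on $[0,\varrho_n]$ both solve the SDE with the globally Lipschitz coefficients $\mu_n,\sigma_n$, so pathwise uniqueness for Lipschitz SDEs forces $X_{t\wedge\varrho_n}=Y_{t\wedge\varrho_n}$ $\P$-a.s., and the Lyapunov estimate applied to $X$ and $Y$ gives $\P(\varrho_n<T)\to 0$, whence $X$ and $Y$ are indistinguishable. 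The principal technical point requiring care is the bookkeeping for the It\^o/Lyapunov martingale argument yielding $\Exp{V(\tau_n,X^n_{\tau_n})}\leq V(0,\xi)$ --- this is where the supersolution inequality~\eqref{krylov_existence:supersolution} on $\mc O$ must be reconciled with the fact that $X^n$ is a priori a $\R^d$-valued process governed by the globally modified coefficients; the resolution is that up to $\tau_n$ the process $X^n$ never leaves $\{V\leq n\}\subseteq\mc O$, so the inequality applies along its sample paths and the stopped process $V(t\wedge\tau_n,X^n_{t\wedge\tau_n})$ is a nonnegative supermartingale.
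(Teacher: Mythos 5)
Your proof plan is correct and matches the paper's argument in all essentials: globally Lipschitz approximations of $\mu$ and $\sigma$ agreeing with the originals on the sublevel sets $\{V\leq n\}$ and vanishing outside $\{V\leq n+1\}$, the stopping times $\tau_n$ tied to $V$, the It\^o/Lyapunov supermartingale estimate $\Exp{V(\tau_n,\cdot)}\leq V(0,\xi)$, Markov's inequality, the coincidence lemma of \cite{StochasticFixedPointEquations} to patch the localized solutions together, and pathwise uniqueness for Lipschitz SDEs for uniqueness. The only cosmetic difference is that the paper obtains $\P(\Exists n\in\N\colon\tau_n=T)=1$ via the Borel--Cantelli lemma along the subsequence $\tau_{n^2}$, whereas you rely (implicitly) on the monotonicity of $\tau_n$ (itself a consequence of the coincidence lemma) together with $\P(\tau_n<T)\to 0$ --- both establish the needed almost sure stabilization.
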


\begin{proof}[Proof of \cref{lem:krylov_existence}]
	Throughout this proof let 
		$ \mathfrak{m}_{n} \in C([0,T]\times\mc O,\R^d) $, $ n \in \N $, 
	and $ \mathfrak{s}_{n} \in C([0,T]\times\mc O,\R^{d\times m}) $, $ n \in \N $,
	satisfy that 
	\begin{enumerate}[(A)]
		\item \label{krylov_existence:lipschitz_property_mun_sigman}
		we have for all 
			$ n \in \N $ 
		that
			\begin{equation}
			\sup_{t\in [0,T]}
			\sup_{x\in \cO} 
			\sup_{y\in \cO\setminus\{x\}} 
			\left[ 
			\frac{\norm{\mathfrak{m}_n(t,x)-\mathfrak{m}_n(t,y)} 
				+ 
				\HSnorm{\mathfrak{s}_n(t,x)-\mathfrak{s}_{n}(t,y)}}{\norm{x-y}}
			\right] 
			< \infty,  
			\end{equation}
		\item 
		\label{krylov_existence:coinciding_with_mu_and_sigma}
		we have for all 
			$ n \in \N $, 
			$ t \in [0,T] $, 
			$ x \in \mc O $ 
		that 
			\begin{equation}
			\mathbbm{1}_{\{V\leq n\}}(t,x)\left[ 
			\norm{\mathfrak{m}_{n}(t,x)-\mu(t,x)} + \HSnorm{\mathfrak{s}_{n}(t,x) - \sigma(t,x)}
			\right] 
			= 
			0,  
			\end{equation}
		and
		\item 
		\label{krylov_existence:vanishing_far_away}
		we have for all 
			$ n \in \N $, 
			$ t \in [0,T] $, 
			$ x \in \mc O $
		that 
		\begin{equation}
		\mathbbm{1}_{\{V\geq n+1\}}(t,x)\left[ 
		\norm{\mathfrak{m}_{n}(t,x)} 
		+ 
		\HSnorm{\mathfrak{s}_{n}(t,x)}
		\right] 
		= 
		0.  
		\end{equation}
	\end{enumerate}
	Note that Items~\eqref{krylov_existence:lipschitz_property_mun_sigman} and \eqref{krylov_existence:vanishing_far_away} 
	ensure that there exist $(\F_t)_{t\in [0,T]}$-adapted stochastic processes $\mathfrak{X}^{(n)}=(\mathfrak{X}^{(n)}_t)_{t\in [0,T]}\colon [0,T]\times\Omega\to\cO$, $ n \in \N $,  with continuous sample paths satisfying that for all 
		$ n \in \N $, 
		$ t \in [0,T] $ 
	we have $\P$-a.s.~that 
		\begin{equation} \label{krylov_existence:sde_for_Xn}
		\mathfrak{X}^{(n)}_t 
		= 
		\xi + \int_0^t \mathfrak{m}_{n}( s, \mathfrak{X}^{(n)}_s )\,ds + \int_0^t \mathfrak{s}_{n}( s, \mathfrak{X}^{(n)}_s )\,ds
		\end{equation}
	(cf., e.g, Karatzas \& Shreve~\cite[Theorem 5.2.9]{KaSh1991_BrownianMotionAndStochasticCalculus} and \cite[Item~(ii) in Lemma 3.4]{StochasticFixedPointEquations}). 
	Next let $ \tau_n \colon \Omega \to [0,T] $, $ n \in \N $, 
	satisfy for all 
		$ n \in \N $ 
	that 
		$ \tau_n = \inf( \{t\in [0,T]\colon V(t,\mathfrak{X}^{(n)}_t) \geq n\} \cup \{T\} ) $. 
	Moreover, observe that  Item~\eqref{krylov_existence:coinciding_with_mu_and_sigma} ensures that for all 
		$ m \in \N $, 
		$ n \in \N \cap [m,\infty) $
	we have that 
		$ \P(\Forall t\in [0,T]\colon \mathbbm{1}_{\{t\leq \tau_m\}} \mathfrak{X}^{(n)}_t = \mathbbm{1}_{\{t\leq \tau_m\}} \mathfrak{X}^{(m)}_t ) = 1 $ (cf., e.g, \cite[Lemma 3.5]{StochasticFixedPointEquations}). 
	Combining this with \eqref{krylov_existence:sde_for_Xn} and Item~\eqref{krylov_existence:coinciding_with_mu_and_sigma} proves that for all 
		$ n \in \N $,
		$ t \in [0,T] $ 
	we have $\P$-a.s.~that 
		\begin{equation}
		\begin{split}
		\mathfrak{X}^{(n)}_{\min\{\tau_n,t\}} 
		& = 
		\xi 
		+ 
		\int_0^{\min\{\tau_n,t\}} 
		\mathfrak{m}_n( s, \mathfrak{X}^{(n)}_s )\,ds 
		+ 
		\int_0^{\min\{\tau_n,t\}}
		\mathfrak{s}_{n}( s, \mathfrak{X}^{(n)}_s)\,dW_s 
		\\
		& = \xi 
		+ 
		\int_0^{\min\{\tau_n,t\}} 
		\mu(s,\mathfrak{X}^{(n)}_s)\,ds 
		+ 
		\int_0^{\min\{\tau_n,t\}}
		\sigma(s,\mathfrak{X}^{(n)}_s)\,dW_s 
		\\
		& = 
		\xi + 
		\int_0^t 
		\mathbbm{1}_{\{s\leq\tau_n\}}
		\mu(s,\mathfrak{X}^{(n)}_{\min\{\tau_n,s\}})\,ds
		+ 
		\int_0^t 
		\mathbbm{1}_{\{s\leq\tau_n\}}
		\sigma(s,\mathfrak{X}^{(n)}_{\min\{\tau_n,s\}})\,dW_s .
		\end{split}
		\end{equation}
	It\^o's formula hence guarantees that for all 
		$ n \in \N $, 
		$ t \in [0,T] $ 
	we have $\P$-a.s.~that 
		\begin{equation} 
		\begin{split} 
		& 
		V(\min\{\tau_n,t\}, \mathfrak{X}^{(n)}_{\min\{\tau_n,t\}}) 
		= 
		V(0,\xi) 
		+ 
		\int_0^{\min\{\tau_n,t\}} \left\langle (\nabla_x V)(s,\mathfrak{X}^{(n)}_{\min\{\tau_n,s\}}), \sigma(s,\mf X^{(n)}_{\min\{\tau_n,s\}})\,dW_s \right\rangle 
		\\
		& + 
		\int_0^{\min\{\tau_n,t\}} (\tfrac{\partial}{\partial t}V)(s,\mf X^{(n)}_{\min\{\tau_n,s\}}) \,ds 
		+
		\int_0^{\min\{\tau_n,t\}} \left\langle \mu(s,\mathfrak{X}^{(n)}_{\min\{\tau_n,s\}}), (\nabla_x V)(s,\mf X^{(n)}_{\min\{\tau_n,s\}}) \right\rangle \,ds 
		\\
		& + 
		\int_0^{\min\{\tau_n,t\}} 
		\tfrac12 \operatorname{Trace}\!\big( \sigma(s,\mf X^{(n)}_{\min\{\tau_n,s\}})[\sigma(s,\mf X^{(n)}_{\min\{\tau_n,s\}})]^{*} (\operatorname{Hess}_x V)(s,\mf X^{(n)}_{\min\{\tau_n,s\}}) \big) \,ds . 
		\end{split} 
		\end{equation} 
	This and \eqref{krylov_existence:supersolution} show that for all 
		$ n \in \N $, 
		$ t \in [0,T] $ 
	we have $ \P $-a.s.~that 
		\begin{equation}
		V(\min\{\tau_n,t\}, \mathfrak{X}^{(n)}_{\min\{\tau_n,t\}})
		\leq  
		V(0,\xi)
		+ 
		\int_0^{\min\{\tau_n,t\}} 
		\langle
		(\nabla_x V)(s, \mathfrak{X}^{(n)}_s), \sigma(s, \mathfrak{X}^{(n)}_s)
		\,dW_s 
		\rangle. 
		\end{equation}
	Hence, we obtain for all 
		$ n \in \N $, 
		$ t \in [0,T] $  
	that 
		\begin{equation} 
		\EXPP{ V(\min\{\tau_n,t\}, \mathfrak{X}^{(n)}_{\min\{\tau_n,t\}}) } \leq V(0,\xi).  
		\end{equation} 
	This implies for all 
		$ n \in \N $ 
	that 
		\begin{equation} 
		\EXPP{ V(\tau_n, \mf X^{(n)}_{\tau_n}) } \leq V(0,\xi). 
		\end{equation} 
	Markov's inequality and the fact that $ \mf X^{(n)} \colon [0,T] \times \Omega \to \mc O $, $ n \in \N $, are stochastic processes with continuous sample paths hence ensure that for all
		$ n \in \N $ 
	we have that 
		\begin{equation}
		\P(\tau_n < T) 
		\leq  
		\P\!\left(
		V(\tau_n,\mathfrak{X}^{(n)}_{\tau_n}) \geq n 
		\right) 
		\leq 
		\frac{1}{n}
		\EXPP{V(\tau_n,\mathfrak{X}^{(n)}_{\tau_n})}
		\leq 
		\frac{V(0,\xi)}{n}. 
		\end{equation}
	Therefore, we obtain that 
		\begin{equation}
		\sum_{n=1}^{\infty} 
		\P(\tau_{n^2}<T) 
		\leq 
		V(0,\xi) \left[ \sum_{n=1}^{\infty} \frac{1}{n^2} \right]
		< \infty. 
		\end{equation}
	The Borel-Cantelli lemma hence yields that $ \P(\Exists n\in\N\colon \tau_n=T)=1 $. 
	This demonstrates that there exists an $(\F_t)_{t\in [0,T]}$-adapted stochastic process $ X \colon [0,T]\times \Omega \to \mc O $ 	with continuous sample paths satisfying that 
	$ \liminf_{n\to\infty} \P(\Forall t\in [0,T]\colon X_t = \mathfrak{X}^{(n)}_t) = 1 $.
	Item~\eqref{krylov_existence:coinciding_with_mu_and_sigma} 
	hence yields that for all 
		$ t \in [0,T] $
	we have that 
		\begin{equation}
		\limsup_{n\to\infty} 
		\Exp{\min\!\left\{
			1,\int_0^t 
			\HSnorm{
				\mathfrak{s}_{n}(s,\mathfrak{X}^{(n)}_s) 
				-
				\sigma(s,X_s)}^2 \,ds 
			\right\}}
		= 0.  
	\end{equation}
	This, the fact that for all 
		$ t \in [0,T] $ 
	we have $\P$-a.s.~that 
	\begin{equation}
	\limsup_{n\to\infty} 
	\norm{
		\int_0^t \mathfrak{m}_n(s,\mathfrak{X}^{(n)}_s)\,ds 
		- 
		\int_0^t \mu(s,X_s)\,ds
	}
	= 0, 
	\end{equation} 
	and \eqref{krylov_existence:sde_for_Xn} guarantee that for all 
		$ t \in [0,T] $
	we have $\P$-a.s.~that 
		\begin{equation} \label{krylov_existence:existence}
		\begin{split}
		X_t 
		& = 
		\xi
		+ 
		\int_0^t 
		\mu(s,X_s)\,ds 
		+ 
		\int_0^t 
		\sigma(s,X_s)\,dW_s. 
		\end{split}
		\end{equation}
	This and, e.g., Karatzas \& Shreve~\cite[Theorem 5.2.5]{KaSh1991_BrownianMotionAndStochasticCalculus}  establish \eqref{krylov_existence:claim}. 
	This completes the proof of  \cref{lem:krylov_existence}. 
\end{proof}

\subsection{Existence results for viscosity solutions of semilinear Kolmogorov PDEs} 
\label{subsec:viscosity_semilinear}

\begin{theorem} \label{thm:existence_of_fixpoint}
	Let $ d,m \in \N $, 
	    $ L, T \in (0,\infty) $,
	let $ \langle\cdot,\cdot\rangle\colon\R^d\times\R^d\to\R $ be the standard Euclidean scalar product on $ \R^d $, 
	let $ \norm{\cdot}\colon\R^d\to [0,\infty) $ be the standard Euclidean norm on $ \R^d $, 
	let $ \HSnorm{\cdot}\colon\R^{d\times m}\to [0,\infty) $ be the Frobenius norm on $ \R^{d\times m} $,     
	let $ \mc O \subseteq \R^d $ be a non-empty open set,
	for every 
	    $ r \in (0,\infty) $ 
	let 
	    $ O_r \subseteq \mc O $ 
	satisfy 
	    $ O_r = \{x\in\mc O \colon (\norm{x}\leq r~\text{and}~\{y\in\R^d\colon \norm{y-x} < \nicefrac{1}{r}\}\subseteq \mc O) \} $,
	let $ \mu \in C([0,T]\times \mc O,\R^d) $, 
	    $ \sigma \in C([0,T]\times\mc O,\R^{d\times m})$, 
		$ f \in C( [0,T] \times \mc O \times \R , \R) $, 
		$ g \in C(\mc O, \R) $, 
		$ V \in C^{1,2}([0,T]\times\mc O,(0,\infty)) $, 
	assume for all 
	    $ r \in (0,\infty) $ 
	that 
	    \begin{equation} 
		\sup\!\left(
	     \left\{
	      \frac{
	      \norm{ \mu(t,x) - \mu(t,y) } 
	      + 
	      \HSnorm{
	        \sigma(t,x) - \sigma(t,y) }}
	      {\norm{ x - y }}
	      \colon t\in [0,T], x,y\in O_r, x\neq y
	     \right\}
	     \cup \{0\}
	     \right)
	     < \infty, 
	    \end{equation}
	assume 
		$ \sup_{r\in (0,\infty)} 
		[\inf_{t\in [0,T]}\inf_{x\in \cO \setminus O_r} V(t,x)]
		= \infty $
	and 
		$ \inf_{r\in (0,\infty)} 
		[\sup_{t\in [0,T]} 
		\sup_{x\in\cO \setminus O_r} 
		(\frac{|f(t,x,0)|}{V( t , x )}
		+
		\frac{|g(x)|}{V(T,x)} ) ] \allowbreak =0 $,     
	assume for all 
	    $ t \in [0, T] $, 
	    $ x \in \mc O $, 
	    $ v, w \in \R $ 
	that
		$ | f(t,x, v) - f(t, x, w) | \leq L | v - w | $
	and 
		\begin{equation}
		(\tfrac{\partial }{\partial t}V)(t,x) 
		+
		\tfrac12 
		\operatorname{Trace}\!\left( 
		\sigma(t,x)[\sigma(t,x)]^{*}
		(\operatorname{Hess}_x V)(t,x) 
		\right) 
		+ 
		\langle 
		\mu(t,x),(\nabla_x V)(t,x)
		\rangle
		\leq 
		0, 
		\end{equation}
	let $ ( \Omega, \mathcal{F}, \P, (\mathbb{F}_t)_{t\in [0,T]} ) $ be a stochastic basis,  
	and let $ W \colon [0,T] \times \Omega \to \R^m $ be a standard $(\mathbb{F}_t)_{t\in [0,T]}$-Brownian motion. 
	Then 
		\begin{enumerate}[(i)]
		\item \label{existence_of_fixpoint:item2}
		there exists a unique viscosity solution 
			$ u \in \{ {\bf u} \in C([0,T] \times \mc O, \R) \colon \limsup_{r\to\infty} [\sup_{t\in [0,T]}\sup_{x\in\cO\setminus O_r} \allowbreak (\frac{| {\bf u}(t,x)|}{V(t,x)})] = 0  \} $ 
		of 
			\begin{multline}
			(\tfrac{\partial}{\partial t}u)(t,x) 
			+
			\tfrac{1}{2} 
			\operatorname{Trace}\! \big( 
			\sigma(t, x)[\sigma(t, x)]^{\ast}(\operatorname{Hess}_x u )(t,x)
			\big) 
			+ 
			\langle 
			\mu(t,x), (\nabla_x u)(t,x)
			\rangle
		 	\\ 
			+
			f(t, x, u(t, x))
			=
			0
			\end{multline} 
		with $u(T,x) = g(x)$ for $ (t,x) \in (0,T)\times\mc O $,  
				\item \label{existence_of_fixpoint:item1}
		for every 
			$ t \in [0,T] $, 
			$ x \in \mc O $ 
		there exists an up to indistinguishability unique $(\mathbb{F}_s)_{s\in [t,T]}$-adapted stochastic process 	$X^{t,x}=(X^{t,x}_{s})_{s\in [t,T]}\colon [t,T]\times\Omega\to\cO$ with continuous sample paths satisfying that for all 
			$ s \in [t,T] $ 
		we have $\P$-a.s.~that 
			\begin{equation}\label{existence_of_fixpoint:ass4}
			X^{t,x}_{s} = x + \int_t^s \mu(r, X^{t,x}_{r})\, dr + \int_t^s \sigma(r, 	X^{t,x}_{r}) \, dW_r,
			\end{equation}
		\item \label{existence_of_fixpoint:item3}
		there exists a unique  
	    	$ v \in C(0,T]\times\mc O,\R) $ 
		which satisfies for all 
			$ t \in [0,T] $, 
			$ x \in \mc O $ 
		that
	    	$ \limsup_{r\to\infty} [ \sup_{s\in [0,T]} \sup_{y\in \mc O \setminus O_r} ( \frac{|v(s,y)|}{V(s, y)} ) ] = 0 $, 
			$ \EXP{|g(X^{t,x}_T)| + \int_t^T |f(s,X^{t,x}_s,v(s,X^{t,x}_s))|\,ds} < \infty $, 
    	and 
			\begin{equation}
	  		v(t, x)
			=
  			\Exp{
    		g ( X^{t,x}_{T} )
    		+
    		\int_{t}^T
    		  f( s, X^{t,x}_{s},  v(s, X^{t,x}_{s}) )
    		\, ds
  			}\!, 
			\end{equation}
		and 
		\item\label{existence_of_fixpoint:item4} 
		we have for all 
	    	$ t \in [0,T] $, 
	    	$ x \in \mc O $ 
    	that 
    		$ u(t,x) = v(t,x) $ 
	\end{enumerate}
	(cf.~\cref{def:viscosity_solution}). 
\end{theorem}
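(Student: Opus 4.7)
The plan is to establish the four items in the order (ii), (iii), (i)/(iv) by combining the preparatory results and cited literature as follows.

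First, I would invoke \cref{lem:krylov_existence} directly to get Item~\eqref{existence_of_fixpoint:item1}: for every $(t,x) \in [0,T]\times\mc O$, the local Lipschitz property of $\mu$ and $\sigma$ together with the Lyapunov condition on $V$ yield an up to indistinguishability unique $(\mathbb{F}_s)_{s\in [t,T]}$-adapted process $X^{t,x}$ with continuous sample paths solving \eqref{existence_of_fixpoint:ass4}; the translation from initial time $0$ to an arbitrary $t\in [0,T]$ is immediate by shifting the filtration. Second, Item~\eqref{existence_of_fixpoint:item3} follows directly from \cite[Theorem 3.8]{StochasticFixedPointEquations}: the global Lipschitz continuity of $f$ in its third argument, the growth condition on $f(\cdot,\cdot,0)$ and on $g$ relative to $V$, and the fact that $V$ acts as a supersolution of the Kolmogorov operator along $X^{t,x}$ verify the hypotheses of that theorem, giving existence and uniqueness of $v$ in the stated weighted class together with the required integrability.

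Third, the central step is to show that the SFPE solution $v$ is a viscosity solution of the semilinear PDE. To this end, I would set $h \colon [0,T]\times\mc O \to \R$ by $h(s,y) := f(s,y,v(s,y))$, which is continuous since $f$ and $v$ are continuous. The SFPE then reads
\begin{equation}
v(t,x) = \Exp{g( X^{t,x}_T ) + \int_t^T h( s, X^{t,x}_s ) \,ds}
\end{equation}
for all $(t,x) \in [0,T]\times\mc O$, so $v$ is precisely the Feynman--Kac representation of a solution of a linear inhomogeneous Kolmogorov PDE with inhomogeneity $h$ and terminal datum $g$. To apply \cref{prop:viscosity_solution_lyapunov} I must verify that $\inf_{r\in (0,\infty)}\sup_{t\in [0,T]}\sup_{x\in\mc O\setminus O_r} \frac{|h(t,x)|}{V(t,x)} = 0$, which by the triangle inequality and the global Lipschitz property $|h(t,x)| \leq |f(t,x,0)| + L|v(t,x)|$ reduces to the two given growth controls on $f(\cdot,\cdot,0)$ and on $v$ relative to $V$. \cref{prop:viscosity_solution_lyapunov} then establishes that $v$ is a viscosity solution of the semilinear PDE with $v(T,\cdot) = g$, giving the existence half of Item~\eqref{existence_of_fixpoint:item2}.

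Fourth, \cref{prop:uniqueness_viscosity_semilinear} applied with the given $V$ and with the one-sided Lipschitz constant taken to be the (two-sided) Lipschitz constant $L$ yields uniqueness of the viscosity solution within the weighted class; combined with the previous step this completes Item~\eqref{existence_of_fixpoint:item2} and, since the viscosity solution coincides with the constructed $v$, also Item~\eqref{existence_of_fixpoint:item4}. I expect the main technical obstacle to be in Step three, namely the bookkeeping to check that $h$ satisfies the precise hypotheses of \cref{prop:viscosity_solution_lyapunov}: one has to feed the Lipschitz estimate on $f$ into the growth condition on $v$ to obtain the required decay of $|h|/V$ at infinity relative to $\mc O$ (i.e.\ as one leaves the exhausting family $O_r$), and then cite \cref{lem:integrability} to justify that all integrals in sight are finite so that the Feynman--Kac identity is well-posed.
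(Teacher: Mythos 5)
Your proposal follows exactly the same route as the paper: existence of $X^{t,x}$ via \cref{lem:krylov_existence}, existence/uniqueness of the SFPE solution $v$ via \cite[Theorem 3.8]{StochasticFixedPointEquations}, the Feynman--Kac representation result \cref{prop:viscosity_solution_lyapunov} with $h(t,x):=f(t,x,v(t,x))$, and uniqueness via \cref{prop:uniqueness_viscosity_semilinear}, in precisely this order. The one step you gloss over is the passage from ``$v$ is a viscosity solution of the \emph{linear} PDE with inhomogeneity $h$'' (which is all \cref{prop:viscosity_solution_lyapunov} yields) to ``$v$ is a viscosity solution of the \emph{semilinear} PDE''; the paper makes this explicit by observing that any test function $\phi$ touching $v$ at $(t,x)$ satisfies $\phi(t,x)=v(t,x)$, hence $f(t,x,\phi(t,x))=f(t,x,v(t,x))=h(t,x)$, so the linear and semilinear viscosity inequalities coincide at the touching point --- a short but logically necessary remark, since \cref{prop:viscosity_solution_lyapunov} by itself does not speak of the nonlinear equation.
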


\begin{proof}[Proof of \cref{thm:existence_of_fixpoint}]
	First, observe that \cref{lem:krylov_existence} (applied with 
		$ d \is d $, 
		$ m \is m $, 
		$ T \is T-t $, 
		$ \mc O \is \mc O $, 
		$ \mu \is ( [0,T-t]\times\mc O \ni (s,x) \mapsto \mu(t+s,x) \in \R^d ) $, 
		$ \sigma \is ([0,T-t]\times\mc O \ni (s,x) \mapsto \sigma(t+s,x) \in \R^{d\times m} ) $, 
		$ V \is ([0,T-t]\times\mc O \ni (s,x) \mapsto V(t+s,x) \in (0,\infty) ) $, 
		$ ( \Omega, \mc F, \P ) \is ( \Omega, \mc F, \P ) $, 
		$ (\F_s)_{s\in [0,T]} \is (\F_{s+t})_{s\in [0,T-t]} $, 
		$ (W_s)_{s\in [0,T]} \is (W_{s+t}-W_t)_{s\in [0,T-t]} $
	for $ t \in [0,T] $ in the notation of \cref{lem:krylov_existence}) establishes Item~\eqref{existence_of_fixpoint:item1}. 
	Next we prove Item~\eqref{existence_of_fixpoint:item3}. Note that Item~\eqref{existence_of_fixpoint:item1} ensures that there exists a unique $ v \in C([0,T] \times \mc O, \R) $ which satisfies for all 
    	$ t \in [0,T] $, 
    	$ x \in \mc O $ 
    that   
		$ \limsup_{r\to\infty} [\sup_{s\in [0,T]}\allowbreak\sup_{y\in \cO\setminus O_r} (\frac{|v(s,y)|}{V(s, y)}) ] = 0 $, 
		$ \EXP{|g(X^{t,x}_T)|+\int_t^T |f(s,X^{t,x}_s,v(s,X^{t,x}_s))|\,ds} < \infty $, 
    and   
	    \begin{equation}\label{existence_of_fixpoint:formula_for_v}
    	v(t,x) = \Exp{ g( X^{t,x}_T ) + \int_t^T f( s , X^{t,x}_s, v(s, X^{t,x}_s ) ) \,ds } 
	    \end{equation}
	(cf., e.g.,  \cite[Theorem 3.8]{StochasticFixedPointEquations}).
    This establishes Item~\eqref{existence_of_fixpoint:item3}.
    In the next step we prove Items~\eqref{existence_of_fixpoint:item2} and \eqref{existence_of_fixpoint:item4}. 
	For this let  
    	$ h \colon [0,T] \times \mc O \to \R $ 
    satisfy for all
    	$ t \in [0,T] $, 
    	$ x \in \mc O $ 
    that 
    	$ h(t,x) = f(t,x,v(t,x)) $. 
	Observe that 
    	$ h \in C([0,T]\times\mc O, \R) $ 
    and  
		\begin{equation} 
 		\begin{split}
  		& 
  		\limsup_{r\to\infty} \left[ \sup_{t \in [0,T]} \sup_{x \in \mc O \setminus O_r} \left( \frac{|h(t,x)|}{V(t,x)} \right) \right] 
  		= \limsup_{r\to\infty} \left[ \sup_{t \in [0,T]} \sup_{x \in \mc O \setminus O_r} \left( \frac{|f(t,x,v(t,x))|}{V(t,x)} \right) \right] \\
  		&  
  		\leq \limsup_{r\to\infty} \left[ \sup_{t \in [0,T]} \sup_{x \in \mc O \setminus O_r} \left( \frac{|f(t,x,0)|+|f(t,x,v(t,x))-f(t,x,0)|}{V(t,x)} \right) \right] \\
  		&   
		\leq \limsup_{r\to\infty} \left[ \sup_{t \in [0,T]} \sup_{x \in \mc O\setminus O_r} \left( \frac{|f(t,x,0)| + L |v(t,x)|}{V(t,x)}   \right) \right] = 0. 
		\end{split}
 		\end{equation} 
 	\cref{prop:viscosity_solution_lyapunov}, Item~\eqref{existence_of_fixpoint:item1}, and \eqref{existence_of_fixpoint:formula_for_v} hence guarantee that $v$ is a viscosity solution of 
		\begin{equation}\label{existence_of_fixpoint:auxiliary_eqn}
		 (\tfrac{\partial}{\partial t}v)(t,x) 
		 + 
		 \tfrac12 
		 \operatorname{Trace}\!\big( 
		   \sigma(t,x)[\sigma(t,x)]^{*}
		   (\operatorname{Hess}_x v)(t,x)
		 \big) 
		 + 
		 \langle \mu(t,x), (\nabla_x v)
		 (t,x) 
		 \rangle
		 + 
		 h(t,x) 
		 = 
		 0 
		\end{equation}
	for $(t,x)\in (0,T)\times\cO$. 
	This implies that for all 
    	$ t \in (0,T) $, 
    	$ x \in \mc O $, 
    	$ \phi \in C^{1,2}((0,T)\times\mc O,\R)$ 
    with $ \phi \geq v $ and $ \phi(t,x) = v(t,x) $ 
 	we have that 
		\begin{align} \label{viscosity_solution_semilinear:viscosity_subsolution}
		\nonumber
		& (\tfrac{\partial}{\partial t}\phi)(t,x) 
		+ 
		\tfrac12 
		\operatorname{Trace}\!\left( 
		 \sigma(t,x)[\sigma(t,x)]^{*}
		 (\operatorname{Hess}_x \phi)(t,x) 
		\right) 
		+ 
		\langle 
		 \mu(t,x),(\nabla_x \phi)(t,x)
		\rangle
		+ 
		f(t,x,\phi(t,x)) 
		\\
		& 
		=
		(\tfrac{\partial}{\partial t}\phi)(t,x) 
		+ 
		\tfrac12 
		\operatorname{Trace}\!\left( 
		 \sigma(t,x)[\sigma(t,x)]^{*}
		 (\operatorname{Hess}_x \phi)(t,x) 
		\right) 
		+ 
		\langle 
		 \mu(t,x),(\nabla_x \phi)(t,x)
		\rangle
		+ 
		h(t,x)
		\\ \nonumber
		&
		\geq 0.  
		\end{align}
	In addition, note that \eqref{existence_of_fixpoint:auxiliary_eqn} proves that for all 
  		$ t \in (0,T) $, 
  		$ x \in \mc O $,
  		$ \phi \in C^{1,2}((0,T)\times\mc O,\R) $ 
  	with $ \phi \leq v $ and $ \phi(t,x) = v(t,x) $ 
  	we have that 
  		\begin{align} \label{viscosity_solution_semilinear:viscosity_supersolution}
		\nonumber  
		& 
		(\tfrac{\partial}{\partial t}\phi)(t,x) 
		+ 
		\tfrac12 
		\operatorname{Trace}\!\left( 
		 \sigma(t,x)[\sigma(t,x)]^{*}
		 (\operatorname{Hess}_x \phi)(t,x) 
		\right) 
		+ 
		\langle 
		 \mu(t,x),(\nabla_x \phi)(t,x)
		\rangle
		+ 
		f(t,x,\phi(t,x))
		\\
		&
		=
		(\tfrac{\partial}{\partial t}\phi)(t,x) 
		+ 
		\tfrac12 
		\operatorname{Trace}\!\left( 
		 \sigma(t,x)[\sigma(t,x)]^{*}
		 (\operatorname{Hess}_x \phi)(t,x) 
		\right) 
		+ 
		\langle 
		  \mu(t,x),(\nabla_x \phi)(t,x)
		\rangle
		+ 
		h(t,x)
		\\ \nonumber
		& \leq 0 .
		\end{align}
	Combining this with \eqref{viscosity_solution_semilinear:viscosity_subsolution} shows that 
    	$v$
 	is a viscosity solution of 
		\begin{multline}
		 \label{existence_of_fixpoint:equation_for_v}
		  (\tfrac{\partial}{\partial t}v)(t,x) 
		  + 
		  \tfrac12 
		  \operatorname{Trace}\!\left( 
		    \sigma(t,x)[\sigma(t,x)]^{*}
		    (\operatorname{Hess}_x v)(t,x)
		  \right) 
		  + 
		  \langle \mu(t,x), (\nabla_x v)
		  (t,x) 
		  \rangle
		  \\
		  + 
		  f(t,x,v(t,x)) 
		  = 
		  0  
		\end{multline}
	 for $(t,x) \in (0,T)\times\cO$.
	 Combining this and the fact that 
	 	$ v \in \{ {\bf u} \in C([0,T]\times\mc O,\R)\colon \limsup_{r\to\infty}\allowbreak[\sup_{t\in [0,T]}\sup_{x\in\mc O\setminus O_r}(\frac{|{\bf u}(t,x)|}{V(t,x)})] = 0 \}$ 
	 with
	 \cref{prop:uniqueness_viscosity_semilinear} (applied with 
	 	$ u_1 \is v $
	 in the notation of \cref{prop:uniqueness_viscosity_semilinear}) establishes Items~\eqref{existence_of_fixpoint:item2} and \eqref{existence_of_fixpoint:item4}. 
 	This completes the proof of  \cref{thm:existence_of_fixpoint}.
\end{proof}

\begin{cor}\label{cor:existence_of_fixpoint_product_lyapunov}
	Let $ d,m \in \N $,
	 	$ T \in (0,\infty) $, 
	 	$ L,\rho \in \R $,
 	let $ \langle \cdot, \cdot \rangle \colon \R^d\times\R^d \to \R $ be the standard Euclidean scalar product on $ \R^d $, 
 	let $ \norm{\cdot}\colon \R^d\to [0,\infty) $ be the standard Euclidean norm on $ \R^d $, 
 	let $ \HSnorm{\cdot} \colon \R^{d\times m}\to [0,\infty) $ be the Frobenius norm on $\R^{d\times m}$, 
 	let $ \mc O\subseteq \R^d$ be a non-empty open set, 
 	for every 
 		$ r \in (0,\infty) $
 	let $ O_r\subseteq \mc O $ satisfy $ O_r = \{x\in \mc O\colon ( \norm{x}\leq r~\text{and}~\{y\in\R^d\colon\norm{y-x}<\nicefrac{1}{r}\}\subseteq\mc O ) \} $, 
 	let $ \mu\in C([0,T]\times\mc O,\R^d) $, 
 		$ \sigma\in C([0,T]\times\mc O,\R^{d\times m}) $, 
 		$ f \in C([0,T]\times\mc O\times\R,\R) $, 
 		$ g \in C(\mc O,\R) $,   
 		$ V \in C^2(\mc O,(0,\infty)) $,  		
 	assume for all 
 		$r\in (0,\infty)$ 
 	that 
 		\begin{equation}
 		 \sup\!\left( 
 		  \left\{
 		   \frac{\norm{\mu(t,x)-\mu(t,y)}
 		   	+ 
 		   	\HSnorm{\sigma(t,x)-\sigma(t,y)}}{\norm{x-y}}
 		   	\colon 
    		t\in [0,T], 
    		x,y\in O_r,
    		x\neq y
 		  \right\}
 		  \cup \{0\}
 		 \right) 
 		 < 
 		 \infty, 
 		\end{equation}
	assume that
 		$ \sup_{r\in (0,\infty)} [ \inf_{ x \in \mc O\setminus O_r} V(x)] = \infty $
 	and 
	 	$ \inf_{r\in (0,\infty)} [ \sup_{t\in [0,T]} \sup_{x\in \cO\setminus O_r} (\frac{|f(t,x,0)|+|g(x)|}{V(x)}) ] = 0 $, 
 	assume for all
 		$ t \in [0,T] $, 
 		$ x \in \mc O $, 
 		$ v,w \in \R $ 
 	that 
 		$ |f(t,x,v)-f(t,x,w)| \leq L|v-w| $
	and 
 		\begin{equation} \label{existence_of_fixpoint_product_lyapunov:V_supersolution}
 		\tfrac12
 		\operatorname{Trace}\!\left(
 		\sigma(t,x)[\sigma(t,x)]^{*}(\operatorname{Hess} V)(x)
 		\right)
 		+ 
 		\langle \mu(t,x),(\nabla V)(x)\rangle 
 		\leq 
 		\rho V(x), 
 		\end{equation}
 	let $ (\Omega,\cF,\P,(\F_t)_{t\in [0,T]}) $ be a stochastic basis, 
	and let $ W \colon [0,T]\times\Omega \to \R^m $ be a standard $(\F_t)_{t\in [0,T]}$-Brownian motion. 
	Then 
		\begin{enumerate}[(i)]
  		\item \label{existence_of_fixpoint_product_lyapunov:item2}
  		there exists a unique viscosity solution $u\in \{  {\bf u} \in C([0,T]\times\cO,\R) \colon \limsup_{r \to \infty } [\sup_{s\in [0,T]}\sup_{y\in\cO\setminus O_r} \allowbreak (\frac{|{\bf u}(s,y)|}{V(y)})] = 0 \} $ of
		    \begin{multline}
		    (\tfrac{\partial}{\partial t}u)(t,x) 
	    	+ 
	    	\tfrac12 
	    	\operatorname{Trace}\!\left( 
	    	\sigma(t,x)[\sigma(t,x)]^{*}
	    	(\operatorname{Hess}_x u)(t,x)\right) 
	    	+ 
	    	\langle 
	    	\mu(t,x),(\nabla_x u)(t,x) 
	    	\rangle 
	    	\\
	    	+ 
	    	f(t,x,u(t,x)) = 0
			\end{multline} 
		with $u(T,x) = g(x)$ for $(t,x) \in (0,T)\times\cO$,  
  			\item \label{existence_of_fixpoint_product_lyapunov:item1}
  		for every 
	  		$ t \in [0,T] $, 
	  		$ x \in \mc O $ 
  		there exists an up to indistinguishability unique $(\F_s)_{s\in [t,T]}$-adapted stochastic process $X^{t,x} = (X^{t,x}_s)_{s\in [t,T]}\colon [t,T]\times\Omega \to \cO$ with continuous sample paths satisfying that for all 
  			$ s \in [t,T] $
  		we have $\P$-a.s.~that 
	  		\begin{equation}
	  		X^{t,x}_s = x + \int_t^s \mu(r,X^{t,x}_r)\,dr + 
	  		\int_t^s \sigma(r,X^{t,x}_r)\,dW_r, 
	  		\end{equation}
  		\item \label{existence_of_fixpoint_product_lyapunov:item3}
		there exists a unique $v\in C([0,T]\times\cO,\R)$ which satisfies for all 
  			$ t \in [0,T] $, 
  			$ x \in \mc O $ 
  		that 
	  		$ \limsup_{r\to\infty} [ \sup_{s\in [t,T]} \sup_{y\in\mc O \setminus O_r} ( \frac{|v(s,y)|}{V(y)} ) ] = 0 $, 
		  	$ \EXP{|g(X^{t,x}_T)|+\int_t^T |f(s,X^{t,x}_s,v(s,X^{t,x}_s))|\,ds} < \infty $,   
	 	and 
			\begin{equation}
			v(t,x) = \Exp{ g( X^{t,x}_T ) + \int_t^T f( s, X^{t,x}_s, v(s,X^{t,x}_s) ) \,ds }\!, 
	  		\end{equation}  
  		and 
  	\item \label{existence_of_fixpoint_product_lyapunov:item4}
	we have for all 
  		$t\in [0,T]$, 
  		$x\in \cO$ 
  	that 
  		$u(t,x) = v(t,x)$ 
 	\end{enumerate}
	(cf.~\cref{def:viscosity_solution}). 
\end{cor}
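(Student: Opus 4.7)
The plan is to reduce \cref{cor:existence_of_fixpoint_product_lyapunov} to \cref{thm:existence_of_fixpoint} by absorbing the Lyapunov growth constant $\rho$ into a time-dependent rescaling of $V$. Concretely, I would define $\tilde{V} \colon [0,T] \times \mc O \to (0,\infty)$ by $\tilde{V}(t,x) = e^{\rho(T-t)} V(x)$. Since $V \in C^2(\mc O,(0,\infty))$, clearly $\tilde{V} \in C^{1,2}([0,T] \times \mc O, (0,\infty))$, and a straightforward computation combined with assumption \eqref{existence_of_fixpoint_product_lyapunov:V_supersolution} would yield, for all $t \in [0,T]$ and $x \in \mc O$,
\begin{align*}
& (\tfrac{\partial}{\partial t}\tilde{V})(t,x) + \tfrac{1}{2}\operatorname{Trace}\!\big(\sigma(t,x)[\sigma(t,x)]^{*}(\operatorname{Hess}_x \tilde{V})(t,x)\big) + \langle \mu(t,x), (\nabla_x \tilde{V})(t,x)\rangle \\
& \quad = e^{\rho(T-t)}\!\left[-\rho V(x) + \tfrac{1}{2}\operatorname{Trace}\!\big(\sigma(t,x)[\sigma(t,x)]^{*}(\operatorname{Hess} V)(x)\big) + \langle \mu(t,x), (\nabla V)(x)\rangle\right] \leq 0.
\end{align*}

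Next, I would observe that the continuous map $[0,T] \ni s \mapsto e^{\rho(T-s)} \in (0,\infty)$ is bounded above and below by strictly positive constants, so there exist $c_1, c_2 \in (0,\infty)$ with $c_1 V(x) \leq \tilde{V}(t,x) \leq c_2 V(x)$ for all $(t,x) \in [0,T] \times \mc O$. This two-sided sandwich transfers the coercivity hypothesis $\sup_{r \in (0,\infty)} \inf_{x \in \mc O \setminus O_r} V(x) = \infty$ into $\sup_{r \in (0,\infty)} \inf_{t \in [0,T], \, x \in \mc O \setminus O_r} \tilde{V}(t,x) = \infty$ and, using $\tilde{V}(T,x) = V(x)$, it converts the assumed negligibility of $\frac{|f(t,x,0)| + |g(x)|}{V(x)}$ outside $O_r$ into the analogous negligibility statement for $\frac{|f(t,x,0)|}{\tilde{V}(t,x)} + \frac{|g(x)|}{\tilde{V}(T,x)}$. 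The same sandwich further shows that the growth class appearing in Items \eqref{existence_of_fixpoint_product_lyapunov:item2} and \eqref{existence_of_fixpoint_product_lyapunov:item3} of \cref{cor:existence_of_fixpoint_product_lyapunov}, formulated in terms of $V(y)$, coincides with the corresponding class in \cref{thm:existence_of_fixpoint} formulated in terms of $\tilde{V}(t,x)$.

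With these verifications in hand, every hypothesis of \cref{thm:existence_of_fixpoint} is met with $\tilde{V}$ in place of $V$, and an application of that theorem then yields Items~\eqref{existence_of_fixpoint_product_lyapunov:item1}--\eqref{existence_of_fixpoint_product_lyapunov:item4} of \cref{cor:existence_of_fixpoint_product_lyapunov} directly. I do not anticipate any genuine obstacle; the argument is essentially bookkeeping, and the only mild subtlety is the verification that the two growth classes coincide, which follows immediately from the bound $c_1 V \leq \tilde{V} \leq c_2 V$.
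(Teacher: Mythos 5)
Your proposal is correct and follows essentially the same route as the paper: the paper's proof sets $\mathbb{V}(t,x) = e^{-\rho t}V(x)$, which differs from your $\tilde V(t,x)=e^{\rho(T-t)}V(x)$ only by the immaterial constant factor $e^{\rho T}$, verifies the same transferred supersolution, coercivity, and growth hypotheses, and then invokes \cref{thm:existence_of_fixpoint}. The bookkeeping you describe is exactly what the paper carries out.
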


\begin{proof}[Proof of \cref{cor:existence_of_fixpoint_product_lyapunov}]
	Throughout this proof let 
		$ \V\colon [0,T]\times \mc O \to (0,\infty) $ 
	satisfy for all 
 		$ t \in [0,T] $, 
 		$ x \in \mc O $ 
	that 
		$ \V(t,x) = e^{-\rho t}V(x) $. 
	Observe that \eqref{existence_of_fixpoint_product_lyapunov:V_supersolution} ensures that for all 
 		$ t\in [0,T] $, 
 		$ x\in \mc O $ 
 	we have that 
 		\begin{equation}
  		(\tfrac{\partial}{\partial t}\V)(t,x) 
  		+ 
  		\tfrac12 
  		\operatorname{Trace}\!\left( 
  		\sigma(t,x)[\sigma(t,x)]^{*}(\operatorname{Hess}_x \V)(t,x)
  		\right)
  		+ 
  		\langle \mu(t,x), (\nabla_x\V)(t,x) \rangle 
  		\leq 0  
 		\end{equation}
 	(cf., e.g, \cite[Lemma 3.2]{StochasticFixedPointEquations}). 
	Moreover, note that the hypothesis that $ \sup_{ r\in (0,\infty) } [\inf_{x\in\mc O\setminus O_r} V(x)] = \infty$ assures that 
		\begin{equation}
		\sup_{ r\in (0,\infty) } 
		\left[ 
		\inf_{t\in [0,T]}
		\inf_{x\in \cO\setminus O_r} 
		\V(t,x)
		\right] 
		= \infty. 
		\end{equation}
	In addition, observe that the hypothesis that 
		$ \inf_{r\in (0,\infty)}\big[ \sup_{t\in [0,T]} \sup_{x\in \mc O \setminus O_r} \big( \frac{|f(t,x,0)| + |g(x)|}{V(x)} \big) \big] = 0 $
	guarantees that 
		\begin{equation} 
		\inf_{r\in (0,\infty)} \left[ 
   		\sup_{t\in [0,T]}
   		\sup_{x\in \cO\setminus O_r} 
   		\left( 
   		\frac{|f(t,x,0)|}{\V(t,x)} 
   		+ 
   		\frac{|g(x)|}{\V(T,x)}
   		\right)
		\right] = 0. 
		\end{equation}
	\cref{thm:existence_of_fixpoint} hence establishes Items  \eqref{existence_of_fixpoint_product_lyapunov:item1}--\eqref{existence_of_fixpoint_product_lyapunov:item4}. 
 	This completes the proof of  \cref{cor:existence_of_fixpoint_product_lyapunov}.
\end{proof}

\begin{cor} \label{existence_of_fixpoint_polynomial_growth}
	Let $ d,m \in \N $,
	    $ L,T \in (0,\infty) $, 
		$ \mu \in C([0,T]\times \R^d,\R^d) $, 
		$ \sigma \in C([0,T]\times\R^d,\R^{d\times m}) $, 
		$ \mf C \in C([0,\infty),[0,\infty)) $,  
	let	$ f \in C( [0,T] \times \R^d \times \R , \R) $, 
		$ g \in C(\R^d, \R) $ 
	be at most polynomially growing, 
	let $ \langle\cdot,\cdot\rangle \colon \R^d\times\R^d\to\R $ be the standard Euclidean scalar product on $\R^d$, 
	let $ \norm{\cdot}\colon\R^d\to [0,\infty)$ be the standard Euclidean norm on $\R^d$, 
	let $ \HSnorm{\cdot}\colon\R^{d\times m}\to [0,\infty)$ be the Frobenius norm on $\R^{d\times m}$, 
	assume for all 
		$ t \in [0, T] $, 
		$ x,y \in \R^d $, 
		$ v, w \in \R $ 
	that
		$ \norm{ \mu(t,x) - \mu(t,y) } + \HSnorm{ \sigma(t,x) - \sigma(t,y) } \leq \mf C(\norm{x}+\norm{y}) \Norm{ x - y } $, 
		$ \langle x,\mu(t,x)\rangle \leq  L ( 1 + \norm{x}^2 ) $,
		$ \HSnorm{\sigma(t,x)} \leq L ( 1+\norm{x} ) $, and 
		$ | f(t,x, v) - f(t, x, w) | \leq L | v - w | $,
	let $ ( \Omega, \mathcal{F}, \P, (\mathbb{F}_t)_{t\in [0,T]} ) $ be a stochastic basis, 
	and let $W \colon [0,T] \times \Omega \to \R^m$ be a standard $(\mathbb{F}_t)_{t\in [0,T]}$-Brownian motion. 
	Then 
		\begin{enumerate}[(i)]
		\item \label{existence_of_fixpoint_polynomial_growth:item2}
		there exists a unique at most polynomially growing viscosity solution $ u \in C([0,T] \times \R^d, \R) $ of 
			\begin{multline}
			\label{existence_of_fixpoint_polynomial_growth:claim2}
				(\tfrac{\partial}{\partial t}u)(t,x) 
				+
				\tfrac{1}{2} 
				\operatorname{Trace}\!\left( 
					\sigma(t, x)[\sigma(t, x)]^{\ast}(\operatorname{Hess}_x u )(t,x)
				\right) 
				+
			    \langle 
			    \mu(t,x), (\nabla_x u)(t,x)\rangle
			     \\
			    \quad \,
			    +
				f(t, x, u(t, x))
			=
				0
			\end{multline}
		with $ u(T,x) = g(x) $ for $ (t,x) \in (0,T) \times \R^d $,
		\item \label{existence_of_fixpoint_polynomial_growth:item1}
		for every 
			$t\in [0,T]$, 
			$x\in\R^d$ 
		there exists an up to indistinguishability unique $(\mathbb{F}_s)_{s\in [t,T]}$-adapted stochastic process
			$X^{t,x}=(X^{t,x}_s)_{s\in [t,T]}\colon [t,T]\times\Omega\to\R^d$ 
		with continuous sample paths satisfying that for all
			$s\in [t,T]$ 
		we have $\P$-a.s.~that 
			\begin{equation}
			\label{existence_of_fixpoint_polynomial_growth:claim1}
			X^{t,x}_s
			= 
			x + \int_t^s \mu(r, X^{t,x}_r)\, dr + \int_t^s \sigma(r, X^{t,x}_r) \, dW_r,
			\end{equation}
		\item \label{existence_of_fixpoint_polynomial_growth:item3}
		there exists a unique at most polynomially growing $ v \in C([0,T]\times\R^d,\R) $ which satisfies for all 
			$t\in [0,T]$, 
			$x\in\R^d$ 
		that 
			$\EXP{|g(X^{t,x}_T)|+\int_t^T |f(s,X^{t,x}_s,v(s,X^{t,x}_s))|\,ds}<\infty $ 
		and 
			\begin{equation}
			v(t, x) = \Exp{ g( X^{t,x}_{T}) + \int_{t}^T f( s, X^{t,x}_s,  v(s, X^{t,x}_s) ) \, ds }\!, 
			\end{equation}
		and 
		\item \label{existence_of_fixpoint_polynomial_growth:item4} 
		we have for all 
			$ t \in [0,T] $, 
			$ x \in \R^d $ 
		that $ u(t,x) = v(t,x) $ 
	\end{enumerate}
	(cf.~\cref{def:viscosity_solution}). 
\end{cor}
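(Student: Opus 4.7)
The plan is to deduce \cref{existence_of_fixpoint_polynomial_growth} from \cref{cor:existence_of_fixpoint_product_lyapunov} applied with the open set $\mc O = \R^d$ and with a Lyapunov function of the form $V(x) = (1+\norm{x}^2)^p$ for a natural number $p$ chosen sufficiently large. The first step is to translate the hypotheses: since $\mc O = \R^d$, each set $O_r$ reduces to the closed ball of radius $r$ centred at the origin, so the assumed bound $\norm{\mu(t,x)-\mu(t,y)}+\HSnorm{\sigma(t,x)-\sigma(t,y)} \leq \mf C(\norm{x}+\norm{y})\norm{x-y}$ together with the continuity of $\mf C$ on $[0,\infty)$ (which makes $\mf C$ bounded on $[0,2r]$) yields the required local Lipschitz condition uniformly in $t \in [0,T]$. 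Furthermore, $V(x) \geq (1+r^2)^p \to \infty$ as $r \to \infty$, so $\sup_{r\in(0,\infty)}[\inf_{x\in\R^d\setminus O_r} V(x)] = \infty$, and the polynomial growth of $f(\cdot,\cdot,0)$ and $g$ makes $\inf_{r\in (0,\infty)}[\sup_{t\in [0,T]}\sup_{x \in \R^d \setminus O_r} (\tfrac{|f(t,x,0)|+|g(x)|}{V(x)})] = 0$ as soon as $2p$ strictly exceeds the polynomial growth exponents of $f(\cdot,\cdot,0)$ and $g$.

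The analytically non-trivial step is the verification of the Lyapunov inequality $\tfrac12 \operatorname{Trace}(\sigma(t,x)[\sigma(t,x)]^{\ast}(\operatorname{Hess} V)(x)) + \langle \mu(t,x), (\nabla V)(x)\rangle \leq \rho V(x)$ of \eqref{existence_of_fixpoint_product_lyapunov:V_supersolution}. A direct computation gives $(\nabla V)(x) = 2p(1+\norm{x}^2)^{p-1}x$ together with
\begin{equation}
(\operatorname{Hess} V)(x) = 2p (1+\norm{x}^2)^{p-1}\operatorname{Id}_{\R^d} + 4p(p-1)(1+\norm{x}^2)^{p-2} xx^{\ast}.
\end{equation}
The coercivity assumption $\langle x, \mu(t,x)\rangle \leq L(1+\norm{x}^2)$ then produces a bound of the form $(2pL) V(x)$ for the drift term, while $\HSnorm{\sigma(t,x)}^2 \leq L^2(1+\norm{x})^2 \leq 2L^2(1+\norm{x}^2)$ together with $\langle x, \sigma(t,x)[\sigma(t,x)]^{\ast} x\rangle \leq \HSnorm{\sigma(t,x)}^2 \norm{x}^2$ produces a bound of the form $(2pL^2 + 4p(p-1)L^2) V(x)$ for the diffusion term, yielding \eqref{existence_of_fixpoint_product_lyapunov:V_supersolution} with some $\rho = \rho(L,p) \in \R$. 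Having verified all the hypotheses of \cref{cor:existence_of_fixpoint_product_lyapunov}, its Items~\eqref{existence_of_fixpoint_product_lyapunov:item1}--\eqref{existence_of_fixpoint_product_lyapunov:item4} provide all the assertions of \cref{existence_of_fixpoint_polynomial_growth} modulo a minor reconciliation of function classes.

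The remaining point is to compare the weighted class $\{{\bf u} \in C([0,T]\times\R^d,\R) \colon \limsup_{r\to\infty}[\sup_{(t,x)\in [0,T]\times(\R^d\setminus O_r)}(\tfrac{|{\bf u}(t,x)|}{V(x)})] = 0\}$ appearing in \cref{cor:existence_of_fixpoint_product_lyapunov} with the class of at most polynomially growing continuous functions appearing in \cref{existence_of_fixpoint_polynomial_growth}. Existence of a polynomially growing $u$ (respectively $v$) is immediate from the existence statement in \cref{cor:existence_of_fixpoint_product_lyapunov}, since any function in the weighted class is bounded by a constant times $V$ and hence at most polynomially growing. For uniqueness, any two at most polynomially growing viscosity solutions lie in the weighted class once $p$ is chosen strictly larger than half of the maximum of their polynomial degrees, so the uniqueness assertion in \cref{cor:existence_of_fixpoint_product_lyapunov} forces them to coincide. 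I do not anticipate a genuine analytic obstacle here; the only care needed is in the parameter bookkeeping for $p$, which must be chosen larger than half of the growth degrees of $f(\cdot,\cdot,0)$, $g$, and any candidate viscosity solution entering the uniqueness argument.
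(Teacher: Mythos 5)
Your proposal is correct and follows essentially the same strategy as the paper: apply \cref{cor:existence_of_fixpoint_product_lyapunov} with $\mc O = \R^d$ and Lyapunov function $V(x) = (1+\norm{x}^2)^p$ for $p$ chosen sufficiently large, and handle uniqueness in the polynomially growing class by re-choosing the exponent $p$ to dominate the growth degree of any candidate solution entering the comparison. The only cosmetic difference is that you verify the Lyapunov inequality \eqref{existence_of_fixpoint_product_lyapunov:V_supersolution} by direct computation of $\nabla V$ and $\operatorname{Hess} V$, whereas the paper cites an external lemma for this estimate.
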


\begin{proof}[Proof of \cref{existence_of_fixpoint_polynomial_growth}]
 	Throughout this proof let 
 		$ V_q\colon\R^d \to (0,\infty) $, $ q\in (0,\infty) $,
 	satisfy for all
 		$ q \in ( 0, \infty ) $, 
 		$ x \in \R^d $ 
 	that 
 		\begin{equation} \label{existence_of_fixpoint_polynomial_growth:definition_of_V_q_functions}
 		V_q(x) = [1 + \Norm{x}^2]^{\nicefrac{q}{2}} . 
 		\end{equation} 
 	Observe that the assumption that $ f $ is at most polynomially growing and the assumption that $ g $ is at most polynomially growing ensure that there exists
 		$ p \in (0,\infty) $ 
 	which satisfies that 
		\begin{equation} 
		\sup_{t\in [0,T]} \sup_{x\in \R^d} \left( \frac{ |g(x)| + |f(t,x,0)| }{V_p(x)} \right)  < \infty . 
		\end{equation}
 	Hence, we obtain for all 
    	$ q \in (p,\infty) $ 
	that 
    	\begin{equation} \label{existence_of_fixpoint_polynomial_growth:growth_condition_for_f_and_g}
    	\limsup_{ r \to \infty } 
    	\left[ 
    	\sup_{ t \in [0,T] }
    	\sup_{ x \in \R^d, \norm{x} > r } 
    	\left( \frac{ |f(t,x,0)| + |g(x)| }
    	{ V_q(x) }
    	\right)
    	\right] 
    	= 0.
    	\end{equation}
	Moreover, note that \eqref{existence_of_fixpoint_polynomial_growth:definition_of_V_q_functions}, the assumption that for all 
		$ t \in [0,T] $, 
		$ x \in \R^d $ 
	we have that 
		$ \langle x, \mu(t,x) \rangle \leq L ( 1 + \norm{x}^2 ) $, 
	and the assumption that for all 
		$ t \in [0,T] $, 
		$ x \in \R^d $ 
	we have that  
		$ \HSnorm{ \sigma(t,x ) } \leq L ( 1 + \norm{ x } ) $
	guarantee that there exist
 		$ \rho_q \in [0,\infty) $, $ q \in \R $, 
	which satisfy for all 
		$ q \in (p,\infty) $, 
 		$ t \in [0,T] $, 
 		$ x \in \R^d $ 
	that 
		\begin{equation} \label{existence_of_fixpoint_polynomial_growth:supersolution_condition}
  		\tfrac12 
  		\operatorname{Trace}\!\left( 
   		\sigma(t,x)[\sigma(t,x)]^{*}
   		(\operatorname{Hess} V_q)(x)
  		\right)
  		+ 
  		\langle \mu(t,x),(\nabla V_q)(x)\rangle
  		\leq \rho_q V_q(x)  
 		\end{equation}
	(cf., e.g., \cite[Lemma 3.3]{StochasticFixedPointEquations}).
 	In addition, observe that \eqref{existence_of_fixpoint_polynomial_growth:definition_of_V_q_functions} demonstrates for all 
 		$ q \in (0,\infty) $ 
 	that 
		$ \liminf_{r\to\infty} [ \inf_{x\in \R^d, \norm{x} > r} V_q(x) ] = \infty $. 
	Item~\eqref{existence_of_fixpoint_product_lyapunov:item1} in \cref{cor:existence_of_fixpoint_product_lyapunov} (applied with 
		$ \rho \is \rho_{2p} $, 
		$ \mc O \is \R^d $, 
		$ V \is V_{2p} $ 
	in the notation of \cref{cor:existence_of_fixpoint_product_lyapunov}), 
	\eqref{existence_of_fixpoint_polynomial_growth:growth_condition_for_f_and_g}, 
	and \eqref{existence_of_fixpoint_polynomial_growth:supersolution_condition} 
	therefore ensure that for every 
		$ t \in [0,T] $, 
		$ x \in \R^d $ 
	there exists an up to indistinguishability unique $(\mathbb{F}_s)_{s\in [t,T]}$-adapted stochastic process $ X^{t,x}=(X^{t,x}_s)_{s\in [t,T]} \colon [t,T]\times\Omega\to\R^d$ with continuous sample paths satisfying that for all 
		$ s \in [t,T] $ 
	we have $\P$-a.s.~that 
 		\begin{equation}
  		X^{t,x}_s = x + \int_t^s \mu( r, X^{t,x}_r ) \,dr + \int_t^s \sigma( r, X^{t,x}_r ) \,dW_r.
  		\end{equation}
	This establishes Item~\eqref{existence_of_fixpoint_polynomial_growth:item1}. 
	Next we prove Item~\eqref{existence_of_fixpoint_polynomial_growth:item2}.  Note that Item~\eqref{existence_of_fixpoint_product_lyapunov:item2} in \cref{cor:existence_of_fixpoint_product_lyapunov}  (applied with 
		$ \rho \is \rho_{2p} $, 
		$ \mc O \is \R^d $, 
		$ V \is V_{2p} $ 
	in the notation of \cref{cor:existence_of_fixpoint_product_lyapunov}), \eqref{existence_of_fixpoint_polynomial_growth:growth_condition_for_f_and_g}, 
	and \eqref{existence_of_fixpoint_polynomial_growth:supersolution_condition} 
	prove that there exists a unique viscosity solution $ u \in \{ {\bf u} \in C([0,T]\times\R^d,\R) \colon \limsup_{r\to\infty}[\sup_{t\in [0,T]} \sup_{x\in\mc O\setminus O_r} (\frac{|{\bf u}(t,x)|}{V_{2p}(t,x)}) ] \allowbreak = 0  \} $ of 
		\begin{multline} \label{existence_of_fixpoint_polynomial_growth:viscosity_solution}
		 (\tfrac{\partial}{\partial t}u)(t,x) 
		 + 
		 \tfrac{1}{2} 
		 \operatorname{Trace}\!\left( 
		 \sigma(t, x)[\sigma(t, x)]^{\ast}(\operatorname{Hess}_x u )(t,x)
		 \right) 
		 +
		 \langle 
		 \mu(t,x), (\nabla_x u)(t,x)\rangle
		 \\
		 +
		 f(t, x, u(t, x))
		 =
		 0
		\end{multline}
	with $u(T,x)=g(x)$ for $(t,x)\in (0,T)\times\R^d$. 
	Next let $ v \in C([0,T]\times\R^d,\R) $ be an at most polynomially growing viscosity solution of 
		\begin{multline}
		(\tfrac{\partial}{\partial t}v)(t,x) 
	  	+ 
	  	\tfrac12 
	  	\operatorname{Trace}\!\left(
	  	\sigma(t,x)[\sigma(t,x)]^{*}(\operatorname{Hess}_x v)(t,x)\right)
	  	+ 
	  	\langle 
	  	\mu(t,x), (\nabla_x v)(t,x)
	  	\rangle 
	  	\\
	  	+ 
	  	f(t,x,v(t,x))
	  	= 
	  	0
	 	\end{multline}
	with $v(T,x) = g(x)$ for $(t,x)\in (0,T)\times\R^d$. 
	Note that the fact that $ v $ is at most polynomially growing guarantees that there exists $\alpha\in [2p,\infty)$ which satisfies that 
		\begin{equation} 
		\limsup_{r\to\infty} \left[ \sup_{t\in [0,T]} \sup_{ x\in \R^d, \norm{x}>r } \left( \frac{ |v(t,x)| }{ V_{\alpha}(x) }	\right) \right] = 0. 
		\end{equation}
	Item~\eqref{existence_of_fixpoint_product_lyapunov:item2} in  \cref{cor:existence_of_fixpoint_product_lyapunov} (applied with 
		$ \rho \is \rho_{\alpha} $, 
		$ \mc O \is \R^d $, 
		$ V \is V_{\alpha} $ 
	in the notation of \cref{cor:existence_of_fixpoint_product_lyapunov}) and \eqref{existence_of_fixpoint_polynomial_growth:viscosity_solution} hence ensure that $ u = v $. 
	This establishes Item~\eqref{existence_of_fixpoint_polynomial_growth:item2}. 
	In the next step we prove Items~\eqref{existence_of_fixpoint_polynomial_growth:item3} and~\eqref{existence_of_fixpoint_polynomial_growth:item4}.
	Observe that Item~\eqref{existence_of_fixpoint_product_lyapunov:item3} in  \cref{cor:existence_of_fixpoint_product_lyapunov} guarantees for all 
		$ t \in [0,T] $, 
		$ x \in \R^d $
	that 
		$ \EXP{|g(X^{t,x}_T)| + \int_t^T | f( s, X^{t,x}_s, u( s, X^{t,x}_s ) ) |\,ds } < \infty $ 
	and  
		\begin{equation} \label{existence_of_fixpoint_polynomial_growth:sfpe}
		u(t,x) 
		= 
		\Exp{ g( X^{t,x}_{T} ) + \int_t^T f( s, X^{t,x}_s, u(s, X^{t,x}_s) ) \,ds }\!. 
		\end{equation}
	Next let $ w \in C([0,T]\times\R^d,\R) $ be an at most polynomially growing function satisfying for all 
		$ t \in [0,T] $, 
		$ x \in \R^d $ 
	that 
		\begin{equation}
		 w(t,x) = \Exp{ g(  X^{t,x}_{T} ) + \int_t^T f( s,  X^{t,x}_s, w(s,  X^{t,x}_s) ) \,ds }\!. 
		\end{equation}
	Observe that the fact that $ w $ is at most polynomially growing yields that there exists $\beta\in [\alpha,\infty)$ which satisfies that 
		\begin{equation} 
		\limsup_{r\to\infty} 
		\left[ 
		\sup_{t\in [0,T]}
		\sup_{ x\in \R^d, \norm{x}>r }
		\left( 
		 \frac{ |w(t,x)| }{ V_{\beta}(x) }
		\right) 
		\right] = 0. 
		\end{equation}
	Combining Items~\eqref{existence_of_fixpoint_product_lyapunov:item3} and~\eqref{existence_of_fixpoint_product_lyapunov:item4} in  \cref{cor:existence_of_fixpoint_product_lyapunov} (applied with 
		$ \rho \is \rho_{\beta} $, 
		$ \mc O \is \R^d $, 
		$ V \is V_{\beta} $
	in the notation of \cref{cor:existence_of_fixpoint_product_lyapunov}) with \eqref{existence_of_fixpoint_polynomial_growth:viscosity_solution} and \eqref{existence_of_fixpoint_polynomial_growth:sfpe} hence demonstrates that $ u = v = w $. 
	This establishes Items~\eqref{existence_of_fixpoint_polynomial_growth:item3} and~\eqref{existence_of_fixpoint_polynomial_growth:item4}. 
 	This completes the proof of  \cref{existence_of_fixpoint_polynomial_growth}.
\end{proof}

\begin{lemma}\label{lem:lyapunov_function_for_heat_type_equation}
 	Let $ d,m \in \N $, 
 		$ \varepsilon,T\in (0,\infty)$,  	    
 	    $ \alpha,c \in [0,\infty) $, 
 	let $ \langle \cdot,\cdot \rangle \colon \R^d \times \R^d \to \R $ be the standard Euclidean scalar product on $ \R^d $, 
 	let $ \norm{\cdot}\colon\R^d\to [0,\infty)$ be the standard Euclidean norm on $\R^d$,
 	let $ B \colon [0,T]\times\R^d \to \R^{d\times m} $ satisfy 
 		$ \sup \{ \langle \xi, B(t,x)[B(t,x)]^{*} \xi \rangle \colon t\in [0,T], x,\xi \in \R^d, \norm{\xi}=1 \} = c \leq  \alpha $, 
 	and let $ V \colon [0,T]\times\R^d\to (0,\infty) $ satisfy for all 
 		$ t \in [0,T] $,
 		$ x \in \R^d $
 	that 
 		\begin{equation} \label{lyapunov_function_for_heat_equation:time_shifted_backwards_heat_kernel}
 		V(t,x) = \tfrac{1}{\left[ 2 \pi (\alpha t+\varepsilon)\right]^{d\slash 2}}
		\exp\!\left( 
    	\tfrac{\norm{x}^2}{2(\alpha t+\varepsilon)} 
  		\right).    
 		\end{equation}
 	Then 
		\begin{enumerate}[(i)]
 		\item \label{lyapunov_function_for_heat_equation:item1}
 			we have that 
 				$V\in C^{\infty}([0,T]\times\R^d, (0,\infty))$ 
 			and 
 		\item \label{lyapunov_function_for_heat_equation:item2}
 			we have for all 
 				$ t \in [0,T] $, 
 				$ x \in \R^d $ 
			that 
				\begin{equation}
				(\tfrac{\partial}{\partial t}V)(t,x) + \tfrac12 \operatorname{Trace}\!\big( B(t,x) [B(t,x)]^{*} (\operatorname{Hess}_x V)(t,x) \big) \leq 0 . 
 				\end{equation}
		\end{enumerate}
\end{lemma}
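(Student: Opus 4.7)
The plan is to prove both items by direct computation, which is straightforward once the right bookkeeping is set up. For Item \eqref{lyapunov_function_for_heat_equation:item1}, I would simply note that $[0,T] \times \R^d \ni (t,x) \mapsto \alpha t + \varepsilon \in [\varepsilon, \alpha T + \varepsilon] \subseteq (0,\infty)$ is smooth and bounded away from zero, so that $V$, being a composition and product of smooth functions $(t,x) \mapsto (\alpha t+\varepsilon)^{-d/2}$ and $(t,x) \mapsto \exp(\norm{x}^2/(2(\alpha t + \varepsilon)))$, lies in $C^\infty([0,T] \times \R^d, (0,\infty))$.

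For Item \eqref{lyapunov_function_for_heat_equation:item2}, I would write $a(t) = \alpha t + \varepsilon$ and compute directly from \eqref{lyapunov_function_for_heat_equation:time_shifted_backwards_heat_kernel}. Taking the logarithmic derivative in $t$ gives
\begin{equation}
(\tfrac{\partial}{\partial t} V)(t,x)
= V(t,x) \left[ -\frac{d \alpha}{2 a(t)} - \frac{\alpha \norm{x}^2}{2 [a(t)]^2} \right]\!.
\end{equation}
Similarly, computing first $(\nabla_x V)(t,x) = V(t,x) \, x / a(t)$ and then the second spatial derivatives yields
\begin{equation}
(\operatorname{Hess}_x V)(t,x) = V(t,x) \!\left[ \frac{x x^{*}}{[a(t)]^2} + \frac{\operatorname{Id}_{\R^d}}{a(t)} \right]\!.
\end{equation}

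The next step is to bound the trace term using the spectral hypothesis on $B B^{*}$. Since $\langle \xi, B(t,x)[B(t,x)]^{*}\xi\rangle \leq c \norm{\xi}^2$ for all $\xi \in \R^d$, I obtain $\langle x, B(t,x)[B(t,x)]^{*} x\rangle \leq c \norm{x}^2$ and $\operatorname{Trace}(B(t,x)[B(t,x)]^{*}) \leq d c$ (by summing the eigenvalues, each $\leq c$). Plugging into the Hessian formula yields
\begin{equation}
\tfrac{1}{2}\operatorname{Trace}\!\big( B(t,x)[B(t,x)]^{*} (\operatorname{Hess}_x V)(t,x)\big)
\leq \frac{V(t,x)}{2} \!\left[ \frac{c \norm{x}^2}{[a(t)]^2} + \frac{d c}{a(t)} \right]\!.
\end{equation}

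Adding the two bounds produces
\begin{equation}
(\tfrac{\partial}{\partial t}V)(t,x) + \tfrac{1}{2}\operatorname{Trace}\!\big( B(t,x)[B(t,x)]^{*} (\operatorname{Hess}_x V)(t,x)\big) \leq \frac{V(t,x)(c-\alpha)}{2}\!\left[ \frac{d}{a(t)} + \frac{\norm{x}^2}{[a(t)]^2}\right]\!,
\end{equation}
and the hypothesis $c \leq \alpha$ makes the right-hand side non-positive, establishing Item \eqref{lyapunov_function_for_heat_equation:item2}. There is no real obstacle here: the entire argument is engineered so that the two leading contributions ($d \alpha/(2 a)$ from $\partial_t V$ and $dc/(2a)$ from the trace, and similarly the $\norm{x}^2$ terms) cancel exactly when $c = \alpha$ and only the favourable sign remains when $c < \alpha$. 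The only mild care required is keeping track of the matrix identity $(\operatorname{Hess}_x V)(t,x) = V(t,x)[x x^{*}/[a(t)]^2 + \operatorname{Id}_{\R^d}/a(t)]$ and correctly applying $\operatorname{Trace}(B B^{*} x x^{*}) = \langle x, B B^{*} x\rangle$.
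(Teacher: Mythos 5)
Your proposal is correct and follows essentially the same route as the paper's proof: compute $\partial_t V$ and $\operatorname{Hess}_x V$ explicitly, bound the trace term via $\operatorname{Trace}(BB^{*}xx^{*})=\langle x,BB^{*}x\rangle\leq c\norm{x}^2$ and $\operatorname{Trace}(BB^{*})\leq cd$, and observe the exact cancellation up to the factor $(c-\alpha)\leq 0$. The only cosmetic difference is that you work with the matrix identity $(\operatorname{Hess}_x V)(t,x)=V(t,x)[xx^{*}/[a(t)]^2+\operatorname{Id}_{\R^d}/a(t)]$ directly, whereas the paper writes out the same computation entrywise.
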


\begin{proof}[Proof of \cref{lem:lyapunov_function_for_heat_type_equation}]
	Throughout this proof let 
		$ \delta_{i,j} \in \R $, $ i,j \in \N $, 
	satisfy for all 
		$ i,j \in \N $ with $ i < j $ 
	that 
		$ \delta_{i,i} = 1 $ 
	and 
		$ \delta_{i,j} = \delta_{j,i} = 0 $
	and let 
    	$ b_{i,j} \colon [0,T] \times \R^d \to \R $, $ i \in \{1,2,\ldots,d\} $, $ j \in \{1,2,\ldots,m \} $, 
    satisfy for all 
		$ t \in [0,T] $, 
		$ x \in \R^d $ 
	that 
		\begin{equation} 
		B(t,x) = 
		\begin{pmatrix} 
		b_{1,1}(t,x) & b_{1,2}(t,x) & \ldots & b_{1,m}(t,x) \\
		b_{2,1}(t,x) & b_{2,2}(t,x) & \ldots & b_{2,m}(t,x) \\
		\vdots & \vdots & \ddots & \vdots \\
		b_{d,1}(t,x) & b_{d,2}(t,x) & \ldots & b_{d,m}(t,x) 
		\end{pmatrix}. 
		\end{equation}  
	Observe that \eqref{lyapunov_function_for_heat_equation:time_shifted_backwards_heat_kernel} and the chain rule establish Item~\eqref{lyapunov_function_for_heat_equation:item1}. 
	Moreover, note that \eqref{lyapunov_function_for_heat_equation:time_shifted_backwards_heat_kernel} ensures that for all 
		$ i \in \{ 1,2,\ldots,d \} $, 
		$ j \in \{ 1,2,\ldots,m \} $, 
	    $ t \in [0,T] $, 
	    $ x=(x_1,x_2,\ldots,x_d) \in \R^d $ 
	we have that 
		\begin{equation}
		\begin{gathered}
		(\tfrac{\partial}{\partial t}V)(t,x) = \alpha \left[ -\tfrac{d}{2(\alpha t+\varepsilon)}-\tfrac{\norm{x}^2}{2(\alpha t+\varepsilon)^2} \right] V(t,x), \qquad 
		(\tfrac{\partial}{\partial x_i} V)(t,x) = \tfrac{x_i}{\alpha t+\varepsilon} V(t,x), 
		\\
		\text{and}\qquad
		(\tfrac{\partial^2}{\partial x_i\partial x_j} V)(t,x) = \left[ \tfrac{x_i x_j}{(\alpha t + \varepsilon)^2} + \tfrac{\delta_{i,j}}{\alpha t + \varepsilon} \right] V(t,x). 
		\end{gathered}
		\end{equation}
	Hence, we obtain that for all 
		$ t \in [0,T] $, 
		$ x=(x_1,x_2,\ldots,x_d) \in \R^d $ 
	we have that 
		\begin{equation} \label{lyapunov_function_for_heat_equation:calculation}
		\begin{split} 
		& (\tfrac{\partial}{\partial t}V)(t,x) + \tfrac12 \operatorname{Trace}\!\big( B(t,x) [B(t,x)]^{*} (\operatorname{Hess}_x V)(t,x) \big) 
		\\
		& = 
		(\tfrac{\partial}{\partial t}V)(t,x) + \tfrac12 \left[ \smallsum\limits_{i,j=1}^{d} \smallsum\limits_{k=1}^{m}  b_{i,k}(t,x) b_{j,k}(t,x) (\tfrac{\partial^2}{\partial x_i\partial x_j} V) (t,x) \right]
		\\
		& = 
		\alpha \left[ -\tfrac{d}{2(\alpha t+\varepsilon)}-\tfrac{\norm{x}^2}{2(\alpha t+\varepsilon)^2} \right] V(t,x) 
		+ 
		\tfrac12 \left[ \smallsum\limits_{i,j=1}^{d} \smallsum\limits_{k=1}^{m} b_{i,k}(t,x) b_{j,k}(t,x)\left(
		\tfrac{x_i x_j}{(\alpha t + \varepsilon)^2} 
		+ 
		\tfrac{\delta_{i,j}}{\alpha t + \varepsilon} 
		\right) 
		\right]
		V(t,x) 
		\\
		& = 
		\alpha \left[ -\tfrac{d}{2(\alpha t+\varepsilon)}-\tfrac{\norm{x}^2}{2(\alpha t+\varepsilon)^2} \right] V(t,x) 
		+ 
		\tfrac12 \left[ \tfrac{ \langle x, B(t,x)[B(t,x)]^{*} x \rangle }{(\alpha t + \varepsilon)^2} + \tfrac{\operatorname{Trace}(B(t,x)[B(t,x)]^{*})}{\alpha t + \varepsilon} \right]\!.
		\end{split}
		\end{equation}
	Next note that the assumption that for all 
		$ t\in [0,T] $, 
		$ x,\xi \in \R^d $ 
	we have that 
		$ \langle \xi, B(t,x)[B(t,x)]^{*} \xi \rangle \leq c \Norm{\xi}^2 $ 
	implies that for all 
		$ t \in [0,T] $, 
		$ x \in \R^d $ 
	we have that 
		$ \operatorname{Trace}(B(t,x)[B(t,x)]^{*}) \leq c d $. 
	Combining this with \eqref{lyapunov_function_for_heat_equation:calculation} and the assumption that $ c \leq \alpha $ ensures that for all 
		$ t \in [0,T] $, 
		$ x \in \R^d $ 
	we have that 
		\begin{equation}
		\begin{split}
		& 
		(\tfrac{\partial}{\partial t}V)(t,x) + \tfrac12 \operatorname{Trace}\!\big( B(t,x) [B(t,x)]^{*} (\operatorname{Hess}_x V)(t,x) \big) 
		\\ &
		\leq 
		-\alpha \left[ \tfrac{d}{2(\alpha t+\varepsilon)} + \tfrac{\norm{x}^2}{2(\alpha t+\varepsilon)^2} \right] V(t,x) 
		+ 
		\tfrac12 \left[ \tfrac{c\norm{x}^2}{(\alpha t+\varepsilon)^2} + \tfrac{cd}{\alpha t+\varepsilon} \right] V(t,x)
		\\
		& = 
		(-\alpha+c) \left[ \tfrac{d}{2(\alpha t+\varepsilon)} + \tfrac{\norm{x}^2}{2(\alpha t+\varepsilon)^2} \right] V(t,x) 
		\leq 0. 
		\end{split} 
		\end{equation} 	
	This establishes Item~\eqref{lyapunov_function_for_heat_equation:item2}. 
	This completes the proof of  \cref{lem:lyapunov_function_for_heat_type_equation}.
\end{proof}

\begin{cor}\label{cor:existence_bounded_heat_equation_type}
	Let $ d,m \in \N $, 
		$ B \in \R^{d\times m} $, 
		$ a,T \in (0,\infty) $,
		$ c,L \in \R $,
    	$ f \in C( [0,T] \times \R^d \times \R , \R) $, 
    	$ g \in C(\R^d, \R) $, 
    let $ \langle\cdot,\cdot\rangle \colon \R^d\times\R^d \to \R $ be the standard Euclidean scalar product on $ \R^d $, 
    let $ \norm{\cdot}\colon\R^d\to [0,\infty) $ be the standard Euclidean norm on $\R^d$,
	assume for all 
		$ t \in [0,T] $, 
		$ x \in \R^d $, 
		$ v,w \in \R $ 
	that
		$ c = \sup \{ \langle y, B B^{*} y \rangle \colon y \in \R^d, \Norm{y}=1 \} < \frac{1}{2aT} $, 
		$ | f(t,x,0) | + | g(x) | \leq L \exp(a\Norm{x}^2) $, 
	and 
		$ | f(t,x,v) - f(t,x,w) | \leq L | v - w | $, 
	let $ (\Omega,\cF,\P) $ be a probability space, 
	and let $ W \colon [0,T] \times \Omega \to \R^m $ be a standard Brownian motion. 
	Then
		\begin{enumerate}[(i)]
		\item \label{existence_bounded_heat_equation_type:item2}
		there exists a unique viscosity solution 
			$ u \in \big( \bigcup_{ b \in \R } \{ {\bf u} \in C([0,T]\times\R^d,\R) \colon 
			\sup\{ \frac{|{\bf u}(t,x)|}{\exp(b\Norm{x}^2)} \colon t\in [0,T], x \in \R^d \} < \infty \} \big) $
		of 
			\begin{equation} \label{existence_bounded_heat_equation_type:claim2}
			(\tfrac{\partial}{\partial t}u)(t,x) 
			+
			\tfrac{1}{2} \operatorname{Trace}\!\big( B B^{*} (\operatorname{Hess}_x u)(t,x) \big) 
			+
			f(t, x, u(t, x))
			=
			0
			\end{equation}
		with $ u(T,x) = g(x) $ for $ (t,x) \in (0,T) \times \R^d $,  	
	\item \label{existence_bounded_heat_equation_type:item3}
	there exists a unique 
		$ v \in \big( \bigcup_{ \varepsilon \in (0,\infty) } \{ {\bf u} \in C([0,T]\times\R^d,\R) \colon \sup \{ |{\bf u}(t,x)| \exp(-\frac{\Norm{x}^2}{2(ct+\varepsilon)}) \colon t\in [0,T], x \in \R^d \} < \infty \} \big) $
	which satisfies for all 
		$ t \in [0,T] $, 
		$ x \in \R^d $		
	that 
		$ \EXP{|g(x+B W_{T-t})| + \int_t^T |f(s,x+BW_{s-t},v(s,x+BW_{s-t}))|\,ds}<\infty $
	and 
		\begin{equation}
		\begin{split} 
		& v(t, x) =	\Exp{ g( x + BW_{T - t} ) + \int_{t}^T f( s, x + BW_{s-t},  v(s, x + BW_{s-t}) ) \, ds }\!,  
		\end{split} 
		\end{equation}
	and 
	\item\label{existence_bounded_heat_equation_type:item4} 
	we have for all 
		$t\in [0,T]$, 
		$x\in\R^d$ 
	that 
		$u(t,x)=v(t,x)$ 
	\end{enumerate}
	(cf.~\cref{def:viscosity_solution}). 
\end{cor}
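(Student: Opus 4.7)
The plan is to deduce \cref{cor:existence_bounded_heat_equation_type} from \cref{thm:existence_of_fixpoint} applied with $\mc O = \R^d$, $\mu \equiv 0$, and $\sigma(t,x) \equiv B$, using the Lyapunov function supplied by \cref{lem:lyapunov_function_for_heat_type_equation}. The standing assumption $c < \tfrac{1}{2aT}$ provides precisely the margin needed to pick a shift parameter $\varepsilon$ so that the resulting Lyapunov function dominates $\exp(a\norm{x}^2)$ uniformly over $[0,T]$.

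Concretely, for any $\varepsilon \in (0, \tfrac{1}{2a} - cT)$ --- a non-empty interval by hypothesis --- I would take $\alpha = c$ in \cref{lem:lyapunov_function_for_heat_type_equation} and set
\begin{equation*}
V(t,x) = [2\pi(ct + \varepsilon)]^{-d/2} \exp\!\left( \tfrac{\norm{x}^2}{2(ct + \varepsilon)} \right)\!.
\end{equation*}
\cref{lem:lyapunov_function_for_heat_type_equation} then guarantees $V \in C^\infty([0,T]\times\R^d,(0,\infty))$ with $(\tfrac{\partial}{\partial t}V)(t,x) + \tfrac12 \operatorname{Trace}(BB^{*}(\operatorname{Hess}_x V)(t,x)) \leq 0$, which together with $\mu \equiv 0$ is precisely the Lyapunov inequality required by \cref{thm:existence_of_fixpoint}. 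Coercivity $\inf_{x \in \R^d \setminus O_r} V(t,x) \to \infty$ as $r \to \infty$ is immediate from the explicit form of $V$, and the growth condition on $f$ and $g$ is verified by
\begin{equation*}
\frac{|f(t,x,0)| + |g(x)|}{V(t,x)} \leq C \exp\!\left( \norm{x}^2 \!\left[ a - \tfrac{1}{2(ct+\varepsilon)} \right] \right)\!,
\end{equation*}
whose exponent is bounded above by $a - \tfrac{1}{2(cT+\varepsilon)} < 0$ by the choice of $\varepsilon$, so this ratio vanishes uniformly in $t$ as $\norm{x} \to \infty$. The Lipschitz hypothesis on $f$ is assumed directly, so all hypotheses of \cref{thm:existence_of_fixpoint} are in place.

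Invoking \cref{thm:existence_of_fixpoint} then produces a unique viscosity solution and a unique SFPE solution in the class $\{ u \in C([0,T]\times\R^d,\R) \colon \limsup_{r \to \infty} \sup_{t \in [0,T], \norm{x} > r} |u(t,x)|/V(t,x) = 0 \}$, coinciding by Item~\eqref{existence_of_fixpoint:item4} of that theorem, while Item~\eqref{existence_of_fixpoint:item1} specializes to $X^{t,x}_s = x + B(W_s - W_t)$, which agrees in distribution with $x + BW_{s-t}$ and thereby yields the explicit representation in Item~\eqref{existence_bounded_heat_equation_type:item3}. To pass to the union-of-classes statements in the corollary, I would let $\varepsilon$ range over $(0, \tfrac{1}{2a} - cT)$ and verify two inclusions: first, if $|u(t,x)| \exp(-\norm{x}^2/(2(ct+\varepsilon_0)))$ is bounded for some $\varepsilon_0 > 0$, then $|u(t,x)|/V(t,x) \to 0$ as $\norm{x}\to\infty$ uniformly in $t$ for any $\varepsilon \in (0, \min\{\varepsilon_0, \tfrac{1}{2a} - cT\})$; and second, if $|u(t,x)| \leq C \exp(b\norm{x}^2)$, then $|u|/V \to 0$ whenever $b < \tfrac{1}{2(cT+\varepsilon)}$. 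The main (and only genuinely delicate) point is this class matching, which ultimately boils down to elementary exponential comparisons; beyond that, the proof is a direct specialization of the general framework established by \cref{thm:existence_of_fixpoint}.
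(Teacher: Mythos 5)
Your overall strategy coincides with the paper's: specialize \cref{thm:existence_of_fixpoint} to $\mu\equiv 0$, $\sigma\equiv B$ with the Gaussian Lyapunov functions $V_{\varepsilon}(t,x)=[2\pi(ct+\varepsilon)]^{-d/2}\exp(\norm{x}^2/(2(ct+\varepsilon)))$ from \cref{lem:lyapunov_function_for_heat_type_equation}, and your verification of the growth and supersolution hypotheses for $\varepsilon\in(0,\tfrac{1}{2a}-cT)$ is exactly what the paper does. The existence statements, the identification $u=v$, and the uniqueness in Item~(iii) all go through as you describe (for Item~(iii), given a competitor with witness $\eta$, one picks $\varepsilon\in(0,\eta)\cap(0,\tfrac{1}{2a}-cT)$ and lands in the class $\mc V_{\varepsilon}$ where the theorem applies).

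However, there is a genuine gap in the uniqueness part of Item~(i), and it is precisely at the point you dismiss as "elementary exponential comparisons." The uniqueness class there is $\bigcup_{b\in\R}\{\mathbf{u}\colon \sup_{t,x}|\mathbf{u}(t,x)|e^{-b\norm{x}^2}<\infty\}$ with $b$ \emph{arbitrary}. Your second inclusion only covers competitors with $b<\tfrac{1}{2(cT+\varepsilon)}$, and since $\sup_{\varepsilon\in(0,\frac{1}{2a}-cT)}\tfrac{1}{2(cT+\varepsilon)}=\tfrac{1}{2cT}$ (when $c>0$), a viscosity solution growing like $\exp(b\norm{x}^2)$ with $b\geq\tfrac{1}{2cT}$ lies in no admissible class $\mc V_{\varepsilon}$ on the whole interval $[0,T]$, so no single application of \cref{thm:existence_of_fixpoint} can rule it out. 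The paper closes this gap with a continuation-in-time argument: given a competitor $v$ with growth parameter $b$, it considers $\mc T=\{t\in[0,T]\colon v|_{[t,T]\times\R^d}=u|_{[t,T]\times\R^d}\}$, notes $T\in\mc T$ and that $\mc T$ is closed, and proves relative openness by applying \cref{thm:existence_of_fixpoint} on subintervals of length $\tfrac{1}{4bc}$ with terminal condition $u(t,\cdot)$; on such a short window the restarted Lyapunov function $V_{\varepsilon}(s,x)$ with $\varepsilon<\tfrac{1}{4b}$ satisfies $\tfrac{1}{2(cs+\varepsilon)}>b$ and hence does dominate $\exp(b\norm{x}^2)$. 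Connectedness of $[0,T]$ then gives $\mc T=[0,T]$. This local-in-time bootstrapping is the missing idea in your proposal; without it, uniqueness in Item~(i) is only established for the restricted range $b<\tfrac{1}{2cT}$. (Two minor further points: the corollary only provides a probability space and a Brownian motion, so you should pass to the usual augmented filtration before invoking the theorem; and the replacement of $W_s-W_t$ by $W_{s-t}$ inside the expectation of the time integral should be justified via Fubini and equality of one-dimensional marginals, as the paper does.)
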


\begin{proof}[Proof of \cref{cor:existence_bounded_heat_equation_type}]
	Throughout this let $ V_{\varepsilon} \colon [0,T]\times\R^d\to (0,\infty) $, $ \varepsilon\in (0,\infty) $, 
	satisfy for all
		$ \varepsilon \in (0,\infty) $, 
		$ t \in [0,T] $,
		$ x \in \R^d $
	that 
    	\begin{equation} \label{existence_bounded_heat_equation_type:definition_of_V_epsilon_function}
    	V_{\varepsilon}(t,x) 
     	= 
		\tfrac{ 1 }{ [2\pi( c t+\varepsilon )]^{ d \slash 2 } }
     	\exp\!\left( 
        \tfrac{\norm{x}^2}{2( c t+\varepsilon )}
     	\right)
    	\end{equation}
 	and let $ \mc V_{ \varepsilon } \subseteq C([0,T]\times\R^d,\R) $, $ \varepsilon \in (0,\infty) $, satisfy for all 
 		$ \varepsilon \in (0,\infty) $ 
	that
		\begin{equation} \label{existence_bounded_heat_equation_type:definition_of_V_epsilon_set}
     	\mc V_{ \varepsilon } 
     	= 
     	\left\{ 
      	{\bf u} \in C([0,T]\times\R^d,\R) 
      	\colon 
      	\left(
      	\limsup_{r\to\infty} 
      	\left[ 
      	 \sup_{t\in [0,T]}
      	 \sup_{x\in \R^d,\norm{x}>r} 
      	 \left( 
        	\frac{|{\bf u}(t,x)|}{V_{\varepsilon}(t,x)}
       	\right) 
      	\right] 
      	= 0 
      	\right)
     	\right\}\!. 
   		\end{equation}
   	Observe that the assumption that for all 
		$ t \in [0,T] $, 
		$ x \in \R^d $ 
	we have that 
		$ |f(t,x,0)| + | g(x) | \leq L \exp(a\Norm{x}^2) $ 
	ensures that for all 
		$ \varepsilon \in (0,\infty) $, 
		$ t \in [0,T] $, 
		$ x \in \R^d $
	we have that 
		\begin{equation} 
		\begin{split} 
		\frac{| f(t,x,0) | + | g(x) |}{V_{\varepsilon}(t,x)}
		& \leq 
		L \, [ 2\pi ( c t + \varepsilon ) ]^{\nicefrac{d}{2}}\, \exp\!\left( a \Norm{x}^2 - \tfrac{\Norm{x}^2}{2( c t + \varepsilon)}\right) 
		\\
		& \leq 
		L \, [ 2\pi ( c T + \varepsilon ) ]^{\nicefrac{d}{2}}\, \exp\!\left( \big( a  - \tfrac{1}{2( c t + \varepsilon)} \big)  \Norm{x}^2  \right)
		\\[1ex]
		& \leq 
		L \, [ 2\pi ( c T + \varepsilon ) ]^{\nicefrac{d}{2}}\, \exp\!\left( \big( a  - \tfrac{1}{2( c T + \varepsilon)} \big)  \Norm{x}^2  \right)\!.
		\end{split} 
		\end{equation} 
	Hence, we obtain that for all 
		$ \varepsilon \in (0,\frac{1}{2 a} - c T) $ 
	we have that 
		\begin{equation} \label{existence_bounded_heat_equation_type:checking_growth_ass}
		\begin{split} 
		& 
		\limsup_{ r \to \infty} 
		\left[ 
		\sup_{t \in [0,T]} 
		\sup_{x \in \R^d, \norm{x}> r} 
		\left(  
		\frac{|f(t,x,0)|}{V_{\varepsilon}(t,x)} + \frac{|g(x)|}{V_{\varepsilon}(T,x)} 
		\right)  
		\right] 
		\\
		& \leq 
		2L \, [ 2\pi ( c T + \varepsilon ) ]^{\nicefrac{d}{2}} 
		\left[ \limsup_{ r \to \infty} \left[ \exp\!\Big( \big( a  - \tfrac{1}{2( c T + \varepsilon)} \big) r^2 \Big) \right] \right] = 0 . 
		\end{split} 
		\end{equation} 
	Moreover, note that \eqref{existence_bounded_heat_equation_type:definition_of_V_epsilon_function} demonstrates that for all 
		$ \varepsilon \in (0,\infty) $, 
		$ t \in [0,T] $, 
		$ x \in \R^d $ 
	we have that 
		\begin{equation} \label{cor:existence_bounded_heat_equation_type:lower_bound_for_V_epsilon_function}
		V_{\varepsilon}(t,x) 
		\geq 
		\tfrac{1}{[2\pi( c T+\varepsilon)]^{d \slash 2}} \exp\!\left( \tfrac{\norm{x}^2}{2( c T+\varepsilon)} \right)\!.  
		\end{equation} 
	Hence, we obtain for all 
		$ \varepsilon \in (0,\infty) $ 
	that 
		\begin{equation} \label{existence_bounded_heat_equation_type:growth_of_V}
		\liminf_{ r \to \infty } \left[ \inf_{t\in [0,T]} \inf_{x\in\R^d,\norm{x}>r} V_{\varepsilon}(t,x) \right] = \infty. 
		\end{equation} 
	In addition, observe that \cref{lem:lyapunov_function_for_heat_type_equation} guarantees that for all 
 		$ \varepsilon \in (0,\infty)$, 
    	$ t \in [0,T] $, 
    	$ x \in \R^d $ 
	we have that 
		\begin{equation} \label{existence_bounded_heat_equation_type:V_epsilon_supersolution}
	  	( \tfrac{\partial}{\partial t} V_{\varepsilon} )(t,x) + \tfrac12 \operatorname{Trace}\!\big( B B^{*} (\operatorname{Hess}_x V_{\varepsilon})(t,x) \big) \leq 0. 
	 	\end{equation}
	Combining this with \eqref{existence_bounded_heat_equation_type:checking_growth_ass}, \eqref{existence_bounded_heat_equation_type:growth_of_V}, and Item~\eqref{existence_of_fixpoint:item2} in \cref{thm:existence_of_fixpoint} (applied with 
		$ \mc O \is \R^d $, 
		$ \mu \is ([0,T]\times\R^d \ni (t,x) \mapsto 0 \in \R^d ) $, 
		$ \sigma \is ([0,T]\times\R^d \ni (t,x) \mapsto B \in \R^{d\times m} ) $, 
		$ V \is V_{\varepsilon} $
	for $ \varepsilon \in ( 0, \frac{1}{2a} - c T ) $ 		
	in the notation of \cref{thm:existence_of_fixpoint}) proves that for every 
		$ \varepsilon \in ( 0, \frac{1}{2a} - c T) $ 
 	there exists a unique viscosity solution
 		$ U_{ \varepsilon } \in \mc V_{ \varepsilon } $ 
	of		
		\begin{equation} \label{existence_bounded_heat_equation_type:viscosity_solution_in_V_epsilon}
		(\tfrac{\partial }{\partial t}U_{ \varepsilon })(t,x) 
		+ 
		\tfrac12 \operatorname{Trace}\!\big( B B^{*} (\operatorname{Hess}_x U_{ \varepsilon })(t,x) \big) 
		+ 
		f(t,x,U_{ \varepsilon }(t,x)) 
		= 0
		\end{equation}
 	with $ U_{ \varepsilon }(T,x) = g(x) $ for $ (t,x) \in (0,T)\times\R^d $. 
	This, \eqref{existence_bounded_heat_equation_type:definition_of_V_epsilon_set}, and \eqref{cor:existence_bounded_heat_equation_type:lower_bound_for_V_epsilon_function} ensure that for all 
 		$ \varepsilon \in ( 0, \frac{1}{2a} - c T ) $ 
 	we have that 
	 	\begin{equation}
	 	\label{existence_bounded_heat_equation_type:u_viscosity_solution}
	 	\sup_{t\in [0,T]} \sup_{x\in \R^d} \left[ \frac{|U_{ \varepsilon }(t,x)|}{\exp\!\left( \frac{\Norm{x}^2}{2\varepsilon} \right)} \right] 
	 	\leq 
	 	\frac{1}{[2\pi\varepsilon]^{ d \slash 2 }} 
	 	\sup_{t\in [0,T]} \sup_{x\in \R^d} \left[ \frac{|U_{ \varepsilon }(t,x)|}{V_{ \varepsilon }(t,x)} \right] 	 	
	 	< \infty.
	 	\end{equation} 
	Moreover, observe that \eqref{existence_bounded_heat_equation_type:definition_of_V_epsilon_function} yields for all 
		$ t \in [0,T] $, 
		$ x \in \R^d $, 
		$ \delta,\varepsilon \in ( 0, \infty ) $ 
	with 
		$ \delta \leq \varepsilon $ 
	that 
		\begin{equation} \label{existence_bounded_heat_equation_type:comparing_V_epsilon_and_V_delta}
		\frac{ V_{\varepsilon}(t,x) }{ V_{\delta}(t,x) } 
		= \left[ \frac{ 2\pi (ct+\delta) }{ 2 \pi (ct+\varepsilon)} \right]^{ d \slash 2 } 
		\exp\!\left( \norm{x}^2 \big( \tfrac{1}{ 2(ct+\varepsilon) } - \tfrac{ 1 }{ 2(ct+\delta) } \big) \right)
		\leq \left[ \frac{ cT + \delta }{ \varepsilon } \right]^{\nicefrac{d}{2}}. 
		\end{equation}  
	This implies for all 
		$ \delta,\varepsilon \in ( 0, \frac{1}{2a} - c T ) $ 
	with 
		$ \delta \leq \varepsilon $ 
	that 
		$ \mc V_{\varepsilon} \subseteq \mc V_{\delta} $. 
 	Combining this with \eqref{existence_bounded_heat_equation_type:viscosity_solution_in_V_epsilon} demonstrates that for all 
 		$ \delta,\varepsilon \in ( 0, \frac{1}{2a} - c T ) $ 
 	we have that 
 		$ U_{\varepsilon} = U_{\delta} $. 
	This proves that there exists a unique $ u \in C([0,T]\times\R^d,\R) $ which satisfies for all 
 		$ \varepsilon \in ( 0, \frac{1}{2a} - c T ) $, 
 		$ t \in [0,T] $, 
 		$ x \in \R^d $ 
 	that 
 		\begin{equation} \label{existence_bounded_heat_equation_type:existence_of_u_function}
 		u(t,x) = U_{\varepsilon}(t,x).  
 		\end{equation} 
 	Note that \eqref{existence_bounded_heat_equation_type:viscosity_solution_in_V_epsilon},  \eqref{existence_bounded_heat_equation_type:u_viscosity_solution}, and \eqref{existence_bounded_heat_equation_type:existence_of_u_function} ensure that 
 		$ u \in \big( \bigcup_{ b \in \R } \{ {\bf u} \in C([0,T]\times\R^d,\R) \colon \sup\{ \frac{|{\bf u}(t,x)|}{\exp(b\Norm{x}^2)} \colon \allowbreak t\in [0,T], x \in \R^d \} < \infty \} \big) $ 
 	is a viscosity solution of 
 		\begin{equation} \label{existence_bounded_heat_equation_type:existence_of_viscosity_solution}
 		(\tfrac{\partial }{\partial t}u)(t,x) + \tfrac12 \operatorname{Trace}\!\big( B B^{*} (\operatorname{Hess}_x u)(t,x) \big) + f(t,x,u(t,x)) = 0
 		\end{equation} 
 	with $ u(T,x) = g(x) $ for $ (t,x) \in (0,T)\times\R^d $. 
 	Next let 
 		$ v \in \big( \bigcup_{ b \in \R } \{ {\bf u} \in C([0,T]\times\R^d,\R) \colon \sup\{ \frac{|{\bf u}(t,x)|}{\exp(b\Norm{x}^2)} \colon \allowbreak t\in [0,T], x \in \R^d \} < \infty \} \big) $ 
 	be a viscosity solution of 
	 	\begin{equation} 
	 	(\tfrac{\partial}{\partial t} v)(t,x) + \tfrac12 \operatorname{Trace}\!\left(BB^{*}(\operatorname{Hess}_x v)(t,x)\right) 
	 	+ f(t,x,v(t,x)) = 0
	 	\end{equation}
 	with $ v(T,x) = g(x) $ for $ (t,x)\in (0,T)\times\R^d $, 
 	let $ b \in (0,\infty) $ satisfy $ \sup_{t\in [0,T]} \sup_{x\in \R^d} (\frac{|v(t,x)|}{\exp(b\|x\|^2)}) < \infty $, 
 	and let $ \mc T \subseteq [0,T] $ satisfy 
		\begin{equation} \label{existence_bounded_heat_equation_type:definition_of_T_set}
		\mc T = \{ t \in [0,T]\colon v|_{[t,T]\times\R^d} = u|_{[t,T]\times \R^d} \}. 
		\end{equation}
 	Observe that \eqref{existence_bounded_heat_equation_type:definition_of_T_set} and the fact that for all 
 		$ x \in \R^d $ 
 	we have that 
 		$ u(T,x) = g(x) = v(T,x) $ 
 	imply that $ T \in \mc T $. 
 	Moreover, note that the fact that $ u,v \in C([0,T]\times\R^d,\R) $ assures that $ \mc T $ is closed. 
	In addition, observe that \eqref{existence_bounded_heat_equation_type:definition_of_V_epsilon_function} and \eqref{existence_bounded_heat_equation_type:definition_of_T_set} ensure that for all 
	 	$ t \in [0,T] $,
	 	$ s \in [0,\min\{t,\frac{1}{4bc}\}] $, 
	 	$ x \in \R^d $, 
	 	$ \varepsilon \in ( 0, \nicefrac{1}{4b} ) $ 
 	we have that 
		\begin{equation}
		\begin{split} 
		\tfrac{| v(\max\{ t-\frac{1}{4bc}, 0 \} + s, x) |}{ V_{\varepsilon}(s,x) } 
		&= 
		[2\pi (cs + \varepsilon)]^{\nicefrac{d}{2}} 
		\left[ \tfrac{|v(\max\{ t-\frac{1}{4bc}, 0 \} + s, x)|}{\exp(b\Norm{x}^2)} \right] \exp\!\left( (b-\tfrac{1}{2(cs+\varepsilon)})\norm{x}^2 \right)
		\\
		& 
		\leq 
		[2\pi (cT+\varepsilon)]^{\nicefrac{d}{2}} 
		\left[ \tfrac{|v(\max\{ t-\frac{1}{4bc}, 0 \} + s,  x)|}{\exp(b\Norm{x}^2)} \right] \exp\!\left( (b-\tfrac{1}{2\varepsilon+\nicefrac{1}{2b}})\norm{x}^2 \right)\!.
		\end{split} 
		\end{equation}
	Hence, we obtain for all 
		$ t \in [0,T] $,
		$ \varepsilon \in ( 0, \nicefrac{1}{4b} ) $ 
	that 
		\begin{equation}
		\limsup_{r\to\infty} \left[ \sup_{ s \in [0,\min\{t,\frac{1}{4bc}\}] } \sup_{ x \in \R^d, \norm{x}>r } \left( \tfrac{|v(\max\{ t-\frac{1}{4bc}, 0 \} + s,x)|}{ V_{\varepsilon}(s,x) } \right) \right] = 0. 
		\end{equation}
	Item~\eqref{existence_of_fixpoint:item2} in \cref{thm:existence_of_fixpoint} (applied with 
		$ T \is \min\{ t, \frac{1}{4bc} \} $, 
		$ \mc O \is \R^d $, 
		$ \mu \is ( [0, \min\{ t, \frac{1}{4bc} \} ]\times\R^d \ni (s,x) \mapsto 0 \in \R^d ) $, 
		$ \sigma \is ( [0, \min\{ t, \frac{1}{4bc} \} ]\times\R^d \ni (s,x)\mapsto B \in \R^{d\times m} ) $, 
		$ f \is ( [0, \min\{ t, \frac{1}{4bc} \} ]\times\R^d\times\R \ni (s,x,v) \mapsto f( \max\{ t-\frac{1}{4bc}, 0 \} + s,x,v ) \in \R ) $, 
		$ g \is ( \R^d \ni x \mapsto u(t,x) \in \R ) $, 
		$ V \is V_{\varepsilon} $
	for $ t \in \mc T \cap (0,T] $, 
		$ \varepsilon \in (0,\frac{1}{4b})\cap (0,\frac{1}{2a}-cT)$ 
	in the notation of \cref{thm:existence_of_fixpoint}), the fact that for all 
		$ \varepsilon \in ( 0, \frac{1}{2a}-cT )$ 
	we have that 
		$ U_{\varepsilon} = u $, 
	and \eqref{existence_bounded_heat_equation_type:viscosity_solution_in_V_epsilon} therefore demonstrate that for all 
	 	$ t \in \mc T \cap (0,T] $, 
		$ \varepsilon \in (0,\frac{1}{4b})\cap (0,\frac{1}{2a}-cT)$ 
	we have that 
 		$ v|_{[\max\{t-\frac{1}{4b},0\},t]\times\R^d} = U_{\varepsilon}|_{[\max\{t-\frac{1}{4b},0\},t]\times\R^d} $. 
 	This and \eqref{existence_bounded_heat_equation_type:existence_of_u_function} ensure that for all 
 		$ t \in \mc T \cap (0,T] $ 
	we have that 
		$ v|_{[\max\{t-\frac{1}{4b},0\},t]\times \R^d} = u|_{[\max\{t-\frac{1}{4b},0\},t]\times \R^d} $. 
	Hence, we obtain for all
		$ t \in \mc T \cap (0,T] $ 
	that $[\max\{t-\frac{1}{4b},0\},t] \subseteq \mc T$. 
	This implies that $ \mc T \in \{ A \subseteq [0,T] \colon ( \Forall a \in A \colon \Exists \varepsilon \in (0,\infty) \colon (a-\varepsilon,a+\varepsilon) \cap [0,T] \subseteq A )  \}$. 
	Combining this with the fact that $ \mc T $ is non-empty, the fact that $ \mc T $ is closed, and the fact that $ [0,T] $ is connected ensures that $ \mc T = [0,T] $. 
	This and \eqref{existence_bounded_heat_equation_type:existence_of_viscosity_solution} establish Item~\eqref{existence_bounded_heat_equation_type:item2}. 
	Next we prove Items~\eqref{existence_bounded_heat_equation_type:item3} and \eqref{existence_bounded_heat_equation_type:item4}.  
 	Observe that 
 	\eqref{existence_bounded_heat_equation_type:definition_of_V_epsilon_function},
 	\eqref{existence_bounded_heat_equation_type:definition_of_V_epsilon_set},
 	\eqref{existence_bounded_heat_equation_type:checking_growth_ass}, 
 	\eqref{existence_bounded_heat_equation_type:growth_of_V},
 	\eqref{existence_bounded_heat_equation_type:V_epsilon_supersolution},
 	\eqref{existence_bounded_heat_equation_type:viscosity_solution_in_V_epsilon},  \eqref{existence_bounded_heat_equation_type:existence_of_u_function}, 
 	and Items~\eqref{existence_of_fixpoint:item3} and \eqref{existence_of_fixpoint:item4} in \cref{thm:existence_of_fixpoint} (applied with 
 		$ \mc O \is \R^d $, 
 		$ \mu \is ( [0,T]\times\R^d \ni (t,x) \mapsto 0 \in \R^d ) $, 
 		$ \sigma \is ( [0,T]\times\R^d \ni (t,x) \mapsto B \in \R^{d\times m} ) $, 
 		$ V \is V_{\varepsilon} $
 	for $ \varepsilon \in (0,\frac{1}{2a}-cT) $
 	in the notation of \cref{thm:existence_of_fixpoint})  guarantee that for all 
 		$ t \in [0,T] $,
 		$ x \in \R^d $
	we have that 
 		\begin{equation} 
  		u(t,x) = \Exp{ g(x+B(W_T-W_t)) + \int_t^T f(s, x+B(W_s-W_t),u(s,x+B(W_s-W_t))) \,ds }\!. \end{equation}
	The assumption that $ W $ is a standard Brownian motion and Fubini's theorem therefore ensure that for all 
		$ t \in [0,T] $,
		$ x \in \R^d $
	we have that 
		\begin{equation} \label{existence_bounded_heat_equation_type:fixpoint_equation_for_u}
		u(t,x) = \Exp{ g(x+BW_{T-t}) + \int_t^T f(s, x+BW_{s-t},u(s,x+BW_{s-t}) ) \,ds }\!. \end{equation}
	Next let 
		$ w \in \big( \bigcup_{ \varepsilon \in (0,\infty) } \{ {\bf u} \in C([0,T]\times\R^d,\R) \colon \sup \{ |{\bf u}(t,x)| \exp(-\frac{\Norm{x}^2}{2(ct+\varepsilon)}) \colon t\in [0,T], x \in \R^d \} < \infty \} \big) $ 
	satisfy for all 
		$ t \in [0,T] $, 
		$ x \in \R^d $ 
	that
		$ \EXP{|g(x+B W_{T-t})| + \int_t^T |f(s,x+BW_{s-t},w(s,x+BW_{s-t}))|\,ds}<\infty $ 
	and 
		\begin{equation} \label{existence_bounded_heat_equation_type:fixpoint_equation_for_w}
		\begin{split} 
		& w(t, x) =	\Exp{ g( x + BW_{T - t} ) + \int_{t}^T f( s, x + BW_{s-t},  w(s, x + BW_{s-t}) ) \, ds }
		\end{split} 
		\end{equation}
	and let $ \eta \in (0,\infty) $ satisfy  
		$ \sup_{t\in [0,T]} \sup_{x\in\R^d} ( |w(t,x)|\exp(-\frac{\norm{x}^2}{2(ct+\eta)}) ) < \infty $. 
	Observe that \eqref{existence_bounded_heat_equation_type:comparing_V_epsilon_and_V_delta} demonstrates that for all 
		$ \varepsilon \in (0,\eta) \cap (0,\frac{1}{2a}-cT) $ 
	we have that 
		$ w \in \mc V_{\varepsilon} $. 
	Combining this,  \eqref{existence_bounded_heat_equation_type:fixpoint_equation_for_u}, and 
	\eqref{existence_bounded_heat_equation_type:fixpoint_equation_for_w} 
	with 
	the fact that $ W $ is a standard Brownian motion,
	the fact that for all 
		$ \varepsilon \in (0,\frac{1}{2a}-cT) $ 
	we have that 
		$ u \in \mc V_{\varepsilon} $, 
	and Item~\eqref{existence_of_fixpoint:item3} in \cref{thm:existence_of_fixpoint} proves that $ u = w $. 	
	This establishes Items~\eqref{existence_bounded_heat_equation_type:item3} and  \eqref{existence_bounded_heat_equation_type:item4}. 
 	This completes the proof of  \cref{cor:existence_bounded_heat_equation_type}.
\end{proof}

\subsection*{Acknowledgements} 

The third author acknowledges funding by the Deutsche Forschungsgemeinschaft (DFG, German Research Foundation) under Germany's Excellence Strategy EXC 2044-390685587, Mathematics Muenster: Dynamics-Geometry-Structure. 

\bibliographystyle{acm}
\bibliography{PDE_approximation_bibfile}
\end{document}